\definecolor{darkblue}{rgb}{0,0,0.7}
\Crefname{algocf}{Algorithm}{Algorithms}
\Crefname{equation}{Equation}{Equations}
\Crefname{figure}{Figure}{Figures}
\newtheorem{theorem}{Theorem}[section]
\newtheorem{lemma}[theorem]{Lemma}
\newtheorem{definition}[theorem]{Definition}
\newtheorem{conjecture}[theorem]{Conjecture}
\newtheorem{example}[theorem]{Example}
\newtheorem{claim}[theorem]{Claim}
\newtheorem{proposition}[theorem]{Proposition}
\numberwithin{equation}{section}
\newcommand{\mA}{\mathcal{A}}
\newcommand{\mK}{\mathcal{K}}
\newcommand{\mB}{\mathcal{B}}
\newcommand{\mX}{\mathcal{X}}
\newcommand{\mF}{\mathcal{F}}
\newcommand{\mT}{\mathcal{T}}
\newcommand{\mG}{\mathcal{G}}
\newcommand{\mL}{\mathcal{L}}
\newcommand{\mH}{\mathcal{H}}
\newcommand{\mS}{\mathcal{S}}
\newcommand{\QQ}{\mathbb{Q}}
\newcommand{\RR}{\mathbb{R}}
\newcommand{\NN}{\mathbb{N}}
\newcommand{\bw}{\mathbf{w}}
\DeclareMathOperator*{\argmin}{arg\,min}
\newcommand{\whiteboxedchar}[1]{
    \hspace{-2pt}\tikz[baseline=(char.base)]{
        \node[shape=rectangle, draw=black, thin, rounded corners=1.5pt, text=black, inner xsep=3pt, inner ysep=2pt] (char) {#1};
    }\hspace{-2pt}
}
\newcommand{\boxedchar}[1]{
    \hspace{-2pt}\tikz[baseline=(char.base)]{
        \node[shape=rectangle, fill=black, rounded corners=1.5pt, text=white, inner xsep=3pt, inner xsep=3pt, inner ysep=2pt] (char) {\normalfont #1};
    }\hspace{-2pt}
}
\begin{document}

\title{The Four-Color Ramsey Multiplicity of Triangles}
\author[1,2]{Aldo Kiem}
\author[1,2]{Sebastian Pokutta}
\author[1,2]{Christoph Spiegel}
\affil[1]{\small Technische Universit\"at Berlin, Institute of Mathematics}
\affil[2]{\small Zuse Institute Berlin, Department AIS2T, \emph{lastname}@zib.de}
\maketitle              
\begin{abstract}
    We study a generalization of a famous result of Goodman and establish that asymptotically at least a $1/256$ fraction of all triangles needs to be monochromatic in any four-coloring of the edges of a complete graph. We also show that any large enough extremal construction must be based on a blow-up of one of the two $R(3,3,3)$ Ramsey-colorings of $K_{16}$. This result is obtained through an efficient flag algebra formulation by exploiting problem-specific combinatorial symmetries that also allows us to study some related problems.
\end{abstract}

\section{Introduction}\label{sec:introduction}

In 1959, Goodman \cite{Goodman_1959} established precisely how few monochromatic triangles any two-edge-coloring of the complete graph on $n$ vertices can contain, implying that asymptotically at least $1/4$ of all triangles need to be monochromatic as $n$ tends to infinity. Subsequently, in~\cite{Goodman_1985}, he also asked for an answer to the natural generalization of this problem to more than two colors.\footnote{He in fact calls the three-color version of this question ``an old and difficult problem'' and raises the question of more than three colors in Section~6 of~\cite{Goodman_1985}. The precise origin of this problem is unclear.} It took over 50 years and the advent of flag algebras for even the case of three colors to be settled: Cummings et al.~\cite{CummingsEtAl_2013} showed that asymptotically at least a $1/25$ fraction of all triangles need to be monochromatic in any three-edge-coloring of $K_n$. For $n$ large enough they also precisely characterize the set of extremal constructions, showing that the problem is closely linked to the Ramsey Number $R(3,3) = 6$ as previously noted by Fox~\cite[Theorem  5.2]{fox2008there}. The purpose of this paper is to study the next iteration of this problem, in particular establishing an answer in the affirmative to Question 4 in~\cite{CummingsEtAl_2013} for the case of four colors.
\begin{theorem}\label{thm:fourcolortriangles}
    Asymptotically at least a $1/256$ fraction of all triangles are monochromatic in any four-edge-coloring of $K_n$ and any sufficiently large extremal coloring must be based on one of the two $R(3,3,3)$ Ramsey-colorings of $K_{16}$.
\end{theorem}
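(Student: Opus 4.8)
The plan is to treat both assertions through Razborov's flag algebra method, working in the theory of $4$-edge-colorings of complete graphs. Let $f=\sum_{c=1}^{4}\llbracket T_c\rrbracket$ be the element of the flag algebra recording the density of monochromatic triangles, where $T_c$ is the triangle all of whose edges have color $c$. The first assertion says $f\ge 1/256$ asymptotically, and it is matched by the following construction, which therefore identifies the extremal limit object: fix one of the two $R(3,3,3)$-colorings $\chi$ of $K_{16}$ in colors $\{1,2,3\}$, split $V(K_n)$ into $16$ balanced parts $V_1,\dots,V_{16}$, color each edge between $V_i$ and $V_j$ by $\chi(ij)$, and color every edge inside a part with color $4$. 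The only monochromatic triangles are the color-$4$ ones contained in a single part, numbering $16\binom{n/16}{3}(1+o(1))=\binom{n}{3}/256\cdot(1+o(1))$, so the limiting monochromatic-triangle density is exactly $1/256$, and against this limit the certificate below must be tight.

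For the lower bound I would produce a flag algebra \emph{certificate} at some fixed level $N$ (trying $N=6$, increasing to $N=7$ if the bound falls short): a finite list of types $\sigma$, positive semidefinite matrices $Q_\sigma$ indexed by $\sigma$-flags, and nonnegative coefficients on a list of flag densities, whose expansion via the flag-algebra relations equals $f-1/256$ up to level $N$. The decisive difficulty is that for four colors the raw semidefinite program is far too large to assemble, so the computation must be carried out after reducing by the combinatorial symmetries of the problem, above all the action of $S_4$ permuting the colors, which leaves $f$ and all flag-algebra relations invariant. Using symmetry-adapted bases (the representation theory of $S_4$, with its five irreducibles) the program block-diagonalizes: one works with $S_4$-orbits of flags and replaces each $Q_\sigma$ by a direct sum of much smaller blocks, shrinking the problem enough to solve numerically; the same reduction machinery applies to the related Goodman-type questions mentioned in the introduction. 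I would then round the numerical $Q_\sigma$ to exact rational (if unavoidable, small-degree algebraic) matrices. Because the blow-up construction forces the certificate onto the boundary of feasibility, naive rounding destroys positive semidefiniteness, so one first reads off from the extremal limit object the kernel vectors each $Q_\sigma$ must contain and rounds only on their orthogonal complements, perturbing the linear part if necessary; a final exact finite check then certifies $f-1/256\ge 0$, proving the first assertion.

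For the characterization I would run the flag-algebra stability/uniqueness argument in the style of Pikhurko--Razborov and of Cummings et al.: sharpness of the certificate forces, along any sequence of colorings whose monochromatic-triangle density tends to $1/256$, the limiting densities of a finite family of ``defect'' flags to vanish, namely those appearing with positive coefficient in the certificate together with the flag vectors that must lie in $\ker Q_\sigma$. One then argues combinatorially that a $4$-coloring of a large $K_n$ that nearly avoids those configurations must be close to a blow-up of a complete $4$-colored graph $H$ in which color $4$ appears only inside the blown-up parts and colors $1,2,3$ induce a monochromatic-triangle-free coloring of $H$; maximality of the density then forces $|V(H)|=R(3,3,3)-1=16$ and, by the classical enumeration of the $R(3,3,3)$-colorings of $K_{16}$, that $H$ restricted to colors $1,2,3$ is one of exactly two colorings. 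A final local-correction step promotes ``near-extremal, hence close to a blow-up'' to ``for all sufficiently large $n$, exactly such a blow-up''.

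I expect the main obstacle to be twofold and essentially computational: even after the $S_4$-reduction the certificate-level semidefinite program is large, so the symmetry bookkeeping and the numerics must be handled carefully, and, more delicately, the rounding must yield an \emph{exact} certificate that is simultaneously feasible and tight enough both to give the precise constant $1/256$ and to drive the stability analysis, leaving essentially no slack. The combinatorial step converting the list of zero-density flags into the blow-up-of-$K_{16}$ structure is routine in spirit but fiddly to execute cleanly.
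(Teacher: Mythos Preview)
Your proposal is essentially correct and follows the same approach as the paper: a flag algebra SDP certificate exploiting the $S_4$ color symmetry (both to reduce constraints and to block-diagonalize), exact rounding guided by the kernel vectors coming from the known extremal construction, and a stability argument in the style of Cummings et al.\ that reads off vanishing defect-flag densities from the tight certificate and then combinatorially forces the blow-up-of-$K_{16}$ structure.

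Two small corrections worth flagging. First, the certificate level: $N=5$ already suffices, not $N=6$ or $N=7$; this matters because the problem size grows so steeply with $N$ that even with full symmetry reduction the computation at $N=6$ or $N=7$ would be substantially harder (and the paper emphasizes that $N=5$ with four colors is already among the largest exact flag algebra calculations done). Second, the paper sharpens the generic ``defect flags vanish'' mechanism into a single clean hypothesis: it isolates two specific families $\hat K_{3,1}$ and $\hat K_{3,3}$ and proves a general stability theorem stating that whenever the certificate yields $\hat K_3 - \varsigma \ge s(\hat K_{3,1}+\hat K_{3,3})$ for some $s>0$, one automatically gets $\varsigma = (R_{c-1}(3)-1)^{-2}$ and stability with respect to blow-ups of the $R_{c-1}(3)$-Ramsey colorings, for \emph{any} number of colors $c$. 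This is slightly more than you sketched, since it decouples the combinatorial stability argument from explicit knowledge of the Ramsey colorings and in principle reduces the conjectured equality $m_c(3)=(R_{c-1}(3)-1)^{-2}$ to checking that $\hat K_{3,1}$ and $\hat K_{3,3}$ have zero density in extremal constructions.
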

We will give a more precise statement in the form of \cref{eq:fourcolortriangles} along with additional context in \cref{sec:background}. The proof of this result relies on the flag algebra framework of Razborov~\cite{Razborov_2007, CoreglianoRazborov_2020}. This allows one to apply a formalized double counting and Cauchy-Schwarz-type argument to obtain bounds for classic problems in Tur\'an and Ramsey theory by solving a concrete semidefinite programming (SDP) formulation. Broadly speaking, the larger this formulation, the better the derived bound becomes.

The major hurdle in establishing \cref{thm:fourcolortriangles} therefore consisted of deriving an efficient formulation by identifying and exploiting combinatorial symmetries through a parameter-dependent notion of automorphisms. The resulting proof likely constitutes the largest \emph{exact} flag algebra calculation done to date. The methods developed to derive it strengthen the previous approach of modifying the underlying notion of isomorphism and generalize Razborov's invariant-anti-invariant decomposition~\cite{Razborov_2010}. They are applicable whenever the object we are minimizing has previously ignored symmetries and we hope that they will therefore find further applications. Accompanying these computational improvements, we also give an extension of the  stability argument previously developed for the three-color case in~\cite{CummingsEtAl_2013}. We generalize it to the case of an arbitrary number of colors and establish a strong link between the problem of determining the Ramsey number and the Ramsey multiplicity problem.

\paragraph{Outline} We start with a summary of the history behind the Ramsey multiplicity problem and related literature and then give precise statements for our results in \cref{sec:background}.  A formal definition of flag algebras will be given in \cref{sec:flagalgebras} along with our proposed symmetry-based improvements in the SDP formulation in \cref{sec:improvements}. \cref{sec:stability} will contain the necessary context for deriving a stability result. The technical details behind the proof of the main statement are given in \cref{sec:proof}. Finally, \cref{sec:discussion} contains some concluding discussions and avenues for future research.

\section{The Ramsey Multiplicity Problem}\label{sec:background}

We are studying the family of $c$-colorings of the edges between a finite number of vertices, that is maps $G: \big\{ \{u, v\} \mid u,v \in V, \,  u \neq v \big\} \to [c] = \{1, \ldots, c\}$ where $V$ is any finite set. Following common notation for graphs, we write $V(G) = V$ for the \emph{set of vertices} of such a coloring $G$ as well as $v(G) = |V(G)|$ for its \emph{order}. We also write $E_i(G) = G^{-1} (\{i\})$ for the set of edges colored with color $i \in [c]$. Let $\mG^{(c)}$ denote the set of all such colorings and $\mG_n^{(c)}$ the set of all colorings of order $n$. Two colorings $G, G' \in \mG^{(c)}$ are \emph{isomorphic}, written as $G \simeq G'$, if there exists a bijection $\varphi: V(G) \overset{\sim}{\hookrightarrow} V(G')$ such that $G \equiv G' \circ \varphi$. Given $G \in \mG^{(c)}$ and $S \subseteq V(G)$, we write $G[S]$ for the restriction of $G$ to edges between vertices in $S$. We say $H$ is a \emph{sub-coloring} of $G$ if there exists a \emph{copy} of $H$ in $G$, that is if there exists $S \subseteq V(G)$ s.t. $G[S] \simeq H$.

Given colorings $H \in \mG_k^{(c)}$ and $G \in \mG_n^{(c)}$, we write $p(H; G) = |\{ S \subseteq V(G) \mid G[S] \simeq H\}| / {n \choose k}$ for the \emph{density} of $H$ in $G$. Note that $p(H;G) = 0$ if $n < k$. Denoting the monochromatic coloring of the edges between vertices in $[t]$ with color $i \in [c]$ by $K_t^i$, a multi-color version of Ramsey's theorem~\cite{ramsey1987problem} states that for any $t_1, \ldots, t_c \in \NN$ the number
\begin{equation*}
    R(t_1, \ldots, t_c) = \min \big( \big\{n \in \NN \mid \{ G \in \mG_n^{(c)} \mid p(K_t^1; G) + \ldots + p(K_t^c; G) = 0 \} = \emptyset \big\} \big)
\end{equation*}
is in fact finite. For the diagonal case, where $t_1 = \ldots = t_c$, we write $R_c(t) = R(t, \ldots, t)$. The study of the parameter
\begin{equation*}
    m_c(t; n) = \min_{G \in \mG_n^{(c)}} p(K_t^1; G) + \ldots + p(K_t^c; G)
\end{equation*}
is known as the \emph{Ramsey multiplicity problem} for cliques.  A simple double-counting argument, see \cref{lemma:double_counting} in the next section, establishes that $m_c(t; n)$ is monotonically increasing, so that the limit $m_c(t) = \lim_{n \to \infty} m_c(t; n)$ is well defined and satisfies $m_c(t) \geq m_c(t; n)$ for any $n \in \NN$. Note that $m_c(t; n) > 0$ as long as $n \geq R_c(t)$ and therefore $m_c(t) > 0$ by Ramsey's theorem.

Concerning upper bounds for $m_c(t)$, coloring the edges uniformly at random with the $c$ colors establishes that
\begin{equation} \label{eq:prob_upper}
    m_c(t) \leq c^{1 - {t \choose 2}}.
\end{equation}
Another way to obtain an upper bound is by blowing up a coloring of the edges of a \emph{looped} complete graph, that is a map $C: \big\{ \{u, v\} \mid u, v \in V \big\} \to [c]$. We use the same notation concerning the vertex and edge set as we did for unlooped colorings and write $\mL^{(c)}$ for the set of all such colorings as well as $\mL_n^{(c)}$ for colorings of order $n$. A coloring $H \in \mG^{(c)}$ \emph{embeds} into a given $C \in \mL^{(c)}_k$, if there exists a (not necessarily injective) map $\varphi: V(H) \to V(C)$ satisfying $H( \{u, v \} ) = C ( \{ \varphi(u), \varphi(v) \})$ for all  $u, v \in V(H)$.
We now let $\mB (C) = \{ H \in \mG^{(c)} \mid H \text{ embeds into } C \}$ denote the \emph{family of blow-up colorings} of $C$. Note that $\mB (C)$ contains graphs of arbitrarily large order. 
Given some probability vector $\bw = (w_v)_{v\in V(C)}$, that is $w_v \geq 0$ for $v \in V(C)$ and $\sum_{v\in V(C)}w_v = 1$, we additionally write $\mB_\bw (C)$ for the \emph{weighted family of blow-up colorings} of $C$, that is the set of all elements in $\mB (C)$ for which $\varphi$ can be chosen such that $\big| |\varphi^{-1}(\{v\})| - w_v \cdot v(G) \big| \leq 1$ for all $v\in V(C)$. Finally, let
\begin{equation*}
    \hat{p}(H; C, \bw) = \sum_{\substack{\varphi\text{ embeds}\\H \text{ into } C}} \, \prod_{v\in V(H)} w_{\varphi(v)}
\end{equation*}
denote the \emph{$\bw$-weighted embedding density}.
\begin{lemma} \label{lemma:blowup}
    Given any $C \in \mL^{(c)}_k$ and a probability vector $\bw$ for $C$, we have 
    \[
        m_c(t) \leq \hat{p}(K_t^1; C, \bw) + \ldots + \hat{p}(K_t^c; C, \bw).
    \]
\end{lemma}
\begin{proof}
    Clearly $m_c(t) \leq \lim_{n \to \infty} \min_{G \in \mG_n^{(c)} \cap \mB_\bw(C)} p(K_t^1; G) + \ldots + p(K_t^c; G)$, so the statement follows by observing that, asymptotically as $n$ tends to infinity, $p(H;G) = \hat{p}(H; C, \bw) + o(1)$ for any $H \in \mG^{(c)}$ and $G \in \mG_n^{(c)} \cap \mB_\bw(C)$.
\end{proof}
In our case, the most relevant candidates for colorings $C$ that can be used in \cref{lemma:blowup} are obtained by considering a Ramsey-coloring on $r = R_{c-1}(t) - 1$ vertices avoiding cliques of size $t$ in any of the $c-1$ colors. Coloring the loops with the additional $c$-th color and using the uniform weighting $\mathbf{1 / r} = (1/r, \ldots, 1/r)$, this implies an upper bound of
\begin{equation} \label{eq:ramsey_upper}
    m_c(t) \leq (R_{c-1}(t) - 1)^{1-t},
\end{equation}
see also Theorem 5.2 in~\cite{fox2008there}.\footnote{For an example demonstrating that considering weightings beyond the uniform along and not coloring all loops using the same color can be relevant, see the construction conjectured to be tight for $c_{4,4}$ in~\cite{pikhurko2013minimum}.}
The result of Goodman~\cite{Goodman_1959} now states that
\begin{equation}
    m_2(3; n) \cdot {n \choose 3} = \left\{\begin{array}{ll}
        2 {u \choose 3} & \text{if } n = 2u,\\
        \frac{2}{3} u (u-1) (4u+1) & \text{if } n = 4u+1,\\
        \frac{2}{3} u (u+1) (4u-1) & \text{if } n = 4u+3,
        \end{array}\right.
\end{equation}
implying that $m_2(3) = 1/4$. This aligns both with the probabilistic upper bound stated in \cref{eq:prob_upper} as well as the Ramsey upper bound stated in \cref{eq:ramsey_upper}, where for the latter we are relying on the trivial case of Ramsey's theorem, that is $R_1(t) = R(t) = t$.

Given that the former bound dominates when $t$ grows as long as $c = 2$, Erd\H{o}s suggested~\cite{Erdos_1962} that the probabilistic upper bound should always be tight in this case. 
This was disproven by Thomason~\cite{Thomason_1989} for any $t \geq 4$ and a significant number of results since then have tried to either determine improved asymptotics for $m_2(t)$ or specific values of it for small $t$~\cite{Conlon_2012,deza2012conjecture,even2015note,Franek_2002,FranekRodl_1993,Giraud_1979,GrzesikEtAl_2020,jagger1996multiplicities,Niess_2012,Sawin_2021,Sperfeld_2011,Thomason_1997, Wolf_2010,ParczykEtAl_2022}. The problem also links to Sidorenko's famous open conjecture and the search for a characterization of common graphs, cf.~\cite{BurrRosta_1980, Sidorenko_1993}. As of now, even $m_2(4)$ remains open, with the best current lower and upper bounds of $0.0296 \leq m_2(4) \leq 0.03014$ respectively due to Grzesik et al.~\cite{GrzesikEtAl_2020} as well as Parczyk et al.~\cite{ParczykEtAl_2022}. Note that we will give a slight improvement to the upper bound in \cref{sec:discussion}. For the asymptotic values, there has likewise been scarce progress, with the current best lower bound of $C^{-t^2 (1 + o(1))} \leq m_2(t)$ for $C \approx 2.18$ due to Conlon~\cite{Conlon_2012} and the best upper bound of $m_2(t) \leq 0.835 \cdot 2^{1-{t\choose 2}}$ for $t \geq 7$ due to Jagger, {\v{S}}{\'t}ov{\'\i}{\v{c}}ek, and Thomason~\cite{jagger1996multiplicities}.
Given the lack of progress on the two-color, diagonal version, there are two obvious directions to explore: the case of more colors, where $c > 2$, as well as the off-diagonal case, where $t_1 \neq t_2$.

\subsection{Increasing the number of colors}\label{sec:more_colors_intro}

Studying monochromatic triangles for more than two colors was, as already mentioned in the introduction, suggested by Goodman~\cite{Goodman_1985} and resolved for $c = 3$ by Cummings et al.~\cite{CummingsEtAl_2013}, whose result aligns with~\cref{eq:ramsey_upper} since $R_2(3) = 6$. In order to state their result in its fullest strength, let $C_{R(3,3)}$ denote the coloring in $\mL^{(3)}_{5}$ obtained by taking the unique Ramsey $2$-coloring of a complete graph on five vertices that avoids monochromatic triangles and coloring the loops with the third color, that is $E_1 (C_{R(3,3)} )$ and $E_2 (C_{R(3,3)} )$ both are $5$-cycles and $E_3 (C_{R(3,3)} )$ contains all five loops. Let 
$\mG^{(3)}_\textrm{ex} \subset \mG^{(3)}$ now consist of all colorings that can be obtained by (i) selecting an element in $\mB_\mathbf{1/5} (C_{R(3,3)} )$, (ii) recoloring some of the edges from the first or second color to instead use the third color without creating any additional monochromatic triangles, and (iii) applying any permutation of the colors. Note that the second step implies that the recolored edges must form a matching between any of the five {\lq}parts{\rq}, though not every such recoloring avoids additional triangles.
\begin{theorem}[Cummings et al.~\cite{CummingsEtAl_2013}]
    There exists an $n_0 \in \NN$ such that any 
    element in $\mG_n^{(3)}$ of order $n \geq n_0$ minimizing the number of monochromatic triangles must be in $\mG^{(3)}_\textrm{ex}$.
\end{theorem}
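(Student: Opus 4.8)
The plan is to pair an exact flag algebra computation establishing $m_3(3) = 1/25$ with a two-stage stability argument: a ``rough'' stability step showing that any near-extremal coloring is close in edit distance to a blow-up of $C_{R(3,3)}$, followed by an exact ``cleaning'' step that, for $n$ large, rules out every deviation from the recipe defining $\mG^{(3)}_\textrm{ex}$. For \emph{Step 1 (sharp bound and rigidity)}, the upper bound $m_3(3) \le 5^{1-3} = 1/25$ is \cref{lemma:blowup} applied to $C_{R(3,3)} \in \mL^{(3)}_5$ with the uniform weighting $\mathbf{1/5}$, and the matching lower bound $m_3(3) \ge 1/25$ is produced by a flag algebra semidefinite programming certificate in the sense of \cref{sec:flagalgebras}. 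What really matters here is not the numerical value but the \emph{rigidity} of that certificate: by analysing its complementary-slackness conditions one identifies precisely which small colorings $H$ must satisfy $p(H;\cdot) = 0$ and which flag inner products must vanish in every limit homomorphism $\phi$ achieving $\phi(K_3^1) + \phi(K_3^2) + \phi(K_3^3) = 1/25$. The aim is to show that these constraints force the set of extremal limit objects to be a single point, namely the blow-up of $C_{R(3,3)}$ with weights $\mathbf{1/5}$; this last deduction uses that $R(3,3) = 6$ and that the Ramsey $2$-coloring of $K_5$ without a monochromatic triangle is unique up to isomorphism and permutation of colors. In particular there is no room for recoloring cross-edges in the \emph{limit}, only in finite colorings.

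\emph{Step 2 (rough stability).} A compactness argument over the (metrizable, compact) space of flag algebra homomorphisms turns the uniqueness of Step 1 into the following finite statement: for every $\varepsilon > 0$ there is a $\delta > 0$ such that every $G \in \mG_n^{(3)}$ with $p(K_3^1;G) + p(K_3^2;G) + p(K_3^3;G) \le 1/25 + \delta$ admits a partition $V(G) = V_1 \cup \dots \cup V_5$ with $\big| |V_i| - n/5 \big| \le \varepsilon n$ for all $i$ and with at most $\varepsilon n^2$ edges ``miscolored'' relative to the blow-up of $C_{R(3,3)}$ — that is, internal edges of a part should be color $3$, and an edge between $V_i$ and $V_j$ should carry the color of the corresponding edge of the $5$-cycle, or else color $3$. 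Since $m_3(3;n) \le m_3(3) = 1/25$, this applies in particular to any exactly extremal $G$.

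\emph{Step 3 (exact cleaning).} Fix an exactly extremal $G$ on $n \ge n_0$ vertices and a partition as in Step 2. I would then run a local-exchange argument showing that any remaining ``defect'' can be removed at a strict decrease in the number of monochromatic triangles, forcing $G$ to be defect-free. First, a vertex $v$ whose color profile to $V_1, \dots, V_5$ does not agree with that of some part $V_i$ (up to at most one exceptional neighbour per part, corresponding to a recolored matching edge) is re-placed into the part it most closely resembles and its incident edges recolored accordingly; for $n$ large the net effect on the triangle count is strictly negative, a contradiction, and iterating this removes all such vertices. With every vertex now assigned a consistent type, one similarly shows that the internal edges of each $V_i$ are all color $3$, that the cross-edges follow the $5$-cycle pattern except for a set recolored to color $3$, that this set forms a matching between each pair of parts, and that it creates no extra monochromatic triangle (else deleting a bad recoloring again lowers the count). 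This is exactly steps (i)--(iii) in the definition of $\mG^{(3)}_\textrm{ex}$, so $G \in \mG^{(3)}_\textrm{ex}$.

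\emph{Main obstacle.} The crux is Step 1 together with the quantitative content of Step 2: extracting from the numerical certificate a genuinely rigid characterization — that the only tight homomorphism is the $\mathbf{1/5}$-blow-up of $C_{R(3,3)}$ — and then controlling the $o(n^2)$ error terms well enough that the local moves in Step 3 are provably improving for all sufficiently large $n$. One must also keep in mind that the cleaning step cannot pin down a single coloring: recoloring a matching of cross-edges from color $1$ or $2$ to color $3$ leaves the triangle count unchanged, so all such colorings are simultaneously extremal, and the argument can only establish that nothing \emph{outside} the family $\mG^{(3)}_\textrm{ex}$ is extremal, which is exactly what the theorem claims.
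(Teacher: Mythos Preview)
Your proposal is correct and follows essentially the same three-stage architecture as the paper (and as Cummings et al.): a flag algebra certificate giving the sharp bound together with rigidity information, a rough-stability step producing an approximate five-part blow-up structure, and a local-exchange cleaning step. The one substantive difference is in how rigidity is extracted: you phrase Step~1/Step~2 abstractly via uniqueness of the extremal limit homomorphism plus compactness, whereas the paper reads off from the certificate the concrete fact that the specific flag sums $\hat K_{3,1}$ and $\hat K_{3,3}$ have positive slack (hence zero density in any extremal object), and then does a direct finite combinatorial analysis of ``standard'' colorings avoiding these flags to obtain the partition --- this bypasses the compactness/graph-limit route and makes the passage to an explicit partition more self-contained.
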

The result characterizes extremal constructions for large enough $n$, though more recently
there has been increasing interest in deriving stability results based on flag algebra calculations~\cite{PikhurkoEtAl_2019}. Let $C'_{R(3,3,3)}$ and $C''_{R(3,3,3)}$ denote the two colorings in $\mL^{(4)}_{16}$ obtained in a similar way to the previously defined $C_{R(3,3)}$ by respectively taking the two Ramsey $3$-coloring of a complete graph on $16$ vertices that avoid monochromatic triangles~\cite{greenwood1955combinatorial, kalbfleisch1968maximal, laywine1988simple, radziszowski2011small} and coloring the vertices with the fourth color. Mirroring the construction of $\mG^{(3)}_\textrm{ex}$, we let $\mG^{(4)}_\textrm{ex} \subset \mG^{(4)}$ consist of all colorings that can be obtained by

\begin{itemize} \setlength\itemsep{0em}
    \item[(i)] selecting an element in $\mB_\mathbf{1/16} (C'_{R(3,3,3)} )$ or $\mB_\mathbf{1/16} (C''_{R(3,3,3)} )$,
    \item[(ii)] recoloring some of the edges from any of the first, second or third color to instead use the fourth color without creating any additional monochromatic triangles,
    \item[(iii)] applying any permutation of the four colors.
\end{itemize}

\begin{theorem} \label{eq:fourcolortriangles}
    There exists an $n_0 \in \NN$ such that for any $\varepsilon > 0$ there exists $\delta > 0$ such that any $G \in \mG_n^{(4)}$ of order $n \geq n_0$ with $\sum_{i=1}^cp(K_3^i; G) \leq m_4(3; n) + \delta$ can be turned into an element of $\mG^{(4)}_\textrm{ex}$ by recoloring at most $\varepsilon \binom{n}{2}$ edges.
\end{theorem}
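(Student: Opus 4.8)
The plan is to combine the semidefinite certificate that underlies the bound $m_4(3) = 1/256$ (the lower bound from the flag algebra computation, matching the upper bound of \cref{eq:ramsey_upper}) with a compactness argument in the flag-algebra/graphon formalism and a finitary rounding step, in the style of the three-color stability proof of Cummings et al.~\cite{CummingsEtAl_2013}. Suppose the statement fails. Then there is a fixed $\varepsilon>0$ together with, for every $N$, a coloring $G_N\in\mG^{(4)}_{n_N}$ with $n_N\geq N$ and $\sum_{i=1}^{4}p(K_3^i;G_N)\leq m_4(3;n_N)+1/N$ that cannot be turned into an element of $\mG^{(4)}_\textrm{ex}$ by recoloring at most $\varepsilon\binom{n_N}{2}$ edges. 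Passing to a subsequence, the $G_N$ converge to a limit object $W$, and by continuity of the density functionals together with the monotonicity of $m_c(t;n)$ we obtain $\sum_{i=1}^{4}p(K_3^i;W)=m_4(3)=1/256$. (The threshold $n_0$ in the statement only enters through structural lemmas guaranteeing that a balanced $16$-class blow-up of the correct order exists and that a weak-regularity partition can be extracted; these are independent of $\varepsilon$, which yields the claimed uniformity of $n_0$.)

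The first step is to read off local structure from the certificate proving $m_4(3)\geq 1/256$. That proof writes $1/256$ minus the monochromatic-triangle density functional as a nonnegative combination of Cauchy--Schwarz squares plus a nonnegative linear combination $\sum_j c_j\,p(H_j;\cdot)$ with $c_j>0$ over an explicit finite list of sub-colorings $H_j$ on a bounded number of vertices; the problem-specific symmetry reduction only changes the shape of the squares, and undoing the symmetrization recovers an ordinary such decomposition. Evaluating at the extremal $W$ and using that the left-hand side vanishes forces $p(H_j;W)=0$ for every $j$ and forces every positive-semidefinite slack form to vanish on the relevant type vectors of $W$. Concretely, $W$ then avoids an explicit family of ``defect'' configurations --- informally, no small configuration can stop $W$ from being a local minimizer and none can force a seventeenth class --- while the vanished square forms pin down the link of almost every vertex.

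The heart of the argument, and the step I expect to be the main obstacle, is to classify the limit objects $W$ with $\sum_{i=1}^{4}p(K_3^i;W)=1/256$ subject to these local constraints. Mirroring and extending the three-color analysis, one first shows that such a $W$ must be a blow-up of a looped four-coloring $C\in\mL^{(4)}_k$; a short counting argument (within-class triples must account for the $1/256$ and all other triples for none of it), combined with the classical fact $R(3,3,3)=17$, then forces the weights to be uniform, $k=16$, all edges inside a class to receive one fixed color (the fourth, after a permutation), and the quotient coloring on the $16$ classes to use only the remaining three colors with no monochromatic triangle --- that is, to be one of the monochromatic-triangle-free $3$-colorings of $K_{16}$. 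Invoking that, up to isomorphism and color permutation, there are exactly two such colorings~\cite{greenwood1955combinatorial, kalbfleisch1968maximal, laywine1988simple, radziszowski2011small}, we conclude that $W$ is, up to a permutation of the colors, the balanced blow-up of $C'_{R(3,3,3)}$ or of $C''_{R(3,3,3)}$; note that the finite-level recolorings to the fourth color allowed in the definition of $\mG^{(4)}_\textrm{ex}$ affect only $O(n)$ edges and are invisible in the limit. The delicate part is precisely the passage from ``locally extremal and avoiding the defect configurations'' to this rigid global partition into $16$ classes: it requires combining the link information coming from the vanished square forms, a Goodman-type counting identity applied inside vertex links, and the rigidity of the Ramsey colorings of $K_{16}$.

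Finally we round. Since $G_N\to W$ in the cut metric and $W$ is a balanced blow-up of $C'_{R(3,3,3)}$ or $C''_{R(3,3,3)}$ after a color permutation, a standard weak-regularity/sampling argument produces a partition $V(G_N)=V_1\cup\dots\cup V_{16}$ with all classes of size $(1+o(1))\,n_N/16$ for which the number of ``defect'' pairs --- edges inside a class, or edges between classes $a,b$ whose color differs from the color prescribed by the relevant $C$ --- is $o(\binom{n_N}{2})$. Recoloring these defect edges to agree with the prescribed pattern and slightly rebalancing the classes turns $G_N$ into a balanced blow-up of $C'_{R(3,3,3)}$ or $C''_{R(3,3,3)}$, which lies in $\mG^{(4)}_\textrm{ex}$ by construction. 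The number of edges recolored is $o(\binom{n_N}{2})$, hence below $\varepsilon\binom{n_N}{2}$ for $N$ large, contradicting the choice of $G_N$ and proving the statement.
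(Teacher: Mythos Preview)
Your outline is a plausible route to stability, but it differs substantially from the paper's approach and has one structural issue worth flagging.

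\textbf{Comparison.} The paper does not pass through limits or compactness at all. Instead it proves a general-$c$ stability theorem (\cref{th:stability}): if the certificate for $\hat K_3-\varsigma\geq 0$ additionally witnesses $\hat K_3-\varsigma\geq s(\hat K_{3,1}+\hat K_{3,3})$ for some $s>0$, then stability with respect to $\mG^{(c)}_{\mathrm{ex}}$ follows. The role of the certificate is thus not to produce an unspecified list of ``defect configurations'' but to force precisely two concrete families, $\hat K_{3,1}$ and $\hat K_{3,3}$, to have zero density in any extremal object. All of the structural work you bundle into ``classify the extremal limit objects'' is then carried out directly and finitarily in \cref{sec:standard_graphs_basic}--\cref{sec:remove_error_edges}: one shows that any large flag avoiding $\hat K_{3,1}$ and $\hat K_{3,3}$ with near-minimal $\hat K_3$-density is close to a balanced blow-up of some $E\in\mL^{(c)}_{\mathrm{ex}}$, via explicit partition-and-cleanup arguments. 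This buys two things your approach does not: the classification works for arbitrary $c$ without knowing the Ramsey colorings explicitly (\cref{sec:Ramsey_mult_graphs}), and the quantifier order ``$\exists n_0\,\forall\varepsilon$'' falls out automatically because every threshold on $n$ arises from combinatorial lemmas depending only on $c$ and $s$.

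\textbf{A gap.} Your compactness argument, as written, does not deliver the quantifier order in the statement. Negating ``$\exists n_0\,\forall\varepsilon\,\exists\delta$'' yields, for each $N$, some $\varepsilon_N>0$ and a counterexample $G_N$ with $v(G_N)\geq N$; you are not entitled to assume the $\varepsilon_N$ are bounded away from zero, so you cannot fix a single $\varepsilon$ along the subsequence. The parenthetical remark that ``$n_0$ only enters through structural lemmas independent of $\varepsilon$'' is exactly what one would need to check, but it is not a consequence of the compactness step; it requires the kind of explicit finitary bookkeeping the paper does in \cref{sec:remove_error_edges}. Relatedly, the rounding step via weak regularity will produce an $\varepsilon$-dependent threshold on $n$ unless you separately argue (as the paper does) that all such thresholds can be chosen uniformly.
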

Note that this implies that any large enough element in $\mG_n^{(4)}$ minimizing the number of monochromatic triangles must be in $\mG^{(4)}_\textrm{ex}$. Our results in fact show that it likewise can be obtained for the case of three colors.

\subsection{The off-diagonal case}

The second of the previously suggested directions, that is considering the off-diagonal case, has recently started to receive some attention~\cite{ParczykEtAl_2022, behague2022common, moss2023off, Hyde2023} with two competing notions of off-diagonal Ramsey multiplicity having been suggested. The first is due to Parczyk et al.~\cite{ParczykEtAl_2022} and is concerned with determining 
\begin{equation*}
    m(t_1, \ldots, t_c; n) = \min_{G \in \mG_n^{(c)}} p(K_{t_1}^1; G) + \ldots + p(K_{t_c}^c; G).
\end{equation*}
This generalizes the previously defined $m_c(t; n)$ but does not consider the inherent imbalance when for example $c = 2$ and $t_1 \ll t_2$; minimizing $p(K_{t_1}^1; G) + p(K_{t_2}^2; G)$ in this case will be equivalent to enforcing $p(K_{t_1}^1; G) = 0$ and minimizing $p(K_{t_2}^2; G)$, a related problem previously suggested by Erd\H{o}s~\cite{Erdos_1962, nikiforov2001minimum, das2013problem, ParczykEtAl_2022}. This issue was already noted in~\cite{ParczykEtAl_2022} and subsequently addressed by Moss and Noel~\cite{moss2023off}, who instead suggested determining 
\begin{equation*}
    m_s(t_1, \ldots, t_c; n) = \min_{G \in \mG_n^{(c)}} \max_{\substack{\lambda_1, ..., \lambda_c \geq 0 \\ \lambda_1 + \ldots + \lambda_c = 1}} \lambda_1 \, p(K_{t_1}^1; G) + \ldots + \lambda_c \, p(K_{t_c}^c; G).
\end{equation*}
We will use $m(t_1, \ldots, t_c)$ as well as $m_s(t_1, \ldots, t_c)$ to respectively denote the limits of both of these functions as $n$ tends to infinity. Both notions generalize the previous diagonal definition and clearly $m_s(t_1, \ldots, t_c) \geq m(t_1, \ldots, t_c)$. Unsurprisingly, determining $m_s(t_1, \ldots, t_c)$ has proven much more difficult, with $m(3, 4)$ and $m(3, 5)$ having been settled in~\cite{ParczykEtAl_2022} and $m_s(3, 4)$ still remaining open. Here we derive the following result for the weaker of the two notions.
\begin{proposition} \label{prop:m334}
    We have $m(3, 3, 4) = 1/125$.
\end{proposition}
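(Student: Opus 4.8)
The plan is to prove the upper bound $m(3,3,4) \le 1/125$ by an explicit blow-up construction and the matching lower bound by a flag algebra computation. For the upper bound I would apply the off-diagonal analogue of the argument behind \cref{lemma:blowup} (whose proof carries over verbatim) to the looped coloring $C_{R(3,3)} \in \mL^{(3)}_5$ with the uniform weighting $\mathbf{1/5}$. Since $E_1(C_{R(3,3)})$ and $E_2(C_{R(3,3)})$ are $5$-cycles and blow-ups of triangle-free graphs are triangle-free, we get $\hat{p}(K_3^1; C_{R(3,3)}, \mathbf{1/5}) = \hat{p}(K_3^2; C_{R(3,3)}, \mathbf{1/5}) = 0$. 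On the other hand, the only color-$3$ edges of a blow-up of $C_{R(3,3)}$ lie inside a single part, so a monochromatic $K_4$ in color $3$ must be mapped into one vertex of $C_{R(3,3)}$, giving $\hat{p}(K_4^3; C_{R(3,3)}, \mathbf{1/5}) = 5 \cdot (1/5)^4 = 1/125$. This is precisely the off-diagonal version of \cref{eq:ramsey_upper} with $R_2(3) - 1 = 5$ and $t_3 = 4$, i.e. $5^{1-4} = 1/125$; convexity of $x \mapsto x^4$ shows that non-uniform weights do not help, and neither does recoloring edges into color $3$ nor enlarging the looped base coloring.

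For the lower bound $m(3,3,4) \ge 1/125$ I would run the flag algebra semidefinite programming method of \cref{sec:flagalgebras} for the linear functional $p(K_3^1;\,\cdot\,) + p(K_3^2;\,\cdot\,) + p(K_4^3;\,\cdot\,)$ on $3$-edge-colorings. What makes this feasible is that this functional is invariant under transposing colors $1$ and $2$ but not under the full symmetric group on the three colors, so the symmetry-based improvements of \cref{sec:improvements} apply — here with the color group $S_2$, together with the generalized invariant-anti-invariant decomposition developed there. Concretely I would (i) fix a suitable vertex size $N$ and enumerate the types and flags modulo the color-$S_2$ action, (ii) solve the resulting (now much smaller) SDP numerically, and (iii) round the numerical optimum to an exact rational dual certificate — rational positive semidefinite block matrices and rational multipliers — and verify by exact arithmetic that the associated sum-of-squares identity certifies the bound $1/125$.

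I expect the main obstacle to be step (iii) of the lower bound: the vertex size $N$ and the chosen set of types must be large enough for the SDP optimum to actually attain $1/125$ rather than merely approach it, yet small enough that the program remains solvable and, above all, roundable; producing a valid exact certificate from the floating-point solution is the genuinely delicate part, and the block structure coming from the $S_2$-symmetry is the lever one would use to make it succeed. If, beyond the numerical value, one also wants the corresponding rigidity/stability statement for the extremal blow-up, the certificate would in addition be fed into the stability machinery of \cref{sec:stability}; the equality $m(3,3,4) = 1/125$ itself, however, needs only the two bounds above.
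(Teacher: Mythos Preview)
Your proposal is correct and matches the paper's approach: the upper bound is exactly \cref{eq:ramsey_upper_offdiag} instantiated with $R(3,3)=6$, and the lower bound is obtained via a flag algebra SDP certificate exploiting the $S_2$ color symmetry. The paper specifies that $N=6$ suffices for the certificate; your remarks on convexity of weights and on feeding the certificate into the stability machinery are extraneous, since only a single construction is needed for the upper bound and no stability is claimed for this proposition.
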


The upper bound follows immediately by generalizing \cref{eq:ramsey_upper} to the off-diagonal case, that is by noting that
\begin{equation} \label{eq:ramsey_upper_offdiag}
    m(t_1, \ldots, t_c) \leq (R(t_1, \ldots, t_{c-1}) - 1)^{1-t_c}
\end{equation}
and inserting $R(3, 3) = 6$. The lower bound was derived using the same improvements to the flag algebra calculus that we developed to derive our main result.

\section{A formal introduction to flag algebras} \label{sec:flagalgebras}

The following is intended as a reasonably concise introduction to Razborov's flag algebras and the associated semidefinite programming method. The purpose of this section is mainly to introduce notation that will be needed in the subsequent sections in order to establish the more efficient problem formulations as well as stability. For a more gentle introduction to the topic, we refer the reader to~\cite{SilvaEtal_2016}. For a comprehensive treatment, see~\cite{Razborov_2007, CoreglianoRazborov_2020}.

\subsection{Flags and types, and their densities}\label{sec:def_flags_and_density}

Throughout this section, we let the number of colors $c$ be arbitrary but fixed. A \emph{flag} $F = (G, \psi)$ consists of a coloring $G \in \mG^{(c)}$ along with a partial but distinct labeling of its vertex set  $\psi: [k] \hookrightarrow V(G)$ for some non-negative integer $k$. Note that we allow the empty map $\varnothing: \emptyset \hookrightarrow V(G)$ when $k = 0$. A flag inherits its order, vertex set, and colored edge sets from the underlying coloring $G$ and we also write $F(\{v,w\}) = G(\{v, w\})$ for the color of the edge between $v, w \in V(G)$ with $v \neq w$. Two flags $F = (G, \psi)$ and $F' = (G', \psi')$ are \emph{isomorphic}, denoted by $F \simeq F'$, if there exists a bijection $\varphi: V(G) \overset{\sim}{\hookrightarrow} V(G')$ such that $G \equiv G' \circ \varphi$ and $\psi' \equiv \varphi \circ \psi$. Note that this in particular necessitates that $v(G) = v(G')$ and that the domain of $\psi$ and $\psi'$ are both $[k]$ for the same $k \geq 0$. This is a natural extension of colorings that allows for partially fixing some of the vertices of underlying complete graph. If $k = 0$ then the flag is just a coloring as defined in the previous section along with an empty map. If $k = v(G)$ on the other hand, we refer to $F$ as a \emph{type} and will commonly use the notation $\tau$ for it. When $k = v(G) = 0$, we refer to it as the \emph{empty type} $\varnothing$. Given some flag $F = (G, \psi)$ and subset of its vertices $\operatorname{im} (\psi) \subseteq S \subseteq V(G)$, we write $F[S] = (G[S], \psi)$ and say $F[S]$ is  a \emph{subflag} of $F$. We say that $F$ is \emph{of type $\tau$} if its labeled part is isomorphic to $\tau$, that is if $F[\operatorname{im} (\psi)] \simeq \tau$.\footnote{Note that we are using the just introduced notion of isomorphisms for flags instead of colorings here, meaning that the type already needs to be ordered in the correct way. Using isomorphisms instead of equivalence here just allows us to ignore the exact nature of the underlying vertex sets.} We denote the set of arbitrary but canonical representatives of isomorphism classes of flags of a given type $\tau$ by $\mF^\tau$ and we use $\mF^\tau_n$ to refer to those of order $n$. Note that $\mF^\tau_n = \emptyset$ when $n < v(\tau)$ and $\mF^\tau_{v(\tau)} = \{\tau\}$.

For a given type $\tau$ and flags $F_1, \ldots, F_\ell, F \in \mF^\tau$ with $F = (G, \psi)$, we let
\begin{equation*}
    \mathbf{S}  = \{ (S_1, \ldots, S_\ell) \mid |S_i| = v(F_i), \, \operatorname{im}(\psi)  \subseteq S_i \subseteq V(G), \, S_i \cap S_j = \operatorname{im}(\psi) \text{ if } i \neq j \} 
\end{equation*}
denote the family of sunflowers of size and intersection dictated by $F_1, \ldots, F_\ell, F$ and define
\begin{equation*}
    p(F_1, \ldots, F_\ell; F) = | \{ (S_1, \ldots, S_\ell) \in \mathbf{S} \mid F[S_i] \simeq F_i \text{ for all } 1 \leq i \leq \ell \} | \, / \, |\mathbf{S}|
\end{equation*}
to be the \emph{tuple density} of $F_1, \ldots, F_\ell$ in $F$. Note that $\mathbf{S} = \emptyset$ if $v(F_1) + \ldots + v(F_\ell) - (\ell-1) \, v(\tau) > v(F)$, in which case we let $p(F_1, \ldots, F_\ell; F) = 0$. We will in fact only need the cases where $\ell = 1$ (the \emph{density}) and $\ell = 2$ (the \emph{pair density}) and note that for $\tau = \varnothing$ the notion of density aligns with the density of colorings defined in the previous section, that is we have $p(H; G) = p\big((H, \varnothing); (G, \varnothing) \big)$. What follows is a fundamental averaging equality that can be proven from a simple double counting argument.

\begin{lemma}[Lemma 2.2 in~\cite{Razborov_2007}] \label{lemma:double_counting}
    For a given type $\tau$, flags $F_1, \ldots, F_\ell, F \in \mF^\tau$ and integer $k$ satisfying $v(F_1) + \ldots + v(F_\ell) - (\ell-1) \, v(\tau) \leq k \leq v(F)$, we have
    \begin{equation*}
        p(F_1, \ldots, F_\ell; F) = \sum_{F' \in \mF^{\tau}_k} p(F_1, \ldots, F_\ell; F') \, p(F'; F).
    \end{equation*}
    \begin{proof}
        For each $S \in\mathbf S(F'; F)$ let $\iota_S: V(F') \overset{\sim}{\hookrightarrow} S \subseteq V(G)$ be an arbitrary but fixed bijection.
        There is a natural map $\mathbf{S}(F_1,\dots,F_\ell; F') \times \mathbf{S}(F'; F) \to \mathbf{S}(F_1,\dots,F_\ell;F)$ given by mapping $(S_1,\dots,S_\ell)\in \mathbf S(F_1,\dots,F_\ell;F')$ and $S \in \mathbf{S}(F'; F)$ to $(\iota_S(S_1),\dots,\iota_S(S_\ell))$. This map is surjective and the pre-image of every point is of equal cardinality.
        Since any $(S_1,\dots, S_\ell)\in\mathbf S(F_1,\dots,F_\ell;F)$ satisfies $F[S_i]\simeq F_i$  for all $1\le i\le \ell$ if and only if all elements $(S_1',\dots,S_\ell)'\in \mathbf S(F_1,\dots,F_\ell;F')$ and $S \in \mathbf{S}(F'; F)$ in its pre-image also satisfy $F[S] \simeq F'$ as well as $F'[S_i']\simeq F_i$ for all $1\le i\le \ell$,  the desired statement follows.
    \end{proof}
\end{lemma}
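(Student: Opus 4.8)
The plan is to prove the averaging identity by counting in two ways the pairs consisting of a $k$-element ``window'' $S$ sitting between the labeled part of $F$ and all of $V(F)$, together with a sunflower inside that window realizing $F_1, \ldots, F_\ell$. Write $F = (G, \psi)$, put $t = v(\tau)$, $v_i = v(F_i)$, and $m = v_1 + \ldots + v_\ell - (\ell-1)t$, and let $W$ denote the set of $k$-subsets $S \subseteq V(G)$ with $\operatorname{im}(\psi) \subseteq S$. Observe first that $W \neq \emptyset$ because $t \le m \le k \le v(F)$ (each $v_i \ge t$, so $m \ge t$), and that for every $S \in W$ the subflag $F[S]$ is again a flag of type $\tau$ — since $F[S][\operatorname{im}(\psi)] = F[\operatorname{im}(\psi)] \simeq \tau$ — hence isomorphic to a unique representative in $\mF^\tau_k$; this is what makes the right-hand sum meaningful. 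Then I would set
\[
    \mT = \big\{ \big((S_1, \ldots, S_\ell), S\big) \mid S \in W,\ (S_1, \ldots, S_\ell) \in \mathbf{S}(F_1, \ldots, F_\ell; F[S]),\ F[S_i] \simeq F_i \text{ for all } i \big\}.
\]

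First I would count $\mT$ by fixing $S$. For fixed $S \in W$ the number of admissible sunflowers inside it is $p(F_1, \ldots, F_\ell; F[S]) \cdot |\mathbf{S}(F_1, \ldots, F_\ell; F[S])|$; since tuple densities are isomorphism invariants and $|\mathbf{S}(F_1, \ldots, F_\ell; \cdot)|$ evaluated on a flag of order $k$ is a constant $A = A(\ell, k, t, v_1, \ldots, v_\ell)$ not depending on the coloring, this equals $A \cdot p(F_1, \ldots, F_\ell; F')$ where $F' \in \mF^\tau_k$ is the isomorphism type of $F[S]$. Grouping the windows by the isomorphism type of $F[S]$ and using $|\{S \in W : F[S] \simeq F'\}| = p(F'; F) \cdot |W|$ (valid because $|\mathbf{S}(F'; F)| = |W|$), I obtain
\[
    |\mT| = A \, |W| \sum_{F' \in \mF^\tau_k} p(F_1, \ldots, F_\ell; F') \, p(F'; F).
\]
Second I would count $\mT$ the other way, fixing the sunflower first: given $(S_1, \ldots, S_\ell) \in \mathbf{S}(F_1, \ldots, F_\ell; F)$ with $F[S_i] \simeq F_i$, inclusion–exclusion (all pairwise, hence all multiple, intersections equal $\operatorname{im}(\psi)$) gives $|\operatorname{im}(\psi) \cup S_1 \cup \ldots \cup S_\ell| = m$, so the number of $S \in W$ containing this $m$-set is the constant $B = \binom{v(F) - m}{k - m}$, well defined since $m \le k \le v(F)$; hence $|\mT| = B \cdot |\mathbf{S}(F_1, \ldots, F_\ell; F)| \cdot p(F_1, \ldots, F_\ell; F)$. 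Equating the two expressions and cancelling, the lemma reduces to the color-free identity $A \, |W| = B \, |\mathbf{S}(F_1, \ldots, F_\ell; F)|$, which is just the same double count of (window, sunflower-in-window) pairs after forgetting the coloring — equivalently, the iterated ``subset of a subset'' identity $\binom{a}{b}\binom{b}{c} = \binom{a}{c}\binom{a-c}{b-c}$ applied to the binomial formulas for the four quantities.

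I expect the content to be bookkeeping rather than a real obstacle: one must check that $A$ and $B$ genuinely do not depend on the coloring or on $F'$, that $F[S]$ always inherits type $\tau$, and that the hypothesis $m \le k \le v(F)$ rules out every degenerate case (all sunflower families appearing are nonempty, so no zero-by-convention terms interfere). A more streamlined write-up, which I would ultimately prefer, skips $\mT$ and instead exhibits the natural map $\mathbf{S}(F_1, \ldots, F_\ell; F') \times \mathbf{S}(F'; F) \to \mathbf{S}(F_1, \ldots, F_\ell; F)$ sending $\big((S_1, \ldots, S_\ell), S\big)$ to $(\iota_S(S_1), \ldots, \iota_S(S_\ell))$ for fixed bijections $\iota_S \colon V(F') \to S$, and then checks that it is surjective, has fibers of equal size, and is compatible with the conditions ``$F[S] \simeq F'$'' and ``$F'[S_i] \simeq F_i$''; normalizing by $|\mathbf{S}(F_1, \ldots, F_\ell; F)|$ then yields the stated averaging identity.
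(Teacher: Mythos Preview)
Your proposal is correct and is essentially the same double-counting argument as the paper's proof; in fact, the ``more streamlined write-up'' you describe at the end---exhibiting the natural map $\mathbf{S}(F_1,\ldots,F_\ell;F')\times\mathbf{S}(F';F)\to\mathbf{S}(F_1,\ldots,F_\ell;F)$ via the bijections $\iota_S$, checking surjectivity, equal-size fibers, and compatibility with the isomorphism conditions---is exactly what the paper does. Your first version with the set $\mT$ is simply a more explicit unpacking of that same idea, with the normalization identity $A\,|W|=B\,|\mathbf{S}(F_1,\ldots,F_\ell;F)|$ made visible rather than absorbed into the equal-fiber observation.
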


\subsection{The actual flag algebras}

We can now formally define the actual flag algebras. For a given type $\tau$, we consider elements in $\RR \mF^\tau$, that is arbitrary finite linear combinations of elements in $\mF^\tau$, and let
\begin{equation}\label{eq:definition_of_kernel}
    \mK^\tau = \operatorname{span} \left\{ F'  - \sum_{F \in \mF^\tau_{ n}} p(F';F) \, F  \mid F' \in \mF^\tau, \, n \geq v(F') \right\} \subset \RR \mF^\tau
\end{equation}
denote the subspace of $\RR \mF^\tau$ spanned by the kernel of the density equalities given by \cref{lemma:double_counting}.
\begin{definition} \label{def:flagalgebra}
    The \emph{flag algebra $\mA^\tau$ of type $\tau$} is given by furnishing the quotient space $\mathbb R\mF^\tau / \mK^\tau$ with a product through the bilinear extension of $F \cdot F' = \sum_{G \in \mF^\tau_{ n}} p(F, F'; G) \cdot G$, defined for any $F, F' \in \mF^\tau$ and $n = v(F) + v(F') - v(\tau)$.
\end{definition}
Note that by \cref{lemma:double_counting}, the bilinear extension in the definition of the product is well defined and in fact independent of $n$ as long as $n \geq v(F) + v(F') - v(\tau)$. We can also extend our notion of density so that the expression $p(f_1,\dots,f_\ell;F)$ is well-defined as along as $f_1, \ldots, f_\ell$ can be expressed in $\mA^\tau$ as the sum of cosets corresponding to flags of order at most $v(F)$.


We identify flags $F \in \mF^\tau$ with their coset $F + \mK^\tau$ in $\mA^\tau$ for notational convenience, i.e., we treat $\mF^\tau$ as a subset of $\mA^\tau$, and whenever an element from $\mA^\tau$ does not necessarily correspond to (the coset of) a flag, we will use lower case letters to denote it. Note that $\tau$ is the identity element of the algebra $\mA^\tau$, that is $f \cdot \tau = f$ for any $f \in \mA^\tau$. Also note that for any $f \in \mA^\tau$ there exists some $n_0 = n_0(f) \in \NN$, likewise by \cref{lemma:double_counting}, such that $f$ can be expressed as a finite linear combination of (the cosets of) the elements in $\mF^\tau_{n}$ as long as $n \geq n_0$. More precisely, we have
\begin{equation*}
    f = \sum_{F \in \mF^\tau_{ n}} p(f; F) \, F,
\end{equation*}
for any $n \geq n_0$, since for any coefficients $(c_F)_{F \in \mF^\tau_{ n}}$ satisfying  $f = \sum_{F \in \mF^\tau_{ n}} c_F \, F$, we have
\begin{equation*}
    p(f; F') = p\Big( \sum_{F \in \mF^\tau_{ n}} c_F \, F; F' \Big) =  \sum_{F \in \mF^\tau_{ n}} c_F \, p(F; F') = c_{F'}
\end{equation*}
for any $F' \in \mF^\tau_{ n}$.

The decisive property of flag algebras is that they algebraically encapsulate some of the combinatorial behavior of sequences of flags (and therefore of sequences of colorings). Let $(F_n)_{n \in \NN}$ be a sequence of flags in $\mF^\tau$ of increasing order and assume that $\lim_{n \to \infty} p(F; F_n)$ exists for any $F \in \mF^\tau$. We call such a sequence \emph{convergent} and note that by compactness any sequence of flags of increasing order contains a convergent subsequence. On the other side, let $\operatorname{Hom}^+(\mA^\tau, \RR) = \{\phi \in \operatorname{Hom} (\mA^\tau, \RR) \mid \phi(F) \geq 0 \text{ for all } F \in \mF^\tau \}$ denote the set of \emph{positive homomorphisms}, that is all real algebra homomorphisms mapping the cosets corresponding to flags in $\mF^\tau$ to positive values.
\begin{theorem}[Theorem 3.3 in ~\cite{Razborov_2007}] \label{thm:homfunctional}
    Let a type $\tau$ be given. For any $\phi \in \operatorname{Hom}^+(\mA^\tau, \RR)$ there exists a convergent sequence $(F_n)_{n \in \NN}$ in $\mF^\tau$ satisfying $\lim_{n \to \infty} p(f; F_n) = \phi(f)$ for all $f \in \mA^\tau$ and vice-versa.
\end{theorem}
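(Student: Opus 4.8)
The plan is to prove the two implications separately; producing a homomorphism from a convergent sequence is routine, while the converse carries the real content. \emph{From a sequence to a homomorphism.} Given a convergent sequence $(F_n)_{n \in \NN}$, I would set $\phi(f) := \lim_{n \to \infty} p(f; F_n)$. This limit exists and is linear in $f$, and since each functional $p(\cdot; F_n)$ annihilates the relations spanning $\mK^\tau$ by \cref{lemma:double_counting}, so does $\phi$; hence $\phi$ descends to a well-defined linear functional on $\mA^\tau$. It is positive because $p(F; F_n) \geq 0$ for every $F \in \mF^\tau$, and $\phi(\tau) = \lim_n p(\tau; F_n) = 1$. The only nontrivial point is multiplicativity. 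Writing $F \cdot F' = \sum_G p(F, F'; G)\, G$ and applying \cref{lemma:double_counting} gives $p(F \cdot F'; F_n) = p(F, F'; F_n)$ for $n$ large, so it suffices to check $p(F, F'; F_n) - p(F; F_n)\, p(F'; F_n) \to 0$: the first quantity samples two subsets of $V(F_n)$ of the prescribed sizes conditioned to meet only in the labeled part, the second samples them independently, and a union bound shows that two independent such samples overlap outside $\operatorname{im}(\psi)$ with probability $O(1/v(F_n)) \to 0$, forcing the two quantities to share a limit.

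\emph{From a homomorphism to a sequence.} Fix $\phi \in \operatorname{Hom}^+(\mA^\tau, \RR)$. Since $\tau = \sum_{F \in \mF^\tau_n} F$ holds in $\mA^\tau$, applying $\phi$ shows that $\mu_n := (\phi(F))_{F \in \mF^\tau_n}$ is a probability distribution on $\mF^\tau_n$; and the identity $F' = \sum_{F \in \mF^\tau_{n+1}} p(F'; F)\, F$ shows that these distributions are consistent, i.e.\ a uniformly random order-$n$ subflag of a $\mu_{n+1}$-random flag has law $\mu_n$. Iterating, a uniformly random order-$m$ subflag of a $\mu_N$-random flag has law $\mu_m$ whenever $m \leq N$; equivalently $\EE_{\mathbf F \sim \mu_N}\big[p(G; \mathbf F)\big] = \phi(G)$ whenever $v(G) \leq N$. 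The key step is then to show that a $\mu_N$-random flag $\mathbf F$ itself approximates $\phi$ well once $N$ is large. For fixed $F' \in \mF^\tau_n$, put $X := p(F'; \mathbf F)$; then $\EE[X] = \phi(F')$, and decomposing the two subsets sampled inside $p(F', F'; \mathbf F)$ according to whether they overlap beyond $\operatorname{im}(\psi)$ yields $\EE[X^2] = \EE_{\mathbf F}\big[p(F', F'; \mathbf F)\big] + o(1) = \phi(F' \cdot F') + o(1)$ as $N \to \infty$, again using \cref{lemma:double_counting} for the last equality. Multiplicativity of $\phi$ now gives $\phi(F' \cdot F') = \phi(F')^2 = \EE[X]^2$, so $\operatorname{Var}(X) = o(1)$, and Chebyshev's inequality produces concentration of $p(F'; \mathbf F)$ around $\phi(F')$.

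\emph{Diagonalization.} Hence, for any finite collection $F'_1, \dots, F'_r \in \mF^\tau$ and any $\varepsilon > 0$, choosing $N$ large makes a $\mu_N$-random flag satisfy $|p(F'_i; \mathbf F) - \phi(F'_i)| < \varepsilon$ for all $i$ with positive probability, so some flag of order at least $N$ meeting all these inequalities exists. I would then enumerate $\mF^\tau = \{F^{(1)}, F^{(2)}, \dots\}$ and, for each $m$, pick a flag $F_m$ of order at least $\max(m,\, v(F_{m-1}) + 1)$ with $|p(F^{(j)}; F_m) - \phi(F^{(j)})| < 1/m$ for all $j \leq m$. This produces a sequence $(F_m)$ of strictly increasing order with $p(F^{(j)}; F_m) \to \phi(F^{(j)})$ for every $j$; since every $f \in \mA^\tau$ is a finite linear combination $f = \sum_{F \in \mF^\tau_{n_0(f)}} p(f; F)\, F$, linearity upgrades this to $p(f; F_m) \to \phi(f)$ for all $f \in \mA^\tau$, so $(F_m)$ is convergent with limit $\phi$, as required.

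\emph{Main obstacle.} The crux is the second-moment estimate in the second step: one must set up the random-subflag model so that the consistency of $(\mu_n)$ genuinely yields both $\EE[X] = \phi(F')$ and $\EE_{\mathbf F}[p(F', F'; \mathbf F)] = \phi(F' \cdot F')$, and one must control the $o(1)$ error arising from the event that the two sampled subsets overlap beyond the labeled part. The conceptual heart of the matter is that multiplicativity of $\phi$ is exactly the condition forcing the empirical subflag densities of a large random flag to become deterministic in the limit, which is what makes the variance vanish; everything else is bookkeeping with \cref{lemma:double_counting}, a compactness argument, and the diagonalization.
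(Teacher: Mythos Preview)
The paper does not actually prove this theorem; it is quoted as Theorem~3.3 from Razborov~\cite{Razborov_2007} and used as a black box. Your proposal is therefore being compared against the original source rather than anything in the present paper, and your argument is essentially the standard one: the direction from a convergent sequence to a positive homomorphism is exactly as you describe, and for the converse Razborov likewise equips each $\mF^\tau_n$ with the probability measure $\phi(F)$, establishes consistency, and then uses a second-moment computation exploiting multiplicativity of $\phi$ to show that a random flag of large order has all its subflag densities concentrated near the values prescribed by $\phi$. Your sketch is correct and matches that approach; the only cosmetic difference is that Razborov packages the concentration step slightly differently (via his Lemma~3.2 on the variance of $p(F';\cdot)$ under the induced measure), but the content is the same.
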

%
This means the goal becomes to identify elements in the \emph{semantic cone} $\mS^\tau = \{f \in \mA^\tau \mid \phi(f) \geq 0 \text{ for all } \phi \in \operatorname{Hom}^+(\mA^\tau, \RR)\}$, as these correspond to combinatorial relations that are {\lq}true{\rq} for any sequence. 
For notational convenience, we will let $\mS^\tau$ define a partial order on $\mA^\tau$ by saying that $f \geq g$ for $f, g \in \mA^\tau$ if and only if $f - g \in \mS^\tau$. 

\begin{example}
    To prove that $\lim_{n \to \infty} p(K_3^1; G_n) + \ldots + p(K_3^4; G_n) \geq 1/256$ holds for any sequence of colorings $G_n \in \mG^{(4)}$ of increasing order, as stated in \cref{thm:fourcolortriangles}, we need to show that the element $f_0 = K_3^1 + K_3^2 + K_3^3 + K_3^4 - 1/256$ is in $\mS^{\varnothing}$ or, using the partial order, $f_0 \geq 1/256$. Here we have identified the elements $K_3^i \in \mG^{(4)}$ first with flags in $\mF^\varnothing$ and then with the corresponding cosets in the flag algebra $\mA^\varnothing$ and we have also used $1/256$ as a short hand notation for $1/256 \cdot \varnothing \in \mA^\varnothing$.  Note that we can also pictorially express $f_0$ as
    \begin{equation*}
        f_0 = \begin{tikzpicture}[line width=0.5mm,baseline=1.0ex, scale=0.5]
            \node[circle,fill=black,draw,scale=0.4] (A) at (0,0) {};
            \node[circle,fill=black,draw, scale=0.4] (B) at (1,0) {};
            \node[circle,fill=black,draw, scale=0.4] (C) at (0.5,0.85) {};
        
            \draw[red, dash dot] (A) -- (B) -- (C) -- (A);
        \end{tikzpicture}
        + 
        \begin{tikzpicture}[line width=0.5mm,baseline=1.0ex, scale=0.5]
            \draw[blue, dotted] (A) -- (B) -- (C) -- (A);
        
            \node[circle,fill=black,draw,scale=0.4] (A) at (0,0) {};
            \node[circle,fill=black,draw, scale=0.4] (B) at (1,0) {};
            \node[circle,fill=black,draw, scale=0.4] (C) at (0.5,0.85) {};
        \end{tikzpicture}
        +
        \begin{tikzpicture}[line width=0.5mm,baseline=1.0ex, scale=0.5]
            \draw[green, dashed] (A) -- (B) -- (C) -- (A);
        
            \node[circle,fill=black,draw,scale=0.4] (A) at (0,0) {};
            \node[circle,fill=black,draw, scale=0.4] (B) at (1,0) {};
            \node[circle,fill=black,draw, scale=0.4] (C) at (0.5,0.85) {};
        \end{tikzpicture}
        +
        \begin{tikzpicture}[line width=0.5mm,baseline=1.0ex, scale=0.5]
            \draw[yellow] (A) -- (B) -- (C) -- (A);
        
            \node[circle,fill=black,draw,scale=0.4] (A) at (0,0) {};
            \node[circle,fill=black,draw, scale=0.4] (B) at (1,0) {};
            \node[circle,fill=black,draw, scale=0.4] (C) at (0.5,0.85) {};
        \end{tikzpicture}
        - 1/256.
    \end{equation*}
\end{example}

Finally, we say that a positive element $f_0 \in \mA^\tau$ is \emph{tight} if there exists a positive homomorphism that is \emph{extremal} for $f_0$, that is some $\phi \in \operatorname{Hom}^+(\mA^\tau, \RR^n)$ satisfying $\phi(f_0) = 0$. By \cref{thm:homfunctional} this is of course equivalent to requiring the existance of a convergent sequence in $\mF^\tau$ that is \emph{extremal} for $f_0$, that is some $(F_n)_{n \in \NN}$ satisfying $\lim_{n \to \infty} p(f_0; F_n) = 0$.

\subsection{Linear maps between algebras}

While the relevant algebra for our and most other applications is $\mA^\varnothing$, i.e., the flag algebra of the empty type, it is often more expressive to construct an argument for some other type $\tau$ and then {\lq}pull{\rq} this argument from the algebra $\mA^\tau$ into the relevant algebra $\mA^\varnothing$.
\begin{definition} \label{def:downward}
    Let some type $\tau$ be given. For any flag $F = (G, \psi) \in \mF^\tau$, let $F \vert_\tau = (G, \varnothing) \in \mF^\varnothing$ denote the flag of the empty type obtained by forgetting the labeling of the vertices of $F$ and
    \begin{equation*}
        q_\tau(F) = \frac{|\{\varphi : [v(\tau)] \hookrightarrow V(G) \mid (G, \varphi) \simeq F \}|}{v(G)! \, / \, (v(G)-v(\tau))!}
    \end{equation*}
    the probability that a randomly chosen injection from $[v(\tau)]$ to $V(G)$ turns $G$ into a flag of type $\tau$ that is isomorphic to $F$. Writing $\llbracket F \rrbracket_\tau = q_\tau(F) \cdot F \vert_\tau \in \mA^\varnothing$, the \emph{downward operator} is now given by the linear extension of $\llbracket \cdot \rrbracket_\tau$ to $\mA^\tau$.
\end{definition}
Note that $\llbracket \cdot \rrbracket_\varnothing = \operatorname{id}$. One key feature of the downward operator, apart from being linear~\cite[Theorem 2.5]{Razborov_2007}, is its \emph{order-preserving} nature, i.e., $\llbracket \mS^\tau \rrbracket_{\tau} \subseteq \mS^\varnothing$ as given by the following lemma.

\begin{lemma}\label{lemma:orderpreserving}
    If $f \in \mA^\tau$ is a positive element, then its image $\llbracket f \rrbracket_{\tau}$ in $\mA^\varnothing$ is as well.
\end{lemma}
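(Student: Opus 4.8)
The plan is to work directly from \cref{thm:homfunctional}, which characterizes positivity in $\mA^\varnothing$ via positive homomorphisms, or equivalently via convergent sequences of flags. So let $f \in \mS^\tau$ be a positive element; we must show $\llbracket f \rrbracket_\tau \in \mS^\varnothing$, i.e. that $\phi_0(\llbracket f \rrbracket_\tau) \geq 0$ for every $\phi_0 \in \operatorname{Hom}^+(\mA^\varnothing, \RR)$. By \cref{thm:homfunctional}, such a $\phi_0$ is realized by a convergent sequence $(G_n, \varnothing)$ of colorings of increasing order, so it suffices to show $\lim_{n\to\infty} p(\llbracket f \rrbracket_\tau; (G_n, \varnothing)) \geq 0$.

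The first step is to unwind the left-hand side. Writing $f = \sum_{F \in \mF^\tau_m} c_F\, F$ for $m$ large enough (possible by \cref{lemma:double_counting}, as noted after \cref{def:flagalgebra}), linearity of $\llbracket \cdot \rrbracket_\tau$ gives $\llbracket f \rrbracket_\tau = \sum_F c_F\, q_\tau(F)\, F\vert_\tau$. Then $p(\llbracket f\rrbracket_\tau; (G_n,\varnothing)) = \sum_F c_F\, q_\tau(F)\, p(F\vert_\tau; (G_n,\varnothing))$. The key combinatorial identity to establish is that this equals the expectation, over a uniformly random injection $\theta : [v(\tau)] \hookrightarrow V(G_n)$, of the quantity $p(f; (G_n, \theta))$ conditioned on $(G_n, \theta)$ actually being of type $\tau$, suitably weighted — more precisely, one shows
\[
    p(\llbracket f \rrbracket_\tau; (G_n, \varnothing)) = \mathbb{E}_\theta\big[ \mathds{1}[(G_n,\theta) \text{ has type } \tau]\cdot p(f; (G_n, \theta)) \big] + o(1),
\]
where the error comes only from the difference between sampling with or without a fixed small label set, which vanishes as $v(G_n)\to\infty$. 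This is a double-counting statement: on the left one first picks an $m$-subset and identifies its isomorphism type as an (unlabeled) flag $F\vert_\tau$, weighting by $q_\tau(F)$ for the chance that a random labeling of $v(\tau)$ of those vertices yields type $\tau$; on the right one first picks the labels $\theta$, then an $m$-subset containing $\operatorname{im}(\theta)$. Both count, with matching normalization, pairs (labeled $v(\tau)$-set, containing $m$-set) together with the induced labeled flag being isomorphic to a fixed $F$.

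Once that identity is in hand, the conclusion is immediate: for each fixed $\theta$ with $(G_n,\theta) \simeq$ some flag of type $\tau$, the pair $(G_n,\theta)$ sits in $\mF^\tau$, and since $f \in \mS^\tau$ we have $p(f; (G_n,\theta)) \geq 0$ by definition of the semantic cone (more carefully: $(G_n,\theta)$ corresponds to a positive homomorphism on $\mA^\tau$ in the limit, so the values $p(f; (G_n,\theta))$ are nonnegative up to $o(1)$ — one should pass to a convergent subsequence to make this precise). The indicator is nonnegative, so the expectation is $\geq -o(1)$, and taking $n\to\infty$ gives $\phi_0(\llbracket f\rrbracket_\tau) \geq 0$. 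As this holds for all $\phi_0 \in \operatorname{Hom}^+(\mA^\varnothing,\RR)$, we get $\llbracket f\rrbracket_\tau \in \mS^\varnothing$.

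The main obstacle is the bookkeeping in the double-counting identity of the second step, in particular getting the normalizing factors to match: $q_\tau(F)$ is defined as a ratio of counts of injections, and one must check it interacts correctly with the density $p(F\vert_\tau; G_n)$ and with the uniform measure on injections $\theta$, including handling the boundary effect that a randomly chosen $m$-set need not contain a prescribed $v(\tau)$-set (which is where the $o(1)$ enters). A cleaner alternative, which I would mention as the streamlined route, is to cite the already-stated linearity of $\llbracket\cdot\rrbracket_\tau$ from \cite[Theorem 2.5]{Razborov_2007} and instead verify the inequality at the level of homomorphisms using the standard fact that every $\phi \in \operatorname{Hom}^+(\mA^\varnothing,\RR)$ "extends" to a probability distribution over $\operatorname{Hom}^+(\mA^\tau,\RR)$ obtained by random rooting — then $\phi(\llbracket f\rrbracket_\tau)$ is literally an average of values $\psi(f) \geq 0$, and positivity is automatic.
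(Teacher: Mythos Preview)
Your argument is correct, and in fact the double-counting identity you set up is \emph{exact} (the two ways of sampling a pair $(\theta, S)$ with $\operatorname{im}(\theta) \subseteq S$ give the same joint distribution), so no $o(1)$ correction is needed there. The only place an $\varepsilon$ genuinely enters is the step you flag yourself: $p(f; (G_n,\theta)) \geq -\varepsilon$ for all sufficiently large flags, which follows from compactness and \cref{thm:homfunctional} exactly as you indicate. Your ``cleaner alternative'' via random rooting is precisely Razborov's original argument, which the paper cites as an alternative.

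The paper's own proof, however, takes a more direct algebraic route that bypasses the double-counting entirely. It uses the same compactness observation --- that $p(f;F) \geq -\varepsilon$ for every $F \in \mF^\tau_n$ once $n$ is large --- but then simply writes
\[
\llbracket f + \varepsilon \rrbracket_\tau = \sum_{F \in \mF^\tau_n} p(f+\varepsilon; F)\, \llbracket F \rrbracket_\tau \geq 0,
\]
since each $\llbracket F \rrbracket_\tau = q_\tau(F)\cdot F|_\tau$ is manifestly a nonnegative multiple of an unlabeled flag and hence in $\mS^\varnothing$, and all coefficients $p(f+\varepsilon;F)$ are nonnegative. This gives $\llbracket f \rrbracket_\tau \geq -\varepsilon\, q_\tau(\tau)$ for every $\varepsilon > 0$, hence $\llbracket f \rrbracket_\tau \geq 0$. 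The advantage of the paper's approach is that it avoids unpacking $p(\llbracket f \rrbracket_\tau; G)$ at all: positivity is established purely from linearity of the operator and the trivial positivity of $\llbracket F \rrbracket_\tau$ on flags. Your approach, on the other hand, makes the probabilistic interpretation of $\llbracket \cdot \rrbracket_\tau$ explicit, which is conceptually useful and is the viewpoint underlying the stronger \cref{lemma:cauchyschwarz}.
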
 
\begin{proof}
    For each $\varepsilon>0$ there exists an integer $n_0 = n_0(f, \varepsilon)$ such that $f$ can be expressed as a linear combination of elements in $\mF^\tau_{n}$ while also satisfying
    $p(f,F)\ge -\varepsilon$ and therefore $p(f+\varepsilon,F)\ge 0$ for  any $F\in \mF^\tau_{n}$ with $n \geq n_0$. 
    Otherwise, there would exist a positive homomorphism $\phi$ by \cref{thm:homfunctional} satisfying $\phi(f)<-\varepsilon$, contradicting the assumption that $f$ is a positive element. We now have
    \[
    \llbracket f\rrbracket_\tau+\varepsilon \cdot q_{\tau}(\tau)
    \ge
    \llbracket f\rrbracket_\tau +\varepsilon \cdot q_{\tau}(\tau) \cdot \tau\vert_\tau 
    =
    \llbracket f+\varepsilon \rrbracket_\tau
    =
    \sum_{F\in\mF^\tau_{n}} p(f+\varepsilon,F) \, \llbracket F\rrbracket_\tau
    \ge 0,
    \]
    where in the first and last step we have used that $0 = 0 \cdot \varnothing \leq \tau\vert_\tau  \leq 1 \cdot \varnothing = 1$, which is in fact true for any flag in $\mF^\varnothing$, and then for the second and third step, respectively, the linearity of the downward operator and the fact that f, and therefore also $f + \varepsilon$, is expressible as a linear combination of elements in $\mF_{n}^\tau$. Since $q_\tau(\tau)$ is a constant, it follows that $\llbracket f\rrbracket_\tau \geq -\varepsilon$ for any $\varepsilon > 0$ and hence $\llbracket f\rrbracket_\tau \geq 0$ as desired.
\end{proof}

Note that Razborov provides an alternative proof of this statement in ~\cite[Theorem 3.1]{Razborov_2007} using the notion of an ensemble of random homomorphisms ~\cite[Definition 10]{Razborov_2007}.
Likewise important is the following Cauchy-schwarz-type statement, though we will only need the implication that the downward operator applied to squares produces positive elements, which is already implied by \cref{lemma:orderpreserving}.

\begin{lemma}[Theorem 3.14 in~\cite{Razborov_2007}] \label{lemma:cauchyschwarz}
    For any type $\tau$ and $f \in \mA^\tau$, we have $\llbracket f^2 \rrbracket_\tau \geq \llbracket f \rrbracket_\tau^2$.
\end{lemma}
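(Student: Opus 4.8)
The statement asserts $\llbracket f^2 \rrbracket_\tau - \llbracket f \rrbracket_\tau^2 \in \mS^\varnothing$, where $f^2$ is the product in $\mA^\tau$ and $\llbracket f \rrbracket_\tau^2$ the product in $\mA^\varnothing$. Since $\mS^\varnothing$ is cut out by the positive homomorphisms and any $\phi \in \operatorname{Hom}^+(\mA^\varnothing,\RR)$ satisfies $\phi(\llbracket f\rrbracket_\tau^2) = \phi(\llbracket f\rrbracket_\tau)^2$, it suffices to show $\phi(\llbracket f^2\rrbracket_\tau) \ge \phi(\llbracket f\rrbracket_\tau)^2$ for every such $\phi$. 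The engine of the proof is an \emph{averaging identity}: for every coloring $G$ of sufficiently large order and every $g \in \mA^\tau$,
\[
p(\llbracket g \rrbracket_\tau; G) \;=\; \EE_\theta\big[\, \mathbf{1}_\tau(\theta)\, p(g; (G,\theta)) \,\big],
\]
where $\theta$ ranges uniformly over injections $[v(\tau)] \hookrightarrow V(G)$, $\mathbf{1}_\tau(\theta)$ is the indicator that the flag $(G,\theta)$ is of type $\tau$, and on that event $(G,\theta)$ is treated as the element of $\mF^\tau_{v(G)}$ it represents. I would prove this first for $g = F' \in \mF^\tau$ by double counting, rewriting both sides as a normalized count of pairs (labeling, superset) and matching them via the definition of $q_\tau$, and then extend it to all of $\mA^\tau$ by linearity, using that every element is a finite combination of flags of one fixed order. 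Taking $g = \tau$ also records $\EE_\theta[\mathbf{1}_\tau(\theta)] = p(\llbracket\tau\rrbracket_\tau; G)$.

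Now fix a positive homomorphism $\phi$ and, via \cref{thm:homfunctional}, a convergent sequence $(G_n)$ with $p(\cdot; G_n) \to \phi$. Write $f = \sum_i c_i F_i$ with all $F_i \in \mF^\tau_k$ and set $X_S := \sum_i c_i\,\mathbf{1}[(G_n,\theta)[S] \simeq F_i]$ for $k$-element supersets $S \supseteq \operatorname{im}\theta$; then the averaging identity together with the definition of the product give, on the event $\mathbf{1}_\tau(\theta)=1$, that $p(f; (G_n,\theta)) = \EE_S[X_S]$ and $p(f^2; (G_n,\theta)) = \EE_{(S_1,S_2)}[X_{S_1}X_{S_2}]$, where in the latter $(S_1,S_2)$ ranges over pairs of such sets meeting exactly in $\operatorname{im}\theta$. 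The key observation is that, drawing $S_1$ and $S_2$ \emph{independently} instead, $X_{S_1}$ and $X_{S_2}$ become independent, so $\EE_{\mathrm{indep}}[X_{S_1}X_{S_2}] = \EE_S[X_S]^2 = p(f;(G_n,\theta))^2$; since the $X_S$ are uniformly bounded and two independently drawn $k$-sets fail to be disjoint over $\operatorname{im}\theta$ with probability $O(1/v(G_n))$, we obtain $p(f^2;(G_n,\theta)) = p(f;(G_n,\theta))^2 + o(1)$ uniformly in $\theta$.

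Feeding this back through the averaging identity and applying the ordinary Cauchy--Schwarz inequality to the $\mathbf{1}_\tau(\theta)$-weighted quantity $\theta \mapsto p(f;(G_n,\theta))$ yields
\[
p(\llbracket f^2\rrbracket_\tau; G_n)\cdot p(\llbracket\tau\rrbracket_\tau; G_n) \;\ge\; p(\llbracket f\rrbracket_\tau; G_n)^2 - o(1).
\]
Letting $n \to \infty$ and writing $a := \phi(\llbracket\tau\rrbracket_\tau) \in [0,1]$ gives $a\,\phi(\llbracket f^2\rrbracket_\tau) \ge \phi(\llbracket f\rrbracket_\tau)^2$. If $a > 0$ we divide and use $a \le 1$; if $a = 0$ then $\phi(\llbracket f\rrbracket_\tau) = 0$, while $\phi(\llbracket f^2\rrbracket_\tau) \ge 0$ because $f^2$ is a positive element of $\mA^\tau$ (every positive homomorphism of $\mA^\tau$ sends it to a square) and hence $\llbracket f^2\rrbracket_\tau \in \mS^\varnothing$ by \cref{lemma:orderpreserving}. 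In either case $\phi(\llbracket f^2\rrbracket_\tau) \ge \phi(\llbracket f\rrbracket_\tau)^2$, and since $\phi$ was arbitrary the claim follows.

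The main obstacle is the bookkeeping behind the averaging identity --- getting the $q_\tau$-normalization exactly right --- and, more delicately, verifying that the discrepancy between the ``sunflower'' sampling built into the product and ordinary independent sampling is $o(1)$ \emph{uniformly} over all labelings $\theta$, which is what legitimizes passing the finite-$n$ inequality to the limit; the degenerate case $\phi(\llbracket\tau\rrbracket_\tau) = 0$ also needs the separate argument above. I note that one can shortcut all of this by invoking Razborov's ensemble of random homomorphisms (Definition~10 in~\cite{Razborov_2007}): if $\psi$ denotes the random positive homomorphism of $\mA^\tau$ associated to $\phi$, then $\phi(\llbracket f^2\rrbracket_\tau) - \phi(\llbracket f\rrbracket_\tau)^2 = a\,\EE[\psi(f)^2] - a^2\,\EE[\psi(f)]^2 \ge a\,\operatorname{Var}(\psi(f)) \ge 0$ with $a = \phi(\llbracket\tau\rrbracket_\tau)$.
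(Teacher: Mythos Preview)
The paper does not actually prove this lemma; it simply cites Theorem~3.14 of \cite{Razborov_2007} and moves on, so there is no ``paper's own proof'' to compare against. Your argument is correct: the averaging identity $p(\llbracket g\rrbracket_\tau;G)=\EE_\theta[\mathbf 1_\tau(\theta)\,p(g;(G,\theta))]$ is exactly the finite-level content of Razborov's downward operator, the sunflower-versus-independent discrepancy is indeed $O(1/v(G_n))$ uniformly in $\theta$ because the $X_S$ are bounded by $\sum_i|c_i|$, and the Cauchy--Schwarz step together with the handling of the degenerate case $a=0$ via \cref{lemma:orderpreserving} closes the argument. The shortcut you mention at the end, via the ensemble of random homomorphisms, is precisely Razborov's own route; your longer argument is effectively an unpacking of that machinery directly in terms of convergent sequences, which is a legitimate and self-contained alternative.
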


\subsection{The semidefinite programming method}

One way to establish that $f_0 \geq 0$ for some particular $f_0 \in \mA^\varnothing$
is to lower bound it by some known positive element. One obvious source of such positive elements, besides the cosets of flags, are sums of squares (SOS). Through \cref{lemma:orderpreserving} or \cref{lemma:cauchyschwarz}, we can also incorporate squares from algebras of other types, so that we are looking to establish an expression of the type
\begin{equation} \label{eq:basic-sos}
    f_0 \geq \sum_{\tau \in \mT} \sum_{g \in \mB^\tau} \llbracket g^2 \rrbracket_\tau
\end{equation}
for some finite set of types $\mT$ and finite sets of algebra elements $\mB^\tau \subset \mA^\tau$.

In practice, we fix the size of environment $N$ in which all expressions must fit, that is $f_0$ must be expressible as a linear combination of elements in $\mF^\varnothing_N$ and the squares $g^2$ must likewise be expressible as a linear combination of elements in $\mF^\tau_N$ for each of the elements $g \in \mB^\tau$, as this implies that $\llbracket g^2 \rrbracket_\tau$ can also be expressed as a linear combination of elements in $\mF^\varnothing_N$. This means we consider types $\mT \subseteq \{ \tau \mid v(\tau) \leq N-2, \, v(\tau) \, \operatorname{mod} N \equiv 2 \}$ and finitely generated sets of algebra elements $\mB^\tau = \operatorname{span} \big( \mF^\tau_{ \ell_\tau } \big)$ where $\ell_\tau = (N - v(\tau)) / 2$. Using the fact that there are at most $|\mF^\tau_{ \ell_\tau }|$ linearly independent expressions in $\mB^\tau$, \cref{eq:basic-sos} therefore turns into
\begin{equation} \label{eq:sos_certificate}
    f_0 = \sum_{\tau \in \mT} \sum_{i = 1}^{|\mF^\tau_{ \ell_\tau }|} \left\llbracket \left( \sum_{F \in \mF^\tau_{ \ell_\tau }} c^\tau_{i, F} \cdot F \right)^2 \right\rrbracket_\tau + \sum_{F \in \mF^\varnothing_N} s_F \cdot F
\end{equation}
for some coefficients $c^\tau_{i, F}$ for $\tau \in \mT$, $1 \leq i \leq \ell_\tau$ and $F\in\mF^\tau_{\ell_\tau}$ as well as $s_F \geq 0$ for all $F \in \mF^\varnothing_N$. We refer to the collection of variables
\begin{equation*}
    N, \, \mT, \, (\{c_{i, F}^\tau \mid 1 \leq i \leq |\mF^\tau_{\ell_\tau}|, F \in \mF^\tau_{\ell_\tau} \} )_{\tau \in \mT}, \, \{ s_F \mid F \in \mF_N^\varnothing \}
\end{equation*}
satisfying \cref{eq:sos_certificate} as an \emph{(SOS) certificate} of the positivity of $f_0$ and to the coefficients $s_F$ in particular as \emph{slack} as they are responsible for turning the inequality in \cref{eq:basic-sos} into an equality. We say a flag $F \in \mF_N^\varnothing$ is \emph{sharp} in a given certificate if $s_F = 0$. The following lemma captures a property of cerficates that will be fundamental in order to establish stability.
\begin{lemma}
    If a flag $F \in \mF_N^\varnothing$ is not sharp in a given certificate of the positivity of some tight $f_0$, then $\phi (F) = 0$ for any $\phi \in \operatorname{Hom}^+(\mA^\varnothing, \RR^n)$ that is extremal for $f_0$.
\end{lemma}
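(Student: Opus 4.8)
The plan is to evaluate the certificate identity \cref{eq:sos_certificate} under an arbitrary extremal positive homomorphism and to exploit that every term on its right-hand side is nonnegative.

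First, recall that if $\phi \in \operatorname{Hom}^+(\mA^\varnothing, \RR)$ is extremal for the tight element $f_0$, then $\phi(f_0) = 0$ by definition. Applying the linear functional $\phi$ to both sides of \cref{eq:sos_certificate} yields
\begin{equation*}
    0 = \phi(f_0) = \sum_{\tau \in \mT} \sum_{i=1}^{|\mF^\tau_{\ell_\tau}|} \phi\!\left( \left\llbracket \left( \sum_{F \in \mF^\tau_{\ell_\tau}} c^\tau_{i,F} \cdot F \right)^{\!2} \right\rrbracket_\tau \right) + \sum_{F \in \mF^\varnothing_N} s_F \cdot \phi(F).
\end{equation*}

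Next I would argue that each summand on the right is nonnegative. For the square terms, note that $g^2$ is a positive element of $\mA^\tau$ for any $g \in \mA^\tau$, so by \cref{lemma:orderpreserving} its image $\llbracket g^2 \rrbracket_\tau$ lies in the semantic cone $\mS^\varnothing$; hence $\phi(\llbracket g^2 \rrbracket_\tau) \geq 0$ by the very definition of $\mS^\varnothing$ (alternatively one may invoke \cref{lemma:cauchyschwarz} together with order-preservation of $\phi$). For the slack terms, $s_F \geq 0$ holds by the definition of a certificate and $\phi(F) \geq 0$ holds because $\phi$ is a positive homomorphism, so $s_F \cdot \phi(F) \geq 0$. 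A sum of nonnegative reals that equals zero forces every summand to vanish; in particular $s_F \cdot \phi(F) = 0$ for every $F \in \mF^\varnothing_N$.

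Finally, if $F$ is not sharp in the certificate then $s_F > 0$ by definition, and $s_F \cdot \phi(F) = 0$ then forces $\phi(F) = 0$, as claimed. I do not anticipate a genuine obstacle here: the argument is a one-line complementary-slackness observation, and the only nontrivial ingredient is the nonnegativity of $\phi(\llbracket g^2 \rrbracket_\tau)$, which is exactly what \cref{lemma:orderpreserving} was set up to provide.
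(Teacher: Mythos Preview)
Your proof is correct and follows essentially the same approach as the paper's own proof: apply an extremal positive homomorphism $\phi$ to the certificate identity, observe that all terms on the right-hand side are nonnegative (the downward-projected squares by \cref{lemma:orderpreserving}, the slack terms because $s_F \geq 0$ and $\phi(F) \geq 0$), and conclude that each summand must vanish, forcing $\phi(F)=0$ whenever $s_F>0$.
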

\begin{proof}
    Since $\phi$ is extremal for $f_0$ as well as linear and since both $F$ and the squares in \cref{eq:sos_certificate} are positive elements, we have
    \begin{align*}
        0 = \phi(f_0) =  \sum_{\tau \in \mT}\sum_{i = 1}^{|\mF^\tau_{ \ell_\tau }|}  \phi \left(  \left\llbracket \left( \sum_{F \in \mF^\tau_{ \ell_\tau }} c^\tau_{i, F} \cdot F \right)^2 \right\rrbracket_\tau \right) + \sum_{F \in \mF^\varnothing_N} s_F \cdot \phi \left( F \right) \geq 0,
    \end{align*}
    so that whenever $s_F > 0$ for some $F \in \mF^\varnothing_N$ we must have $\phi(F) = 0$.
\end{proof}

\begin{example}\label{ex:Goodman_SOS} Goodman's computation of the two color Ramsey multiplicity of triangles has the following compact SOS certificate, in which all the slack variables are $0$:
    \[
        \begin{tikzpicture}[line width=0.5mm, baseline=1ex,
        scale = 0.5]
            \draw [red] (0,0) -- (1,0);
            \draw [red] (0,0) -- (0.5,0.75);
            \draw [red] (0.5,0.75) -- (1,0);
            \fill [black] (0,0) circle (4pt);
            \fill [black] (1,0) circle (4pt);
            \fill [black] (0.5,0.75) circle (4pt);
        \end{tikzpicture}
        +   
        \begin{tikzpicture}[line width=0.5mm, baseline=1ex,
        scale = 0.5]
            \draw [blue, dashed] (0,0) -- (1,0);
            \draw [blue, dashed] (0,0) -- (0.5,0.75);
            \draw [blue, dashed] (0.5,0.75) -- (1,0);
            \fill [black] (0,0) circle (4pt);
            \fill [black] (1,0) circle (4pt);
            \fill [black] (0.5,0.75) circle (4pt);
        \end{tikzpicture}
        -\frac{1}{4}\cdot \varnothing
        =\Bigg \llbracket \Bigg(\frac{\sqrt{3}}{2} \,
        \begin{tikzpicture}[line width=0.5mm, baseline=1ex,
        scale = 0.5]
            \draw [red] (0,0) -- (0,1);
            \fill [black] (0,0) circle (4pt);
            \fill [black] (0,1) circle (4pt);
            \node at (0.30,-0.30) {\emph{1}};
        \end{tikzpicture}
        -\frac{\sqrt{3}}{2} \, 
        \begin{tikzpicture}[line width=0.5mm, baseline=1ex,
        scale = 0.5]
            \draw [blue, dashed] (0,0) -- (0,1);
            \fill [black] (0,0) circle (4pt);
            \fill [black] (0,1) circle (4pt);
            \node at (0.30,-0.30) {\emph{1}};
        \end{tikzpicture}
        \Bigg)^2\Bigg
        \rrbracket 
            \begin{tikzpicture}[line width=0.5mm, baseline=1ex,
        scale = 0.5]
            \fill [black] (0,0) circle (4pt);
            \node at (0.3,-0.3) {\emph{1}};
            \end{tikzpicture}.
\]

\end{example}

There is a well known connection between SOS optimization and semidefinite programming, see for example Gatermann and Parrilo~\cite{gatermann2004symmetry}. Define the \emph{data matrices}
\begin{equation*}\label{eq:density_matrix}
    D_F^\tau = \left( p \big( \left\llbracket F_1 \cdot F_2 \right\rrbracket_\tau; F \big) \right) \in \QQ^{\mF^\tau_{\ell_\tau} \times \mF^\tau_{\ell_\tau}}
\end{equation*}
for any $\tau \in \mT$ and $F \in \mF^\varnothing_N$ and write $\langle \cdot, \cdot \rangle$ for the Frobenius inner product of two matrices. $N, \, \mT, \, (\{c_{i, F}^\tau \mid 1 \leq i \leq | \mF^\tau_{\ell_\tau}|, F \in \mF^\tau_{\ell_\tau} \} )_{\tau \in \mT}, \, \{ s_F \mid F \in \mF_N^\varnothing \}$ now is an SOS certificate if and only if the positive semidefinite matrices
\begin{equation*}
    \QQ^{\mF^\tau_{\ell_\tau} \times \mF^\tau_{\ell_\tau}} \ni Q^\tau = (C^\tau)^\top \cdot C^\tau  \succeq 0,
\end{equation*}
where $C^\tau \in \QQ^{\ell_\tau \times \mF^\tau_{\ell_\tau}}$ is the matrix with element $c_{i,F}^\tau$ in row $i$ and column $F$, satisfy
\begin{equation} \label{eq:sdp_certificate}
    p(f_0;F)  = \sum_{\tau \in \mT} \left\langle Q^\tau, D_F^\tau \right\rangle + s_F,
\end{equation}
for all $F \in \mF^\varnothing_N$. We will refer to
\begin{equation*}
    N, \, \mT, \, (Q^\tau )_{\tau \in \mT}, \, \{ s_F \mid F \in \mF_N^\varnothing \}
\end{equation*}
satisfying \cref{eq:sdp_certificate} as an \emph{(SDP) certificate} of the positivity of $f_0$. This is the formulation in which we will derive our improvements. Let us remark that we will refer to $Q^\tau$ and $D_F^\tau$ as \emph{blocks}, since in the context of semidefinite programming one would consider them part of larger block-diagonal matrices $Q \succeq 0$ and $D_F$, so that \cref{eq:sdp_certificate} simply becomes $p(f;F)  = \left\langle Q, D_F \right\rangle + s_F$ for all $F \in \mF^\varnothing_N$.

\begin{example}\label{ex:Goodman_SDP}
    Mirroring \cref{ex:Goodman_SDP}, the SDP formulation of Goodman's problem with $N=3$ has data matrices
    \begin{align*}
        D^{\bullet_1}_{\begin{tikzpicture}[line width=0.3mm, baseline=0ex,
        scale = 0.4]
            \draw [red] (0,0) -- (1,0);
            \draw [red] (0,0) -- (0.5,0.75);
            \draw [red] (0.5,0.75) -- (1,0);
            \fill [black] (0,0) circle (4pt);
            \fill [black] (1,0) circle (4pt);
            \fill [black] (0.5,0.75) circle (4pt);
        \end{tikzpicture}}
        =
        \begin{pmatrix}
            1 & 0 \\ 0 & 0
        \end{pmatrix}, \quad
        D^{\bullet_1}_{\begin{tikzpicture}[line width=0.3mm, baseline=0ex,
        scale = 0.4]
            \draw [red] (0,0) -- (1,0);
            \draw [red] (0,0) -- (0.5,0.75);
            \draw [blue, dashed] (0.5,0.75) -- (1,0);
            \fill [black] (0,0) circle (4pt);
            \fill [black] (1,0) circle (4pt);
            \fill [black] (0.5,0.75) circle (4pt);
        \end{tikzpicture}}
        =
        \begin{pmatrix}
            \frac{1}{3} & \frac{1}{3} \\ \frac{1}{3} & 0
        \end{pmatrix}, \quad
        D^{\bullet_1}_{\begin{tikzpicture}[line width=0.3mm, baseline=0ex,
        scale = 0.4]
            \draw [red] (0,0) -- (1,0);
            \draw [blue, dashed] (0,0) -- (0.5,0.75);
            \draw [blue, dashed] (0.5,0.75) -- (1,0);
            \fill [black] (0,0) circle (4pt);
            \fill [black] (1,0) circle (4pt);
            \fill [black] (0.5,0.75) circle (4pt);
        \end{tikzpicture}}
        =
        \begin{pmatrix}
            0 & \frac{1}{3} \\ \frac{1}{3} & \frac{1}{3} 
        \end{pmatrix}, and \quad
                D^{\bullet_1}_{\begin{tikzpicture}[line width=0.3mm, baseline=0ex,
        scale = 0.4]
            \draw [blue, dashed] (0,0) -- (1,0);
            \draw [blue, dashed] (0,0) -- (0.5,0.75);
            \draw [blue, dashed] (0.5,0.75) -- (1,0);
            \fill [black] (0,0) circle (4pt);
            \fill [black] (1,0) circle (4pt);
            \fill [black] (0.5,0.75) circle (4pt);
        \end{tikzpicture}}
        =
        \begin{pmatrix}
            0 & 0 \\ 0 & 1 
        \end{pmatrix},
    \end{align*}
    where the flags are sorted with \begin{tikzpicture}[line width=0.3mm, baseline=0ex, scale = 0.4]
            \draw [red] (0,0) -- (0,1);
            \fill [black] (0,0) circle (4pt);
            \fill [black] (0,1) circle (4pt);
            \node at (0.40,-0.30) {\emph{1}};
        \end{tikzpicture}  coming first and then \begin{tikzpicture}[line width=0.3mm, baseline=0ex, scale = 0.4]
            \draw [blue, dashed] (0,0) -- (0,1);
            \fill [black] (0,0) circle (4pt);
            \fill [black] (0,1) circle (4pt);
            \node at (0.40,-0.30) {\emph{1}};
        \end{tikzpicture}. The certificate for Goodman's theorem is now completed through the semidefinite matrix
    \[
        Q^{\bullet_1} = \begin{pmatrix} \frac{3}{4} & -\frac{3}{4} \\ -\frac{3}{4} & \frac{3}{4}\end{pmatrix}
    \]
    and by setting all slack variables to $0$.
\end{example}

\subsection{Forbidden subcolorings}

All definitions and statements made in this section can also be conditioned on any particular  set $\mH\subseteq\mF^\tau$ of flags not appearing as an induced subflag. For this, we denote by $\mF^\tau_\mH = \{ F \in \mF^\tau \mid p(H; F)=0 \}$ the collection of all flags $F$ satisfying not containing any $H\in\mH$. Note that for every $F\in\mF^\tau_\mH$ and $S\subseteq V(F)$, the flag $F[S]$ is again in $\mF^\tau_\mH$. This defines an algebra $\mA_\mH$ as before. We will omit $\mH$ from notation whenever it is the empty set. This will in fact be the case for the vast majority of the subsequent sections, with the only exception being the proof of \cref{th:extremals_are_Ramsey} where it be convenient to briefly switch the an algebra with a particular set of forbidden subflags.


\section{Using symmetries in flag algebras} \label{sec:improvements}

This section explores the use of problem specific symmetries to 
reduce the overall size 
of the SDP given in \cref{eq:sdp_certificate}. In \cref{eq:generalized_isomorphism}, we first define a generalized notion of isomorphism that will be required for both methods. In \cref{sec:color_invariance}, we use color permutations to reduce the number of constraints and blocks. Afterwards, in \cref{sec:block_diagonalization}, we apply the theory of block-diagonalization to break down the remaining blocks into several smaller ones, therefore reducing the overall number of variables. These methods were crucial for deriving \cref{eq:fourcolortriangles}.

\subsection{Generalized flag isomorphisms}\label{eq:generalized_isomorphism}

We begin by defining a class of homomorphisms between two flag algebras $\mA^\tau$ and $\mA^\sigma$ in the particular case where $\sigma$ is derived from $\tau$ through color and vertex permutations. To be more precise, let $\tau$ be a given type and $(\alpha, \beta)$ a pair consisting of a color permutation $\alpha: [c] \overset{\sim}{\hookrightarrow} [c]$ and a permutation of the type indices $\beta: [v(\tau)] \overset{\sim}{\hookrightarrow} [v(\tau)]$. For every $\tau$-flag $F = (G, \psi)$, we now denote by $\pi^{\alpha,\beta}(F) = (\alpha \circ F, \psi \circ \beta)$ the flag obtained by permuting the colors and type indices according to $(\alpha, \beta)$. Note that the flag $\pi^{\alpha,\beta}(F)$ is of type $\pi^{\alpha,\beta}(\tau)$, so that $\pi^{\alpha,\beta}$ defines a bijection from $\mF^\tau$ to $\mF^{\pi^{\alpha,\beta}(\tau)}$.
The following two statements show that we can in fact extend $\pi^{\alpha, \beta}$ to form an isomorphism between the respective algebras that commutes with our notion of density as well as the downward operator in a natural way.

\begin{lemma}\label{lemma:extend_color_vertex_permutations}
    The linear extension of $\pi^{\alpha, \beta}$ induces an algebra homomorphism from $\mA^\tau$ to $\mA^{\pi^{\alpha,\beta}(\tau)}$ that satisfies
    \begin{equation} \label{eq:product_commute}
        \pi^{(\alpha,\beta)}(f_1 \cdot f_2) = \pi^{(\alpha,\beta)}(f_1) \cdot \pi^{(\alpha,\beta)}(f_2)
    \end{equation}
    for any $f_1, f_2 \in \mA^\tau$ as well as
    \begin{equation} \label{eq:density_permutation}
        p \big( \pi^{(\alpha,\beta)}(f_1), \ldots, \pi^{(\alpha,\beta)}(f_\ell); \pi^{(\alpha,\beta)}(F) \big) = p(f_1, \ldots, f_\ell; F).
    \end{equation}
    for any $f_1, \ldots, f_\ell \in \mA^\tau$ and $F \in \mF^\tau$.
\end{lemma}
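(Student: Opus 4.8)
The plan is to verify all the claimed identities first at the level of (cosets of) flags, where $\pi^{\alpha,\beta}$ is a concrete combinatorial operation, and then extend by (bi)linearity. The starting point is the observation that $\pi^{\alpha,\beta}$ commutes with taking induced subflags: for any $\tau$-flag $F = (G, \psi)$ and any $\operatorname{im}(\psi) \subseteq S \subseteq V(G)$ one has $\pi^{\alpha,\beta}(F)[S] = \pi^{\alpha,\beta}(F[S])$, since recoloring the edges via $\alpha$ and relabeling the fixed vertices via $\beta$ (which does not move the image, $\operatorname{im}(\psi \circ \beta) = \operatorname{im}(\psi)$) clearly commutes with deleting the vertices outside $S$. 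In particular $\pi^{\alpha,\beta}$ respects flag isomorphism and sends $\tau$-flags to $\pi^{\alpha,\beta}(\tau)$-flags, so it is a well-defined bijection $\mF^\tau \to \mF^{\pi^{\alpha,\beta}(\tau)}$ restricting to a bijection $\mF^\tau_n \to \mF^{\pi^{\alpha,\beta}(\tau)}_n$ for each $n$, with inverse $\pi^{\alpha^{-1},\beta^{-1}}$.

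Next I would prove the density identity \eqref{eq:density_permutation} for flags $F_1, \ldots, F_\ell, F \in \mF^\tau$. Because $\pi^{\alpha,\beta}$ fixes the underlying vertex set and $\operatorname{im}(\psi \circ \beta) = \operatorname{im}(\psi)$, it induces the identity bijection between the sunflower family $\mathbf{S}$ associated with $F_1, \ldots, F_\ell, F$ and the one associated with $\pi^{\alpha,\beta}(F_1), \ldots, \pi^{\alpha,\beta}(F_\ell), \pi^{\alpha,\beta}(F)$; and by the commutation with restriction, $F[S_i] \simeq F_i$ holds if and only if $\pi^{\alpha,\beta}(F)[S_i] = \pi^{\alpha,\beta}(F[S_i]) \simeq \pi^{\alpha,\beta}(F_i)$. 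Hence the numerator and denominator of the two tuple densities coincide, which gives \eqref{eq:density_permutation} for flags; the case of general $f_1, \ldots, f_\ell \in \mA^\tau$ then follows from the multilinearity of tuple density and the linearity of $\pi^{\alpha,\beta}$, once we know the map descends to the quotient (and it does so order-preservingly, since $\pi^{\alpha,\beta}$ preserves the order of a flag).

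To see that $\pi^{\alpha,\beta}$ descends to a linear map $\mA^\tau \to \mA^{\pi^{\alpha,\beta}(\tau)}$, I would check that $\pi^{\alpha,\beta}(\mK^\tau) = \mK^{\pi^{\alpha,\beta}(\tau)}$: applying $\pi^{\alpha,\beta}$ to a generator $F' - \sum_{F \in \mF^\tau_n} p(F';F)\, F$ of $\mK^\tau$ and using the $\ell = 1$ case of \eqref{eq:density_permutation} together with the bijection $\mF^\tau_n \to \mF^{\pi^{\alpha,\beta}(\tau)}_n$ turns it into the generator $\pi^{\alpha,\beta}(F') - \sum_{F'' \in \mF^{\pi^{\alpha,\beta}(\tau)}_n} p(\pi^{\alpha,\beta}(F');F'')\, F''$ of $\mK^{\pi^{\alpha,\beta}(\tau)}$, with equality following by symmetry from the corresponding statement for $\pi^{\alpha^{-1},\beta^{-1}}$. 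Finally, for \eqref{eq:product_commute} it suffices by bilinearity to take $f_1 = F_1$, $f_2 = F_2$ flags; with $n = v(F_1) + v(F_2) - v(\tau) = v(\pi^{\alpha,\beta}(F_1)) + v(\pi^{\alpha,\beta}(F_2)) - v(\pi^{\alpha,\beta}(\tau))$ and \eqref{eq:density_permutation} for pair densities one computes
\begin{align*}
    \pi^{\alpha,\beta}(F_1 \cdot F_2) &= \sum_{G \in \mF^\tau_n} p(F_1, F_2; G)\, \pi^{\alpha,\beta}(G) \\
    &= \sum_{G'' \in \mF^{\pi^{\alpha,\beta}(\tau)}_n} p\big(\pi^{\alpha,\beta}(F_1), \pi^{\alpha,\beta}(F_2); G''\big)\, G'' = \pi^{\alpha,\beta}(F_1) \cdot \pi^{\alpha,\beta}(F_2),
\end{align*}
where independence of $n$ on both sides, and hence well-definedness of the displayed identities on the quotient, is guaranteed by \cref{lemma:double_counting}.

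I do not expect a genuine obstacle: the lemma is essentially a matter of unwinding definitions and noting that the combinatorial operation $\pi^{\alpha,\beta}$ (recolor the edges, relabel the fixed part) commutes with the two primitive operations from which the flag algebra is built, namely passing to induced subflags and counting them. The only points demanding a little care are the quotient bookkeeping — evaluating densities and products at an order $n$ large enough for \cref{lemma:double_counting} to apply — and the fact that the choice of canonical representatives of isomorphism classes is immaterial because $\pi^{\alpha,\beta}$ respects flag isomorphism.
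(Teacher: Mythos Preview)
Your proposal is correct and follows essentially the same route as the paper: first verify \eqref{eq:density_permutation} for flags by observing that $\pi^{\alpha,\beta}$ commutes with restriction to subflags (so the sunflower families and the isomorphism conditions match), then deduce that $\pi^{\alpha,\beta}$ sends $\mK^\tau$ into $\mK^{\pi^{\alpha,\beta}(\tau)}$, and finally check \eqref{eq:product_commute} on flags via the same density-preservation and extend by bilinearity. If anything, you are slightly more explicit than the paper in spelling out $\operatorname{im}(\psi\circ\beta)=\operatorname{im}(\psi)$ and in noting the inverse $\pi^{\alpha^{-1},\beta^{-1}}$ to get equality of the kernels, but the argument is the same.
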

\begin{proof}
    We start by noting that \cref{eq:density_permutation} holds for \emph{flags} $F_1, \ldots, F_\ell \in \mF^\tau$,
    since for any $i\in \{1,\dots,\ell\}$ and $(S_1, \ldots, S_\ell) \in {\bf S}(F_1, \ldots, F_\ell; F)$ we have $\pi^{\alpha,\beta}(F)\vert_{S_i} \simeq \pi^{\alpha,\beta}(F_i)$ if and only if $F\vert_{S_i} \simeq F_i$. 
    Using this, it immediately follows that $\pi^{\alpha,\beta}(\mK^\tau) \subseteq \mK^{\pi^{\alpha,\beta}(\tau)}$ and that $\pi^{\alpha,\beta}$ likewise commutes with the multiplication of flags, since for $F_1, F_2\in\mF^\tau$ we have
    \begin{align*}
        \pi^{\alpha,\beta}(F_1\cdot F_2) & = \sum_{H\in\mF_n^\tau} p(F_1,F_2;H) \, \pi^{\alpha,\beta}(H) \\
        & = \sum_{H\in\mF_n^\tau} p(\pi^{\alpha,\beta}(F_1),\pi^{\alpha,\beta}(F_2);\pi^{\alpha,\beta}(H)) \, \pi^{\alpha,\beta}(H) \\
        & = \sum_{H \in \mF_n^{\pi^{\alpha, \beta}(\tau)}}  p(\pi^{\alpha,\beta}(F_1),\pi^{\alpha,\beta}(F_2);H) \, H \\
        & = \pi^{\alpha,\beta}(F_1)\cdot\pi^{\alpha,\beta}(F_2)
    \end{align*}
    for some large enough $n$. It follows that the linear extension of $\pi^{\alpha,\beta}$ induces an algebra homomorphism from $\mA^\tau$ to $\mA^{\pi^{\alpha,\beta}(\tau)}$ and that \cref{eq:product_commute} as well as \cref{eq:density_permutation} hold for arbitrary algebra elements by linearity.
\end{proof}
\begin{lemma}\label{lemma:harmonic_downward}
    We have $\llbracket \pi^{\alpha,\beta}(f) \rrbracket_{\pi^{\alpha,\beta}(\tau)} = \pi^{\alpha,\varnothing}(\llbracket f \rrbracket_{\tau})$ for any $f \in\mF^\tau$.
\end{lemma}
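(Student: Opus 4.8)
The plan is to reduce to the case where $f$ is (the coset of) a single flag and then unwind \cref{def:downward} on both sides. Since $\pi^{\alpha,\beta}$, $\pi^{\alpha,\varnothing}$ and both downward operators are linear, and since every element of $\mA^\tau$ is a linear combination of cosets of flags, it suffices to verify the identity for an arbitrary flag $F = (G, \psi) \in \mF^\tau$. Write $\sigma = \pi^{\alpha,\beta}(\tau)$, so that $\pi^{\alpha,\beta}(F) = (\alpha \circ G, \psi \circ \beta) \in \mF^\sigma$, and recall that by \cref{def:downward} we have $\llbracket F \rrbracket_\tau = q_\tau(F) \cdot F\vert_\tau$ and $\llbracket \pi^{\alpha,\beta}(F) \rrbracket_\sigma = q_\sigma\big(\pi^{\alpha,\beta}(F)\big) \cdot \pi^{\alpha,\beta}(F)\vert_\sigma$.

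First I would observe that the ``forgetful'' parts agree: both $\pi^{\alpha,\beta}(F)\vert_\sigma$ and $\pi^{\alpha,\varnothing}(F\vert_\tau)$ are, by definition, equal to $(\alpha \circ G, \varnothing) \in \mF^\varnothing$, since applying the colour permutation $\alpha$ and discarding the vertex labelling commute. Hence the claim reduces to the scalar identity $q_\sigma\big(\pi^{\alpha,\beta}(F)\big) = q_\tau(F)$.

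To establish this, note that the denominators in the two instances of \cref{def:downward} coincide, as $v(\alpha \circ G) = v(G)$ and $v(\sigma) = v(\tau)$. For the numerators, the key point is that $\alpha$ being a bijection of $[c]$ gives $\operatorname{Aut}(\alpha \circ G) = \operatorname{Aut}(G)$, so a bijection $\rho$ of $V(G)$ realises $(G, \varphi) \simeq (G, \psi)$ (i.e.\ $\rho \in \operatorname{Aut}(G)$ and $\psi = \rho \circ \varphi$) if and only if it realises $(\alpha \circ G, \varphi \circ \beta) \simeq (\alpha \circ G, \psi \circ \beta)$, because precomposing the condition $\psi = \rho \circ \varphi$ with the bijection $\beta$ changes nothing. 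Consequently the set of injections counted in the numerator of $q_\sigma\big(\pi^{\alpha,\beta}(F)\big)$ is exactly $\{\varphi \circ \beta \mid (G, \varphi) \simeq F\}$, and since precomposition with the permutation $\beta$ is a bijection on the set of injections $[v(\tau)] \hookrightarrow V(G)$, this set has the same cardinality as $\{\varphi \mid (G, \varphi) \simeq F\}$, the set counted by $q_\tau(F)$. Plugging $q_\sigma\big(\pi^{\alpha,\beta}(F)\big) = q_\tau(F)$ and $\pi^{\alpha,\beta}(F)\vert_\sigma = \pi^{\alpha,\varnothing}(F\vert_\tau)$ back into the two expansions yields the asserted equality, which then extends to all of $\mA^\tau$ by linearity.

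The argument is essentially bookkeeping, and there is no real obstacle; the only place requiring care is this last step, where one must check that the colour permutation leaves $\operatorname{Aut}(G)$ intact and that the index permutation $\beta$ induces a bijection between the two families of ``type-inducing'' injections. An equivalent, slightly slicker route would be to invoke the probabilistic reading of $q_\tau$ from \cref{def:downward} directly: $q_\tau(F)$ is the probability that a uniformly random injection $[v(\tau)] \hookrightarrow V(G)$ turns $G$ into a flag isomorphic to $F$, and since $\pi^{\alpha,\beta}$ is a bijection $\mF^\tau \to \mF^\sigma$ that is compatible with isomorphism, this probability is unchanged under $\pi^{\alpha,\beta}$.
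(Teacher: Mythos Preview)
Your proof is correct and follows essentially the same approach as the paper: reduce to a single flag by linearity, observe that the underlying unlabelled colourings agree as $(\alpha\circ G,\varnothing)$, and then match the scalars $q_\tau(F)$ and $q_{\pi^{\alpha,\beta}(\tau)}(\pi^{\alpha,\beta}(F))$ via the bijection $\varphi\mapsto\varphi\circ\beta$. Your write-up is in fact slightly more explicit than the paper's in spelling out why $\operatorname{Aut}(\alpha\circ G)=\operatorname{Aut}(G)$ and why precomposition with $\beta$ gives the required bijection of injections.
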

\begin{proof}
    Let us first show that the statement is true for any flag $F = (G,\psi) \in \mF^\tau$. We note that $\pi^{\alpha,\beta}(F)\vert_{\pi^{\alpha,\beta}(\tau)} = (\alpha\circ G,\varnothing) = \pi^{\alpha,\varnothing}(F\vert_\tau)$. For the coefficient, we likewise have
    \begin{eqnarray*}
        q_{\pi^{\alpha,\beta}(\tau)}(\pi^{\alpha,\beta}(F))
        &=&
        \frac{|\{\varphi : [v(\tau)] \hookrightarrow V(G) \mid (\alpha\circ G, \varphi\circ \beta) \simeq (\alpha\circ G,\psi\circ \beta) \}|}{v(G)! \, / \, (v(G)-v(\tau))!},
    \end{eqnarray*}
    since $(\alpha\circ G,\varphi\circ \beta) \simeq (\alpha\circ G,\psi\circ \beta)$ if and only if $(G,\varphi) \simeq F$.
    It follows that 
    \begin{align*}
        \llbracket \pi^{\alpha,\beta}(F) \rrbracket_{\pi^{\alpha,\beta}(\tau)} & = q_{\pi^{\alpha,\beta}(\tau)}(\pi^{\alpha,\beta}(F)) \cdot \pi^{\alpha,\beta}(F)\vert_{\pi^{\alpha,\beta}(\tau)} \\
        %
        %
        & = q_\tau(F) \cdot \pi^{\alpha,\varnothing}( F \vert_\tau )
        = \pi^{\alpha, \varnothing}\big( q_\tau(F) \cdot F \vert_\tau \big) \\
        & = \pi^{\alpha,\varnothing}(\llbracket F \rrbracket_{\tau}).
    \end{align*}
    The statement now follows for all elements in $\mA^\tau$ by linearity. 
\end{proof}

Let us note that we are essentially stacking another notion of isomorphism on top of our previously defined notion of flag isomorphism by now taking permutations of the type index sets as well as permutations of colors into account. The former has always implicitly been considered in flag algebra certificates by not adding two types that are isomorphic when considered as graphs (even though they are non-isomorphic when considered as flags) to the set $\mT$.  For the latter we need to be more nuanced: the element $f_0 \in \mA^\varnothing$, for which we want to establish positivity, may not be invariant under all permutations of colors, motivating the following definition.
\begin{definition}
    Given some $f_0\in\mA^\varnothing$, we let 
    \begin{equation*}
        \Upsilon = \Upsilon(f_0) = \{ \alpha:[c]\to[c] \mid \pi^{\alpha,\varnothing}(f_0) = f_0 \}
    \end{equation*}
    denote the group of all \emph{admissible color permutations} for $f_0$. Given some type $\tau$ and flags $F_1, F_2 \in \mF^\tau$, we say they are \emph{generalized isomorphic} with respect to $f_0$, denoted by $F_1 \simeq_g F_2$, if there exists $\alpha \in \Upsilon$ and $\beta: [v(\tau)] \overset{\sim}{\hookrightarrow} [v(\tau)]$ satisfying $\pi^{(\alpha,\beta)}(F_1) \simeq F_2$. For each flag $F$, we will let $\mu(F)$ denote an arbitrary but canonical representative of the equivalence class induced by $\simeq_g$.  We also let
    \begin{equation*}
        \Lambda_F =  \Lambda_F(f_0) = \{ (\alpha,\beta) \mid \alpha \in \Upsilon, \, \beta: [v(\tau)]\overset{\sim}{\hookrightarrow} [v(\tau)], \, \pi^{(\alpha,\beta)}(F) \simeq F \}
    \end{equation*}
    denote the \emph{generalized automorphism group} of a given flag $F \in \mF^\tau$ with respect to $f_0$.
\end{definition}

%

%

%
%
\begin{example}
    In the case of $f_0 = K_3^1 + K_3^2 + K_3^3 + K_3^4 - 1/256$, we have $\Upsilon = S_{[4]}$.
\end{example}


\subsection{Using color-invariance}\label{sec:color_invariance}


We can average both sides of a given certificate in \cref{eq:sos_certificate}  over all $\alpha \in \Upsilon$ by applying $\pi^{(\alpha,\varnothing)}$ and invoking \cref{lemma:extend_color_vertex_permutations} as well as \cref{lemma:harmonic_downward}. On the left hand side we again get $f_0$, but on the right hand side we obtain a new certificate of positivity for $f_0$ that is now invariant under the map $\pi^{(\alpha,\varnothing)}$ for any $\alpha \in \Upsilon$. The idea is to directly search for such an invariant certificate and therefore reducing the size of the problem formulation.
 In order to formally state this result in a form corresponding to  \cref{eq:sdp_certificate}, let us write 
\begin{equation*}
    \hat{D}_{F }^{\tau } = \frac{1}{|\Upsilon|}\sum_{\alpha \in \Upsilon} D^\tau_{\pi^{(\alpha,\varnothing)} (F)}
\end{equation*}
for the \emph{$\Upsilon$-averaged data matrices}. We will also need some additional notation to correctly orient the index sets of matrices.
Given any $\mF^\tau_N \times \mF^\tau_N$ matrix $M$, we let $M^{(\alpha, \beta)}$ denote the $\mF^{\pi^{(\alpha,\beta)}(\tau)}_N \times \mF^{\pi^{(\alpha,\beta)}(\tau)}_N$ matrix satisfying
\begin{equation*}
    M^{(\alpha, \beta)}_{F_1, F_2} = M_{\pi^{(\alpha^{-1},\beta^{-1})}(F_1),\pi^{(\alpha^{-1},\beta^{-1})}(F_2)}
\end{equation*}
for any $F_1, F_2 \in \mF^{\pi^{(\alpha,\beta)}(\tau)}_N$. Note that clearly  $\langle L^{(\alpha,\beta)}, M^{(\alpha,\beta)}\rangle = \langle L, M\rangle$ as well as $(L + M)^{(\alpha,\beta)} = L^{(\alpha,\beta)} + M^{(\alpha,\beta)}$ for any two $\mF^\tau_N \times \mF^\tau_N$ matrices $L$ and $M$.

\begin{lemma}\label{lem:averaged_are_invariant}
    Given any type $\tau$, $\alpha \in \Upsilon$, $\beta: [v(\tau)] \to [v(\tau)]$, and $F \in \mF^\varnothing$, we have
    \begin{align*}
        (D_F^\tau)^{(\alpha,\beta)} = (D_{\pi^{(\alpha,\varnothing)}(F)}^{\pi^{(\alpha,\beta)}(\tau)}) \quad \text{as well as} \quad (\hat D^\tau_{F})^{(\alpha,\beta)} = \hat D_F^{\pi^{(\alpha,\beta)}(\tau)}.
    \end{align*}
\end{lemma}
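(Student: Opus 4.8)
The statement is a pair of compatibility identities relating the data matrices $D_F^\tau$ under two operations: reindexing a matrix via $(\alpha,\beta)$, and the action of the color/vertex permutation $\pi^{(\alpha,\beta)}$ on the type and on $F$. Both claims are bookkeeping identities about how the scalars $p(\llbracket F_1 \cdot F_2 \rrbracket_\tau; F)$ transform, so the proof is essentially a chase through \cref{lemma:extend_color_vertex_permutations} and \cref{lemma:harmonic_downward}. I will prove the first identity directly from the definitions, then derive the second by averaging the first over $\Upsilon$.

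**First identity.** The plan is to compute the $(F_1,F_2)$-entry of both sides for arbitrary $F_1, F_2 \in \mF^{\pi^{(\alpha,\beta)}(\tau)}_{\ell}$. By definition of the reindexing, $(D_F^\tau)^{(\alpha,\beta)}_{F_1,F_2} = (D_F^\tau)_{\pi^{(\alpha^{-1},\beta^{-1})}(F_1),\,\pi^{(\alpha^{-1},\beta^{-1})}(F_2)} = p\big(\llbracket \pi^{(\alpha^{-1},\beta^{-1})}(F_1) \cdot \pi^{(\alpha^{-1},\beta^{-1})}(F_2) \rrbracket_\tau;\, F\big)$. Now I apply \cref{eq:product_commute} (with the permutation $(\alpha^{-1},\beta^{-1})$) to rewrite the product inside as $\pi^{(\alpha^{-1},\beta^{-1})}(F_1 \cdot F_2)$ — wait, this goes the wrong way; instead I use that $\pi^{(\alpha,\beta)}$ is an algebra \emph{isomorphism} with inverse $\pi^{(\alpha^{-1},\beta^{-1})}$, so $\pi^{(\alpha^{-1},\beta^{-1})}(F_1)\cdot \pi^{(\alpha^{-1},\beta^{-1})}(F_2) = \pi^{(\alpha^{-1},\beta^{-1})}(F_1 \cdot F_2)$ as elements of $\mA^\tau$. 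Then by \cref{lemma:harmonic_downward} (applied with $(\alpha^{-1},\beta^{-1})$, whose domain type is $\pi^{(\alpha,\beta)}(\tau)$), $\llbracket \pi^{(\alpha^{-1},\beta^{-1})}(g) \rrbracket_\tau = \pi^{(\alpha^{-1},\varnothing)}\big(\llbracket g \rrbracket_{\pi^{(\alpha,\beta)}(\tau)}\big)$ for $g = F_1 \cdot F_2$. Finally I invoke \cref{eq:density_permutation}: $p\big(\pi^{(\alpha^{-1},\varnothing)}(h); F\big) = p\big(h; \pi^{(\alpha,\varnothing)}(F)\big)$, since $\pi^{(\alpha,\varnothing)}$ is the inverse of $\pi^{(\alpha^{-1},\varnothing)}$ on the empty type and densities are preserved. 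This turns the entry into $p\big(\llbracket F_1 \cdot F_2 \rrbracket_{\pi^{(\alpha,\beta)}(\tau)};\, \pi^{(\alpha,\varnothing)}(F)\big) = (D^{\pi^{(\alpha,\beta)}(\tau)}_{\pi^{(\alpha,\varnothing)}(F)})_{F_1,F_2}$, which is exactly the claimed right-hand side.

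**Second identity.** Here I use that $\hat D^\tau_F = \frac{1}{|\Upsilon|}\sum_{\gamma \in \Upsilon} D^\tau_{\pi^{(\gamma,\varnothing)}(F)}$ and that reindexing is linear, i.e. $(L+M)^{(\alpha,\beta)} = L^{(\alpha,\beta)} + M^{(\alpha,\beta)}$, as noted just before the lemma. Applying the first identity termwise, $(D^\tau_{\pi^{(\gamma,\varnothing)}(F)})^{(\alpha,\beta)} = D^{\pi^{(\alpha,\beta)}(\tau)}_{\pi^{(\alpha,\varnothing)}(\pi^{(\gamma,\varnothing)}(F))} = D^{\pi^{(\alpha,\beta)}(\tau)}_{\pi^{(\alpha\gamma,\varnothing)}(F)}$, using that $\pi^{(\alpha,\varnothing)}\circ\pi^{(\gamma,\varnothing)} = \pi^{(\alpha\gamma,\varnothing)}$ on the empty type. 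Since $\alpha \in \Upsilon$ and $\Upsilon$ is a group, $\gamma \mapsto \alpha\gamma$ is a bijection of $\Upsilon$, so summing over $\gamma$ reproduces $\sum_{\gamma' \in \Upsilon} D^{\pi^{(\alpha,\beta)}(\tau)}_{\pi^{(\gamma',\varnothing)}(F)} = |\Upsilon| \cdot \hat D^{\pi^{(\alpha,\beta)}(\tau)}_F$. Dividing by $|\Upsilon|$ gives $(\hat D^\tau_F)^{(\alpha,\beta)} = \hat D^{\pi^{(\alpha,\beta)}(\tau)}_F$.

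**Main obstacle.** There is no deep obstacle; the only real care needed is getting the variances/inverses right — tracking which permutation lands on which side, remembering that $\pi^{(\alpha,\beta)}: \mF^\tau \to \mF^{\pi^{(\alpha,\beta)}(\tau)}$ so its inverse $\pi^{(\alpha^{-1},\beta^{-1})}$ is the one that acts on $\mF^{\pi^{(\alpha,\beta)}(\tau)}$, and checking that $\Upsilon$ being a group (closed under composition and inverse, so that $\Upsilon$-averaging is genuinely invariant under left translation by $\alpha$) is what makes the second identity work. I expect the write-up to be a short entry-wise computation followed by a one-line group-action argument.
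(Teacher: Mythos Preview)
Your proof is correct and follows essentially the same approach as the paper: an entrywise computation for the first identity using \cref{lemma:extend_color_vertex_permutations} and \cref{lemma:harmonic_downward}, followed by averaging for the second. Your derivation of the second identity is in fact slightly more economical than the paper's, which redoes the entrywise density manipulation from scratch rather than invoking the first identity and the left-translation bijection $\gamma\mapsto\alpha\gamma$ on $\Upsilon$; but this is a cosmetic difference, not a different method.
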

\begin{proof}
    Let $F_1, F_2 \in \mF^{\pi^{(\alpha,\beta)}(\tau)}_N$. For the first equality, we have
    \begin{align*}
        (D_F^\tau)^{(\alpha, \beta)}_{F_1,F_2} &=
        (D_F^\tau)_{\pi^{(\alpha^{-1}, \beta^{-1})} (F_1), \pi^{(\alpha^{-1}, \beta^{-1})} (F_2)} \\
        & = p \left( \big\llbracket \pi^{(\alpha^{-1}, \beta^{-1})} (F_1) \cdot \pi^{(\alpha^{-1}, \beta^{-1})}(F_2) \big\rrbracket_{\tau} ; F \right) \\
        & = p \left( \pi^{(\alpha, \varnothing)} \left( \big\llbracket \pi^{(\alpha^{-1}, \beta^{-1})} (F_1 \cdot F_2) \big\rrbracket_{\tau} \right); \pi^{(\alpha, \varnothing)} (F) \right) \\
        & = p(\left\llbracket F_1 \cdot F_2 \right\rrbracket_{\pi^{(\alpha, \beta)}(\tau)}; \pi^{(\alpha, \varnothing)} (F)) \\
        & = \big( D_{\pi^{(\alpha,\varnothing)}(F)}^{\pi^{(\alpha,\beta)}(\tau)} \big)_{F_1,F_2}.
    \end{align*}
    For the second equality, we have
    \begin{align*}
        (\hat D^\tau_{F})^{(\alpha,\beta)}_{F_1,F_2} 
        & = \frac{1}{|\Upsilon|}\sum_{\alpha' \in \Upsilon} \left( D^\tau_{\pi^{(\alpha',\varnothing)} (F)} \right)^{(\alpha, \beta)}_{F_1, F_2} \\
        & = \frac{1}{|\Upsilon|}\sum_{\alpha' \in \Upsilon} p(\llbracket \pi^{(\alpha^{-1}, \beta^{-1})}(F_1 \cdot F_2) \rrbracket_\tau; \pi^{(\alpha', \varnothing)}(F)) \\
        & = \frac{1}{|\Upsilon|}\sum_{\alpha' \in \Upsilon}  p(\pi^{(\alpha^{-1}, \varnothing)} (\llbracket (F_1 \cdot F_2) \rrbracket_{\pi^{(\alpha, \beta)} (\tau)} ); \pi^{(\alpha', \varnothing)}(F)) \\
        & = \frac{1}{|\Upsilon|}\sum_{\alpha' \in \Upsilon} p(\left\llbracket F_1 \cdot F_2 \right\rrbracket_{\pi^{(\alpha,\beta)}(\tau)}; \pi^{(\alpha', \varnothing)}(F)) \\
        & = \frac{1}{|\Upsilon|}\sum_{\alpha' \in \Upsilon} \left( D^{\pi^{(\alpha,\beta)}(\tau)}_{\pi^{(\alpha', \varnothing)}(F)} \right)_{F_1, F_2} \\
        & = \big( \hat D_F^{\pi^{(\alpha,\beta)}(\tau)} \big)_{F_1, F_2}.
    \end{align*}
    Here we have made use of \cref{lemma:extend_color_vertex_permutations} as well as \cref{lemma:harmonic_downward} for both arguments.
\end{proof}

\begin{proposition}\label{lem:one_direction_of_equiv}
    Assume we are given matrices $Q^\tau\succeq 0$ that satisfy
    \begin{equation} \label{eq:reduced-sdp-0}
        p(f_0; F) = \sum_{\tau \in \mT}\left\langle Q^{\tau}, \hat{D}_{F }^{\tau } \right\rangle + s_{ F }
    \end{equation}
    for all $F \in \mu(\mF_n^\varnothing)$. With
    \begin{equation*}
        \tilde{\mT} = \{ \tilde \tau \mid \tilde{\tau} = \pi^{(\alpha,\beta)}(\tau) \text{ for some } \tau\in\mT, \alpha:[c]\to [c], \beta:[v(\tau)]\to [v(\tau)] \} \supseteq \mT
    \end{equation*}
    and
    \begin{align*}
        \tilde Q^{\tilde \tau} &= \sum_{\tau\in \mT} \sum_{\substack{\alpha\in\Upsilon \\ \beta:[v(\tau)]\to [v(\tau)] \\ \pi^{(\alpha,\beta)}(\tau) = \tilde\tau}} 
        \frac{1}{|v(\tau)|!|\Upsilon|}  \, (Q^{\tau})^{(\alpha,\beta)} \succeq 0
    \end{align*}
    for all $\tilde{\tau} \in \tilde{\mT}$, we have that $N, \, \tilde{\mT}, \, (\tilde{Q}^{\tilde{\tau}} )_{\tilde {\tau} \in \tilde{\mT}}, \, \{ s_{\mu(F)} \mid F \in \mF_N^\varnothing \}$ is a certificate of positivity for $f_0$.
\end{proposition}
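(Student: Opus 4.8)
The plan is to verify directly that the proposed data $N,\,\tilde\mT,\,(\tilde Q^{\tilde\tau})_{\tilde\tau\in\tilde\mT},\,\{s_{\mu(F)}\mid F\in\mF_N^\varnothing\}$ meets the definition of an SDP certificate of positivity for $f_0$ as in \cref{eq:sdp_certificate}: namely that each $\tilde Q^{\tilde\tau}$ is positive semidefinite, and that $p(f_0;F)=\sum_{\tilde\tau\in\tilde\mT}\langle\tilde Q^{\tilde\tau},D_F^{\tilde\tau}\rangle+s_{\mu(F)}$ for \emph{every} $F\in\mF_N^\varnothing$, not merely for the representatives in $\mu(\mF_N^\varnothing)$ for which \cref{eq:reduced-sdp-0} is assumed. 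The bookkeeping conditions are immediate: the slacks $s_{\mu(F)}$ occur among the slacks of the hypothesised reduced certificate and are hence nonnegative, and every $\tilde\tau\in\tilde\mT$ satisfies the required bound on $v(\tilde\tau)$ since $\pi^{(\alpha,\beta)}$ preserves the order of a type. Positive semidefiniteness is the easy half: for a type $\tau$ and admissible $(\alpha,\beta)$ one has $v(\pi^{(\alpha,\beta)}(\tau))=v(\tau)$, hence $\ell_{\pi^{(\alpha,\beta)}(\tau)}=\ell_\tau$, and the operation $M\mapsto M^{(\alpha,\beta)}$ only relabels the common index set along the bijection $\mF^\tau_{\ell_\tau}\to\mF^{\pi^{(\alpha,\beta)}(\tau)}_{\ell_\tau}$ induced by $\pi^{(\alpha,\beta)}$, i.e. it is simultaneous conjugation of rows and columns by a permutation matrix, which preserves the PSD cone; since $\tilde Q^{\tilde\tau}$ is a nonnegative linear combination of such $(Q^{\tau})^{(\alpha,\beta)}$ with $Q^\tau\succeq 0$, it is positive semidefinite.

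For the linear identity I would fix $F\in\mF_N^\varnothing$ and unfold the left-hand side. Substituting the definition of $\tilde Q^{\tilde\tau}$, using bilinearity of $\langle\cdot,\cdot\rangle$, and observing that summing over $\tilde\tau\in\tilde\mT$ and then over triples $(\tau,\alpha,\beta)$ with $\pi^{(\alpha,\beta)}(\tau)=\tilde\tau$ is exactly summing over all $\tau\in\mT$, $\alpha\in\Upsilon$, and permutations $\beta$ of $[v(\tau)]$, one gets
\[
    \sum_{\tilde\tau\in\tilde\mT}\langle\tilde Q^{\tilde\tau},D_F^{\tilde\tau}\rangle=\sum_{\tau\in\mT}\sum_{\alpha\in\Upsilon}\sum_{\beta}\frac{1}{v(\tau)!\,|\Upsilon|}\,\big\langle (Q^{\tau})^{(\alpha,\beta)},\,D_F^{\pi^{(\alpha,\beta)}(\tau)}\big\rangle.
\]
Applying the first identity of \cref{lem:averaged_are_invariant} with $F$ replaced by $\pi^{(\alpha^{-1},\varnothing)}(F)$ gives $D_F^{\pi^{(\alpha,\beta)}(\tau)}=\big(D_{\pi^{(\alpha^{-1},\varnothing)}(F)}^{\tau}\big)^{(\alpha,\beta)}$, so the relation $\langle L^{(\alpha,\beta)},M^{(\alpha,\beta)}\rangle=\langle L,M\rangle$ collapses each summand to $\langle Q^\tau,D_{\pi^{(\alpha^{-1},\varnothing)}(F)}^{\tau}\rangle$. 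This no longer depends on $\beta$, so the $\beta$-sum cancels the factor $1/v(\tau)!$; substituting $\alpha\mapsto\alpha^{-1}$ over the group $\Upsilon$ and recognising the $\Upsilon$-average then yields $\sum_{\tilde\tau\in\tilde\mT}\langle\tilde Q^{\tilde\tau},D_F^{\tilde\tau}\rangle=\sum_{\tau\in\mT}\langle Q^\tau,\hat D_F^\tau\rangle$ for every $F\in\mF_N^\varnothing$.

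It remains to pass from the representatives to arbitrary $F$. I would observe that both $p(f_0;\cdot)$ and $F\mapsto\sum_{\tau\in\mT}\langle Q^\tau,\hat D_F^\tau\rangle$ are constant on $\simeq_g$-classes of $\mF_N^\varnothing$: if $\pi^{(\alpha,\varnothing)}(F)\simeq\mu(F)$ with $\alpha\in\Upsilon$, then $p(f_0;\mu(F))=p(\pi^{(\alpha,\varnothing)}(f_0);\pi^{(\alpha,\varnothing)}(F))=p(f_0;F)$ by \cref{eq:density_permutation} and $\alpha\in\Upsilon$, while $D_{\mu(F)}^\tau=D_{\pi^{(\alpha,\varnothing)}(F)}^\tau$ together with a reindexing of $\Upsilon$ gives $\hat D_{\mu(F)}^\tau=\hat D_F^\tau$. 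Hence, for every $F\in\mF_N^\varnothing$,
\[
    p(f_0;F)=p(f_0;\mu(F))=\sum_{\tau\in\mT}\langle Q^\tau,\hat D_{\mu(F)}^\tau\rangle+s_{\mu(F)}=\sum_{\tau\in\mT}\langle Q^\tau,\hat D_F^\tau\rangle+s_{\mu(F)}=\sum_{\tilde\tau\in\tilde\mT}\langle\tilde Q^{\tilde\tau},D_F^{\tilde\tau}\rangle+s_{\mu(F)},
\]
using \cref{eq:reduced-sdp-0} for $\mu(F)$ in the second step and the identity from the previous paragraph in the last. This is exactly the certificate equation, completing the proof.

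The main obstacle is purely notational: one must keep straight which finite index set each matrix lives on, and check that the permutation conventions in the definition of $M^{(\alpha,\beta)}$, in \cref{lem:averaged_are_invariant}, and in the formula for $\tilde Q^{\tilde\tau}$ compose consistently — in particular the direction of the $\alpha\leftrightarrow\alpha^{-1}$ substitution and the fact that the number of permutations $\beta$ of $[v(\tau)]$ is precisely the normalising factor $v(\tau)!$. Beyond this, the argument uses only bilinearity of the Frobenius inner product, invariance of the PSD cone under simultaneous row/column permutations, and averaging over the finite groups $\Upsilon$ and $S_{[v(\tau)]}$, together with \cref{lemma:extend_color_vertex_permutations} and \cref{lemma:harmonic_downward}, which are already packaged into \cref{lem:averaged_are_invariant}.
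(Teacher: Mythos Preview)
Your proof is correct and follows essentially the same approach as the paper: both arguments use \cref{lem:averaged_are_invariant} together with the invariance $\langle L^{(\alpha,\beta)},M^{(\alpha,\beta)}\rangle=\langle L,M\rangle$ to rewrite the $\tilde Q^{\tilde\tau}$-sum as $\sum_{\tau\in\mT}\langle Q^\tau,\hat D_F^\tau\rangle$, and then invoke the $\simeq_g$-invariance of $p(f_0;\cdot)$ and of the averaged data matrices to reduce to the hypothesis at $\mu(F)$. The only difference is cosmetic ordering: the paper starts from $p(f_0;F)=p(f_0;\mu(F))$ and expands forward to reach $\sum_{\tilde\tau}\langle\tilde Q^{\tilde\tau},D_F^{\tilde\tau}\rangle$, whereas you compute the latter sum first and then match it to the former.
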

\begin{proof}
    Note that the $Q_{\tilde{\tau}}$ are positive semidefinite as a positive combination of positive semidefinite matrices. Given any $F \in \mF_N^\varnothing$, we have
    \begin{align*}
        p(f_0;F) &= p(f_0;\mu(F)) \\
        &= \sum_{\tau \in \mT}\left\langle Q^{\tau}, \hat{D}_{\mu(F) }^{\tau } \right\rangle + s_{ \mu(F) } \\
        &= \sum_{\tau \in \mT}\left\langle Q^{\tau}, \frac{1}{|\Upsilon|}\sum_{\alpha\in\Upsilon}D_{\pi^{(\alpha^{-1},\varnothing)}(F)}^{\tau } \right\rangle + s_{ \mu(F) } \\
        &= \sum_{\tau \in \mT}\sum_{\alpha\in\Upsilon}
        \sum_{\beta:[v(\tau)]\to [v(\tau)]}
        \left\langle \frac{1}{|v(\tau)|!|\Upsilon|} (Q^{\tau})^{(\alpha,\beta)}, D_{F}^{\pi^{(\alpha,\beta)}(\tau)} \right\rangle + s_{ \mu(F) } \\
       &= \sum_{\tilde\tau\in\tilde{\mT}} \left\langle \sum_{\tau\in \mT} \sum_{\substack{\alpha \in \Upsilon \\ \beta:[v(\tau)]\to [v(\tau)] \\ \pi^{(\alpha,\beta)}(\tau) = \tilde\tau}}
        \frac{1}{|v(\tau)|!|\Upsilon|} (Q^{\tau})^{(\alpha,\beta)}, D_{F}^{\tilde\tau} \right\rangle + s_{ \mu(F) } \\
        &= \sum_{\tilde\tau\in\tilde{\mT}} \left\langle \tilde{Q}^{\tilde{\tau}}, D_{F}^{\tilde\tau} \right\rangle + s_{ \mu(F) },
    \end{align*}
    establishing that $N, \, \tilde{\mT}, \, (\tilde{Q}^{\tilde{\tau}} )_{\tilde {\tau} \in \tilde{\mT}}, \, \{ s_{\mu(F)} \mid F \in \mF_N^\varnothing \}$ is in fact an SDP certificate of positivity for $f_0$.
\end{proof}

\cref{lem:one_direction_of_equiv} suffices for our applications, because it tells us that any solution of  \cref{eq:reduced-sdp-1} gives us a valid certificate of positiveness for $f_0$. For completeness, we include a simple proof that a certificate of positivity for the SDP from \cref{eq:reduced-sdp-0} produces a certificate of positivity for \cref{eq:sdp_certificate}. In order to state it, let $\eta(\tau)$ an arbitrary but fixed permutation pair $(\alpha,\beta)$ with$\alpha\in\Upsilon$ and satisfying $\pi^{\eta(\tau)}(\tau) = \mu(\tau)$ for a given $\tau\in \mT$.

\begin{proposition}
    \label{thm:color_invariant_sdp}
    If $N, \, \mT, \, (Q^\tau )_{\tau \in \mT}, \, \{ s_F \mid F \in \mF_N^\varnothing \}$ is a certificate of positivity for $f_0$, then
    \[
        \hat{s}_{ F } = \frac{1}{|\Upsilon|}\sum_{\alpha \in \Upsilon} s_{\pi^{(\alpha,\varnothing)}(F)}
    \quad \text{and} \quad
        \hat Q^{\hat \tau} = \sum_{\substack{\tau \in \mT \\ \mu(\tau) = \hat\tau}} (Q^\tau)^{\eta(\tau)}
    \]
    is an SDP certificate of the averaged SDP, that is for all $F \in \mu (\mF_n^\varnothing)$
    \begin{equation} \label{eq:reduced-sdp-1}
        p(f_0; F) = \sum_{\tau \in \mu(\mT)}\left\langle \hat{Q}^{\tau}, \hat{D}_{F }^{\tau } \right\rangle + \hat{s}_{ F }.%
    \end{equation}
\end{proposition}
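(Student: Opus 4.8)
The plan is to obtain \cref{eq:reduced-sdp-1} by first $\Upsilon$-averaging the given certificate \cref{eq:sdp_certificate} and then re-orienting the resulting blocks onto the canonical type representatives $\mu(\tau)$; the positive semidefiniteness of the $\hat Q^{\hat\tau}$ will come for free from the fact that the re-orientation operator $M \mapsto M^{(\alpha,\beta)}$ is a congruence by a permutation matrix.

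First I would fix $F \in \mF_N^\varnothing$ and, for each $\alpha \in \Upsilon$, evaluate \cref{eq:sdp_certificate} at $\pi^{(\alpha,\varnothing)}(F)$ in place of $F$. Because $\alpha \in \Upsilon$ means $\pi^{(\alpha,\varnothing)}(f_0) = f_0$, \cref{eq:density_permutation} in \cref{lemma:extend_color_vertex_permutations} gives
\[
    p\big(f_0; \pi^{(\alpha,\varnothing)}(F)\big) = p\big(\pi^{(\alpha,\varnothing)}(f_0); \pi^{(\alpha,\varnothing)}(F)\big) = p(f_0; F),
\]
so the left-hand sides of all $|\Upsilon|$ identities are the same constant $p(f_0;F)$. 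Averaging them over $\alpha$, the slack sum becomes $\hat s_F = \tfrac{1}{|\Upsilon|}\sum_{\alpha}s_{\pi^{(\alpha,\varnothing)}(F)}$ by definition, and for each $\tau \in \mT$ bilinearity of the Frobenius product turns $\tfrac{1}{|\Upsilon|}\sum_\alpha \langle Q^\tau, D^\tau_{\pi^{(\alpha,\varnothing)}(F)}\rangle$ into $\langle Q^\tau, \hat D^\tau_F\rangle$. This produces the intermediate identity
\[
    p(f_0;F) = \sum_{\tau\in\mT}\langle Q^\tau, \hat D^\tau_F\rangle + \hat s_F,
\]
valid for every $F\in\mF_N^\varnothing$ and in particular for every $F\in\mu(\mF_n^\varnothing)$.

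Second, I would rewrite each summand over $\mT$ as a summand over $\mu(\mT)$. Write $\eta(\tau)=(\alpha_0,\beta_0)$ with $\alpha_0\in\Upsilon$ and $\pi^{\eta(\tau)}(\tau)=\mu(\tau)$. Since $M\mapsto M^{(\alpha,\beta)}$ simultaneously relabels rows and columns by the same permutation, it is a congruence by a permutation matrix, hence $\langle Q^\tau,\hat D^\tau_F\rangle = \langle (Q^\tau)^{\eta(\tau)}, (\hat D^\tau_F)^{\eta(\tau)}\rangle$; and the second identity of \cref{lem:averaged_are_invariant} (which applies precisely because $\alpha_0\in\Upsilon$) gives $(\hat D^\tau_F)^{\eta(\tau)} = \hat D^{\pi^{\eta(\tau)}(\tau)}_F = \hat D^{\mu(\tau)}_F$. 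Thus $\langle Q^\tau,\hat D^\tau_F\rangle = \langle (Q^\tau)^{\eta(\tau)}, \hat D^{\mu(\tau)}_F\rangle$, and grouping the intermediate identity's sum by the value of $\mu(\tau)$ yields
\[
    p(f_0;F) = \sum_{\hat\tau\in\mu(\mT)} \Big\langle \sum_{\substack{\tau\in\mT\\\mu(\tau)=\hat\tau}}(Q^\tau)^{\eta(\tau)},\ \hat D^{\hat\tau}_F\Big\rangle + \hat s_F = \sum_{\hat\tau\in\mu(\mT)}\langle \hat Q^{\hat\tau}, \hat D^{\hat\tau}_F\rangle + \hat s_F,
\]
which is \cref{eq:reduced-sdp-1}. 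Finally, each $(Q^\tau)^{\eta(\tau)}$ is congruent to $Q^\tau\succeq 0$ through an invertible (permutation) matrix, so it is positive semidefinite, and since permutations preserve the order of a type all matrices in the sum defining $\hat Q^{\hat\tau}$ share the index set $\mF^{\hat\tau}_{\ell_{\hat\tau}}$; hence $\hat Q^{\hat\tau}\succeq 0$ as a sum of positive semidefinite matrices.

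There is no serious obstacle here: the argument is entirely bookkeeping, essentially the mirror image of the proof of \cref{lem:one_direction_of_equiv}. The points that need care are (i) that $\eta(\tau)$ can be chosen with $\alpha_0\in\Upsilon$, which is exactly what lets the $\Upsilon$-average commute with the re-orientation so that $(\hat D^\tau_F)^{\eta(\tau)}$ equals the averaged matrix $\hat D^{\mu(\tau)}_F$ rather than some twisted variant, and (ii) that the matrices combined in $\hat Q^{\hat\tau}$ have matching index sets. One may also note in passing that $p(f_0;\cdot)$, $\hat s_{(\cdot)}$, and $\hat D^\tau_{(\cdot)}$ are all invariant under $F\mapsto\pi^{(\alpha,\varnothing)}(F)$ for $\alpha\in\Upsilon$, so verifying \cref{eq:reduced-sdp-1} on the representatives $\mu(\mF_n^\varnothing)$ is the same as verifying it on all of $\mF_N^\varnothing$, as the derivation above in fact does.
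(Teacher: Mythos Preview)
Your proof is correct and follows essentially the same approach as the paper: average the certificate \cref{eq:sdp_certificate} over $\alpha\in\Upsilon$ using the invariance $p(f_0;\pi^{(\alpha,\varnothing)}(F))=p(f_0;F)$ to obtain $p(f_0;F)=\sum_{\tau\in\mT}\langle Q^\tau,\hat D^\tau_F\rangle+\hat s_F$, then re-orient each block via $\eta(\tau)$ and invoke \cref{lem:averaged_are_invariant} to collapse the sum onto $\mu(\mT)$. You additionally spell out why $\hat Q^{\hat\tau}\succeq 0$ and why the index sets match, which the paper leaves implicit.
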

\begin{proof}
By assumption, for all  $F \in \mF_n^\varnothing$
     \begin{equation*}
         p(f_0; F) = \sum_{\tau \in \mT}\left\langle Q^{\tau},   D_{F }^{\tau } \right\rangle + s_{ F }.
    \end{equation*}
    Then, in particular
     \begin{align*}
          \sum_{\alpha \in \Upsilon} p(f_0; \pi^{(\alpha,\varnothing)} (F)) 
          &= \sum_{\alpha \in \Upsilon} \left(\sum_{\tau \in \mT}\left\langle Q^{\tau},   D_{\pi^{(\alpha,\varnothing)} (F) }^{\tau } \right\rangle + s_{ \pi^{(\alpha,\varnothing)} (F) } \right) \\
        |\Upsilon| p(f_0; F) &= \sum_{\tau \in \mT}\left\langle Q^{\tau}, \sum_{\alpha \in \Upsilon}  D_{\pi^{(\alpha,\varnothing)} (F) }^{\tau } \right\rangle + \sum_{\alpha \in \Upsilon} s_{ \pi^{(\alpha,\varnothing)} (F) } \\
        p(f_0; F) &= \sum_{\tau \in \mT}\left\langle Q^{\tau}, \hat{D}_{F }^{\tau } \right\rangle + \hat{s}_{ F } \\
        &= \sum_{\tau \in \mT}\left\langle Q^{\eta(\tau)}, (\hat{D}_{F }^{\tau})^{\eta(\tau)} \right\rangle + \hat{s}_{ F } \\
        &= \sum_{\sigma\in \mu(\mT)}\left\langle 
        \sum_{\substack{\tau \in \mT \\ \mu(\tau) = \sigma}} (Q^\tau)^{\eta(\tau)}, \hat{D}_{F }^{\sigma} \right\rangle + \hat{s}_{ F },
    \end{align*}
    establishing the claim.
\end{proof}

The SDP from \cref{eq:reduced-sdp-0} is relevant because $\mu(\mF_N^\varnothing)$ is (possibly significantly) smaller than $\mF_N^\varnothing$, so we have reduced the number of constraints, and $\mu(\mT)$ is smaller than $\mT$, so we have reduced the number of blocks. As previously mentioned, it is notable that implicitly this was previously already done for vertex permutations of types, i.e., one does not reuse types that are graph isomorphic even though they are not flag isomorphic. \cref{lem:one_direction_of_equiv} is the natural generalization of this for colors, where we now need to take the invariance of $f_0$ w.r.t. color permutations into account (whereas the invariance of $f_0$ with respect to graph isomorphisms is a given).

\begin{example}\label{ex:Goodman_SDP_averaged}
    Continuing \cref{ex:Goodman_SDP}, we now use black edges to denote one color class and white ones to denote the other, so as to emphasize the symmetry between them. We have $\Upsilon = \{(\operatorname{id},\varnothing), (0\mapsto 1, 1\mapsto 0,\varnothing)\}$ and we choose $\mu$ so that say $\mu(\mF_3^\varnothing) = \{\begin{tikzpicture}[line width=0.3mm, baseline=0.3ex,
        scale = 0.4]
            \draw [black] (0,0) -- (1,0);
            \draw [black] (0,0) -- (0.5,0.75);
            \draw [black] (0.5,0.75) -- (1,0);
            \fill [black] (0,0) circle (4pt);
            \fill [black] (1,0) circle (4pt);
            \fill [black] (0.5,0.75) circle (4pt);
        \end{tikzpicture}
        ,
        \begin{tikzpicture}[line width=0.3mm, baseline=0.3ex,
        scale = 0.4]
            \draw [black] (0,0) -- (1,0);
            \draw [black] (0,0) -- (0.5,0.75);
            \fill [black] (0,0) circle (4pt);
            \fill [black] (1,0) circle (4pt);
            \fill [black] (0.5,0.75) circle (4pt);
        \end{tikzpicture}\}$.
    The color-invariant version from \cref{eq:reduced-sdp-0} has averaged data matrices
     \begin{align*}
        D^{\bullet_1}_{\begin{tikzpicture}[line width=0.3mm, baseline=0ex,
        scale = 0.4]
            \draw [black] (0,0) -- (1,0);
            \draw [black] (0,0) -- (0.5,0.75);
            \draw [black] (0.5,0.75) -- (1,0);
            \fill [black] (0,0) circle (4pt);
            \fill [black] (1,0) circle (4pt);
            \fill [black] (0.5,0.75) circle (4pt);
        \end{tikzpicture}}
        =
        \begin{pmatrix}
            1 & 0 \\ 0 & 1
        \end{pmatrix} \quad \text{and} \quad
        D^{\bullet_1}_{\begin{tikzpicture}[line width=0.3mm, baseline=0ex,
        scale = 0.4]
            \draw [black] (0,0) -- (1,0);
            \draw [black] (0,0) -- (0.5,0.75);
            \fill [black] (0,0) circle (4pt);
            \fill [black] (1,0) circle (4pt);
            \fill [black] (0.5,0.75) circle (4pt);
        \end{tikzpicture}}
        =
        \begin{pmatrix}
            \frac{1}{3} & \frac{2}{3} \\ \frac{2}{3} & \frac{1}{3}
        \end{pmatrix}.
    \end{align*}
    Note that in this case, the number of symmetries of the SDP increase when averaging, as compared to the constraints in \cref{ex:Goodman_SDP}.
\end{example}

We note that Balogh et al.~\cite{Balogh_Rainbow_Triangles_2017} previously already used a similar approach in order to reduce the size of the SDP search space by taking the color invariance of $f_0$ into account. Instead of coloring the edges of a complete graph, they use a theory where the edges are partitioned according to whether they have the same color or not, i.e., they use $\simeq_g$ as their underlying notion of graph (or flag) isomorphism. In practice this leads to the same reduction in the number of constraints and blocks but it additionally reduces number of flags. This however comes at a cost, since reducing the number of flags does not guarantee the same expressiveness of the SDP that our approach guarantees through \cref{thm:color_invariant_sdp}.\footnote{In practice, once $N$ is large enough there is often still a sufficient amount of redundancies in the formulation given by \cref{lem:one_direction_of_equiv} for the additional removal of flags through the approach used in~\cite{Balogh_Rainbow_Triangles_2017} to have not negative effect, at least when considering colorings of the edges of the complete graph. We still believe that the lossless nature motivates our approach.} This is easiest to illustrate with  
Goodman's theorem which can be proven with a sum of squares expression using the minimum graph size of $3$ as in \cref{ex:Goodman_SOS}.
If we try to prove the same result using edge-partitioned graphs with up to two partition classes, we see that graph size $3$ does not suffice anymore because there is only one flag of size 2 on the point type:
   \[
        \begin{tikzpicture}[line width=0.5mm, baseline=1ex,
        scale = 0.5]
            \draw [black] (0,0) -- (1,0);
            \draw [black] (0,0) -- (0.5,0.75);
            \draw [black] (0.5,0.75) -- (1,0);
            \fill [black] (0,0) circle (4pt);
            \fill [black] (1,0) circle (4pt);
            \fill [black] (0.5,0.75) circle (4pt);
            \node [isosceles triangle, minimum width=1cm, minimum height=0.75cm, draw, rounded corners, shape border rotate=90,  isosceles triangle stretches] (t) at (0.475,0.23) {};
        \end{tikzpicture}
        -0\cdot \varnothing
        \ge 0\cdot\Bigg \llbracket \ 
        \begin{tikzpicture}[line width=0.5mm, baseline=1ex,
        scale = 0.5]
            \fill [black] (0,0) circle (4pt);
            \fill [black] (0,1) circle (4pt);
            \node[scale=0.7] at (0.5,-0.3) {1};
            \draw (0,0) -- (0,1);
            \draw[rounded corners] (-0.3, -0.3) rectangle (0.3, 1.3) {};
        \end{tikzpicture}^2
        \Bigg\rrbracket 
            \begin{tikzpicture}[line width=0.5mm, baseline=2ex,
        scale = 0.5]
            \fill [black] (0,0) circle (4pt);
            \node[scale=0.7] at (0.3,-0.3) {1};
            \end{tikzpicture}
    \]
Here, the circled triangle means that all edges of the triangle are in the same partition class. Similarly the circled flag means that its only edge is in a partition class.

\subsection{Block-diagonalization}\label{sec:block_diagonalization}

    The group $\Lambda_\tau$ permutes the set of flags $\mF^\tau_{\ell_\tau}$. 
    This permutation extends to a group action of $\Lambda_\tau$ on $\RR^{\mF^\tau_{\ell_\tau}}$, so $\RR^{\mF^\tau_{\ell_\tau}}$ is an $\mathbb R[\Lambda_\tau]$-module. Given $x\in \RR^{\mF^\tau_{\ell_\tau}}$ and $g\in \Lambda_\tau$, it is customary to denote by $g \, x$ the image of $x$ under the linear map corresponding to $g$. Similarly, $g$ denotes the linear map $\RR^{\mF^\tau_{\ell_\tau}}\to \RR^{\mF^\tau_{\ell_\tau}}$ as well.

    \begin{example}\label{ex:group_action}
        Continuing \cref{ex:Goodman_SDP_averaged}, note that 
        \[
            \Lambda_{\bullet_1} = \{(\operatorname{id},\operatorname{id}_1), ((0\mapsto1,1\mapsto0),\operatorname{{id}_1}) \}.
        \]
        The identity element of $\Lambda_{\bullet_1}$ acts as an identity on $\mF^{\bullet_1}_{2}$ whereas the non-identity of $\Lambda_{\bullet_1}$ transposes the two flags in $\mF^{\bullet_1}_{2}$. When we view $\Lambda_{\bullet_1}$ as acting on the vector space $\mathbb R^{\mF_2^{\bullet_1}}$ then the two elements of $\Lambda_{\bullet_1}$ correspond to the linear maps given by the matrices
        \[
            \begin{pmatrix}
                1 & 0 \\ 0 & 1
            \end{pmatrix}
            \quad \text{and} \quad
            \begin{pmatrix}
                0 & 1 \\ 1 & 0
            \end{pmatrix}.
        \]
    \end{example}
    
    By \cref{lem:averaged_are_invariant}, the constraint matrices $\hat D^\tau_F$ are invariant when permuting the flag indices through elements of $\Lambda_\tau$. We have already observed this in \cref{ex:Goodman_SDP_averaged}.
    For the remainder of this section, we view the matrix $\hat D^\tau_F$ as an $\mathbb R$-linear map $\mathbb \RR^{\mF^\tau_{\ell_\tau}} \to \RR^{\mF^\tau_{\ell_\tau}}$. The group-invariance from \cref{lem:averaged_are_invariant} can be rephrased as saying that for each $g\in \Lambda_\tau$ we have 
    \[
        \hat D^\tau_F \, g = g \, \hat D^\tau_F.
    \]
    Moreover, we may equivalently say that $\hat D_F^\tau$ is an endomorphism of the module $\RR^{\mF^\tau_{\ell_\tau}}$, i.e.,
    \[
        \hat D^\tau_F \in \operatorname{End}_{\mathbb R[\Lambda_\tau]}(\RR^{\mF^\tau_{\ell_\tau}}).
    \]
The invariance under group actions of constraint matrices, means that the constraint matrices have many duplicate entries. Our goal in this section is to \emph{block-diagonalize} the matrices in such a way that all of the symmetries are removed and the resulting problem formulation therefore relies on fewer variables.

The problem of block-diagonalizing group symmetric SDPs has been extensively studied~\cite{Babai_1989,BachocEtAl_2012,Murota_2010,gatermann2004symmetry}. There is a theoretical best possible diagonalization, i.e., change of basis resulting in a more granular block diagonal structure of the matrices, that is given by the constraint matrices $(\hat D_F^\tau)_{F\in\mF^\tau_{\ell_\tau}}$ or, more specifically, their symmetries. The goal is to find a single base change matrix $B^\tau$ over $\mF^\tau_{\ell_\tau}\times [j_\tau]$, ideally with $j_\tau\ll |\mF^\tau_{\ell_\tau}|$, so that all matrices $(B^\tau)^T D_F^\tau B^\tau$ for $F\in\mF^\tau$ are minimally block-diagonal. Some approaches such as~\cite{Murota_2010} determine the symmetries present in the original constraints $D^\tau_F$ by itself, while most approaches assume prior knowledge of (at least part of) the symmetries. Given the results stated in this section, we are in the latter case. Here, methods such as \cite{Babai_1989}, allow one to determine a block-diagonalization by finding simple subspaces in $\mathbb R^{\mF^\tau_{\ell_\tau}}$

\begin{example}\label{ex:concrete_base_change}
    For the two constraint matrices from \cref{ex:Goodman_SDP_averaged}, the following matrix
    \[
        B^{\bullet_1} = \frac{1}{\sqrt{2}}
        \begin{pmatrix}
            1 & -1 \\
            1 & 1
        \end{pmatrix}
    \]
    simultaneously block-diagonalizes both constraints. Instead of using the constraints from \cref{ex:Goodman_SDP}, we may use the constraints:
    \[
        (B^{\bullet_1})^T
        D^{\bullet_1}_{\begin{tikzpicture}[line width=0.3mm, baseline=0ex,
        scale = 0.4]
            \draw [black] (0,0) -- (1,0);
            \draw [black] (0,0) -- (0.5,0.75);
            \draw [black] (0.5,0.75) -- (1,0);
            \fill [black] (0,0) circle (4pt);
            \fill [black] (1,0) circle (4pt);
            \fill [black] (0.5,0.75) circle (4pt);
        \end{tikzpicture}}
        B^{\bullet_1}
        =
        \begin{pmatrix}
            1 & 0 \\
            0 & 1
        \end{pmatrix}
        \quad
        (B^{\bullet_1})^T
        D^{\bullet_1}_{\begin{tikzpicture}[line width=0.3mm, baseline=0ex,
        scale = 0.4]
            \draw [black] (0,0) -- (1,0);
            \draw [black] (0,0) -- (0.5,0.75);
            \fill [black] (0,0) circle (4pt);
            \fill [black] (1,0) circle (4pt);
            \fill [black] (0.5,0.75) circle (4pt);
        \end{tikzpicture}}
        B^{\bullet_1}
        =
        \begin{pmatrix}
            1 & 0 \\
            0 & -\frac{1}{3}
        \end{pmatrix}.
    \] 
    Note that this reduced the problem structure from a single $2 \times 2$ block to two $1 \times 1$ blocks, i.e., in this example we obtained a linear program (LP).
\end{example}

We briefly sketch the theory underlying the theoretical best possible decomposition given known symmetries.
Let $R$ denote the $\mathbb R$-subalgebra of the matrix ring $\operatorname{End}_{\mathbb R}(\RR^{\mF^\tau_{\ell_\tau}})$ that is generated by all matrices $\hat D^\tau_F$ as $F$ varies in $\mu(\mF)$. Since $R$ is generated by symmetric matrices, it follows that the vector space $\mathbb R^{\mF^\tau_{\ell_\tau}}$ is a semi-simple $R$-module. Indeed, the orthogonal complement of any $R$-invariant subspace is again $R$-invariant. By \cite[pp. 654, Proposition 4.7]{SergeLang2002}, $R$ is semi-simple, so $R = \prod_{j\in J} R_j$ with each $R_j$ a simple ring.

We can find in $\mathbb R^{\mF^\tau_{\ell_\tau}}$ a simple $R_j$-subspace for every $j\in J$. This is because the unit $e_j$ of $R_j$ is non-zero in $\operatorname{End}_{\mathbb R}(\RR^{\mF^\tau_{\ell_\tau}})$ and so $R_j$ acts faithfully on $\mathbb R^{\mF^\tau_{\ell_\tau}}$. Such a subspace $V_j$ may be found in the isotypic subspace $e_j\RR^{\mF^\tau_{\ell_\tau}}$. By Wedderburn's Theorem \cite[pp. 649, Corollary 3.5]{SergeLang2002}:
\[
    R_j \simeq \operatorname{End}_{\operatorname{End}_{\mathbb R}(R_j)}(V_j)
\]
and by Frobenius's theorem we know that $\operatorname{End}_{\RR}(R_j)\in \{\mathbb R,\mathbb C, \mathbb H\}$.
In other words, there exists a unique sequence $(n_j,K_j)\in \mathbb N\times \{\mathbb R, \mathbb C, \mathbb H\}$ so that
\begin{equation}\label{eq:ring_minimal_decomposition}
    R_j \simeq \operatorname{Mat}^{n_j\times n_j}(K_j)
    \quad \text{ and }
    \quad 
    R \simeq \bigoplus_{j\in J} \operatorname{Mat}^{n_j\times n_j}(K_j).
\end{equation}
The matrices $B^\tau$ that we are looking for, are those base change matrices that witness the second isomorphism in \cref{eq:ring_minimal_decomposition}.

\begin{example}
    Continuing \cref{ex:concrete_base_change}, the matrix $B^{\bullet_1}$ is normal and so we see that:
    \[
        R = \{B^{\bullet_1}\begin{pmatrix}
            r & 0 \\ 0 & s
        \end{pmatrix}(B^{\bullet_1})^T \mid r,s\in\mathbb R \}.
    \]
    As a ring, this is isomorphic to the product of fields $\mathbb R\times\mathbb R$, each field of course being a simple ring. The units are:
    \[
        B^{\bullet_1}\begin{pmatrix}
            1 & 0 \\ 0 & 0
        \end{pmatrix}(B^{\bullet_1})^T
        =
        \begin{pmatrix}
            \frac{1}{2} & \frac{1}{2} \\ \frac{1}{2} & \frac{1}{2}
        \end{pmatrix}
        \quad
        B^{\bullet_1}\begin{pmatrix}
            0 & 0 \\ 0 & 1
        \end{pmatrix}(B^{\bullet_1})^T
        =
        \begin{pmatrix}
            \frac{1}{2} & -\frac{1}{2} \\ -\frac{1}{2} & \frac{1}{2}
        \end{pmatrix}        
    \]
    The simple $R_j$-spaces are then the images of the units because these are already simple (as invariant one dimensional vector spaces)
    \[
        \operatorname{span}_{\mathbb R}
        \begin{pmatrix}
            1 \\ 1
        \end{pmatrix}
        \quad  \text{and} \quad
        \operatorname{span}_{\mathbb R}
        \begin{pmatrix}
            1 \\ -1
        \end{pmatrix}.
    \]
    Finally, $B^{\bullet_1}$ is the base change matrix for the sum of these two simple spaces.
\end{example}

For flag algebra SDPs, numerical evidence suggests that $R=\operatorname{End}_{\mathbb R[\Lambda_\tau]}(\mathbb R^{\mF^\tau_{\ell_\tau}})$. Unfortunately, the randomized algorithms from \cite{Murota_2010} and \cite{Babai_1989} were problematic for large flag algebra SDPs. The matrices $\hat D^\tau_F$ are extremely sparse and multiplying them with dense floating point matrices produces many near zero matrix entries. 
We therefore derived our own, simplified partial block-diagonalization, that we will refer to as a \emph{rational isotypic decomposition}, in order to provide an efficient diagonalization involving rational base change matrices only. The algorithm is summarized in  \cref{alg:Rational_Isotypic}.

    Letting $R = \operatorname{End}_{\mathbb R[\Lambda_\tau]}(\RR^{\mF^{\tau}_{\ell_\tau}})$, there is a bijection between the simple rings $R_j$ and the irreducible characters $\chi$ of $\Lambda_\tau$. Given an irreducible character $\chi$,
    the unit $e_\chi$ of the simple ring $R_\chi$ is
    \[
        e_\chi = \frac{\operatorname{deg}(\chi)}{|\Lambda_\tau|}\sum_{g\in \Lambda_\tau} \chi(g^{-1}) g.
    \]
    The idea of the isotypic decomposition is to simply take the isotypic component $e_\chi\mathbb R^{\mF^\tau_{\ell_\tau}} = \operatorname{im}(e_\chi)$ itself instead of searching for simple subspaces $V_\chi$ in $e_\chi\mathbb R^{\mF^\tau_{\ell_\tau}}$.
    This ensures two things: firstly, $\chi$ may have complex or non-rational entries possibly making $\operatorname{im}(e_\chi)$ non-rational and even complex valued. Therefore, in lines 4 to 9 of \cref{alg:Rational_Isotypic}, if $\chi$ is not rational, but $\chi+\overline{\chi}$ is, then we take the invariant space $\operatorname{im}(e_\chi) + \operatorname{im}(e_{\overline{\chi}})$. If neither $\chi$ nor $\chi+\overline{\chi}$ is rational then we add $\chi$ to the class function $\chi^\bot$ which is initialized to $0$ and accumulates over time. Then we take $\operatorname{im}(e_{\chi^\bot})$.

\begin{example}
    We begin again with \cref{ex:group_action}. The group $\Lambda_\tau$ has two irreuducible characters, namely $\chi_1 = (1,1)$ and $\chi_2 = (1,-1)$. In this case, both are already rational. The isotypic components for each character are the images of the units:
    \[
        e_{\chi_1} = \frac{1}{2}\left(
        \begin{pmatrix}
            1 & 0 \\ 0 & 1
        \end{pmatrix}
        +
        \begin{pmatrix}
            0 & 1 \\ 1 & 0
        \end{pmatrix}
        \right)
        =
        \begin{pmatrix}
            \frac{1}{2} & \frac{1}{2}  \\ \frac{1}{2}  & \frac{1}{2} 
        \end{pmatrix}
        \quad \implies
        \quad
        \operatorname{span}_{\mathbb R}
        \begin{pmatrix}
            1 \\ 1
        \end{pmatrix}
    \]
    \[
        e_{\chi_2} = \frac{1}{2}\left(
        \begin{pmatrix}
            1 & 0 \\ 0 & 1
        \end{pmatrix}
        -
        \begin{pmatrix}
            0 & 1 \\ 1 & 0
        \end{pmatrix}
        \right)
        =
        \begin{pmatrix}
            \frac{1}{2} & -\frac{1}{2}  \\ -\frac{1}{2}  & \frac{1}{2} 
        \end{pmatrix}
        \quad \implies
        \quad 
        \operatorname{span}_{\mathbb R}
        \begin{pmatrix}
            1 \\ -1
        \end{pmatrix}.
    \]
\end{example}

\begin{algorithm}
    \caption{Rational Isotypic Decomposition}
    \label{alg:Rational_Isotypic}
    \begin{algorithmic}[2]
        \REQUIRE A group $\Lambda_\tau$, an action $\Lambda_\tau\times [\mF^\tau_{\ell_\tau}]\to [\mF^\tau_{\ell_\tau}]$
        \ENSURE Every $V_\chi$ has a rational basis and $\RR^{\mF^\tau_{\ell_\tau}} = \bigoplus_{\chi\in C\cup\{\chi^\bot\}} V_\chi$
        \STATE $ C \gets \varnothing$
        \STATE $ \chi^\bot \gets (0,\dots,0) $
        \FOR{$\chi$ an irreducible character of $\Lambda_\tau$}
            \IF{ $\chi$ is rational}
                \STATE $C \gets C \cup \{\chi\}$
            \ELSIF{$\chi+\overline{\chi}$ is rational}
                \STATE $C \gets C \cup \{\chi+\overline{\chi}\}$
            \ELSE
                \STATE $\chi^\bot = \chi^\bot + \chi$
            \ENDIF
        \ENDFOR
        \FOR{$\chi$ in $C\cup \{\chi^\bot\}$}
            \STATE $V_\chi \gets \operatorname{im}(\frac{\operatorname{deg}(\chi)}{|\Lambda_\tau|}\sum_{g\in \Lambda_\tau} \chi(g^{-1}) g)$ 
        \ENDFOR
        \RETURN $\{V_\chi\}$
    \end{algorithmic}
\end{algorithm}

\medskip

Finally, let us note that Razborov proposed an invariant-anti-invariant-split in \cite{Razborov_2010}. This can also be considered within the framework of block diagonalization, by viewing it as a further simplification of our rational isotypic decomposition. In Razborov's case, one only computes the isotypic component of $\chi = \chi_{\operatorname{triv}}$, which is called the invariant part, where $\chi_{\operatorname{triv}} = (1,\dots, 1)$ is the trivial character. Afterwards, we take the complement of the invariant space $V_{\chi_{\operatorname{triv}}}$, which is called the anti-invariant part. Our method results in a more significant reduction in the number of variables, while still being relatively easy to implement and taking a reasonable amount of compute power to run even for larger problems.

\section{Stability results from flag algebra certificates}\label{sec:stability}

If one can show that a positive element $f_0$ is tight, then it is of interest to determine whether this is accompanied by a stability statement, i.e., if we can say that any flag $F \in \mF^\varnothing$ with $p(f_0; F)$ small must also be {\lq}close{\rq} to an element of some $\mS \subseteq \mF^\varnothing$.
 The most common measure of closeness between two flags $F$ and $F'$ is the \emph{edit distance} $\Delta_{\operatorname{edit}}(F, F')$ defined as the minimum number of edges in $F$ that need to be recolored in order to obtain a flag isomorphic to $F'$.
\begin{definition}\label{def:stability}
    $f_0$ is \emph{stable} with respect to some set $\mS\subseteq \mF^\varnothing$, if there exists an integer $n_0$ so that for any $\varepsilon > 0$ there exists some $\delta > 0$ such that for all $G\in\mF^\varnothing$ with $v(G) = n\ge n_0$ and $p(f_0; G) \leq \min_{F\in\mF^\varnothing_{n}}p(f_0;F) + \delta$ there exists $H\in\mS$ with $v(H)=v(G)$ and $\Delta_{\operatorname{edit}}(G,H) \le \varepsilon \binom{n}{2}$.
\end{definition}

One could obviously show stability with respect to $\mS = \mF^\varnothing$, so the goal is to choose $\mS$ as small as possible. Note that $\lim_{n \to \infty} \min_{F\in\mF^\varnothing_n}p(f_0;F) = 0$ since we are assuming $f_0$ a tight positive element. 

The general argument made by Cummings et al.~\cite{CummingsEtAl_2013} to establish exactness in the three-color case is also applicable to the four-color case. It can in fact also be formulated for the case of an arbitrary number of colors as well as strengthened to yield a stability result. The former of these two aspects is significant in that it theoretically implies that an equivalence of the two problems, that is determining the Ramsey number $R_{c-1}(3)$ and determining the Ramsey multiplicity $m_c(3)$, could be established without having solved both or even either of them. We will discuss this in more detail in \cref{sec:discussion}.
Most major proof ideas of this section will therefore build on~\cite{CummingsEtAl_2013}, but we provide a self-contained and slightly more verbose proof for the convenience of the reader.

In \cref{sec:formal_statement_theorem} we will introduce some notation necessary to then formally state the main result from which we will derive stability. It essentially says that if a particular set of colorings can be assumed to not occur in any extremal construction, we obtain stability. The fact that these colorings have zero density will be derived from the flag algebra certificate. In \cref{sec:existence_standard_graphs} we state the notion of a standard coloring, slightly modifying it from~\cite{CummingsEtAl_2013} in order to accommodate some new additional graphs introduced in the previous section. 
 \cref{sec:standard_graphs_basic} contains some results regarding these standard colorings.
 The contents of \cref{sec:Ramsey_mult_graphs} are new, necessitated by the fact that we state the proof without requiring explicit knowledge of the extremal Ramsey colorings underlying the construction. Finally, \cref{sec:lifting_standard_subcoloring} 
 contains only minor modifications of the arguments in~\cite{CummingsEtAl_2013},  while
 most of \cref{sec:remove_error_edges} has been strengthened to yield stability 
while also significantly simplifying the proof of \cref{lem:V_star_is_zero} by isolating and consistently applying an argument that decreases the number of triangles through \cref{alg:decrease_triangles}. A summary of the structure of the proof of our stability statement can be found in \cref{fig:proof_outline}.

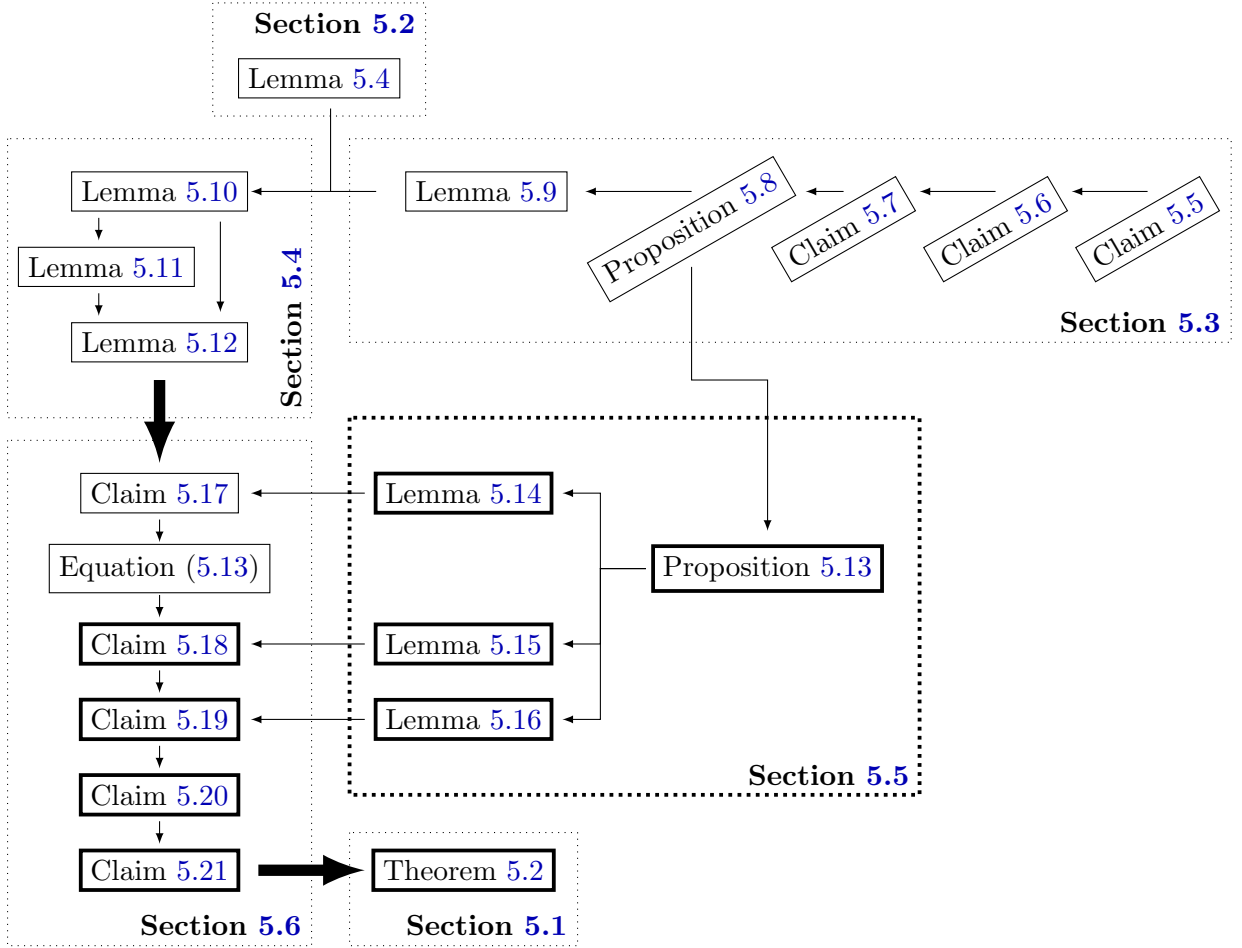
\begin{figure}[h]
    \bigskip
    \centering
    \begin{tikzpicture}


    \draw[dotted] (0.7,2.5) rectangle (3.5, 1.0);
    \node[anchor=north east] at (3.5, 2.5){\textbf{\cref{sec:existence_standard_graphs}}};
    
    \node[draw, rectangle] at (2.1,1.5) {\cref{lem:Claim_1}};
    \draw  (2.25, 1.1) -- (2.25, 0) ;

    \draw[dotted] (2.5,0.7) rectangle (14.1,-2);
    \node[anchor=south east] at (14.1,-2){\textbf{\cref{sec:standard_graphs_basic}}};
    
    \node[draw, rectangle, rotate=30] at (13,-0.5) {\cref{lem:Claim_2}};
    \draw[-latex]  (13.0,0) --  (12,0);
    \node[draw, rectangle, rotate=30] at (11,-0.5) {\cref{lem:Claim_4}};
    \draw[-latex]  (11,0) --  (10,0);
    \node[draw, rectangle, rotate=30] at (9,-0.5) {\cref{lem:standard_has_partition}};
    \draw[-latex]  (9.0,0) --  (8.5,0);
    \node[draw, rectangle, rotate=30] at (7,-0.5) {\cref{lem:standard_is_Ramsey}};
    \draw[-latex]  (7,0) --  (5.6,0);
    \node[draw, rectangle, rotate=0] at (4.3,0) {\cref{lem:standard_has_uniform_subgraph}};


    \draw[dotted, line width=0.5mm] (2.5,-3) rectangle (10.0,-8);
    \node[anchor=south east] at (10.0,-8) {\textbf{\cref{sec:Ramsey_mult_graphs}}};

    \draw[-latex]  (7.0,-1) -- (7.0,-2.5) -- (8.0,-2.5) --  (8.0,-4.5);
    \node[draw, rectangle, line width=0.5mm] at (8,-5) {\cref{th:extremals_are_Ramsey}};
    
    \draw[-latex]  (6.4, -5) -- (5.8, -5) -- (5.8, -4) -- (5.3, -4) ;
    \node[draw, rectangle, line width=0.5mm] at (4,-4) {\cref{lem:Degree_at_lest_two}};
    \draw[-latex]  (2.7, -4) -- (1.2, -4) ;

    \draw[-latex]  (6.4, -5) -- (5.8, -5) -- (5.8, -6) -- (5.3, -6) ;
    \node[draw, rectangle, line width=0.5mm] at (4,-6) {\cref{lem:two_disjoint_monochrom}};
    \draw[-latex]  (2.7, -6) -- (1.2, -6) ;

    \draw[-latex]  (6.4, -5) -- (5.8, -5) -- (5.8, -7) -- (5.3, -7) ;
    \node[draw, rectangle, line width=0.5mm] at (4,-7) {\cref{lem:exists_cherry_in_Ramsey}};
    \draw[-latex]  (2.7, -7) -- (1.2, -7) ;

    
    \draw[dotted] (-2,0.7) rectangle (2,-3);
    \node[anchor=south east] at (2,-3) 
    {\rotatebox{90}{{\textbf{\cref{sec:lifting_standard_subcoloring}}}}};

    \node[draw, rectangle] at (0,0) {\cref{lem:Claim_7}};
    \draw[-latex]  (2.85, 0) -- (1.2, 0) ;

    \draw[-latex]  (0.8, -0.4) -- (0.8, -1.6) ;
    \node[draw, rectangle] at (-0.7,-1) {\cref{lem:bound_for_zerosets}};

    \draw[-latex]  (-0.8, -0.35) -- (-0.8, -0.65) ;
    \draw[-latex]  (-0.8, -1.35) -- (-0.8, -1.65) ;
    
    \node[draw, rectangle] at (0,-2) {\cref{lem:Claim_8}};

    \draw[-latex, line width=1.5mm]  (0, -2.5) -- (0, -3.6) ;

    \draw[dotted] (-2,-3.3) rectangle (2,-10);
    \node[anchor=south east] at (2,-10) {\textbf{\cref{sec:remove_error_edges}}};

    \node[draw, rectangle] at (0,-4) {\cref{lem:Claim_10}};
    \draw[-latex]  (-0, -4.35) -- (-0, -4.65) ;
    
    \node[draw, rectangle] at (0,-5) {\cref{eq:f2}};
    \draw[-latex]  (-0, -5.35) -- (-0, -5.65) ;

    \node[draw, rectangle, line width=0.5mm] at (0,-6) {\cref{lem:V_star_is_zero}};
    \draw[-latex]  (-0, -6.35) -- (-0, -6.65) ;
    
    \node[draw, rectangle, line width=0.5mm] at (0,-7) {\cref{claim:f4}};
    \draw[-latex]  (-0, -7.35) -- (-0, -7.65) ;
    
    \node[draw, rectangle, line width=0.5mm] at (0,-8) {\cref{claim:f5}};
    \draw[-latex]  (-0, -8.35) -- (-0, -8.65) ;
    
    \node[draw, rectangle, line width=0.5mm] at (0,-9) {\cref{claim:f6}};

    
    \draw[dotted] (2.5,-8.5) rectangle (5.5,-10);
    \node[anchor=south east] at (5.5,-10)
    {\rotatebox{0}{{\textbf{\cref{sec:formal_statement_theorem}}}}};

    \node[draw, rectangle, line width=0.5mm] at (4,-9) {\cref{th:stability}};
    \draw[-latex, line width=1.5mm]  (1.3, -9) -- (2.7, -9) ;

    \end{tikzpicture}
    \caption{Outline of the proof of \cref{th:stability} with significant modifications from~\cite{CummingsEtAl_2013} highlighted.}
    \label{fig:proof_outline}
\end{figure}

\subsection{Formal statement of the stability result}\label{sec:formal_statement_theorem}

Let the number of colors $c$ be arbitrary but fixed throughout the rest of this section and exclusively consider the problem of minimizing the density of monochromatic triangles. 
Note that we already introduced the sets $\mG_{\operatorname{ex}}^{(3)}$ and $\mG_{\operatorname{ex}}^{(4)}$ in \cref{sec:background}. We will now generalize this for arbitrary $c$. Let $\mL_{\operatorname{ex}}^{(c)}$ denote all looped colorings in $\mL^{(c)}_{R_{c-1}(3)-1}$ where all loops get the same color and the non-loop edges form a $R_{(c-1)}(3)$-Ramsey graph with the remaining $c-1$ colors. Note that $|\mL_{\operatorname{ex}}^{(3)}| = 1 \cdot 3 = 3$ and $|\mL_{\operatorname{ex}}^{(4)}| = 2 \cdot 4 = 8$ since the respective Ramsey colorings are self-complementary. Next, we define $\mG^{(c)}_{\operatorname{ex}}$ as the set of all colorings
that can be obtained by selecting an element $G\in\mB_\mathbf{1/(R_{c-1}(3)-1)} (E)$ for some $E \in\mL_{\operatorname{ex}}^{(c)}$ and recoloring some edges of $G$ to the loop color of $C$ without creating any additional monochromatic triangles. Note that throughout, we will treat the elements of $\mG^{(c)}_{\operatorname{ex}}$ both as colorings and as flags of the empty type.

%
We will write $\hat K_3 = \sum_{i \in [c]} K_3^i$ for
the flag algebra element corresponding to the sum of all monochromatic triangles. Likewise, let $\hat K_{3,3}$ denote the sum of all flags consisting of two disjoint $c_1$- and $c_2$-colored triangles with the remaining edges colored with $c_3$, where $c_1,c_2,c_3\in [c]$ are distinct colors. The summands of $\hat K_{3,3}$ can pictorially be expressed as 
\[   
   \begin{tikzpicture}[line width=0.4mm, baseline=1ex,
        scale = 0.5]
        \node[circle,fill=black,draw,scale=0.4] (A) at (0,0) {};
        \node[circle,fill=black,draw, scale=0.4] (B) at (0,1) {};
        \node[circle,fill=black,draw, scale=0.4] (C) at (-0.66,0.5) {};
        \draw[red] (A) -- (B);
        \draw[red] (A) -- (C);
        \draw[red] (B) -- (C);
        \node[circle,fill=black,draw,scale=0.4] (E) at (2,0) {};
        \node[circle,fill=black,draw, scale=0.4] (F) at (2,1) {};
        \node[circle,fill=black,draw, scale=0.4] (G) at (2.66,0.5) {};
        \draw[blue] (E) -- (F);
        \draw[blue] (E) -- (G);
        \draw[blue] (F) -- (G);

        \draw[lime] (A) -- (E);
        \draw[lime] (B) -- (E);
        \draw[lime] (C) -- (E);
        \draw[lime] (A) -- (F);
        \draw[lime] (B) -- (F);
        \draw[lime] (C) -- (F);
        \draw[lime] (A) -- (G);
        \draw[lime] (B) -- (G);
        \draw[lime] (C) -- (G);
        
    \end{tikzpicture}.
\]

Given integers $k_1$ and $k_2\ge k_3$ with $k_1 + k_2 + k_3 = 3$, let $\mH(k_1,k_2,k_3)$ denote the family of (isomorphism classes of) flags of order $4$ that contain a subset of cardinality $3$ inducing a monochromatic triangle, where we denote the color of this triangle by $c_1$, and the fourth vertex is incident to $k_1,k_2,k_3$ edges that respectively have the distinct colors $c_1,c_2,c_3$. Using this, we let
\[
    \mH =
    \mH(2,1,0)\cup \mH(1,1,1)\cup \mH(0,2,1),
\]
the elements of which can pictorially be represented as
\[
        \begin{tikzpicture}[line width=0.4mm, baseline=1ex,
        scale = 0.5]
            \node[circle,fill=black,draw,scale=0.4] (A) at (0,0) {};
            \node[circle,fill=black,draw, scale=0.4] (B) at (1,0) {};
            \node[circle,fill=black,draw, scale=0.4] (C) at (1,1) {};
            \node[circle,fill=black,draw, scale=0.4] (D) at (0,1) {};
            \draw[red] (A) -- (B);
            \draw[red] (A) -- (C);
            \draw[red] (A) -- (D);
            \draw[blue] (B) -- (C);
            \draw[red] (B) -- (D);
            \draw[red] (C) -- (D);
        \end{tikzpicture}
    \hspace{9mm} 
    \begin{tikzpicture}[line width=0.4mm, baseline=1ex,
        scale = 0.5]
        \node[circle,fill=black,draw,scale=0.4] (A) at (0,0) {};
        \node[circle,fill=black,draw, scale=0.4] (B) at (1,0) {};
        \node[circle,fill=black,draw, scale=0.4] (C) at (1,1) {};
        \node[circle,fill=black,draw, scale=0.4] (D) at (0,1) {};
        \draw[red] (A) -- (B);
        \draw[red] (A) -- (C);
        \draw[red] (A) -- (D);
        \draw[blue] (B) -- (C);
        \draw[red] (B) -- (D);
        \draw[lime] (C) -- (D);
    \end{tikzpicture}
    \hspace{9mm} 
    \begin{tikzpicture}[line width=0.4mm, baseline=1ex,
        scale = 0.5]
        \node[circle,fill=black,draw,scale=0.4] (A) at (0,0) {};
        \node[circle,fill=black,draw, scale=0.4] (B) at (1,0) {};
        \node[circle,fill=black,draw, scale=0.4] (C) at (1,1) {};
        \node[circle,fill=black,draw, scale=0.4] (D) at (0,1) {};
        \draw[red] (A) -- (B);
        \draw[lime] (A) -- (C);
        \draw[red] (A) -- (D);
        \draw[blue] (B) -- (C);
        \draw[red] (B) -- (D);
        \draw[lime] (C) -- (D);
    \end{tikzpicture}.
    \]
    We will write $\hat K_{3,1}$ for the sum of all flags in $\mH$. 
    Using this notation, we can now formally state the main stability result.
    
\begin{theorem}\label{th:stability}
    If $\hat K_3 - \varsigma$ is a tight positive algebra element and there exists some $s>0$ such that $\hat K_3-\varsigma \ge s(\hat K_{3,1} + \hat K_{3,3})$, then $\hat K_3 - \varsigma$ is stable with respect to $\mG^{(c)}_{\operatorname{ex}}$ and therefore in particular $\varsigma = 1 / (R_{c-1}(3)-1)^2$.
\end{theorem}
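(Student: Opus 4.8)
First I would observe that the value $\varsigma = 1/(R_{c-1}(3)-1)^2$ is a formal consequence of the stability statement, so it suffices to prove the latter. Writing $r = R_{c-1}(3)-1$, \cref{lemma:blowup} applied to any $E\in\mL^{(c)}_{\operatorname{ex}}$ with uniform weights gives $m_c(3)\le 1/r^2$, and tightness of $\hat K_3-\varsigma$ gives $\varsigma = m_c(3)\le 1/r^2$; on the other hand every element of $\mG^{(c)}_{\operatorname{ex}}$ of order $n$ has monochromatic-triangle density $1/r^2+o(1)$, so applying stability to a sequence of colorings attaining $m_c(3;n)$ and using that an $\varepsilon\binom n2$-edit changes this density by $O(\varepsilon)$ yields $\varsigma\ge 1/r^2-O(\varepsilon)$ for every $\varepsilon>0$, hence $\varsigma = 1/r^2$. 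For the stability statement itself, the hypothesis $\hat K_3-\varsigma\ge s(\hat K_{3,1}+\hat K_{3,3})$ provides the key input: for any $\phi\in\operatorname{Hom}^+(\mA^\varnothing,\RR)$ extremal for $\hat K_3-\varsigma$ we get $0 = \phi(\hat K_3-\varsigma)\ge s\,\phi(\hat K_{3,1}+\hat K_{3,3})\ge 0$, so $\phi(\hat K_{3,1}) = \phi(\hat K_{3,3}) = 0$; by compactness and \cref{thm:homfunctional} this means that for every $\eta>0$ there is $\delta>0$ such that any coloring $G$ of large enough order $n$ with $p(\hat K_3;G)\le m_c(3;n)+\delta$ satisfies $p(\hat K_{3,1};G)+p(\hat K_{3,3};G)\le\eta$.

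\textbf{Standardization and the coarse partition.} Given such a near-extremal $G$, a removal-lemma argument for edge-colored complete graphs recolors $o(n^2)$ edges so as to destroy all induced copies of the flags in $\mH$ and of the summands of $\hat K_{3,3}$; since recoloring $o(n^2)$ edges perturbs every bounded-order density by $o(1)$, the resulting \emph{standard} coloring $G^*$ still has $p(\hat K_3;G^*)\le\varsigma+o(1)$. This is \cref{lem:Claim_1}; the rest of the argument is carried out on $G^*$ and pulled back to $G$ at the end via the triangle inequality for edit distance. Using $\mH$- and $\hat K_{3,3}$-freeness together with near-extremality, I would then produce a vertex partition $V(G^*) = V_1\cup\dots\cup V_m$ in which each $G^*[V_i]$ is essentially monochromatic in one ``loop'' color and the induced quotient coloring $C\in\mL^{(c)}$ — recording the typical color between $V_i$ and $V_j$ and the loop color of $V_i$ — has the property that its non-loop part carries no monochromatic triangle; this chains \cref{lem:Claim_2}, \cref{lem:Claim_4}, \cref{lem:standard_has_partition} and \cref{lem:standard_is_Ramsey}, with \cref{lem:standard_has_uniform_subgraph} extracting a large balanced blow-up of $C$ inside $G^*$. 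Since $c-1$ colors avoid monochromatic triangles only on at most $r$ vertices, the non-loop part of $C$ lives on at most $r$ parts.

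\textbf{The quotient is a Ramsey coloring.} The genuinely new part is \cref{sec:Ramsey_mult_graphs}, which must identify $C$ without ever inspecting the actual $R(3,3,3)$-colorings of $K_{16}$. The balanced blow-up of $C$ inside $G^*$ forces $p(\hat K_3;G^*)$ to be asymptotically at least the uniformly-weighted monochromatic-triangle density of $C$; combining this lower bound with $\mH$- and $\hat K_{3,3}$-freeness rules out every admissible $C$ except the ones in $\mL^{(c)}_{\operatorname{ex}}$, i.e. all loops carry a single common color, the non-loop edges form a $(c-1)$-color Ramsey coloring, and there are exactly $r$ parts. The supporting facts are that every part has the correct number of loop-type neighbors (\cref{lem:Degree_at_lest_two}), that $C$ contains no two disjoint monochromatic triangles in distinct colors (\cref{lem:two_disjoint_monochrom}, from $\hat K_{3,3}$-freeness), and a cherry-finding lemma in the Ramsey graph (\cref{lem:exists_cherry_in_Ramsey}); together these give \cref{th:extremals_are_Ramsey}.

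\textbf{Lifting, error edges, and the main obstacle.} Knowing $C\in\mL^{(c)}_{\operatorname{ex}}$, I would assign each vertex of $G^*$ to one of the $r$ classes so that it is consistent with the blow-up of $C$ on all but $o(n)$ of its edges (\cref{lem:Claim_7}, \cref{lem:bound_for_zerosets}, \cref{lem:Claim_8}), and call an edge an \emph{error edge} if its color disagrees with the blow-up pattern dictated by this assignment. The heart of the proof, \cref{lem:Claim_10} through \cref{claim:f6}, bounds the number of error edges by $o(n^2)$: a linear number of error edges of any prescribed type would either create $\omega(n^3)$ additional monochromatic triangles — pushing $p(\hat K_3;G^*)$ above $\varsigma = 1/r^2$ — or reintroduce a positive density of $\mH$- or $\hat K_{3,3}$-flags, the accounting being organized around the triangle-decreasing procedure of \cref{alg:decrease_triangles} and statements such as \cref{lem:V_star_is_zero}, \cref{claim:f4} and \cref{claim:f5}. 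Once only $o(n^2)$ error edges remain, $G^*$ is within $o(n^2)$ edit distance of a recolored blow-up of $C$, hence of an element of $\mG^{(c)}_{\operatorname{ex}}$, and together with the $o(n^2)$ edits from standardization this gives $\Delta_{\operatorname{edit}}(G,\mG^{(c)}_{\operatorname{ex}})\le\varepsilon\binom n2$ for $n$ large — which is \cref{th:stability}. I expect this error-edge count to be the main obstacle: it is a global estimate in which even error edges that are individually harmless must be shown to cost, in aggregate, more than $\varsigma\binom n3$ monochromatic triangles (or to force a forbidden $4$-vertex pattern), and the bookkeeping has to be uniform over the many types of error edge and over which of the $r$ classes they distort; the characterization \cref{th:extremals_are_Ramsey} is the other delicate point, since it has to exclude \emph{all} non-Ramsey quotient structures from the maximality of the Ramsey number and the two removal-derived constraints alone.
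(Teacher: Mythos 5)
Your first paragraph is fine as far as it goes, but your proposed standardization step contains a genuine misreading of the paper that propagates through the rest of the outline. You assert that \cref{lem:Claim_1} is ``a removal-lemma argument for edge-colored complete graphs [that] recolors $o(n^2)$ edges so as to destroy all induced copies of the flags in $\mH$ and of the summands of $\hat K_{3,3}$,'' producing a globally standard coloring $G^*$ on which the rest of the proof is run and whose edits are charged back to $G$ at the end. That is not what \cref{lem:Claim_1} does, and it is not how the argument is structured. The lemma is a sampling statement: a \emph{uniformly random} $n_1$-subset $\mathscr M\subseteq V(G)$ induces a $(\varsigma+\varepsilon_1)$-standard subcoloring with probability close to $1$, proved via \cref{lemma:double_counting} and Markov's inequality against the quantitative hypothesis \boxedchar{A2}. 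The paper never recolors $G$ at this stage; $G$ is kept fixed throughout \cref{sec:existence_standard_graphs}--\cref{sec:lifting_standard_subcoloring}, and the small random standard subcoloring is only used as a \emph{template}: a fixed good $W\subseteq V(G)$ of size $n_2$ is chosen in \cref{lem:Claim_7}, and the classification $V_i$, $F_i$, $E_0$, $V_0$ of \cref{eq:E0V0FiVi} sorts the vertices and edges of the \emph{original} $G$ according to how they interact with $W$. Only in \cref{sec:remove_error_edges} does any recoloring begin, and the bounds there (e.g.\ \cref{lem:Claim_10}, \cref{lem:V_star_is_zero}) measure edits against $p(\hat K_3;G)-\varsigma_n$ on $G$ itself, not on a pre-standardized $G^*$.

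This matters for more than attribution. If one really did pass to $G^*$ via an Alon--Shapira--type infinite removal lemma for colored graphs --- a substantially heavier piece of machinery than anything the paper invokes --- then the sampling and lifting lemmas you cite (\cref{lem:standard_has_uniform_subgraph}, \cref{lem:Claim_7}, \cref{lem:bound_for_zerosets}, \cref{lem:Claim_8}) would be both unnecessary and inapplicable, because they are formulated for $G$ relative to the reference set $W$, and the error-edge book-keeping in \cref{lem:Claim_10}--\cref{claim:f6} is built on the sets $V_i$ defined from $W$ rather than on the maximal monochromatic cliques $\mX_{G^*}$ of a standardized coloring. You would have to reprove those claims in your setting, and you would have to carefully track the threefold dependency $\delta \to \eta:=p(\hat K_{3,1}+\hat K_{3,3};G) \to g(\eta)$ (the removal-lemma edit bound, which is tower-type) to ensure both $\Delta_{\operatorname{edit}}(G,G^*)$ and the subsequent $p(\hat K_3;G^*)-\varsigma_n$ are small enough; neither of these is addressed in the proposal. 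The route is plausibly salvageable, and your high-level picture of \cref{sec:standard_graphs_basic}, \cref{sec:Ramsey_mult_graphs}, and the error-edge counting is accurate, but as written the first step conflates two incompatible frameworks and leaves the quantitative cleanup to references that do not apply.
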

Let us introduce a shorthand notation for the assumptions of \cref{th:stability} by defining
\begin{itemize} \itemsep0em 
    \item[\boxedchar{A1}] we know that $\hat K_3 - \varsigma$ is a tight positive algebra element,
    \item[\boxedchar{A2}] there exists some $s > 0$ satisfying $\hat K_3-\varsigma \ge s(\hat K_{3,1} + \hat K_{3,3})$, and
    \item[\boxedchar{A3}] we are given some large enough $G \in \mF^\varnothing$ whose order we will denote by $n$.
\end{itemize}
The flag $G$ will serve as a fixed but arbitrary instance for which the properties of \cref{def:stability} need to be shown. Throughout the following sections, we will also use constants $\varepsilon_0,...,\varepsilon_8,\delta_0\in\mathbb R$ and $n_1,n_2\in \mathbb N$ that depend on the number of colors $c$ and the constant $s$ as given by \boxedchar{A2} along with both $C > 0$ and $n_0 \in\mathbb N$ from \cref{def:stability}. They will satisfy
\[
    0 < \frac{1}{n_0} < \varepsilon_0 < \frac{1}{n_1} < \varepsilon_1 <
    \varepsilon_2 < \varepsilon_3 < \frac{1}{n_2} < \varepsilon_4
    < \varepsilon_5 < \varepsilon_6 < \varepsilon_7 <
    1-\delta_0 < \varepsilon_8 < 1
\]
along with certain constraints that are not further specified here but will be \whiteboxedchar{highlighted} throughout for emphasis.
Note that we will assume $n = v(G) \geq n_0$ throughout with $G$ given by \boxedchar{A3}. 

\subsection{Definition and existence of standard flags}\label{sec:existence_standard_graphs}

The following definition now introduces a notion of flags that have sufficiently low $\hat K_3$-density as well as zero $\hat K_{3,1} + \hat K_{3,3}$ density.

\begin{definition}
    A flag $F\in\mF^\varnothing$ is \emph{standard} if $p(\hat K_{3,1} + \hat K_{3,3}; F)= 0$. It is \emph{$\alpha$-standard} for some $\alpha>0$ if additionally  $p(\hat K_3;F) \le  \alpha$.
\end{definition}

\begin{lemma}\label{lem:Claim_1} Assume \boxedchar{A1}, \boxedchar{A2}, and \boxedchar{A3}. For a subset of $\mathscr M \subseteq V(G)$ of size $n_1$ chosen uniformly at random, we have 
\[
    \mathbb P[G[\mathscr M] \text{ is }
    (\varsigma+\varepsilon_1)\text{-standard}]
    \ge 1 - 
    \left(\frac{1}{\varepsilon_1(1+\varepsilon_1)} + \frac{\binom{n_1}{6}}{s} \right)
    \left(p(\hat K_3;G)-\varsigma\right)
    -
    \left( 
    \frac{1}{1+\varepsilon_1}+\frac{1}{s}
    \right)
    \varepsilon_1.
\]

\end{lemma}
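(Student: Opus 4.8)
The plan is to bound the complementary event that $G[\mathscr M]$ fails to be $(\varsigma+\varepsilon_1)$-standard by a union bound over its two causes: either $p(\hat K_{3,1}+\hat K_{3,3};G[\mathscr M])\neq 0$, so $G[\mathscr M]$ is not standard at all, or $p(\hat K_3;G[\mathscr M])>\varsigma+\varepsilon_1$. Both will be controlled by Markov's inequality, the common ingredient being that for a uniformly random $n_1$-subset $\mathscr M$ and any flag $H$ of order at most $n_1$ one has $\mathbb E_{\mathscr M}\big[p(H;G[\mathscr M])\big]=\sum_{F\in\mF^\varnothing_{n_1}}p(H;F)\,p(F;G)=p(H;G)$, which is the $\ell=1$, $\tau=\varnothing$ instance of the averaging identity in \cref{lemma:double_counting} (using $v(G)\ge n_0> n_1$, which is forced by the ordering of the constants).

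For the non-standardness event, I would first observe that since $n_1$ is large, any $F\in\mF^\varnothing_{n_1}$ with $p(\hat K_{3,1}+\hat K_{3,3};F)>0$ in fact satisfies $p(\hat K_{3,1}+\hat K_{3,3};F)\ge\binom{n_1}{6}^{-1}$, this being the smallest nonzero value a sum of a $4$-vertex flag density and a $6$-vertex flag density can attain. By Markov's inequality and the averaging identity,
\[
    \mathbb P[G[\mathscr M]\text{ not standard}]\le\binom{n_1}{6}\,\mathbb E_{\mathscr M}\!\big[p(\hat K_{3,1}+\hat K_{3,3};G[\mathscr M])\big]=\binom{n_1}{6}\,p(\hat K_{3,1}+\hat K_{3,3};G).
\]
Now invoke \boxedchar{A2}: since $\hat K_3-\varsigma-s(\hat K_{3,1}+\hat K_{3,3})$ is a positive element, the quantitative statement established inside the proof of \cref{lemma:orderpreserving} (almost-nonnegativity of the density of a positive element on all sufficiently large flags) gives, for $G$ of order at least a threshold depending only on $c$, $s$, $\varepsilon_1$, $n_1$ — a constraint we fold into the lower bound on $n_0$ implicit in \boxedchar{A3} — that $p\big(\hat K_3-\varsigma-s(\hat K_{3,1}+\hat K_{3,3});G\big)\ge -s\varepsilon_1/\binom{n_1}{6}$, i.e. $p(\hat K_{3,1}+\hat K_{3,3};G)\le\tfrac1s\big(p(\hat K_3;G)-\varsigma\big)+\varepsilon_1/\binom{n_1}{6}$. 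Substituting back yields $\mathbb P[G[\mathscr M]\text{ not standard}]\le\tfrac{\binom{n_1}{6}}{s}\big(p(\hat K_3;G)-\varsigma\big)+\tfrac{\varepsilon_1}{s}$.

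For the density event, the extra input is that $n_1$ is chosen large enough that $\min_{F\in\mF^\varnothing_{n_1}}p(\hat K_3;F)\ge\varsigma-\varepsilon_1^2$; this is possible because positivity of $\hat K_3-\varsigma$ (part of \boxedchar{A1}) forces $\lim_n\min_{F\in\mF^\varnothing_n}p(\hat K_3;F)=m_c(3)\ge\varsigma$ while this minimum is monotone in $n$ by \cref{lemma:double_counting}. Writing $Z=p(\hat K_3;G[\mathscr M])$, the variable $Z-(\varsigma-\varepsilon_1^2)$ is then nonnegative with expectation $p(\hat K_3;G)-\varsigma+\varepsilon_1^2$, so Markov's inequality gives
\[
    \mathbb P[Z>\varsigma+\varepsilon_1]\le\mathbb P\big[Z-(\varsigma-\varepsilon_1^2)\ge\varepsilon_1(1+\varepsilon_1)\big]\le\frac{p(\hat K_3;G)-\varsigma+\varepsilon_1^2}{\varepsilon_1(1+\varepsilon_1)}=\frac{p(\hat K_3;G)-\varsigma}{\varepsilon_1(1+\varepsilon_1)}+\frac{\varepsilon_1}{1+\varepsilon_1}.
\]
Adding the two bounds and subtracting from $1$ produces exactly the claimed inequality. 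The main obstacle is not any individual estimate but making sure the thresholds thereby imposed on $n_0$ and $n_1$ — namely $p\big(\hat K_3-\varsigma-s(\hat K_{3,1}+\hat K_{3,3});G\big)\ge -s\varepsilon_1/\binom{n_1}{6}$ and $\min_{F\in\mF^\varnothing_{n_1}}p(\hat K_3;F)\ge\varsigma-\varepsilon_1^2$ — are compatible with the global ordering $\tfrac1{n_0}<\varepsilon_0<\tfrac1{n_1}<\varepsilon_1<\dots$ fixed once and for all for the stability proof and depend only on $c$ and $s$; these are precisely the \whiteboxedchar{unspecified constraints} referred to in the setup. Everything else is a routine application of Markov's inequality and the averaging identity.
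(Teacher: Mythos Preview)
Your argument is correct and follows essentially the same route as the paper: a union bound over the two failure modes, each controlled by Markov's inequality applied to the random densities $p(\hat K_3;G[\mathscr M])$ and $p(\hat K_{3,1}+\hat K_{3,3};G[\mathscr M])$, with expectations supplied by the averaging identity and the slack quantified via \boxedchar{A1} and \boxedchar{A2}. The only cosmetic differences are that the paper splits the non-standardness event into separate Markov bounds for $\hat K_{3,1}$ and $\hat K_{3,3}$ before recombining (using $\binom{n_1}{4}\le\binom{n_1}{6}$), whereas you handle the sum directly, and that you are slightly more explicit than the paper about the lower bound $\varsigma-\varepsilon_1^2$ needing to hold for the $n_1$-vertex subgraph $G[\mathscr M]$ rather than for $G$ itself.
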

\begin{proof}
    By \cref{lemma:double_counting}, we have both
    \begin{align*}
        \mathbb E[p(\hat K_3;G[\mathscr M])]
        =
        p(\hat K_3;G) \quad \text{and} \quad
        \mathbb E[p(\hat K_{3,1} + \hat K_{3,3};G[\mathscr M])]
        =
        p(\hat K_{3,1} + \hat K_{3,3};G).
    \end{align*}
    Let $x=\mathbb P[p(\hat K_3;G[\mathscr M])>\varsigma+\varepsilon_1]$ and note that, for \whiteboxedchar{sufficiently large $n_0$}, we have by \boxedchar{A1} that $p(\hat K_3; G)\ge \varsigma-\varepsilon_1^2$.
    By approximating $p(\hat K_3;G)$ through a step function, we have
    \[
    (\varsigma-\varepsilon_1^2) \, (1-x)
    +
    (\varsigma+\varepsilon_1) \, x  
    \le
    p(\hat K_3;G),
    \]
    and therefore
    \begin{equation} \label{eq:k3bnd}
        \mathbb P[p(\hat K_3;G[\mathscr M] > \varsigma + \varepsilon_1) = x
        \le
        \frac{p(\hat K_3;G)-\varsigma+\varepsilon_1^2}{\varepsilon_1(1+\varepsilon_1)}.
    \end{equation}
    Using Markov's inequality, we get
    \begin{align*}
        \mathbb P[ 
        p(\hat K_{3,1} + \hat K_{3,3}; G[\mathscr M]) > 0] &\leq \mathbb P \left[p(\hat K_{3,1};G[\mathscr M]) \ge \binom{n_1}{4}^{-1} \right]
        +\mathbb P \left[p(\hat K_{3,3};G[\mathscr M]) \ge \binom{n_1}{6}^{-1} \right] \\ 
        &\le \binom{n_1}{4} \, p(\hat K_{3,1};G)
        +\binom{n_1}{6} \, p(\hat K_{3,3};G).
    \end{align*}
    Using \boxedchar{A2}, we have for \whiteboxedchar{sufficiently large $n_0$}
    \[
        p(\hat K_3;G)-\varsigma
        \ge
        s\left(p(\hat K_{3,1};G) + p(\hat K_{3,3}; G)\right) - \varepsilon_1 \binom{n_1}{6}^{-1}.
    \]
    Assuming that \whiteboxedchar{$n_1$ is large enough} for ${n_1 \choose 4} \leq {n_1 \choose 6}$ to hold, we therefore get
    \begin{equation}
        \mathbb P[ 
        p(\hat K_{3,1} + \hat K_{3,3}; G[\mathscr M]) > 0] \le \frac{\binom{n_1}{6}(p(\hat K_3;G)-\varsigma)+\varepsilon_1}{s}. \label{eq:k31k33bnd}
    \end{equation}
    Finally, through the union bound and by combining \cref{eq:k3bnd} as well as \cref{eq:k31k33bnd}, we therefore get
    \begin{align*}
        & \quad \mathbb P[G[\mathscr M] \text{ is } (\varsigma+\varepsilon_1)\text{-standard}] \\
        & = 1 - \mathbb P[p(\hat K_3;G[\mathscr M]) >  \varsigma+\varepsilon_1 \vee 
        p(\hat K_{3,1} + \hat K_{3,3}; G[\mathscr M]) > 0] \\
        & \geq 1 - \mathbb P[p(\hat K_3;G[\mathscr M]) >  \varsigma+\varepsilon_1] - \mathbb P[ 
        p(\hat K_{3,1} + \hat K_{3,3}; G[\mathscr M]) > 0] \\
        & \geq 1 - \frac{p(\hat K_3;G)-\varsigma+\varepsilon_1^2}{\varepsilon_1(1+\varepsilon_1)} - \frac{\binom{n_1}{6}(p(\hat K_3;G)-\varsigma)+\varepsilon_1}{s} \\
        & = 1 - 
            \left(\frac{1}{\varepsilon_1(1+\varepsilon_1)} + \frac{\binom{n_1}{6}}{s} \right)
            \left(p(\hat K_3;G)-\varsigma\right)
            -
            \left( 
            \frac{1}{1 + \varepsilon_1}+\frac{1}{s}
            \right)
            \varepsilon_1,
    \end{align*}
    implying the statement of the lemma.
\end{proof}

\subsection{Properties of standard flags \label{sec:standard_graphs_basic}}

Throughout, we will use the following notation: given two disjoint sets $S_1, S_2$, we write $S_1 \ast S_2 = \big\{ \{ v_1, v_2 \} \mid v_1 \in S_1, v_2 \in S_2\big\}$ for the set of all edges between them. If $S_1=\{v\}$, we simply write $v \ast S_2$.
Given a set of edges $M \subseteq S_1 \ast S_2$ and a vertex $v\in S_1$,
we let $M(v)$ denote the set $\{ u \mid \{u,v\} \in M\}$. Note that $|M(v)| \in \{0,1\}$ when $M$ is a matching.
For an $\alpha$-standard  graph $F$ let $\mX_F$ denote the set of all maximal sets of vertices inducing a monochromatic clique of size at least $\max(c+1,6)$ in $F$. Note that by Ramsey's theorem, $\mX_F$ is non-empty when \whiteboxedchar{$n_1$ is large enough}.

\begin{claim} \label{lem:Claim_2} Let $F$ be standard. Let $X \in \mX_F$ and $c_1$ the color of the monochromatic clique $F[X]$. For any
$y \in V(F)\setminus X$
there exists at most one edge $e_0 \in y \ast X$ with $F(e_0) = c_1$ as well as a color $c_2 \neq c_1$ such that $F(e) = c_2$ for all $e \in y \ast X \setminus \{e_0\}$.
\end{claim}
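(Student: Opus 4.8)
The plan is to argue purely combinatorially, using that $F$ is standard (so $p(\hat K_{3,1};F)=0$ and $p(\hat K_{3,3};F)=0$) together with the fact that $|X|\ge 6$, hence in particular $|X|\ge 3$. Fix $y\in V(F)\setminus X$ and write $c_1$ for the color of the monochromatic clique $F[X]$. The key observation is that a $c_1$-colored edge from $y$ into $X$ is very restrictive: if $\{y,x\}$ and $\{y,x'\}$ were both $c_1$-colored for distinct $x,x'\in X$, then $\{y,x,x'\}$ would induce a monochromatic $c_1$-triangle, and any third vertex $x''\in X\setminus\{x,x'\}$ (which exists since $|X|\ge 3$) sees this triangle along three $c_1$-edges, producing a copy of the member of $\mH(2,1,0)$ with all four vertices… wait — that configuration has the fourth vertex incident to three edges of the \emph{same} color $c_1$, which is not in $\mH$. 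So I need to be more careful: I would instead look at the edges from $y$ to $X\setminus\{x,x'\}$. If some $\{y,x''\}$ has color $c_1$ we iterate; otherwise pick $x''$ with $F(\{y,x''\})=c_2\ne c_1$, and then the vertex set $\{y,x,x''\}$ together with… hmm, this still needs the triangle plus a suitably-colored fourth vertex. The clean way: take the monochromatic $c_1$-triangle $T=\{x,x',x''\}\subseteq X$ (exists since $|X|\ge 3$) and add $y$; then $y$ is incident to edges $\{y,x\},\{y,x'\}$ of color $c_1$ and $\{y,x''\}$ of some color. If $F(\{y,x''\})=c_1$ we get a monochromatic $K_4$, hence $p(\hat K_{3,1};F)>0$ is not immediately forced, so instead: among the $\ge 6$ vertices of $X$, if $y$ had two $c_1$-edges into $X$, say to $a,b$, then for \emph{every} other $w\in X$ the set $\{y,a,b,w\}$ contains the $c_1$-triangle $\{y,a,b\}$ and $w$ is joined to it by the $c_1$-edges $\{w,a\},\{w,b\}$ plus the edge $\{w,y\}$; if $F(\{w,y\})=c_1$ this is a mono $K_4$ (contains a member of $\mH(2,1,0)$? no — fourth vertex has three $c_1$-edges). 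The escape is that a monochromatic $K_4$ in color $c_1$ forces a member of $\mH$ via \emph{any further vertex}; but more directly, I would use that $|X|\ge 6\ge c+1$ forces, by Ramsey-type counting on $X\cup\{y\}$, a $c_1$-triangle with its fourth vertex $y$ incident to colors $(2,1,0)$ or $(1,1,1)$ or $(0,2,1)$ unless $y$ sends at most one $c_1$-edge into $X$.

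Concretely, I would structure it as follows. \textbf{Step 1:} Show that $y$ is incident to at most one $c_1$-edge in $y\ast X$. Suppose not, so $\{y,a\}$ and $\{y,b\}$ both have color $c_1$ for distinct $a,b\in X$. Since $|X|\ge 6$, pick any $w\in X\setminus\{a,b\}$; the edges $\{a,b\},\{a,w\},\{b,w\}$ all have color $c_1$, so $\{a,b,w\}$ is a mono $c_1$-triangle, and the fourth vertex $y$ is incident to $\{y,a\},\{y,b\}$ of color $c_1$ and $\{y,w\}$ of some color $d$. If $d\ne c_1$ this flag lies in $\mH(2,1,0)$, contradicting $p(\hat K_{3,1};F)=0$. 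Hence $F(\{y,w\})=c_1$ for every $w\in X\setminus\{a,b\}$; but then $\{y,a,b\}$ together with two such vertices $w_1,w_2$ (available since $|X|\ge 6 > 4$) — actually now the $c_1$-triangle $\{a,w_1,w_2\}$ has fourth vertex $y$ with all three edges $c_1$: still a mono $K_4$. To kill this, take instead the triangle $\{y,a,w_1\}$ (all $c_1$) and fourth vertex $b$: $b$ has $\{b,a\}$ and $\{b,w_1\}$ of color $c_1$ and $\{b,y\}$ of color $c_1$ — again mono. The genuine fix: a monochromatic $K_4$ in color $c_1$ on $\{y,a,b,w_1\}$ extended by \emph{yet another} vertex $w_2\in X$ (exists, $|X|\ge 6$): the triangle $\{y,a,b\}$ with fourth vertex $w_2$ has $\{w_2,a\},\{w_2,b\}$ of color $c_1$ and $\{w_2,y\}$ — if that last edge is $c_1$ we have a $K_5$ monochromatic, and so on, eventually $X\cup\{y\}$ is a monochromatic clique of size $\ge 7$, contradicting the \emph{maximality} of $X$ in $\mX_F$. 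This is the crux and the intended argument: either some edge $\{y,w\}$ is non-$c_1$ giving a member of $\mH(2,1,0)$, or all are $c_1$ and $X\cup\{y\}$ enlarges $X$.

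\textbf{Step 2:} Let $e_0$ be the unique $c_1$-edge in $y\ast X$ if it exists (else $e_0$ undefined and $y\ast X$ has no $c_1$-edge). Show all edges in $y\ast X\setminus\{e_0\}$ share one color $c_2\ne c_1$. Suppose $\{y,u\}$ has color $c_2$ and $\{y,v\}$ has color $c_3$ with $c_2\ne c_3$, both $\ne c_1$, for distinct $u,v\in X\setminus\{e_0\ \text{endpoint}\}$. Pick $w\in X\setminus\{u,v,\text{endpoint of }e_0\}$ (possible since $|X|\ge 6$ leaves room even after removing $\le 3$ vertices): $\{u,v,w\}$ is a $c_1$-triangle, and $y$ is joined to it by $\{y,u\}$ of color $c_2$, $\{y,v\}$ of color $c_3$, $\{y,w\}$ of some color $d$. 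If $d=c_1$, this flag is in $\mH(1,1,1)$; if $d=c_2$, it is in $\mH(0,2,1)$ with $(c_1;c_2,c_3)$; if $d=c_3$, likewise in $\mH(0,2,1)$; if $d$ is a fourth color, it is in $\mH(1,1,1)$ after relabelling (the fourth vertex touches three pairwise-distinct colors none equal to $c_1$, which is covered once we note $\mH(1,1,1)$ as defined requires the fourth vertex to see colors $c_1,c_2,c_3$ — so if $d\notin\{c_1,c_2,c_3\}$ I instead use the pair $\{y,u\},\{y,w\}$ of colors $c_2,d$ and fourth vertex $v$: now $v$ sees $\{v,u\},\{v,w\}$ of color $c_1$ and $\{v,y\}$ of color $c_3$, i.e.\ an $\mH(2,1,0)$-type with triangle color $c_1$ — contradiction again). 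Either way $p(\hat K_{3,1};F)>0$, a contradiction. Hence all non-$e_0$ edges from $y$ to $X$ have a common color $c_2$, and $c_2\ne c_1$ since otherwise there is a second $c_1$-edge. This completes the proof; I have not needed the $\hat K_{3,3}$ hypothesis here, only $p(\hat K_{3,1};F)=0$ and $|X|\ge\max(c+1,6)\ge 6$ together with maximality of $X$.

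\textbf{Main obstacle.} The delicate point is Step 1: ruling out two $c_1$-edges from $y$ into $X$ is not immediate because the naive extension produces a \emph{monochromatic} $K_4$, which is not literally one of the forbidden flags in $\mH$. The resolution is to push the monochromatic clique all the way up using every remaining vertex of $X$ (there are at least $|X|-2\ge 4$ of them) until either a non-$c_1$ edge appears — yielding a genuine member of $\mH(2,1,0)$ — or the whole set $X\cup\{y\}$ is monochromatic in $c_1$, contradicting the choice of $X$ as a \emph{maximal} such clique in $\mX_F$. Getting the bookkeeping of "which three vertices form the triangle and which is the fourth" right, so that one always lands in $\mH(2,1,0)$ rather than in a monochromatic $K_4$, is the only real care the argument requires; everything else is a short case check.
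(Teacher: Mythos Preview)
Your Step~1 is correct and matches the paper: two $c_1$-edges from $y$ into $X$ either force every edge of $y\ast X$ to be $c_1$ (contradicting maximality of $X$) or produce a member of $\mH(2,1,0)$ via the triangle $\{a,b,w\}$.

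Your Step~2, however, has a genuine gap in the ``fourth color'' sub-case. With $\{y,u\},\{y,v\},\{y,w\}$ of pairwise distinct colors $c_2,c_3,d$ all different from $c_1$, you try to salvage a forbidden flag by taking $v$ as the fourth vertex. But then the remaining triple is $\{y,u,w\}$, whose edges have colors $c_2$, $d$, and $c_1$ --- this is \emph{not} a monochromatic triangle, so no member of $\mH$ arises. In fact the only monochromatic triangle on $\{y,u,v,w\}$ is $\{u,v,w\}$, and adjoining $y$ gives the pattern $(0,1,1,1)$, which is not in $\mH(2,1,0)\cup\mH(1,1,1)\cup\mH(0,2,1)$. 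So for $c\ge 4$ your argument as written does not close.

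The paper avoids this by \emph{not} excluding the endpoint $a$ of $e_0$ when forming the triangle. If $e_0=\{y,a\}$ exists and two other edges $\{y,u\},\{y,v\}$ carry distinct colors $c_2\ne c_3$ (both $\ne c_1$), then the triangle $\{a,u,v\}\subseteq X$ is $c_1$-monochromatic and $y$ meets it in colors $c_1,c_2,c_3$, giving $\mH(1,1,1)$ directly --- no third auxiliary vertex $w$ is needed. When no $c_1$-edge exists, the paper instead uses pigeonhole on $|X|\ge c+1$ edges distributed among at most $c-1$ colors to force a repeated color $c_2$ on some $u_1,u_2$, and then any differently-colored $v$ yields $\mH(0,2,1)$ via the triangle $\{u_1,u_2,v\}$. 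Either of these two moves repairs your Step~2.
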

\begin{proof}
    If there exists an edge $e_0 \in y \ast X$ satisfying $F(e_0) = c_1$, then all other edges in $y \ast X$ must be monochromatic with a color distinct from $c_1$; if they were monochromatic with $c_1$, this would contradict the maximality of $X$, and if they were not monochromatic, then $F$ would contain an element in $\mH(2,1,0)$ or $\mH(1,1,1)$ as a subcoloring, contradicting the assumption that $F_1$ is $\varepsilon_1$-standard.
    
    If there does \emph{not} exists an edge $e_0 \in y \ast X$ satisfying $F(e_0) = c_1$, then all edges in $y \ast X$ must be monochromatic with a color distinct from $c_1$; if they were monochromatic with $c_1$, this would again contradict the maximality of $X$, and if they were not monochromatic, then by the pigeonhole principle $F$ would contain an element in $\mH(0,2,1)$ since the cardinality of $X$ is at least $c+1$, again contradicting the assumption that $F_1$ is $\varepsilon_1$-standard.
\end{proof} 

\begin{claim}\label{lem:Claim_4}
Let $F$ be standard. 
For given distinct sets $X_1,X_2\in\mX_F$ respectively corresponding to maximal cliques that are monochromatic with colors $c_1$ and $c_2$, the following holds:
    \begin{itemize}
        \item If $X_1\cap X_2 = \varnothing$, then $c_1 = c_2$ and there exists a matching  $M\subseteq X_1\ast X_2$ such that all edges in $M$ have color $c_1=c_2$ and all edges in $X_1\ast X_2 \setminus M$ have the same color $c_3$ that is distinct from $c_1$ and $c_2$.
        \item If $X_1\cap X_2= \{v\}$, then $c_1 = c_2$ and all edges in $(X_1\setminus \{v\})\ast (X_2\setminus \{v\})$ have the same color $c_3$ that is distinct from $c_1$ and $c_2$.
        \item $|X_1\cap X_2| \geq 2$ cannot occur.
    \end{itemize}
\end{claim}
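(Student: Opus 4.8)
The three cases correspond to analyzing, for two maximal monochromatic cliques $X_1, X_2 \in \mX_F$, how the edges between them (or the structure near their intersection) are constrained by \cref{lem:Claim_2} together with the standardness hypothesis, i.e., the absence of the subcolorings in $\mH$ and of $\hat K_{3,3}$. The unifying idea is: pick a representative vertex in one clique, apply \cref{lem:Claim_2} to learn the ``majority color'' of the edges from that vertex to the other clique, and then play these observations for several vertices off against each other, using that both $X_1$ and $X_2$ are large (size at least $\max(c+1,6) \geq 6$) so that the pigeonhole principle and Ramsey-type counting bite.

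\textbf{Case $|X_1 \cap X_2| \geq 2$.} I would handle this first since it is the cleanest. Suppose $u, v \in X_1 \cap X_2$. Since $F[X_1]$ is monochromatic of color $c_1$ and $F[X_2]$ is monochromatic of color $c_2$, the edge $\{u,v\}$ has both color $c_1$ and color $c_2$, forcing $c_1 = c_2$. But then $X_1 \cup X_2$ induces a monochromatic clique of color $c_1$: every edge inside $X_1$ or inside $X_2$ has color $c_1$, and for an edge $\{x_1, x_2\}$ with $x_1 \in X_1 \setminus X_2$, $x_2 \in X_2 \setminus X_1$, applying \cref{lem:Claim_2} to $X_1$ and the vertex $x_2 \notin X_1$: the edges from $x_2$ to $X_1$ are (almost all) a single color $c'$, but $x_2$ also sees $u, v \in X_1 \cap X_2$ via color $c_1$ (those edges lie in $X_2$), and since there are two such edges of color $c_1 = c_2$ from $x_2$ into $X_1$, the ``at most one exceptional edge'' clause of \cref{lem:Claim_2} forces $c' = c_1$, hence $F(\{x_1,x_2\}) = c_1$. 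So $X_1 \cup X_2 \supsetneq X_1$ is a larger monochromatic clique, contradicting maximality of $X_1$ (note $X_1 \neq X_2$).

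\textbf{Case $X_1 \cap X_2 = \{v\}$ and case $X_1 \cap X_2 = \varnothing$.} For the one-vertex intersection: as above, the shared vertex argument is unavailable for the color equality since there is no shared edge, so instead I would argue via \cref{lem:Claim_2} applied to $X_1$ and some $y \in X_2 \setminus \{v\}$, getting a majority color $c_2'$ on $y \ast X_1$. Because $v \in X_1$ and $\{v,y\}$ lies in $X_2$, that edge has color $c_2$; if $c_2 \neq c_2'$ it is the unique exceptional edge allowed by \cref{lem:Claim_2}. Doing this for \emph{two} distinct vertices $y, y' \in X_2 \setminus \{v\}$ (possible since $|X_2| \geq 6$) and using that $\{y, y'\} \in X_2$ has color $c_2$, one shows the majority colors coincide and, combined with forbidding $\mH(2,1,0), \mH(1,1,1), \mH(0,2,1)$ among $\{v, y, y', x\}$ for $x \in X_1 \setminus \{v\}$, pins down that all of $(X_1\setminus\{v\}) \ast (X_2 \setminus \{v\})$ is a single color $c_3 \neq c_1, c_2$ and that $c_1 = c_2$ (otherwise $X_1 \cup \{y\}$ or a monochromatic extension contradicts maximality, or one produces a forbidden $\hat K_{3,1}$). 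For the disjoint case, the same machinery gives, for each $y \in X_2$, a majority color on $y \ast X_1$ that must be $c_1$ (else maximality of $X_1$ or a forbidden configuration), with at most one exceptional edge $e_0(y)$; symmetrically from the $X_2$ side. The exceptional edges from both sides must be consistent, and a short argument (if two vertices $y, y' \in X_2$ had exceptional edges to the \emph{same} $x \in X_1$, or if the exceptional edges failed to form a matching, we would get a forbidden subcoloring among four vertices, or a $\hat K_{3,3}$ from the two cliques) shows the non-$c_1$ edges all share one color $c_3$ and the $c_1$-edges form a matching $M$; finally $c_1 = c_2$ follows because a $c_1$-majority edge structure between the cliques together with $F[X_2]$ monochromatic of $c_2 \neq c_1$ would let us extend $X_1$ or produce a forbidden element of $\mH$.

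\textbf{Main obstacle.} The genuinely fiddly part is the disjoint case: coordinating the ``exceptional'' edges seen from the $X_1$-side and from the $X_2$-side and verifying they assemble into a single matching with a single off-color $c_3$, rather than merely a bounded-degree subgraph, requires carefully checking several small ($\leq 6$-vertex) configurations against the forbidden family $\mH \cup \{\hat K_{3,3}\text{-summands}\}$ and exploiting $|X_i| \geq \max(c+1,6)$ in the pigeonhole steps so that no color other than $c_1, c_3$ can appear. I expect the color-equality claim $c_1 = c_2$ and the ``matching, not just low-degree'' claim to be where all the casework concentrates; everything else is a direct application of \cref{lem:Claim_2}.
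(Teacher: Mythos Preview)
Your plan contains a genuine misreading of \cref{lem:Claim_2} that derails the disjoint case and muddles the other two.

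\medskip

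\textbf{The majority/exception roles are swapped.} \cref{lem:Claim_2} says that for $y\notin X$ with $X$ a maximal $c_1$-clique, \emph{at most one} edge in $y\ast X$ has colour $c_1$, and \emph{all the others} share a single colour $c'\neq c_1$. So the majority colour on $y\ast X_1$ is \emph{never} $c_1$; the $c_1$-coloured edge is the exception. Your disjoint-case sentence ``a majority color on $y\ast X_1$ that must be $c_1$ (else maximality \ldots)'' is therefore exactly backwards, and the subsequent reasoning about exceptional edges collapsing into a matching is built on the wrong premise. The same confusion appears in your $|X_1\cap X_2|\geq 2$ case: once $x_2\in X_2\setminus X_1$ has \emph{two} $c_1$-coloured edges into $X_1$ (to $u$ and $v$), that is already an immediate contradiction to \cref{lem:Claim_2}; there is no valid step ``forces $c'=c_1$'' to continue from.

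\medskip

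\textbf{Where $\hat K_{3,3}$ enters.} You invoke $\hat K_{3,3}$ only when arguing the matching structure, and attribute $c_1=c_2$ to maximality or $\hat K_{3,1}$. In the paper it is the other way round. In both the one-vertex and the disjoint case one first shows, via \cref{lem:Claim_2} applied from each side, that after removing a small set of edges (a star or a matching), the remaining edges of $X_1\ast X_2$ are monochromatic of some colour $c_3$ with $c_3\notin\{c_1,c_2\}$. If $c_1\neq c_2$, then three vertices in $X_1\setminus\{v_1\}$ and three in $X_2\setminus\{v_2\}$ (both sides have size $\geq 5$) form precisely a summand of $\hat K_{3,3}$: a $c_1$-triangle, a $c_2$-triangle, and $c_3$-edges between them. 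That is the mechanism forcing $c_1=c_2$; maximality and $\mH$ alone do not give it.

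\medskip

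\textbf{What makes the disjoint case work.} Let $M_i\subseteq X_1\ast X_2$ be the set of $c_i$-coloured cross-edges. \cref{lem:Claim_2} applied to $X_1$ gives $|M_1(x_2)|\leq 1$ for every $x_2\in X_2$; applied to $X_2$ it gives $|M_2(x_1)|\leq 1$ for every $x_1\in X_1$. If $c_1\neq c_2$, a short argument (pick an edge of $M_1$, apply \cref{lem:Claim_2} from the $X_2$-side to its $X_1$-endpoint) shows $M_1$ is empty or a star centred at a single $v_1\in X_1$, and symmetrically for $M_2$; this isolates the $c_3$-monochromatic complete bipartite piece and triggers the $\hat K_{3,3}$ contradiction above. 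If $c_1=c_2$, then $M_1=M_2$ and the two degree bounds together say it is a matching; the complement is monochromatic of colour $c_3\neq c_1$ because \cref{lem:Claim_2} gives a single non-$c_1$ colour per vertex and the bipartite complement of a matching between two sets of size $\geq 6$ is connected. None of this requires the four-vertex casework you anticipated; the ``coordination of exceptional edges'' is handled by the two one-line degree bounds.
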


\begin{proof}    
    Assume that $X_1\cap X_2 = \varnothing$. By \cref{lem:Claim_2}, there exists two sets $M_1, M_2\subseteq X_1 \ast X_2$ of edges satisfying $|M_2(x_1)| \le 1$ for any $x_1 \in X_1$ and $|M_1(x_2)| \le 1$ for any $x_2 \in X_2$  that are, respectively, all colored with color $c_1$ and $c_2$.

    If $c_1\ne c_2$, let us assume that $M_1$ is non-empty and pick some arbitrary edge $\{v_1,v_2\}\in M_1$ with $v_1\in X_1$ and $v_2\in X_2$. Applying \cref{lem:Claim_2} to $X_2$ and $v_1$ implies that that
    \[
        M_1 \supseteq v_1 \ast (X_2\setminus M_2(v_1)).
    \]
    with $|M_2(v_1)| \le 1$.
    Since $|M_1(x_2)|\le 1$ for all $x_2\in X_2$ by \cref{lem:Claim_2}, it follows that there can exist at most one additional edge in $M_1$ that is not in $v_1 \ast (X_2\setminus M_2(v_1))$, namely $\{M_2(v_1), z\}$ for some $z\in X_1\setminus v_1$. This however would likewise imply that $M_1 \supseteq z \ast (X_2 \setminus M_2(z))$ with $|M_2(z)| \le 1$, contradicting that $|M_1| \leq |X_2|$. Hence either $M_1 = \emptyset$ or
    \[
        M_1 = v_1 \ast (X_2\setminus M_2(v_1))
    \]
    and by symmetry either $M_2 = \emptyset$ or 
    \[
        M_2 = (X_1\setminus M_1(v_2)) \ast v_2,
    \]
    for some $v_2 \in X_2$.
    It follows that there exists $v_ \in X_1$ and $v_2 \in X_2$ such that
    \begin{equation*}
        (X_1\setminus\{v_1\})\ast (X_2\setminus \{v_2 \}) \subseteq X_1 \ast X_2 \setminus (M_1 \cup M_2)
    \end{equation*}
    and all edges in $(X_1\setminus\{v_1\})\ast (X_2\setminus \{v_2 \})$ need to have the same color $c_3$ distinct from $c_1$ and $c_2$ by \cref{lem:Claim_2} and symmetry. Then however, since we are assuming that $c_1 \neq c_2$, we get the contradiction that $p(\hat K_{3,3}; F) > 0$.
    
    If $c_1=c_2$, then $M_1 = M_2$ needs to be a matching by symmetry and $(X_1\ast X_2)\setminus (M_1\cup M_2)$ is again monochromatic with a color distinct from $c_1=c_2$.
    
    \medskip
    
    Assume now that $|X_1\cap X_2| = \{v\}$ and pick an arbitrary $w \in X_2 \setminus X_1$. Then $\{v,w\}$ has color $c_2$ and so all edges in $w \ast (X_1 \setminus \{v\})$ have the same color $c_3$ distinct from $c_2$ by \cref{lem:Claim_2}. By symmetry, the same argument is true for edges going from any $w\in X_1\setminus X_2$ to the set $X_2\setminus \{v\}$ with the color being distinct from $c_1$. Since $(X_1\setminus \{v\})\ast (X_2\setminus \{v\})$ is a complete bipartite graph, the symmetry also implies that the color $c_3$ is the same for each choice of $w$, so that all edges in $(X_1\setminus \{v\})\ast (X_2\setminus \{v\})$ have the same color $c_3$ distinct from $c_1$ and $c_2$. Finally, if $c_1 \ne c_2$, we would get a contradiction to the assumption that $p(\hat K_{3,3};F) = 0$.

    \medskip
    
    Finally, if $|X_1\cap X_2| \ge 2$, then there exist $v_1, v_2 \in X_1\cap X_2$ and the edge $\{v_1, v_2\}$ therefore has both colors $c_1$ and $c_2$, implying that $c_1 = c_2$. By assumption $X_1 \neq X_2$, so there exists $v\in X_1\Delta X_2$. Let us w.l.o.g. say that $v\in X_2$. Then, for each $w \in X_1 \cap X_2$, of which there are at least two, the edge $\{v,w\}$ as color $c_1$, contradicting \cref{lem:Claim_2}.
\end{proof}

For a standard $F$, let us therefore construct
a $c$-coloring $E_F$ of the complete and partially looped graph on vertex set 
$\mX_F\sqcup Y_F$, where $Y_F= \{ \{y\} \mid y \in V(F)\setminus 
\bigcup_{X \in \mX_F} X\}$. Given $X, X_1, X_2 \in \mX_F$ and $y \in Y_F$, we note that the edges in $y \ast X$ and $X_1 \ast X_2$ always have a clearly defined majority color in $F$ by \cref{lem:Claim_2} and \cref{lem:Claim_4}. The edges and their color in $E_F$ are now defined through 
\begin{equation}\label{eq:def_of_looped_graph_E}
    E_F(\{v_1, v_2\}) = \left\{\begin{array}{lr}
        \text{no edge} & \text{if } v_1 = v_2 \in Y_F, \\
        F(\{v_1, v_2\}) & \text{if } v_1 \neq v_2 \in Y_F, \\
        \text{color of } F[v_1]  & \text{if } v_1 = v_2 \in \mX_F, \\
        \text{majority color in } v_1 \ast v_2 & \text{otherwise}. 
        \end{array}\right.
\end{equation}

\begin{claim}\label{lem:standard_has_partition}
    Let $F$ be standard. Given any $i\in [c]$, the only embeddings of $K_{\max(c+1,6)}^i$ in $E_F$ map everything to a single vertex. The same holds for $K_3^i$ if the embedding contains at least one vertex in $\mX_F$.
\end{claim}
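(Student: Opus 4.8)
The plan is to turn the structural information about $E_F$ collected in \cref{lem:Claim_2} and \cref{lem:Claim_4} into three simple facts, and then to show that a monochromatic ``clique'' of $E_F$ spread over more than one vertex would produce a forbidden order‑$4$ subcoloring of $F$.

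First I would record the following about $E_F$, writing $c^\ast$ for the common color of all cliques in $\mX_F$ (well defined whenever $\mX_F\neq\varnothing$, since by \cref{lem:Claim_4} any two distinct members of $\mX_F$ have the same color): \emph{(a)} the loop at every $X\in\mX_F$ has color $c^\ast$; \emph{(b)} no loop is present at any vertex of $Y_F$; and \emph{(c)} every non‑loop edge of $E_F$ incident to some $X\in\mX_F$ has a color different from $c^\ast$ — for an edge to another $X'\in\mX_F$ this is the bipartite color $c_3\neq c^\ast$ of \cref{lem:Claim_4}, and for an edge to a vertex of $Y_F$ it is the majority color $c_2\neq c^\ast$ of \cref{lem:Claim_2}. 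All three are immediate from those two claims together with the definition \cref{eq:def_of_looped_graph_E}.

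Now fix an embedding $\varphi$ of $K^i_m$ into $E_F$ with either $m=\max(c+1,6)$, or $m=3$ and $\operatorname{im}(\varphi)\cap\mX_F\neq\varnothing$, and suppose for contradiction that $|\operatorname{im}(\varphi)|\geq 2$. If some $w\in\operatorname{im}(\varphi)$ had at least two $\varphi$‑preimages, then $E_F$ would carry a loop of color $i$ at $w$, hence $w\in\mX_F$ and $i=c^\ast$ by \emph{(a)} and \emph{(b)}; but picking any other image vertex $w'\neq w$ gives $E_F(\{w,w'\})=i=c^\ast$, contradicting \emph{(c)}. So $\varphi$ is injective and $\operatorname{im}(\varphi)$ is a genuine induced monochromatic‑$i$ clique on $m\geq 3$ vertices of $E_F$. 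If $\operatorname{im}(\varphi)\subseteq Y_F$, then, since $E_F$ restricts to $F$ on $Y_F$, these vertices span a monochromatic $K_m$ inside $V(F)\setminus\bigcup_{X\in\mX_F}X$; when $m=\max(c+1,6)$ any such clique extends to a maximal monochromatic clique, which by definition lies in $\mX_F$ — contradicting that its vertices avoid $\bigcup_{X\in\mX_F}X$ — and when $m=3$ this case is excluded by the hypothesis $\operatorname{im}(\varphi)\cap\mX_F\neq\varnothing$. Hence $\operatorname{im}(\varphi)$ contains some $X\in\mX_F$, and as $m\geq 3$ it contains two further distinct vertices $w_1,w_2$; all of $E_F(\{X,w_1\})$, $E_F(\{X,w_2\})$, $E_F(\{w_1,w_2\})$ equal $i$, and $i\neq c^\ast$ by \emph{(c)}.

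It then remains to descend back into $F$. Let $S_X\subseteq V(F)$ be the clique underlying $X$ and let $S_{w_j}$ be the clique (or singleton) underlying $w_j$. By \cref{lem:Claim_2} and \cref{lem:Claim_4}, the $F$‑edges between $S_X$ and $S_{w_j}$, and those between $S_{w_1}$ and $S_{w_2}$, all carry the corresponding $E_F$‑color $i$ apart from at most a matching's worth of exceptions; since every clique in play has at least $\max(c+1,6)\geq 6$ vertices, I can select four distinct vertices $a_1,a_2\in S_X$, $u_1\in S_{w_1}$, $u_2\in S_{w_2}$ that avoid all exceptional edges, so that $F(\{a_1,u_1\})=F(\{a_2,u_1\})=F(\{a_1,u_2\})=F(\{a_2,u_2\})=F(\{u_1,u_2\})=i$ while $F(\{a_1,a_2\})=c^\ast\neq i$. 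Then $\{a_1,u_1,u_2\}$ spans a monochromatic $i$‑triangle in $F$ whose fourth vertex $a_2$ is incident to two edges of color $i$ and one edge of color $c^\ast$, i.e.\ $F[\{a_1,a_2,u_1,u_2\}]$ is a copy of a flag in $\mH(2,1,0)\subseteq\mH$, contradicting $p(\hat K_{3,1};F)=0$. Therefore $|\operatorname{im}(\varphi)|=1$, which is exactly the assertion. The only fiddly point is the vertex selection in this last step, but the uniform lower bound $\max(c+1,6)\geq 6$ on every relevant clique leaves ample room to dodge the $O(1)$ exceptional edges, so I do not expect a real obstacle there; all the content sits in facts \emph{(a)}–\emph{(c)} and in spotting the $\mH(2,1,0)$‑copy.
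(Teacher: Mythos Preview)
Your argument is correct and follows essentially the same route as the paper: both proofs dispose of the non-injective case via the loop/edge color mismatch, reduce the large-clique case to the triangle case by observing that a monochromatic $K_{\max(c+1,6)}$ inside $Y_F$ would produce a new member of $\mX_F$, and finish by locating four vertices in $F$ forming a copy of a flag in $\mH(2,1,0)$. Your explicit recording of facts \emph{(a)}--\emph{(c)} is a clean way to package what the paper invokes from \cref{lem:Claim_2} and \cref{lem:Claim_4} ad hoc.

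One small imprecision: you write that ``every clique in play has at least $\max(c+1,6)\geq 6$ vertices'', but $S_{w_j}$ is a singleton when $w_j\in Y_F$. This does not break the argument---for singletons there are no exceptional edges, since $E_F$ agrees with $F$ there---but the vertex selection needs a short case split rather than a blanket size bound. Concretely: first choose $u_1\in S_{w_1}$, $u_2\in S_{w_2}$ distinct with $u_1,u_2\notin X$ and $F(\{u_1,u_2\})=i$ (forced when $w_j\in Y_F$; when $w_j\in\mX_F$ avoid the at most one point of $w_j\cap X$ and the at most one matching-exception), then choose $a_1,a_2\in X$ avoiding the at most two vertices bad for $u_1$ or $u_2$. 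With $|X|\geq 6$ this always succeeds. Once you spell this out, the proof is complete and matches the paper's.
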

\begin{proof}
    We begin with the proof of the second claim.
    Assume $v_1, v_2, v_3\in V(E)$ with $v_1\in\mX_F$ forms a monochromatic triangle in $E_F$. Denote the color of this triangle by $c_\triangle \in [c]$. The case 
    $|\{v_1,v_2,v_3\}|= 2$ is not possible \cref{lem:Claim_4} contradicts the case that $v_1,v_2,v_3 \in \mX_F$ and \cref{lem:Claim_2} contradicts the case where one of $v_1,v_2,v_3$ is in $Y$. Note that we cannot have two of the vertices in $Y$ as $E_F$ does not have loops in $Y$.
    
    We may therefore assume that $|\{v_1,v_2,v_3\}|= 3$. Choose any two vertices $u_2 \in v_2$, $u_3 \in v_3$ in $F$ satisfying $F(\{u_2,u_3\}) = E(\{v_2,v_3\}) = c_\triangle$. Note that such an edge exists by definition of $E_F$. Since $v_1 \in \mX_F$ by assumption, we know that $|v_1| \ge 4$ and we can by the pigeonhole principle pick two distinct vertices $w_1,w_2 \in v_1$ by \cref{lem:Claim_2} satisfying $F(\{u_2, w_1\}) = F(\{u_2, w_2\}) = F(\{u_3, w_1\}) = F(\{u_3, w_2\}) = F(\{u_2,u_3\}) = c_\triangle$.  We know that $F(\{w_1,w_2\}) = E_F(\{v_1\}) \neq c_\triangle$ by construction as well as \cref{lem:Claim_2}. With these however, we get the contradiction that $F[\{u_2,u_3,w_1,w_2\}]$ is in $\mH(2,1,0)$.
    $$
    \begin{tikzpicture}[line width=0.4mm,baseline=2ex, scale=0.8]
        \node[circle,fill=cyan,fill opacity=0.3,draw,scale=2] (A) at (0,0) {};
        \node[circle,fill=black, draw,scale=0.4] (Ap1) at (-0.2,0) {};
        \node[circle,fill=black,draw,scale=0.4] (Ap2) at (0.2,0) {};
        \node[circle,fill=cyan,fill opacity=0.3, draw, scale=2] (B) at (-1,1.3) {};
        \node[circle,fill=black,draw,scale=0.4] (Bp) at (-1,1.3) {};
        \node[circle,fill=cyan,fill opacity=0.3, draw, scale=2] (C) at (1,1.3) {};
        \node[circle,fill=black,draw,scale=0.4] (Cp) at (1,1.3) {};
        
        \draw[red, line width=3mm] (A) -- (B);
        \draw[red, line width=3mm] (B) -- (C);
        \draw[red, line width=3mm] (A) -- (C);

        \draw[black] (Ap1) -- (Ap2);
        \draw[black] (Ap1) -- (Bp);
        \draw[black] (Ap1) -- (Cp);
        \draw[black] (Ap2) -- (Bp);
        \draw[black] (Ap2) -- (Cp);
        \draw[black] (Bp) -- (Cp);

        \node[scale=0.8] at (0,-0.7) {$v_1$};
        \node[scale=0.8] at (-0.15,-0.25) {$w_1$};
        \node[scale=0.8] at (0.25,-0.25) {$w_2$};
        \node[scale=0.8] at (-1,2) {$v_2$};
        \node[scale=0.8] at (-1,1.57) {$u_2$};
        \node[scale=0.8]  at (1,2) {$v_3$};
        \node[scale=0.8]  at (1,1.57) {$u_3$};
    \end{tikzpicture}
    $$
    
    Now let us prove the first claim.
    Assume that $E_F$ contains an embedding of $K_{\max(c+1,6)}^{c'}$ at vertices $v_1,\dots,v_{\max(c+1,6)}$ for some $c' \in [c]$. By the just established claim, $v_i \notin \mX_F$ for all $1 \leq i \leq \max(c+1,6)$. This however implies that $Y$ would contain a copy of $K^{c'}_{\operatorname{max(c+1,6)}}$, giving us a contradiction.
\end{proof}

\begin{proposition}\label{lem:standard_is_Ramsey}
   Assume \boxedchar{A1} and that $F \in \mF^\varnothing$ is a $(\varsigma+\varepsilon_1)$-standard flag of order $n_1 = v(F)$. We have $E_F\in\mL_{\operatorname{ex}}^{(c)}$ and $Y_F=\varnothing$ for \whiteboxedchar{$n_1$ sufficiently large} and for every $X\in\mX_F$ we have
    \begin{equation}\label{eq:first_bound_on_cliques}
    (1-\varepsilon_3) \, \frac{n_1}{R_{c-1}(3)-1}\le |X|
    \le  (1+\varepsilon_3) \, \frac{n_1}{R_{c-1}(3)-1}.      
    \end{equation}\label{lem:XisR}
\end{proposition}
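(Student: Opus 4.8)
My plan is to show first that $Y_F = \varnothing$ and that every element of $\mX_F$ is large (of size $\Theta(n_1)$), then deduce that $E_F$ has exactly $R_{c-1}(3)-1$ vertices, all loops the same color, and the non-loop structure is a Ramsey $(c-1)$-coloring; from this $E_F \in \mL_{\operatorname{ex}}^{(c)}$ follows. The key input is \boxedchar{A1}, which via the monotonicity $\varsigma = m_c(3) \ge m_c(3;n)$ and \cref{lemma:blowup} pins down the value $\varsigma = 1/(R_{c-1}(3)-1)^2$ precisely, together with the structural claims \cref{lem:Claim_2,lem:Claim_4,lem:standard_has_partition} which tell us that $E_F$ is a looped coloring whose only monochromatic triangles (and only large monochromatic cliques) come from the loops.

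First I would establish that $Y_F$ is small, in fact empty for $n_1$ large. By \cref{lem:standard_has_partition}, $F[Y_F]$ contains no monochromatic $K_{\max(c+1,6)}$ in any color, so $Y_F$ would be a $(c)$-coloring of a complete graph on $|Y_F|$ vertices avoiding such a clique; by Ramsey's theorem this forces $|Y_F| < R_c(\max(c+1,6))$, a constant. Next, still by \cref{lem:standard_has_partition}, collapsing each $X\in\mX_F$ to a single vertex and contracting $Y_F$ arbitrarily gives that $E_F$, restricted to $\mX_F$ with loops removed, is a $(c-1)$- or $c$-coloring of $K_{|\mX_F|}$ whose only monochromatic triangles use a loop vertex — hence the non-looped restriction to $\mX_F$ is triangle-free in every color, so $|\mX_F| \le R_{c-1}(3)-1$ (using here that the loop color and any recolorings do not create non-loop monochromatic triangles, exactly \cref{lem:standard_has_partition}). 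Combined with $|Y_F| = O(1)$, the parts $X\in\mX_F$ partition all but a constant number of the $n_1$ vertices into at most $R_{c-1}(3)-1$ cliques, so the largest has size $\ge (n_1 - O(1))/(R_{c-1}(3)-1)$.

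Now I would use \boxedchar{A1} to force the parts to be balanced and $Y_F$ to be genuinely empty. If some part were too small — say of size $\le (1-\varepsilon_3)n_1/(R_{c-1}(3)-1)$ — or if $Y_F \ne \varnothing$, then the partition of $F$ induced by $\mX_F$ (every vertex of $X$ sees the other parts in the majority-color pattern of $E_F$, with only a bounded matching of exceptions by \cref{lem:Claim_4}, and a bounded number of stray $Y_F$-vertices) would be a near-blow-up of a looped coloring on $\le R_{c-1}(3)-1$ vertices that is \emph{more unbalanced} than the uniform one; counting monochromatic triangles in this near-blow-up and comparing against \cref{lemma:blowup}, a Lagrange/convexity argument shows that any weighting on at most $R_{c-1}(3)-1$ vertices other than the uniform one on exactly $R_{c-1}(3)-1$ vertices yields strictly more than $\varsigma\binom{n_1}{3}(1+o(1))$ monochromatic triangles — contradicting that $F$ is $(\varsigma+\varepsilon_1)$-standard once $n_1$ is large enough relative to the constants $\varepsilon_1 < \varepsilon_3$. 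This forces $|\mX_F| = R_{c-1}(3)-1$ exactly, each $|X|$ within a $(1\pm\varepsilon_3)$ factor of $n_1/(R_{c-1}(3)-1)$ — which is \cref{eq:first_bound_on_cliques} — and $Y_F = \varnothing$. With the right number of parts, all loops carrying one common color (any two distinct loop colors would, via \cref{lem:Claim_2,lem:Claim_4}, create an $\hat K_{3,3}$-configuration), and the non-loop coloring being triangle-free in $c-1$ colors on $R_{c-1}(3)-1 = R_{c-1}(3)-1$ vertices, $E_F$ is by definition in $\mL_{\operatorname{ex}}^{(c)}$.

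The main obstacle I anticipate is the quantitative convexity argument in the last paragraph: making precise that an \emph{imbalanced} near-blow-up (or one with too few or too many parts, or with a nonempty $Y_F$) strictly exceeds the extremal triangle count by an amount that dominates the slack $\varepsilon_1$. This requires a clean statement that, among all probability weightings $\bw$ on all looped colorings $C$ on at most $R_{c-1}(3)-1$ vertices whose underlying non-loop coloring is triangle-free in $c-1$ colors, the minimum of $\sum_i \hat p(K_3^i; C, \bw)$ is uniquely attained by the uniform weighting on the $R_{c-1}(3)-1$-vertex Ramsey colorings, with a definite gap otherwise — essentially a finite optimization whose strict optimality I would need to verify, likely by a short direct computation of $\hat p(\hat K_3; C, \bw)$ as a function of $\bw$ and a second-derivative/strict-convexity check, handling separately the degenerate cases where a weight is zero (fewer effective parts) or $C$ has fewer than $R_{c-1}(3)-1$ vertices.
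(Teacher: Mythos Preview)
Your overall strategy matches the paper's: use \cref{lem:standard_has_partition} to bound $|\mX_F|$ (noting that \cref{lem:Claim_4} already forces all loops of $E_F$ to share one color $c_\ell$ and all non-loop edges among $\mX_F$ to avoid $c_\ell$, so the restriction is a triangle-free $(c-1)$-coloring and $|\mX_F|\le R_{c-1}(3)-1$), then use convexity to force $|\mX_F|=R_{c-1}(3)-1$ and the parts balanced. Two remarks.

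First, a minor misstatement: \boxedchar{A1} does not give $\varsigma = 1/(R_{c-1}(3)-1)^2$; it only gives $\varsigma \le 1/(R_{c-1}(3)-1)^2$ via \cref{eq:ramsey_upper} (equality is \cref{th:extremals_are_Ramsey}, which requires \boxedchar{A2}). This is harmless for your argument, since the contradiction only needs the imbalanced density to exceed $1/(R_{c-1}(3)-1)^2\ge\varsigma$. Relatedly, the convexity step is much simpler than your obstacle paragraph suggests: the paper just uses $p(\hat K_3;F)\ge\sum_{X\in\mX_F}\binom{|X|}{3}/\binom{n_1}{3}$ and convexity of $x\mapsto\binom{x}{3}$; there is no need to optimize over all looped colorings $C$ and weightings $\bw$.

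Second, and this is a genuine gap: the convexity argument cannot establish $Y_F=\varnothing$. You correctly bound $|Y_F|$ by a Ramsey constant, but a bounded number of stray vertices changes the triangle density only by $O(1/n_1)$, far below the $\varepsilon_1$ slack, so no density contradiction arises from $Y_F\ne\varnothing$. The paper closes this differently: once $|\mX_F|=R_{c-1}(3)-1$, for any $v\in Y_F$ the majority color of $v\ast X$ is $\ne c_\ell$ for every $X\in\mX_F$ (by \cref{lem:Claim_2}, since $|X|\ge\max(c+1,6)$ and at most one edge of $v\ast X$ can have the clique color), so the non-loop edges of $E_F$ on $\{v\}\cup\mX_F$ form a $(c-1)$-coloring of $K_{R_{c-1}(3)}$ and must contain a monochromatic triangle, contradicting \cref{lem:standard_has_partition}. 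You need an argument of this kind; density alone will not empty $Y_F$.
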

\begin{proof}
    Because $E_F$ without its loops contains no $K^i_{\operatorname{max(c+1,6)}}$, $v(E)$ is bounded independently of $n_1$. The elements of $\mX_F$ intersect pairwise in at most one point. Therefore, 
\begin{equation}\label{eq:first_bound_on_triangle_density}
    p(\hat K_3; F)
    \ge
    \frac{\sum_{X\in\mX_F}\binom{X}{3}}{\binom{n_1}{3}} \geq ....
\end{equation}
Now, if $|\mX_F|\le R_{c-1}(3)-2$ then we would further get
\begin{equation*}
    ... \ge
    \frac{|\mX_F|
    \binom{\lfloor (n_1-|Y|)/|\mX_F| \rfloor}{3}}
    {\binom{n_1}{3}}
    =
    \frac{1}{(R_{c-1}(3)-2)^2} + o(n_1).
\end{equation*}
Note that for the last step, we have used that $|Y_F| \leq v(E_F)$ is bounded independent of $n_1$. On  the other hand, by \cref{eq:ramsey_upper} and \boxedchar{A1} we know that 
\begin{equation*}
    \varsigma \leq \frac{1}{(R_{c-1}(3)-1)^2}.
\end{equation*}
So for \whiteboxedchar{$\varepsilon_1$ sufficiently small}, \whiteboxedchar{$n_1$ sufficiently large}, we get a contradiction from
\begin{align*}
    \frac{1}{(R_{c-1}(3)-2)^2} + o(n_1) \leq p(\hat K_3; F) \leq \varsigma + \varepsilon_1 \leq \frac{1}{(R_{c-1}(3)-1)^2} + \varepsilon_1.
\end{align*}
Since $E_F[\mX_F]$ is triangle-free, we cannot have $|\mX_F| \geq R_{c-1}(3)$ and therefore
$|\mX_F|= R_{c-1}(3)-1$ follows.

    To prove that $Y_F=\varnothing$, assume that there existed $v\in Y_F$. Then, since we know that $|\mX_F| = R_{c-1}(3)-1$, there needs to exist two distinct $w_1,w_2\in \mX_F$ so that $v,w_1,w_2$ is a monochromatic triangle in $E_F$. This however would contradict \cref{lem:standard_has_partition}. Therefore, $E_F\in\mL_{\operatorname{ex}}^{(c)}$.

    Finally, assume now that the bound in \cref{eq:first_bound_on_cliques} was not fulfilled, say because $|X|$ was too large. Then, we could bound $p(\hat K_3;F)$ just as in \cref{eq:first_bound_on_triangle_density} and would similarly arrive at a contradiction.
\end{proof}

\begin{lemma}\label{lem:standard_has_uniform_subgraph}
    Assume \boxedchar{A1} and that $F \in \mF^\varnothing$ is a $(\varsigma+\varepsilon_1)$-standard flag of order $n_1 = v(F)$. With probability $1-\varepsilon_4§$ a uniformly chosen $n_2$-sized subset $\mathscr W \subseteq V(F)$ has an embedding $\varphi:V(\mathscr W)\to V(E_F)$ such that $|\varphi^{-1}(v)|/|\varphi^{-1}(w)| \ge 1-\varepsilon_4$ for all $v,w\in V(E_F)$.
\end{lemma}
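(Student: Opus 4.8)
The plan is to exhibit, for almost every $n_2$-subset $\mathscr W$, the ``obvious'' embedding that sends each vertex of $F[\mathscr W]$ to the maximal monochromatic clique containing it. By \cref{lem:standard_is_Ramsey} (applicable by \boxedchar{A1}) I may assume $E_F\in\mL_{\operatorname{ex}}^{(c)}$, so $Y_F=\varnothing$, all loops of $E_F$ carry a common color $c_0$, $|\mX_F| = r := R_{c-1}(3)-1$, and $|X|\in\big[(1-\varepsilon_3)\tfrac{n_1}{r},\,(1+\varepsilon_3)\tfrac{n_1}{r}\big]$ for every $X\in\mX_F$. Since $Y_F=\varnothing$ every vertex of $F$ lies in at least one $X\in\mX_F$; let $Z\subseteq V(F)$ be the set of vertices lying in two of them — by \cref{lem:Claim_4} in exactly two, so $|Z|\le\binom r2$. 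For distinct $X_1,X_2\in\mX_F$ let $M_{X_1,X_2}\subseteq X_1\ast X_2$ be the $c_0$-colored matching furnished by \cref{lem:Claim_4} (empty if $X_1\cap X_2\ne\varnothing$), and set $M=\bigcup_{X_1\ne X_2}M_{X_1,X_2}$, so $|M|\le\binom r2\max_X|X|\le\binom r2 n_1$. Call $\mathscr W\subseteq V(F)$ \emph{nice} if $\mathscr W\cap Z=\varnothing$ and $\mathscr W$ contains no edge of $M$.

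The first step is to check that for a nice $\mathscr W$ the map $\varphi\colon\mathscr W\to\mX_F=V(E_F)$ sending $v$ to the unique clique containing it is a well-defined embedding of $F[\mathscr W]$ into $E_F$. If $\varphi(u)=\varphi(v)=X$ then $F(\{u,v\})$ is the color of the monochromatic clique $F[X]$, which equals the common loop color $c_0 = E_F(\{X,X\})$. If $\varphi(u)=X_1\ne X_2=\varphi(v)$ then, using $u,v\notin Z$ and $\{u,v\}\notin M$, \cref{lem:Claim_4} forces $F(\{u,v\})$ to be the color $c_3\ne c_0$ carried by all non-matching edges between $X_1\setminus X_2$ and $X_2\setminus X_1$; since $|X_1|,|X_2|\ge\max(c+1,6)$ this $c_3$ is the majority color of $X_1\ast X_2$, which by \cref{eq:def_of_looped_graph_E} is precisely $E_F(\{X_1,X_2\})$. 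Note also that $\varphi^{-1}(X)=\mathscr W\cap X$ for every $X\in\mX_F$.

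The second step is a union bound over three failure events for a uniformly random $\mathscr W$ of size $n_2$: that $\mathscr W$ meets $Z$; that $\mathscr W$ contains an edge of $M$; and that $|\mathscr W\cap X|/|\mathscr W\cap X'| < 1-\varepsilon_4$ for some $X,X'\in\mX_F$. For the first, $\mathbb P[\mathscr W\cap Z\ne\varnothing]\le|Z|\,n_2/n_1\le\binom r2\,n_2/n_1$. For the second, each fixed pair lies in $\mathscr W$ with probability $\tfrac{n_2(n_2-1)}{n_1(n_1-1)}$, so the expected number of edges of $M$ inside $\mathscr W$ is at most $\binom r2 n_1\cdot\tfrac{n_2(n_2-1)}{n_1(n_1-1)}=O(n_2^2/n_1)$, and Markov's inequality bounds its probability. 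For the third, $|\mathscr W\cap X|$ is hypergeometrically distributed with mean $n_2|X|/n_1\in(1\pm\varepsilon_3)\tfrac{n_2}{r}$, and a standard concentration inequality for sampling without replacement shows that with probability tending to $1$ as $n_2\to\infty$ all $r$ of these counts differ from their means by at most $\tfrac{\varepsilon_4}{4}\cdot\tfrac{n_2}{r}$ simultaneously, whence (for $\varepsilon_3\le\tfrac{\varepsilon_4}{4}$) every pairwise ratio is at least $1-\varepsilon_4$. Taking $n_1$ large compared with $n_2^2$ and $n_2$ large compared with $\varepsilon_3,\varepsilon_4$ and $c$ — all consistent with the fixed ordering of constants — makes each of the three probabilities at most $\varepsilon_4/3$, and on the complement $\varphi$ from the first step is the desired embedding.

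I expect the one genuinely delicate point to be the second failure event. The union of the matchings $M_{X_1,X_2}$ may cover a constant fraction of $V(F)$ — each single matching can have size proportional to $n_1$ — so one cannot hope to avoid its \emph{vertices} and must instead avoid its \emph{edges}. That is why the argument has to be probabilistic rather than deterministic, and why it relies on $n_1$ being much larger than $n_2^2$, which is exactly the regime guaranteed by $\tfrac1{n_1}<\varepsilon_1$ with $n_2$ fixed.
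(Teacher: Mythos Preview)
Your proof is correct and follows essentially the same route as the paper's: both use \cref{lem:standard_is_Ramsey} and \cref{lem:Claim_4} to reduce to the natural map $v\mapsto$(containing clique), identify the two obstructions (vertices in two cliques, and the $c_0$-colored matching edges), and then do a three-event union bound combined with hypergeometric concentration for the balance condition. The only cosmetic differences are that the paper organizes the three events as ``$C_i$ not a partition / bad ratio / no embedding given a partition'' and routes its probability estimates through a random function $[n_2]\to[n_1]$ conditioned on injectivity, whereas you bound the three events directly for a uniform $n_2$-subset; your bookkeeping is slightly cleaner but the content is identical.
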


\begin{proof}
    First, let us note that $\mX_F = \{X_1,\dots, X_{r}\}$ is a covering of $V(F)$ by \cref{lem:standard_is_Ramsey}. The same therefore holds for $C_i = \mathscr W \cap X_i$ with respect to $W$. We can lower bound the probability $P$ of $\mathscr W$ having the desired properties by
    \begin{equation} \label{eq:p}
        P \geq 1 - P_1 - P_2 - P_3,
    \end{equation}
    where $P_1$ denotes that probability that the $C_i$ do not form a partition of $\mathscr W$, $P_2$ the probability that there exists $i,j$ s.t. $|C_i| / |C_j| \le 1- \varepsilon_4$, and $P_3$ the probability that there does not exist an embedding $\varphi$ with the desired properties conditioned on $C_i$ being a partition of $\mathscr W$.

    Writing $P_I$ for the probability that a random function $[n_2] \to [n_1]$ is injective, we note that 
    \[
        \mathbb P[I] = \frac{n_1!}{(n_1-n_2)! \, n_1^{n_2}} \geq 1 - \varepsilon_4
    \]    
    for \whiteboxedchar{$n_1$ sufficiently large}.
    To upper bound $P_1$, let $U = \bigcup_{X,X'\in\mX_F, \, X\ne X'} X\cap X'$ and note that by \cref{lem:Claim_4}  $|U| < {r \choose 2}$. By Boole's inequality, it follows that
    \begin{equation*}
        P_1 \leq n_2 \, |U| / ( n_1 \, P_I) \leq \varepsilon_4/3
    \end{equation*}
    for \whiteboxedchar{$n_1$ sufficiently large}.
    By Chernoff-Hoeffding, we also have that
    \begin{equation*}
        P_2 \leq \varepsilon_4 /3
    \end{equation*}
    for \whiteboxedchar{$n_1$ and $n_2$ large enough} and \whiteboxedchar{$\varepsilon_3$ small enough}. Finally, to upper bound $P_3$, we note that by \cref{lem:standard_is_Ramsey} such an embedding exists whenever the matchings with non-majority color between $X_i$ and $X_j$ are avoided. Let us therefore write
    \begin{equation*}
        M = \bigcup_{i\ne j\in [r]} \{(u,v)\in X_i\ast X_j \mid F(\{u,v\}) = E_F(\{X_i\}) = E_F(\{X_j\})\}
    \end{equation*}
    and note that by \cref{lem:standard_is_Ramsey}
    \begin{equation*}
        |M| \le \binom{{r}}{2}\max_i |X_i|
        \le
        \binom{{r}}{2}\frac{1+\varepsilon_3}{{r}}n_1.
    \end{equation*}
    By Boole's inequality, we therefore have
    \begin{equation} \label{eq:p3}
        P_3 \leq \frac{1}{P_I}\binom{n_2}{2}\frac{|M|}{\binom{n_1}{2}} \leq \varepsilon_4/3
    \end{equation}
    for \whiteboxedchar{$n_1$ large enough}.
    Combining these bounds establishes that  $P \geq 1 - \varepsilon_4$.
\end{proof}

\subsection{Lifting a standard subcoloring}\label{sec:lifting_standard_subcoloring}

Assume that \whiteboxedchar{$\varepsilon_1$ is small enough} to satisfy $(1 + 1/s) \, \varepsilon_1 \le \varepsilon_4 / 2$. Let us also write $r=R_{c-1}(3)-1$ and add an additional assumption to the list in the form of
\begin{itemize} \itemsep0em 
    \item[\boxedchar{A4}] $p(\hat K_3; G)$ is small enough such that the bound in \cref{lem:Claim_1} is at least $1 - \varepsilon_4$.
\end{itemize}

\begin{lemma}\label{lem:Claim_7}
    Assume \boxedchar{A1}, \boxedchar{A3}, and \boxedchar{A4}. 
    There exists a set $W \subseteq V(G)$ with $|W| = n_2$ with a partition $W = C_1 \cup \ldots \cup C_{r}$ 
   satisfying 
    $|C_i| / |C_j| \ge 1-\varepsilon_4$ 
    as well as some unique $E\in\mL_{\operatorname{ex}}^{(c)}$ 
    such that for any $x,y\in V(G)$ chosen uniformly at random,  with probability at least $1-\varepsilon_5$ the induced sub-coloring $G[W \cup \{x, y\}]$ embeds into $E$.
\end{lemma}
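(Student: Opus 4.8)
The plan is to realise $W$ and its partition inside a uniformly random $n_1$-element set, by feeding that set into \cref{lem:standard_is_Ramsey} and \cref{lem:standard_has_uniform_subgraph}, and then to handle the two extra vertices $x,y$ by a Markov-type averaging argument. Concretely I would work with the joint random choice of a uniform $n_1$-subset $\mathscr M\subseteq V(G)$, a uniform $n_2$-subset $\mathscr W\subseteq\mathscr M$, and two uniform vertices $x,y\in V(G)$, and aim to show that the ``bad'' event (that either $(\mathscr M,\mathscr W)$ fails the structural requirements below, or $(\mathscr M,\mathscr W)$ is structurally good but $G[\mathscr W\cup\{x,y\}]$ does not embed into $E_{G[\mathscr M]}$) has probability that, together with $\mathbb P[(\mathscr M,\mathscr W)\text{ good}]\ge 1-2\varepsilon_4$, forces by Markov's inequality the existence of a good outcome $(\mathscr M,\mathscr W)$ for which $\mathbb P_{x,y}[\text{no embedding}]\le\varepsilon_5$. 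Fixing such an outcome yields $W:=\mathscr W$; the degenerate cases $x=y$ or $x,y\in W$ occur with probability $O(n_2/n)$ and are absorbed into $\varepsilon_5$ for \whiteboxedchar{$n_0$ large enough}.

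For the structural requirements, by \boxedchar{A4} and \cref{lem:Claim_1} we have $\mathbb P[\,G[\mathscr M]\text{ is }(\varsigma+\varepsilon_1)\text{-standard}\,]\ge 1-\varepsilon_4$; on that event \cref{lem:standard_is_Ramsey} (for \whiteboxedchar{$n_1$ large enough}) gives $E:=E_{G[\mathscr M]}\in\mL_{\operatorname{ex}}^{(c)}$, $Y_{G[\mathscr M]}=\varnothing$, and a partition $\mX_{G[\mathscr M]}=\{X_1,\dots,X_r\}$ of $\mathscr M$ with $(1-\varepsilon_3)n_1/r\le|X_i|\le(1+\varepsilon_3)n_1/r$. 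Conditionally, \cref{lem:standard_has_uniform_subgraph} (for \whiteboxedchar{$n_2$ large enough}) produces with probability at least $1-\varepsilon_4$ a balanced embedding $\varphi\colon V(\mathscr W)\to V(E)$; inspecting its proof, $\varphi$ may be taken to be $v\mapsto X_i$ for $v\in X_i\cap\mathscr W$ as soon as $\mathscr W$ also avoids the set $M$ of loop-colour matching edges between cliques, which holds with the same probability. Setting $C_i:=X_i\cap\mathscr W$ gives the required balanced partition, and $G[W]$ is then literally a blow-up of $E$ with every part of size at least $n_2/(2r)\ge 2$; consequently any embedding of $G[W]$ into a member of $\mL_{\operatorname{ex}}^{(c)}$ collapses no part and hence is an isomorphism onto that member, so $E$ is unambiguously pinned down by $G[W]$. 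Thus $(\mathscr M,\mathscr W)$ is structurally good with probability at least $1-2\varepsilon_4$.

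The remaining and decisive step is to bound, for a fixed good $(\mathscr M,\mathscr W)$, the probability over $x,y$ that $G[W\cup\{x,y\}]$ fails to embed into $E$. It suffices to produce $\pi(x),\pi(y)\in[r]$ with $G(\{x,v\})=E(\{X_{\pi(x)},X_j\})$ for all $j$ and all $v\in C_j$ (reading $E(\{X_k,X_k\})$ as the loop colour), symmetrically for $y$, and $G(\{x,y\})=E(\{X_{\pi(x)},X_{\pi(y)}\})$, since then $v\mapsto X_{(\cdot)}$, $x\mapsto X_{\pi(x)}$, $y\mapsto X_{\pi(y)}$ embeds $G[W\cup\{x,y\}]$ into $E$. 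To obtain $\pi(x)$ when $x\notin\mathscr M$ I would enlarge back to an $n_1$-set $\mathscr M^x:=(\mathscr M\setminus\{a\})\cup\{x\}$ with $a$ uniform in $\mathscr M\setminus\mathscr W$ (available since $n_1>n_2$); unconditionally $\mathscr M^x$ is again a uniform $n_1$-subset of $V(G)$, so $G[\mathscr M^x]$ is $(\varsigma+\varepsilon_1)$-standard with probability at least $1-\varepsilon_4$, and then \cref{lem:standard_is_Ramsey} places $x$ in one of the $r$ cliques of $G[\mathscr M^x]$ (when $x\in\mathscr M$ one uses $\mathscr M$ itself). The clique partition restricted to the large common set $\mathscr W$ is intrinsic to $G[\mathscr W]$, since two vertices of $\mathscr W$ share a clique exactly when they are joined by a loop-colour edge and have more than $r$ common loop-colour neighbours inside $\mathscr W$, which is forced by $|C_i|\ge n_2/(2r)$ together with \cref{lem:Claim_4}; hence it coincides with $C_1,\dots,C_r$ and the associated $\mL_{\operatorname{ex}}^{(c)}$-structure coincides with $E$. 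By \cref{lem:Claim_4}, applied in $G[\mathscr M^x]$ to the clique containing $x$ and to each other clique, $x$ meets every $C_j$ monochromatically in $E(\{X_{\pi(x)},X_j\})$ save possibly at the unique matching partner of $x$ in that clique; so $\pi(x)$ works unless $\mathscr W$ meets one of these at most $r-1$ specified vertices, an event of probability $O(r\,n_2/n_1)$. Running the same argument for $y$ through a further perturbation $\mathscr M^{xy}$, and noting that inside $G[\mathscr M^{xy}]$ the edge $\{x,y\}$ is automatically compatible unless it is a loop-colour cross-clique edge (an event bounded by the fact that in a random $n_1$-set the loop-colour cross-clique edges form $\binom{r}{2}$ matchings, hence a $O(r/n_1)$ fraction of pairs), a union bound gives a total conditional bad probability of $O\!\left(\varepsilon_4+r\,n_2/n_1+r/n_1\right)$, which is below $\varepsilon_5$ once the constants are fixed with \whiteboxedchar{$n_1$ large enough relative to $r$, $n_2$ and $\varepsilon_5$}. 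Feeding this into the Markov step and fixing one good $(\mathscr M,\mathscr W)$ completes the proof.

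I expect the third step to be the main obstacle, and within it two points in particular: arguing that the clique partition of $\mathscr W$ is unchanged when $\mathscr W$ is viewed inside $\mathscr M$ versus inside the perturbed sets, which is precisely the intrinsic characterisation above and relies on $W$ being balanced with $n_2\gg r$; and the careful bookkeeping of the several small error terms (non-standardness of the perturbed $n_1$-sets, a matching partner of $x$ or $y$ landing in $\mathscr W$, and compatibility of the single edge $\{x,y\}$) so that they all fall below $\varepsilon_5$ under the hierarchy of constants fixed in \cref{sec:formal_statement_theorem}.
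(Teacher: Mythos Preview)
Your plan is correct and would go through, but it takes a genuinely different and considerably more intricate route than the paper. You keep an ambient $n_1$-set $\mathscr M$ around and perturb it to $\mathscr M^x$ and $\mathscr M^{xy}$ so as to absorb the extra vertices while keeping the perturbed sets unconditionally uniform; this forces you to verify that the clique partition on $\mathscr W$ is intrinsic to $G[\mathscr W]$ (so that it agrees across $\mathscr M$, $\mathscr M^x$, $\mathscr M^{xy}$), to track the matching partners of $x$ and $y$, and to run a two-layer Markov argument over $(\mathscr M,\mathscr W)$ and then over $(a,b)$. The paper bypasses all of this with a single observation: conditioned on how many of $\mathscr P_1,\mathscr P_2$ already lie in $\mathscr W$ (five cases), the set $\mathscr W\cup\{\mathscr P_1,\mathscr P_2\}$ is itself a uniform random subset of $V(G)$ of size $n_2$, $n_2+1$, or $n_2+2$, and the chain \cref{lem:Claim_1}--\cref{lem:standard_has_uniform_subgraph} applies verbatim to each of these sizes. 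Hence the paper never fixes an ambient $n_1$-set at this stage, never perturbs anything, and never needs your intrinsic characterisation or the matching-partner bookkeeping; a single averaging over $\mathscr W$ at the end finishes. Your approach does buy a more explicit description of the embedding (you actually name $\pi(x),\pi(y)$), but since the lemma only requires existence of \emph{some} embedding, the paper's route is substantially shorter.
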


    
    





\begin{proof}
    %
    %


    If $\mathscr U$ is a uniform random subsets of $G$ of size $n_2$, and for each $F\in\mF^\varnothing_{n_1}$, $\mathscr W_F$ is a uniform random subset of $F$ of size $n_2$ then we get
    \begin{align*}
        \mathbb P[G[\mathscr U]\text{ embeds into some $\mL^{(c)}_{\operatorname{ex}}$}]
        &\ge
        \sum_{\substack{F\in\mF_{n_1}^\varnothing\\ F\text{ is $\varepsilon_1$-standard}}} \mathbb P[G[\mathscr W_F]\text{ embeds into } E_F] \, p(F;G) \\
        &\ge(1-\varepsilon_4)^2 \ge 1-2\varepsilon_4,
    \end{align*}
    where the first inequality follows by \cref{lemma:double_counting}, the second inequality follow from \cref{lem:standard_has_uniform_subgraph}, \cref{lem:Claim_1} and \boxedchar{A4}, and the last inequality simply omits the quadratic term. Note that the same also holds if $\mathscr U$ is of size $n_2 +1$ and $n_2 + 2$, assuming \whiteboxedchar{$n_1$, $n_2$, and $\varepsilon_3$} are adjusted accordingly.

    Let $\mathscr W$ be an  $n_2$-sized subset of $G$ chosen uniformly at random and let $\mathscr P_1$ and $\mathscr P_2$ be two vertices in $V(G)$ also chosen uniformly at random. Note that $\mathscr W$, $\mathscr P_1$, and $\mathscr P_2$ are independent and let the following five events partition our probability space
    \begin{align*}
        I_1 &= \{\mathscr P_1\in \mathscr W\land \mathscr P_2 \in \mathscr W\}, \\
        I_2 &= \{\mathscr P_1\in \mathscr W\land \mathscr P_2 \notin \mathscr W\}, \\
        I_3 &= \{\mathscr P_1\notin \mathscr W\land \mathscr P_2 \in \mathscr W\}, \\
        I_4 &= \{\mathscr P_1\notin \mathscr W\land \mathscr P_2 \notin \mathscr W\land \mathscr P_1 = \mathscr P_2\}, \text{ and} \\
        I_5 &= \{\mathscr P_1\notin \mathscr W\land \mathscr P_2 \notin \mathscr W\land \mathscr P_1 \ne \mathscr P_2\} .
    \end{align*}
    Then $\mathscr W \cup \{\mathscr P_1, \mathscr P_2\}$ is a uniform random $n_2$-sized set when conditioned on $I_1$, uniform random $(n_2+1)$-sized set when conditioned on $I_2,I_3$ or $I_4$ and a random $(n_2+2)$-sized subset when conditioned on $I_5$.
    It follows that
    \begin{align*}
    &\mathbb P[\mathscr W \cup \{\mathscr P_1, \mathscr P_2\}\text{ embeds into some }\mL_{\operatorname{ex}}^{(c)}]\\
    &=
    \sum_{i=1,\dots,5} \mathbb P[(\mathscr W \cup \{\mathscr P_1, \mathscr P_2\} \text{ embeds into some } \mL_{\operatorname{ex}}^{(c)}) \text{ and } I_i \text{ holds} \} \ge 1 - 10\varepsilon_4
    \end{align*}
    due to the oberservation at the beginning of this proof. It follows that there
    exists at least one set $W$ with $|W|=n_2$ so that
    \begin{align*}
    \mathbb P[\mathscr W \cup \{\mathscr P_1, \mathscr P_2\}\text{ embeds into some }\mL_{\operatorname{ex}}^{(c)}, \mathscr W = W] \ge 1 - 10\varepsilon_4   
    \end{align*}
    We assume that \whiteboxedchar{$\varepsilon_4$ is sufficiently small} so that we get the lower bound $\ge 1-\varepsilon_5$. When \whiteboxedchar{$n_2$ is sufficiently large}, there exists exactly one $E\in\mL_{\operatorname{ex}}^{(c)}$ into which $G[W]$ embeds, proving the claim.
\end{proof}

Assuming \boxedchar{A1}, \boxedchar{A3} and \boxedchar{A4}, fix a set $W_G \subseteq V(G)$ as guaranteed by \cref{lem:Claim_7} together with a partition $C_1,...,C_r$ and $E \in \mL_{\operatorname{ex}}^{(c)}$ be the looped graph associated with $G[W]$. Let us w.l.o.g. assume that $V(E) = [r]$ and that each of the components $C_i$ correspond to the vertex $i$ in $E$. Now let
\begin{equation}
    \begin{aligned}
        E_{0} &= 
        \{(u,v)\in V(G)\times V(G) \mid G[W_G\cup\{u,v\}]\text{ does not embed into } E\}, \\
        V_{0} &= \{v\in V(G)  \mid v\text{ is incident to }\ge
        \varepsilon_6n_0 \text{ edges in }E_0\}, \\
        F_i &= \{v\in V(G)\setminus V_0  \mid W_G\cup \{v\} \text{ embeds into }E \}, \text{ and} \\
        V_i & = \left\{  v\in V(G)  \mid \ v\ast F_j
        \text{ has }\ge\delta_0 |F_j|\text{ edges with color }E(\{i,j\}) \text{ for each }j\in [r]  \right\}.
    \end{aligned}  
    \label{eq:E0V0FiVi}
\end{equation}

$$
\begin{tikzpicture}[line width=0.4mm,baseline=0ex, scale=0.8]
    \node[circle,,draw,scale=4, dotted] (A) at (0,0) {};
    \node[circle,,draw,scale=12] (Z) at (-0.1,0) {};
    \node[circle,fill=white,draw,scale=0.7, line width=1mm] (Ap1) at (-0.3,0.3) {};
    \node[circle,fill=white,draw,scale=0.7, line width=1mm] (Ap2) at (0.6,0) {};
    \node[circle,fill=white,draw,scale=0.7, line width=1mm] (Ap3) at (-0.3,-0.45) {};
    \node[circle,fill=black,draw,scale=0.4] (V1) at (-0.75,0.9) {};
    \node[circle,fill=black,draw,scale=0.4] (V2) at (-0.75,-1.1) {};
    \node[circle,fill=black,draw,scale=0.4] (V3) at (1.3,0) {};

    \node[scale=0.8] at (-1.3,1.3) {$F_1$};
    \node[scale=0.8] at (-1.8,1.8) {$V_1$};
    \node[scale=0.8] at (1.8,0) {$F_2$};
    \node[scale=0.8] at (2.4,0) {$V_2$};
    \node[scale=0.8] at (-1.2,-1.6) {$F_3$};
    \node[scale=0.8] at (-1.6,-2) {$V_3$};
    \node[scale=0.8] at (0.5,1.1) {$W$};
    \node[scale=0.8] at (1.9,2.4) {$G$};

    \draw[red, line width=1.2mm] (Ap1) -- (Ap2);
    \draw[cyan, line width=1.2mm] (Ap1) -- (Ap3);
    \draw[lime, line width=1.2mm] (Ap2) -- (Ap3);

    \draw[black] (V1) -- (Ap1);
    \draw[red] (V1) -- (Ap2);
    \draw[cyan] (V1) -- (Ap3);

    \draw[cyan] (V2) -- (Ap1);
    \draw[lime] (V2) -- (Ap2);
    \draw[black] (V2) -- (Ap3);

    \draw[red] (V3) -- (Ap1);
    \draw[black] (V3) -- (Ap2);
    \draw[lime] (V3) -- (Ap3);

    \node[circle,,draw,scale=3] (C1) at (-0.6,0.6) {};
    \node[circle,,draw,scale=3] (C2) at (-0.5,-0.9) {};
    \node[circle,,draw,scale=3] (C3) at (0.8,0) {};
       
    \node[circle,,draw, scale=4.5]  at (-0.85,0.85) {};
    \node[circle,,draw,scale=4.5]  at (-0.6,-1.2) {};
    \node[circle,,draw,scale=4.5]  at (1.1,0) {};
\end{tikzpicture}
$$

\begin{lemma} \label{lem:bound_for_zerosets}
    Assume \boxedchar{A3} and \boxedchar{A4}.
    We have $|E_0|<\varepsilon_5 n_0^2$ and $|V_0|\le \varepsilon_6n_0$.
\end{lemma}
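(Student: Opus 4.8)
The plan is to read off both estimates directly from \cref{lem:Claim_7}. Recall that the set $W_G$ and the looped graph $E\in\mL_{\operatorname{ex}}^{(c)}$ were fixed precisely so that, for $x,y\in V(G)$ picked uniformly at random, $G[W_G\cup\{x,y\}]$ embeds into $E$ with probability at least $1-\varepsilon_5$. By the definition of $E_0$ in \cref{eq:E0V0FiVi} the complementary event is exactly ``$(x,y)\in E_0$'', so at most an $\varepsilon_5$-fraction of the $n^2$ ordered pairs in $V(G)\times V(G)$ lie in $E_0$ (the $n$ diagonal pairs with $x=y$ being negligible), which gives the first bound $|E_0|\le \varepsilon_5 n^2$.

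For the bound on $V_0$ I would simply double count. Since ``$G[W_G\cup\{u,v\}]$ embeds into $E$'' does not depend on the order of $u$ and $v$, the set $E_0$ is symmetric, and writing $\deg_{E_0}(v)=|\{u\in V(G)\mid (v,u)\in E_0\}|$ we have $\sum_{v\in V(G)}\deg_{E_0}(v)=|E_0|\le \varepsilon_5 n^2$. Every $v\in V_0$ contributes $\deg_{E_0}(v)\ge \varepsilon_6 n$ to this sum by definition, so $|V_0|\cdot\varepsilon_6 n\le \varepsilon_5 n^2$ and hence $|V_0|\le (\varepsilon_5/\varepsilon_6)\,n$. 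To conclude $|V_0|\le \varepsilon_6 n$ it then suffices to impose \whiteboxedchar{$\varepsilon_5\le\varepsilon_6^2$}, which I would record as one of the constraints on the constant hierarchy; it only refines the already assumed order $\varepsilon_5<\varepsilon_6$.

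I do not expect any real obstacle, since the lemma is essentially a repackaging of \cref{lem:Claim_7}. The only care needed is in the elementary bookkeeping: making sure the diagonal pairs $x=y$ and the symmetry of $E_0$ do not spoil the exact double count, and checking that the newly introduced inequality among the $\varepsilon_i$ is compatible with the rest of the hierarchy fixed in \cref{sec:formal_statement_theorem}.
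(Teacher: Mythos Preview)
Your proposal is correct and follows essentially the same approach as the paper: invoke \cref{lem:Claim_7} for the bound on $|E_0|$, then double count incidences to bound $|V_0|$, absorbing the resulting factor $\varepsilon_5/\varepsilon_6$ by taking $\varepsilon_5$ sufficiently small relative to $\varepsilon_6$. The paper's version is terser (it writes $\varepsilon_6 n_0 |V_0|\le 2|E_0|$ and ``$\varepsilon_5$ sufficiently small'' rather than your explicit $\varepsilon_5\le\varepsilon_6^2$), but the content is identical.
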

\begin{proof}    
    By \cref{lem:Claim_7} we have $|E_0|\le \varepsilon_5n_0^2$.
    Furthermore, the definition of $V_0$ implies the bound $\varepsilon_6n_0|V_0|\le 2|E_0|$ which gives the claim for $V_0$ when \whiteboxedchar{$\varepsilon_5$ is sufficiently small}.
\end{proof}

\begin{lemma}\label{lem:Claim_8}
    Assume \boxedchar{A3} and \boxedchar{A4}. For every $i\in [r]$ we have $|F_i| \geq (1-\varepsilon_7) \, n /r$, the $V_i$ are pairwise disjoint and we have $F_i\subseteq V_i$.
\end{lemma}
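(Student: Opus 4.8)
The plan is to prove the three assertions in the order listed — the lower bound on $|F_i|$, then $F_i\subseteq V_i$, then pairwise disjointness of the $V_i$ — since the last two only need that the parts $F_i$ are large (in particular nonempty) together with the combinatorial structure of $E$. Write $r=R_{c-1}(3)-1$ and let $c_0$ be the common colour of the loops of $E$, so that by definition of $\mL_{\operatorname{ex}}^{(c)}$ the non-loop edges of $E$ form a triangle-free $R_{c-1}(3)$-Ramsey colouring in the remaining colours; in particular $E(\{i,j\})\neq c_0$ for all distinct $i,j\in[r]$. Note that $E_0$ is symmetric and that, by \cref{lem:bound_for_zerosets} and \cref{lem:Claim_7}, $|E_0|$ is a negligible fraction of $n^2$ while $|V_0|\le\varepsilon_6 n$, so every $v\notin V_0$ lies in at most $\varepsilon_6 n$ pairs of $E_0$.

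Next I isolate the structural facts that do all the work. Since $G[W_G]$ embeds into $E$ with each $C_k$ mapped onto vertex $k$, every edge inside $C_k$ has colour $c_0=E(\{k,k\})$ and every edge between $C_i$ and $C_k$ has colour $E(\{i,k\})$; thus $G[W_G]$ is exactly the blow-up of $E$ along the parts $C_k$, and any embedding of a superset of $W_G$ into $E$ collapses each $C_k$ to a single vertex and induces a colour-automorphism $\pi$ of $E$ on the classes (a non-injective or colour-violating map is impossible because distinct vertices of $E$ are joined by a non-$c_0$ edge). Consequently $F_i$ — the vertices $v\notin V_0$ whose edges to each $C_k$ are all of colour $E(\{i,k\})$ — is well defined, the $F_i$ are pairwise disjoint (a vertex cannot have colour $c_0=E(\{i,i\})$ to all of $C_i$ and colour $E(\{j,i\})\neq c_0$ to all of $C_i$), and, crucially, whenever $v\in F_i$, $u\in F_j$ and $(v,u)\notin E_0$ the edge $\{v,u\}$ has colour $E(\{i,j\})$: in the embedding of $G[W_G\cup\{v,u\}]$ the two new vertices are forced onto $\pi(i)$ and $\pi(j)$, and $\pi$ preserves colours. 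The case $i=j$ says that inside $F_i$ every non-$E_0$ pair has colour $c_0$.

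For the lower bound, observe first that $\bigcup_i F_i$ omits at most $(\varepsilon_5+\varepsilon_6)n$ vertices of $G$: the $\le\varepsilon_6 n$ vertices of $V_0$, plus the $v\notin V_0$ for which $G[W_G\cup\{v\}]$ fails to embed into $E$, each of which forces all $n$ pairs $(v,\cdot)$ into $E_0$ and hence accounts for at most $|E_0|/n\le\varepsilon_5 n$ vertices. Since inside $F_i$ all but at most $|E_0|$ pairs have colour $c_0$, $F_i$ contains at least $\binom{|F_i|}{3}-|F_i|\,|E_0|\ge\binom{|F_i|}{3}-\varepsilon_5 n^3$ monochromatic triangles, and as the $F_i$ are disjoint
\[
    p(\hat K_3;G)\binom n3 \;\ge\; \sum_{i=1}^{r}\binom{|F_i|}{3}-r\,\varepsilon_5\,n^3 .
\]
On the other hand \boxedchar{A1} with \cref{eq:ramsey_upper} gives $\varsigma\le 1/r^{2}$, and \boxedchar{A4} forces $p(\hat K_3;G)\le\varsigma+o(1)$, whence $\sum_i\binom{|F_i|}{3}\le\frac{n^3}{6r^2}(1+o(1))$. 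Combined with $\sum_i|F_i|\ge n(1-\varepsilon_5-\varepsilon_6)$ and convexity of $x\mapsto\binom x3$, this places $\sum_i\binom{|F_i|}{3}$ within $o(n^3)$ of its minimum $r\binom{(\sum_i|F_i|)/r}{3}$; since the exact convexity defect of $x\mapsto\binom x3$ at scale $\bar x\approx n/r$ is at least $\tfrac{2\bar x-3}{6}\sum_i(|F_i|-\bar x)^2$, each $|F_i|$ must differ from $n/r$ by $o(n)$, and after imposing the \whiteboxedchar{size constraints} making $\varepsilon_5,\varepsilon_6$ small enough compared with $\varepsilon_7^{2}/r^{3}$ we obtain $|F_i|\ge(1-\varepsilon_7)n/r$. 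This triangle-count-plus-convexity argument is the crux of the lemma; everything else is bookkeeping.

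Granting the lower bound, $F_i\subseteq V_i$ is immediate: for $v\in F_i$ and any $j\in[r]$ the structural fact shows that every $u\in F_j$ with $(v,u)\notin E_0$ gives an edge of colour $E(\{i,j\})$, and $v\notin V_0$ means at most $\varepsilon_6 n$ pairs $(v,u)$ lie in $E_0$, so $v\ast F_j$ has at least $|F_j|-\varepsilon_6 n\ge\delta_0|F_j|$ edges of colour $E(\{i,j\})$ once $\varepsilon_6$ is \whiteboxedchar{small enough} that $\varepsilon_6\le(1-\delta_0)(1-\varepsilon_7)/r$, using $|F_j|\ge(1-\varepsilon_7)n/r$. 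Finally, suppose $v\in V_i\cap V_j$ with $i\neq j$. Applying the defining property of $V_i$ and of $V_j$, each with the index $j$, the set $v\ast F_j$ contains at least $\delta_0|F_j|$ edges of colour $E(\{i,j\})$ and at least $\delta_0|F_j|$ edges of colour $E(\{j,j\})=c_0$; since $E(\{i,j\})\neq c_0$ and $|F_j|>0$ these edge sets are disjoint, which needs $2\delta_0|F_j|\le|F_j|$ and contradicts the \whiteboxedchar{parameter constraint $\delta_0>1/2$}.
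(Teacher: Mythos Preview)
Your proof is correct and follows essentially the same route as the paper's: count monochromatic triangles inside the parts $F_i$ (using that non-$E_0$ pairs inside $F_i$ have the loop colour), combine this with the upper bound on $p(\hat K_3;G)$ coming from \boxedchar{A4} and the convexity of $x\mapsto\binom{x}{3}$ to force the $|F_i|$ to be nearly equal to $n/r$, and then verify $F_i\subseteq V_i$ and the pairwise disjointness of the $V_i$ directly from the definitions together with $\delta_0>1/2$.

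The only noteworthy difference is organisational. The paper argues the size bound by contradiction: it supposes some $|F_i|<(1-\varepsilon_7)n/r$, lower-bounds the remaining union, and shows that the resulting lower bound on the triangle count already exceeds $\varsigma+\varepsilon_4/2$, contradicting \boxedchar{A4}. You instead quantify the convexity defect globally, using the exact third-order Taylor expansion of $\binom{x}{3}$ to bound $\sum_i(|F_i|-\bar x)^2$ by the gap between the actual and minimal triangle counts. Both are standard and equivalent in strength; your version makes the dependence on the $\varepsilon$-hierarchy more transparent, while the paper's avoids the second-derivative bookkeeping. Note also that your explicit appeal to \boxedchar{A1} for $\varsigma\le 1/r^2$ is not listed among the lemma's hypotheses, but the paper's own proof needs the same inequality to turn its triangle-count lower bound into a contradiction with \boxedchar{A4}, so this is a harmless omission in the statement rather than an issue with your argument.
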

\begin{proof}  
    First, note that
    \begin{equation}\label{eq:crude_upper_bound_on_remainder}
        \Big|V(G) \setminus \big(V_0 \cup \bigcup_{i=1}^r F_i \big)\Big|^2\le |E_0|.
    \end{equation}
    Now, assume that for some $i$ we have
    $|F_i|<(1-\varepsilon_7) \, n / r$.
    Then by \cref{eq:crude_upper_bound_on_remainder}, the union $\bigcup_{i\ne j} F_j$ has size
    at least
    \[
       \Big|\bigcup_{i\ne j} F_j \Big| \ge n -  |V_0| - \sqrt{|E_0|} - |F_i|
       \ge
       \left( 1-\varepsilon_6 - \sqrt{\varepsilon_5} - \frac{1-\varepsilon_7}{{r}} \right)\, n
    \]
    by \cref{lem:bound_for_zerosets}.
    The number of monochromatic triangles of $G$ is at least
    \[
        \binom{|F_i|}{3}
        - n|E_0|
        + \sum_{j \neq i} \binom{|F_j|}{3} \geq \binom{|F_i|}{3}
        - n|E_0|
        + (r-1)
        \binom{\frac{1}{r-1}|\bigcup_{i \ne j} F_j|}{3}
    \]
    and therefore for \whiteboxedchar{$\varepsilon_4$, $\varepsilon_5$ and $\varepsilon_6$ sufficiently small } as well as \whiteboxedchar{$n_0$ sufficiently large} and $n\ge n_0$ we would get a contradiction to \boxedchar{A4} since $p(\hat K_3; G)\ge  \varsigma+\varepsilon_4/2$.
    %
    Note that in particular the $F_i$ are non-empty for \whiteboxedchar{$n_0$ sufficiently large}.

    Now let us show that the sets $V_i$ are disjoint when \whiteboxedchar{$\delta_0 > 1/2$}. Otherwise, let $v \in V_i \cap V_j$ for some $i \neq j$ and note that would have at least $\delta_0|F_i|$ edges with color $E(\{i,j\})$ and $\delta_0|F_i|$ edges with color $E(\{i\})$ in $v\ast F_i$ with colors. However, by \cref{lem:standard_is_Ramsey}, loops in $E$ have a different color from their incident edges in $E$, a contradiction, since $F_i$ is non-empty and $\delta_0 > 1/2$. 

    Now let us show that $F_i\subseteq V_i$. For any $v\in F_i$ we know that $v\not\in V_0$, so that $|E_0(v)|<\varepsilon_6n_0$. Therefore, for each $j$ we have
    \[
        \frac{|\{w\in F_j\mid G(\{v,w\})\ne E(\{i,j\}) \}|}{|F_i|}
        \le 
        \frac{|E_0(v)|}{|F_i|}
        \le
        \varepsilon_6 \left(\frac{1-\varepsilon_7}{r}\right)^{-1}.
    \]
    This upper bound is $\le \delta_0$ for \whiteboxedchar{$\varepsilon_6$ sufficiently small} and therefore $v \in V_i$ by definition. Finally, the lower bound for $|V_i|$ follows immediately from that of $|F_i|$.
\end{proof}

\subsection{Properties of Ramsey constructions}\label{sec:Ramsey_mult_graphs}

The following proposition states that $\varsigma = 1 / (R_{c-1}(3)-1)^2$ and  therefore every balanced blow-up from an $E\in\mL_{\operatorname{ex}}^{(c)}$ minimizes the density of monochromatic triangles asymptotically. Notice that unlike \cref{th:stability}, we do not yet get a full characterization of those finite graphs minimizing the $\hat K_3$ density or a stability statement.

\begin{proposition}\label{th:extremals_are_Ramsey}
    Assume \boxedchar{A1} and \boxedchar{A2}.
    We have $\varsigma = 1/(R_{c-1}(3)-1)^{2}$. 
\end{proposition}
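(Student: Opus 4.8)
The plan is to match the Ramsey-blow-up upper bound with a lower bound read off from the structure of standard flags established in \cref{lem:standard_is_Ramsey}. First observe that \boxedchar{A1} pins $\varsigma$ to the asymptotic multiplicity $m_c(3) = \lim_n m_c(3;n)$: positivity together with \cref{thm:homfunctional} forces $\lim p(\hat K_3; F_n) \ge \varsigma$ along every convergent sequence, hence $m_c(3) \ge \varsigma$, while tightness produces a convergent sequence with $p(\hat K_3;\cdot) \to \varsigma$, hence $m_c(3) \le \varsigma$. Combining this with \cref{eq:ramsey_upper} (equivalently, \cref{lemma:blowup} applied to the balanced blow-up of an $R_{c-1}(3)$-Ramsey coloring whose loops carry the $c$-th color) gives the upper bound $\varsigma \le (R_{c-1}(3)-1)^{-2}$, exactly as was already invoked inside the proof of \cref{lem:standard_is_Ramsey}. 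So only the reverse inequality $\varsigma \ge (R_{c-1}(3)-1)^{-2}$ remains.

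For this, write $r = R_{c-1}(3)-1$, fix a small $\varepsilon_1 > 0$, and let $n_1 = n_1(\varepsilon_1)$ and $\varepsilon_3 = \varepsilon_3(\varepsilon_1)$ be as required by \cref{lem:Claim_1} and \cref{lem:standard_is_Ramsey}, noting that $n_1 \to \infty$ and $\varepsilon_3 \to 0$ as $\varepsilon_1 \to 0$. Using \boxedchar{A1}, choose for each $n$ a triangle-minimizing flag $G_n \in \mF^\varnothing$ of order $n$, so that $p(\hat K_3; G_n) = m_c(3;n) \le \varsigma$ and $p(\hat K_3; G_n) \to \varsigma$; for $n$ large this $G_n$ meets the hypotheses of \cref{lem:Claim_1} (in particular $p(\hat K_3; G_n) \ge \varsigma - \varepsilon_1^2$), and since $p(\hat K_3; G_n) - \varsigma \le 0$ the bound there collapses to
\[
    \mathbb P[\,G_n[\mathscr M] \text{ is }(\varsigma+\varepsilon_1)\text{-standard}\,] \ge 1 - (1 + 1/s)\,\varepsilon_1
\]
for a uniformly random $n_1$-subset $\mathscr M \subseteq V(G_n)$, with the constant independent of $n_1$. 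By \cref{lem:standard_is_Ramsey}, any $(\varsigma+\varepsilon_1)$-standard flag $F$ of order $n_1$ has $|\mX_F| = r$, $Y_F = \varnothing$, and $|X| \ge (1-\varepsilon_3)\,n_1/r$ for every $X \in \mX_F$; since the cliques in $\mX_F$ then cover $V(F)$ and, by \cref{lem:Claim_4}, pairwise meet in at most one vertex, no monochromatic triangle of $F$ lies in two of them, so
\[
    p(\hat K_3; F) \ \ge\ \frac{\sum_{X \in \mX_F} \binom{|X|}{3}}{\binom{n_1}{3}} \ \ge\ \frac{r\,\binom{\lceil (1-\varepsilon_3)\,n_1/r\rceil}{3}}{\binom{n_1}{3}}.
\]

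Averaging over $\mathscr M$ by \cref{lemma:double_counting} and using $p(\hat K_3;\cdot) \ge 0$ on the remaining (few) subsets yields, for all large $n$,
\[
    \varsigma \ \ge\ p(\hat K_3; G_n) \ =\ \mathbb E_{\mathscr M}\!\left[p(\hat K_3; G_n[\mathscr M])\right] \ \ge\ \big(1 - (1+1/s)\varepsilon_1\big)\,\frac{r\,\binom{\lceil (1-\varepsilon_3)\,n_1/r\rceil}{3}}{\binom{n_1}{3}}.
\]
Letting $n \to \infty$ and then $\varepsilon_1 \to 0$, so that $n_1 \to \infty$, $\varepsilon_3 \to 0$ and the last fraction tends to $1/r^2$, we obtain $\varsigma \ge (R_{c-1}(3)-1)^{-2}$, and the proposition follows. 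I expect the only real subtlety to be the bookkeeping of this iterated limit — checking that for each fixed $\varepsilon_1$ the extremal sequence $(G_n)$ drives $p(\hat K_3;G_n)-\varsigma$ to $0$ fast enough for \cref{lem:Claim_1} to deliver a probability $1 - O(\varepsilon_1)$ with a constant not depending on $n_1$. A cleaner, equivalent route that avoids the sampling step is to note that \boxedchar{A2} forces every homomorphism extremal for $\hat K_3-\varsigma$ to annihilate $\hat K_{3,1}+\hat K_{3,3}$ and hence to descend to the algebra $\mA^\varnothing_\mH$ of $\mH$-free flags; since every flag of $\mA^\varnothing_\mH$ is standard, one then applies \cref{lem:standard_is_Ramsey} directly to a triangle-minimizing sequence of $\mH$-free flags.
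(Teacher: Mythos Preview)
Your argument is correct, and the alternative route you sketch at the end --- descending to $\mA^\varnothing_\mH$ via an extremal homomorphism and then invoking \cref{thm:homfunctional} there --- is exactly what the paper does: it fixes an extremal $\phi_0$, uses \boxedchar{A2} to see that $\phi_0$ kills all of $\mH$, identifies $\mA^\varnothing_\mH \cong \mA^\varnothing/\langle\mH\rangle$, and realizes $\phi_0$ by a sequence of standard flags to which \cref{lem:standard_is_Ramsey} applies directly. Your main route is a genuinely different, more elementary variant: it stays in the unrestricted algebra and replaces the algebraic descent by the sampling estimate of \cref{lem:Claim_1} applied to a triangle-minimizing sequence $(G_n)$, where the key observation $p(\hat K_3;G_n)-\varsigma\le 0$ makes the $n_1$-dependent term in that bound vanish and leaves probability $\ge 1-O(\varepsilon_1)$ uniformly in $n_1$, so the iterated limit closes. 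This avoids the forbidden-subgraph algebra machinery entirely at the cost of the parameter bookkeeping you flag; one small correction is that in the paper's hierarchy $\varepsilon_3$ is chosen \emph{before} $\varepsilon_1$, so the clean way to run your limit is to send $\varepsilon_3\to 0$ and then take $\varepsilon_1<\varepsilon_3$ and $n_1>1/\varepsilon_1$ accordingly, rather than writing $\varepsilon_3=\varepsilon_3(\varepsilon_1)$. The paper's route is shorter and sidesteps that limit, but needs the quotient identification.
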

\begin{proof}
    Let $\phi_0\in\operatorname{Hom}^+(\mA^\varnothing, \mathbb R)$ be an arbitrary but fixed extremal positive homomorphism for $\hat K_3-\varsigma$, that is $\phi_0(\hat K_3-\varsigma)=0$. By \boxedchar{A2} we therefore also have $\phi_0(\hat K_{3,1}+\hat K_{3,3})=0$. Denote by $\mH$ all flags corresponding to summands in $\hat K_{3,1}$ or $\hat K_{3,3}$. Let us sketch the argument that $\mA_\mH^\varnothing \cong \mA^\varnothing / \langle \mH\rangle$. There is an algebra homomorphism $\pi: \mA^\varnothing \to \mA_\mH^\varnothing$ defined by mapping a flag $F\in \mF$ to $F$ if $F\in \mF^\varnothing_\mH$ and otherwise to 0, see \cite{Razborov_2007} Definition 4 and Theorem 2.6. Clearly, it is surjective and $\langle \mH \rangle$ is in its kernel. To see that its kernel is exactly $\langle \mH \rangle$ we notice that if $\pi(f)=0$ then $\pi(f)$ may be written as a linear combination of elements of $\mK^\varnothing_\mH$. But we can pull back this linear combination to a linear combination of $\mK^\varnothing$ and then it follows that $f$ is in $\langle \mH\rangle$.

    Since $\mA_\mH^\varnothing \cong \mA^\varnothing / \langle \mH\rangle$,
    the map $\phi_0$ also induces a positive homomorphism of $\mA^\varnothing_{\mH}$. Hence, by \cref{thm:homfunctional} there exists a sequence $F_i$ of $\alpha_i$-standard flags with $\lim_{i\to\infty}\alpha_i=\varsigma$ in $\mA^\varnothing_{\mH}$, and therefore in $\mA^\varnothing$, satisfying $\lim_{i \to \infty} p(F;F_i) = \phi_0(F)$. By \cref{lem:standard_is_Ramsey}, the $\hat K_3$ density in $F_i$ is asymptotically $1/(R_{c-1}(3)-1)^{2}$.
\end{proof}

Under the assumptions \boxedchar{A1} and \boxedchar{A2}, \cref{th:extremals_are_Ramsey} shows that the balanced blow-ups of the $R_{c-1}(3)$-Ramsey colorings minimize the density of monochromatic triangles. This has several consequences regarding the structure of any $E \in \mL_{\operatorname{ex}}^{(c)}$.

%

\begin{lemma}\label{lem:Degree_at_lest_two}
    Assume \boxedchar{A1} and \boxedchar{A2}. For any $E \in \mL_{\operatorname{ex}}^{(c)}$ there are at least two $c'$-colored edges in $E$ at $v$ for any $c' \in [c]$ that is not the loop color $c_\ell$ and $v\in V(E)$.
\end{lemma}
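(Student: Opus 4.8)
Rather than trying to beat the extremal balanced blow-up of $E$ directly (which does not seem to be achievable by any of the natural local modifications, e.g.\ splitting a part in two and recolouring the edges between the halves), the plan is to extract the degree condition from a short double-counting argument on the neighbourhood of $v$, using only that the non-loop part of $E$ is a \emph{maximum-size} monochromatic-triangle-free $(c-1)$-colouring together with the recursive behaviour of Ramsey numbers. In particular the argument is purely combinatorial and does not really invoke \boxedchar{A1}, \boxedchar{A2} beyond the fact, already built into the definition of $\mathcal{L}_{\operatorname{ex}}^{(c)}$, that $E$ lives on $R_{c-1}(3)-1$ vertices.

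\textbf{Setup and key observation.} Fix $E \in \mathcal{L}_{\operatorname{ex}}^{(c)}$, write $r = R_{c-1}(3)-1$ for its order and $c_\ell$ for its loop colour, so that the non-loop edges of $E$ form a $(c-1)$-colouring of $K_r$ with colours in $S := [c]\setminus\{c_\ell\}$ and no monochromatic triangle. Suppose towards a contradiction that some $v \in V(E)$ is incident to at most one $c'$-coloured edge for some $c' \in S$. For each $a \in S$ let $N_a \subseteq V(E)\setminus\{v\}$ be the set of vertices joined to $v$ by an $a$-coloured edge; then $\{N_a\}_{a\in S}$ partitions $V(E)\setminus\{v\}$, so $\sum_{a\in S}|N_a| = r-1$. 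The point is that every $N_a$ is small: the induced colouring $E[N_a]$ contains no $a$-coloured edge, since an $a$-edge inside $N_a$ together with $v$ would be a monochromatic $a$-triangle in $E$; hence $E[N_a]$ uses only the $c-2$ colours of $S\setminus\{a\}$, and it contains no monochromatic triangle at all, being an induced sub-colouring of $E$. By definition of the Ramsey number this forces $|N_a| \le R_{c-2}(3)-1$.

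\textbf{The contradiction.} Combining $|N_{c'}| \le 1$ with the bound above for the remaining $c-2$ colours gives
\[
    R_{c-1}(3)-2 \;=\; \sum_{a\in S}|N_a| \;\le\; 1 + (c-2)\bigl(R_{c-2}(3)-1\bigr).
\]
This is false for the relevant number of colours: for $c=3$ it reads $4 \le 3$ and for $c=4$ it reads $15 \le 11$, using $R_1(3)=3$, $R_2(3)=6$, $R_3(3)=17$. This contradiction shows $v$ is incident to at least two $c'$-coloured edges, proving the lemma. Together with \cref{th:extremals_are_Ramsey} this also justifies, a posteriori, that the balanced blow-ups behind $\mathcal{G}^{(c)}_{\operatorname{ex}}$ genuinely use each non-loop colour at every vertex.

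\textbf{Main obstacle.} The only real difficulty is conceptual: one is tempted to argue through the extremality of balanced blow-ups of $E$, but the degree statement is an elementary property of maximal Ramsey colourings and is cleanest proved directly by the nested-Ramsey degree bound $|N_a| \le R_{c-2}(3)-1$. One should also record that the concluding numerical inequality must be checked for each $c$ of interest; it holds with room to spare for $c \le 4$, which is all that is needed here.
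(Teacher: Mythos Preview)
Your argument is correct for $c\in\{3,4\}$ and takes a genuinely different route from the paper. The paper proves the lemma by exploiting assumptions A1 and A2: from a hypothetical vertex $v$ with at most one $c'$-edge it constructs a weighted looped colouring $E'$ (duplicating $v$ and recolouring a single edge of the duplicate) whose blow-up still achieves monochromatic-triangle density exactly $\varsigma=1/(R_{c-1}(3)-1)^2$ but now contains a copy of an $\mH(0,2,1)$ configuration, contradicting A2. Your approach instead bypasses A1 and A2 entirely, using only the classical neighbourhood bound $|N_a|\le R_{c-2}(3)-1$ inside a maximum triangle-free $(c-1)$-colouring together with the known values $R_1(3)=3$, $R_2(3)=6$, $R_3(3)=17$. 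This is cleaner and more elementary, but it trades away generality: the needed inequality $R_{c-1}(3)-2>1+(c-2)(R_{c-2}(3)-1)$ is not known for all $c$ (already for $c=5$ it would require $R_4(3)\ge 52$, whereas only $R_4(3)\ge 51$ is established). Since Section~5 is deliberately formulated for arbitrary $c$ satisfying A1 and A2, the paper's construction-based proof is the appropriate one in that setting; your argument is nonetheless entirely sufficient for the applications at $c\le 4$ actually carried out.
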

\begin{proof}
    There exists at least one such edge, as otherwise
    we could increase the number of vertices of $E$ by duplicating $v$ and setting the connecting edge between $v$ and its duplicate to $c_2$. This would contradict the fact that the $E$ is a Ramsey graph.

    $$
       \begin{tikzpicture}[line width=0.4mm,baseline=0ex, scale=0.8]
        \node[circle,fill=white,draw,scale=4] (A) at (0,0) {};
        \node[circle,fill=black,draw,scale=0.5] (Ap2) at (0.3,0.3) {};
        \node[circle,fill=black,draw,scale=0.5] (Ap3) at (0,-0.3) {};

        \draw[cyan, line width=0.6mm] (Ap2) -- (Ap3);
        
        \node at (0.4, 0.6) {$v$};
        \node at (0, -0.6) {};
    \end{tikzpicture}
    \mapsto
    \begin{tikzpicture}[line width=0.4mm,baseline=0ex, scale=0.8]
        \node[circle,fill=white,draw,scale=4] (A) at (0,0) {};
        \node[circle,fill=black,draw,scale=0.5] (Ap2) at (0.3,0.3) {};
        \node[circle,fill=black,draw,scale=0.5] (Ap3) at (0,-0.3) {};
        \node[circle,fill=black,draw,scale=0.5] (B) at (-1.2,-0.1) {};

        \draw[red, line width=0.6mm] (B) -- (Ap2);
        \draw[cyan, line width=0.6mm] (B) -- (Ap3);
        \draw[cyan, line width=0.6mm] (Ap2) -- (Ap3);

        \node at (0.4, 0.6) {$v$};
        \node at (0, -0.6) {};

        \node at (-1.5,0) {$v'$};
    \end{tikzpicture}.
    $$
    
    Assume that only one such edges exists for some vertex $v\in V(G)$. Let $w \ne v$ denote the other vertex with $G(\{v,w\})=c'$. There exists
    a vertex $x$, different from $v$ and $w$, with $c'\ne G(\{w,x\})$. 
    Otherwise, we could duplicate $w$ and choose any color between $w$ and its duplicate that is different from $c'$ without increasing the number of monochromatic triangles.
    $$
    \begin{tikzpicture}[line width=0.4mm,baseline=0ex, scale=0.8]
        \node[circle,fill=white,draw,scale=4] (A) at (0,0) {};
        \node[circle,fill=black,draw,scale=0.5] (Ap1) at (-0.3,0.3) {};
        \node[circle,fill=black,draw,scale=0.5] (Ap2) at (0.3,0.3) {};
        \node[circle,fill=black,draw,scale=0.5] (Ap3) at (0,-0.3) {};

        \draw[red, line width=0.6mm] (Ap1) -- (Ap2);
        \draw[red, line width=0.6mm] (Ap1) -- (Ap3);
        \draw[cyan, line width=0.6mm] (Ap2) -- (Ap3);
        
        \node at (0.4, 0.6) {$v$};
        \node at (-0.4, 0.6) {$w$};
        \node at (0, -0.6) {$x$};
    \end{tikzpicture}
    \mapsto
    \begin{tikzpicture}[line width=0.4mm,baseline=0ex, scale=0.8]
        \node[circle,fill=white,draw,scale=4] (A) at (0,0) {};
        \node[circle,fill=black,draw,scale=0.5] (Ap1) at (-0.3,0.3) {};
        \node[circle,fill=black,draw,scale=0.5] (Ap2) at (0.3,0.3) {};
        \node[circle,fill=black,draw,scale=0.5] (Ap3) at (0,-0.3) {};
        \node[circle,fill=black,draw,scale=0.5] (B) at (-1.2,-0.1) {};

        \draw[red, line width=0.6mm] (Ap1) -- (Ap2);
        \draw[red, line width=0.6mm] (Ap1) -- (Ap3);
        \draw[lime, line width=0.6mm] (B) -- (Ap1);
        \draw[red, line width=0.6mm] (B) -- (Ap2);
        \draw[red, line width=0.6mm] (B) -- (Ap3);
        \draw[cyan, line width=0.6mm] (Ap2) -- (Ap3);

        \node at (0.4, 0.6) {$v$};
        \node at (-0.4, 0.6) {$w$};
        \node at (0, -0.6) {$x$};

        \node at (-1.5,0) {$w'$};
    \end{tikzpicture}.
    $$
    Duplicate $v$ to $v'$ and recolor the edge $\{x,v'\}$ so that it has color $c'$.
    Call the resulting looped graph $E'$. We use the following weights on $E'$:
    \[
        \mathbf w(y) = \frac{1}{v(E)}\cdot
        \begin{cases}
            1 & \text{ if }y\not\in\{v,v'\} \\
            \frac{1}{2} & \text{ otherwise }
        \end{cases}.
    \]
    Thus, $\sum_{i\in [c]} \hat p(K^i_3; E', \mathbf w) = \varsigma$, but $E'$ contains a copy of $\mH(0,2,1)$, contradicting \boxedchar{A2}.
    $$
    \begin{tikzpicture}[line width=0.4mm,baseline=0ex, scale=0.8]
        \node[circle,fill=white,draw,scale=4] (A) at (0,0) {};
        \node[circle,fill=black,draw,scale=0.5] (Ap1) at (-0.3,0.3) {};
        \node[circle,fill=black,draw,scale=0.5] (Ap2) at (0.3,0.3) {};
        \node[circle,fill=black,draw,scale=0.5] (Ap3) at (0,-0.3) {};

        \draw[red, line width=0.6mm] (Ap1) -- (Ap2);
        \draw[lime, line width=0.6mm] (Ap1) -- (Ap3);
        \draw[cyan, line width=0.6mm] (Ap2) -- (Ap3);

        \node at (0.4, 0.6) {$v$};
        \node at (-0.4, 0.6) {$w$};
        \node at (0, -0.6) {$x$};
    \end{tikzpicture}
    \mapsto
    \begin{tikzpicture}[line width=0.4mm,baseline=0ex, scale=0.8]
        \node[circle,fill=white,draw,scale=4] (A) at (0,0) {};
        \node[circle,fill=black,draw,scale=0.5] (Ap1) at (-0.3,0.3) {};
        \node[circle,fill=black,draw,scale=0.5] (Ap2) at (0.3,0.3) {};
        \node[circle,fill=black,draw,scale=0.5] (Ap3) at (0,-0.3) {};
        \node[circle,fill=black,draw,scale=0.5] (Ap4) at (1.2,-0.1) {};

        \draw[red, line width=0.6mm] (Ap1) -- (Ap2);
        \draw[lime, line width=0.6mm] (Ap1) -- (Ap3);
        \draw[cyan, line width=0.6mm] (Ap2) -- (Ap3);
        \draw[red, line width=0.6mm] (Ap1) -- (Ap4);
        \draw[black, line width=0.6mm] (Ap2) -- (Ap4);
        \draw[red, line width=0.6mm] (Ap3) -- (Ap4);

        \node at (0.4, 0.6) {$v$};
        \node at (1.2, 0.4) {$v'$};
        \node at (-0.4, 0.6) {$w$};
        \node at (0, -0.6) {$x$};
    \end{tikzpicture}
    $$
\end{proof}

\begin{lemma}\label{lem:two_disjoint_monochrom}
     Assume \boxedchar{A1} and \boxedchar{A2}. Let $E \in \mL_{\operatorname{ex}}^{(c)}$ with loop color $c_\ell$. If $f:V(E)\to [c]$ is a function for which there exists $v_0$ with $f(v_0)=c_\ell$ and for each $v\in V(E)\setminus \{v_0\}$ we have
    $f(v)\in [c]\setminus \{c_\ell\}$, then either the set $v_0\ast f^{-1}(c')$ has color $c'$ for every $c'\in[c]$ or there exist two vertices $w\ne x$ so that $f(w)=E(\{w,x\})=f(x)$.
\end{lemma}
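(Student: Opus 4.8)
The plan is to prove the dichotomy by contradiction: I would assume the first alternative fails and, on top of that, that the second one fails, and then derive a contradiction with \boxedchar{A2} by a weighted blow-up argument in the spirit of the proof of \cref{lem:Degree_at_lest_two}.

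First I would observe that $f^{-1}(c_\ell) = \{v_0\}$, so the $c' = c_\ell$ case of the first alternative is vacuous and the first alternative is equivalent to $f(v) = E(\{v_0, v\})$ holding for every $v \in V(E)\setminus\{v_0\}$. Assuming this fails, I would fix a vertex $u$ with $a_1 := f(u)$ and $a_2 := E(\{v_0, u\})$ distinct; since $u \neq v_0$ we have $a_1 \neq c_\ell$, and since no non-loop edge of $E$ carries the loop color we have $a_2 \neq c_\ell$, so $c_\ell, a_1, a_2$ are pairwise distinct. I would then assume, for contradiction, that there is no pair $w \neq x$ with $f(w) = f(x) = E(\{w,x\})$.

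Next, writing $r = R_{c-1}(3) - 1 = v(E)$, I would form the looped coloring $E' \in \mL^{(c)}_{r+1}$ on vertex set $(V(E)\setminus\{v_0\}) \sqcup \{v', v''\}$ by: keeping all loops of color $c_\ell$; coloring the new edge $\{v',v''\}$ with $c_\ell$ as well; keeping every edge inside $V(E)\setminus\{v_0\}$ with its $E$-color; and setting $E'(\{v', v\}) = E(\{v_0, v\})$ and $E'(\{v'', v\}) = f(v)$ for each $v \in V(E)\setminus\{v_0\}$. I would equip $E'$ with the probability vector $\bw$ assigning $\tfrac1{2r}$ to each of $v', v''$ and $\tfrac1r$ to each remaining vertex, and then check two facts. \emph{(i)} $E'$ has no monochromatic triangle in a non-loop color: one avoiding $\{v',v''\}$ would be a monochromatic triangle of $E$; one meeting $v'$ but not $v''$ is a copy of an $E$-triangle through $v_0$; one meeting $v''$ but not $v'$ is precisely a pair $w, x$ realizing the configuration we excluded; and one meeting both uses the $c_\ell$-edge $\{v',v''\}$. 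Hence $\sum_{i=1}^c \hat p(K_3^i; E', \bw) = \hat p(K_3^{c_\ell}; E', \bw)$, which equals $1/r^2 = \varsigma$ by a short computation, using that $\{v',v''\}$ is the only non-loop edge of color $c_\ell$, that $w_{v'} + w_{v''} = \tfrac1r$, and (for the last equality) \cref{th:extremals_are_Ramsey}. \emph{(ii)} Taking a monochromatic (hence $c_\ell$-colored) triangle with two vertices in the blow-up part of $v'$ and one in the part of $v''$, together with a vertex in the part of $u$, gives a copy of an element of $\mH(0,2,1)$, since the external vertex is then incident to two edges of color $a_2$ and one of color $a_1$ and $c_\ell, a_1, a_2$ are distinct. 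Thus $\sum_{H\in\mH}\hat p(H; E', \bw) > 0$.

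Finally, by \cref{lemma:blowup} and \cref{thm:homfunctional} the sequence of $\bw$-blow-ups of $E'$ provides a positive homomorphism $\phi$ with $\phi(\hat K_3) = \sum_i \hat p(K_3^i; E', \bw) = \varsigma$ and $\phi(\hat K_{3,1}) = \sum_{H\in\mH}\hat p(H; E', \bw) > 0$, whereas \boxedchar{A2} forces $0 = \phi(\hat K_3 - \varsigma) \ge s\,\phi(\hat K_{3,1} + \hat K_{3,3}) \ge s\,\phi(\hat K_{3,1}) > 0$, a contradiction; hence the pair $w \neq x$ exists and the second alternative holds. The step I expect to be the main obstacle is getting the construction $E'$ right: one has to \emph{split} $v_0$ into two copies joined by a loop-colored edge, rather than duplicate and locally recolor as in \cref{lem:Degree_at_lest_two}, so that no new monochromatic triangle is created (keeping the blow-up density-optimal) while the disagreement of $f$ with $E$ at $u$ is nevertheless forced to appear as a copy of $\mH(0,2,1)$. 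Checking the triangle bookkeeping around $v'$ and $v''$ in steps (i) and (ii) is the technical heart.
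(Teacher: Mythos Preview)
Your proof is correct and follows essentially the same approach as the paper. The paper phrases the construction as adding a new vertex $z$ to $E$ with $E'(\{z,v\}) = f(v)$ and splitting the weight $1/r$ between $v_0$ and $z$, whereas you describe it as splitting $v_0$ into $v'$ (keeping the $E$-edges) and $v''$ (taking the $f$-edges); since $f(v_0)=c_\ell$ these two descriptions yield the very same looped coloring and weighting. Your verification of the triangle count and of the $\mH(0,2,1)$-copy is more explicit than the paper's, but the argument is the same.
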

\begin{proof}
    Assume there exists a color $c'\in [c]$ so that $v_0\ast f^{-1}(c')$ is not entirely of color $c'$. Then, define a new graph $E'$ on $V(E)\sqcup \{z\}$ so that $E'(z,v) = f(v)$ for all $v\in E$. Consider the following weighting on $E'$ 
    \begin{equation}\label{eq:weighting_on_extension}
        \mathbf w(y) = \frac{1}{v(E)}\cdot
        \begin{cases}
            1 & \text{ if }y\not\in\{v_0,z\} \\
            \frac{1}{2} & \text{ otherwise }
        \end{cases}.
    \end{equation}
    If no two distinct vertices $w,x\in V(E)$ exist with $f(w) = E(\{w,x\}) = f(x)$, we would get $\sum_{i\in [c]} \hat p(K^i_3; E', \mathbf w) = \frac{1}{(R_{c-1}(3)-1)^2} = \varsigma$ but the graph $E'$ contains a copy of $\mH(0,2,1)$, contradicting \boxedchar{A2}.
    $$  
    \begin{tikzpicture}[line width=0.4mm,baseline=0ex, scale=0.8]
        \node[circle,black,draw,scale=5] (A) at (0,0) {};
        \node[circle,lime,draw,scale=2.4, line width=1mm] (B1) at (-0.35,0.3) {};
        \node[circle,red,draw,scale=0.7, line width=1mm] (B2) at (-0.1,-0.6) {};
        \node[circle,cyan,draw,scale=0.7, line width=1mm] (B3) at (0.7,-0.4) {};
        \node[circle,fill=black,draw,scale=0.4] (P) at (1.2,1.2) {};
        \node[circle,fill=black,draw,scale=0.4] (Q1) at (-0.5,0.6) {};

        \draw[cyan, line width=1mm] (P) -- (B3);
        \draw[red, line width=1mm] (P) -- (B2);
        \draw[lime, line width=1mm] (P) -- (B1);

        \draw[black, line width=1mm] (Q1) -- (P);

        \node[scale=0.8] at (0,1.4) {$E$};
        \node[scale=0.8] at (1.45,1.45) {$z$};
        \node[scale=0.8] at (1.35,0.8) {$f$};
        \node[scale=0.8] at (-0.5,0.3) {$v_0$};
    \end{tikzpicture}   
    $$
\end{proof}

\begin{lemma}\label{lem:exists_cherry_in_Ramsey}
     Assume \boxedchar{A1} and \boxedchar{A2}. For any $E \in \mL_{\operatorname{ex}}^{(c)}$ with loop color $c_\ell$ and any two distinct vertices $x,y\in V(E)$ and $c'\in [c]\setminus \{c_\ell\}$,
    there exists
    a vertex $w\in E$ 
    satisfying $E(\{x,w\})=E(\{y,w\})=c'$.
\end{lemma}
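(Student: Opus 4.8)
The plan is to argue by contradiction through a weighted blow-up, in the spirit of the proofs of \cref{lem:Degree_at_lest_two} and \cref{lem:two_disjoint_monochrom}. Write $r=R_{c-1}(3)-1=v(E)$ and, for $v\in V(E)$, let $N_{c'}(v)$ be the set of vertices joined to $v$ by a $c'$-edge in $E$. We may assume $E(\{x,y\})\neq c'$: a $c'$-edge between $x$ and $y$ together with a common $c'$-neighbour would span a monochromatic $c'$-triangle, which the Ramsey coloring $E$ does not contain, so the statement is only non-vacuous (and only applied) in this regime. Supposing, towards a contradiction, that $N_{c'}(x)\cap N_{c'}(y)=\varnothing$, note that $E(\{x,y\})$ is then a colour distinct from both $c_\ell$ and $c'$.

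First I would build a looped $c$-coloring $E'$ on $V(E)\sqcup\{x_2\}$ by \emph{duplicating} $x$: set $E'(\{x_2,x_2\})=E'(\{x,x_2\})=c_\ell$, let $E'(\{x_2,v\})=E(\{x,v\})$ for every $v\in V(E)\setminus\{y\}$, and perform the single modification $E'(\{x_2,y\})=c'$. Equip $E'$ with the weighting $\bw$ assigning $\tfrac{1}{2r}$ to each of $x$ and $x_2$ and $\tfrac1r$ to every other vertex. Since $\{x,x_2\}$ carries the loop colour, in any $\bw$-weighted blow-up of $E'$ the parts of $x$ and $x_2$ jointly behave like a single part of weight $\tfrac1r$ carrying a $c_\ell$-clique; hence every monochromatic triangle either lies inside one blow-up part — these are exactly the $c_\ell$-cliques, one per part — or it uses the recoloured edge, i.e.\ it is a $c'$-triangle $\{x_2,y,t\}$. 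The latter forces $E'(\{x_2,t\})=E'(\{y,t\})=c'$, hence $t\in N_{c'}(x)\cap N_{c'}(y)=\varnothing$, a contradiction. Therefore $\sum_{i\in[c]}\hat{p}(K_3^i;E',\bw)=1/r^2=\varsigma$, the last equality by \cref{th:extremals_are_Ramsey}.

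Next I would observe that $E'$ contains a copy of an element of $\mH(0,2,1)$: in a $\bw$-weighted blow-up pick a $c_\ell$-triangle with two of its vertices in the part of $x$ and one in the part of $x_2$, together with a fourth vertex lying in the part of $y$. That vertex is incident to two edges of colour $E(\{x,y\})$ and one edge of colour $c'$ towards the triangle, and since $c_\ell$, $E(\{x,y\})$ and $c'$ are pairwise distinct this $4$-vertex induced subcoloring is precisely a member of $\mH(0,2,1)\subseteq\mH$ (concretely, the map $t_1,t_2\mapsto x$, $t_3\mapsto x_2$, $t_4\mapsto y$ embeds it into $E'$, so $\hat{p}(H;E',\bw)>0$ for this $H\in\mH$). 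Thus along a sequence of balanced blow-ups of $(E',\bw)$ of increasing order the density of $\hat K_{3,1}$ converges to a positive limit $\delta$ while that of $\hat K_3$ converges to $\varsigma$, both by \cref{lemma:blowup}. Passing to the corresponding $\phi\in\operatorname{Hom}^+(\mA^\varnothing,\RR)$ via \cref{thm:homfunctional} yields $\phi(\hat K_3-\varsigma)=0$ but $\phi(\hat K_{3,1}+\hat K_{3,3})\geq\delta>0$, contradicting \boxedchar{A2}. Hence $N_{c'}(x)\cap N_{c'}(y)\neq\varnothing$, and any $w$ in this intersection is as required.

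The hard part is the bookkeeping of the second paragraph: one must check that the single recoloured edge spawns no new monochromatic triangle — this is exactly where $N_{c'}(x)\cap N_{c'}(y)=\varnothing$ is used and what keeps the weighted $\hat K_3$-density pinned at $\varsigma$ — while at the same time it does create the forbidden configuration in $\mH$. The remaining steps are routine, apart from recording at the outset that $x$ and $y$ are not $c'$-adjacent.
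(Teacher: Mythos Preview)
Your argument is correct and follows the paper's proof essentially verbatim: duplicate $x$, recolour the single edge $\{x',y\}$ to $c'$, use the halved weighting on $\{x,x'\}$, check that no new monochromatic triangle arises while an $\mH(0,2,1)$ configuration does, and contradict \boxedchar{A2}. Your write-up is in fact more careful than the paper's, since you make explicit both the verification that the recoloured edge creates no $c'$-triangle and the implicit hypothesis $E(\{x,y\})\neq c'$ (without which the conclusion is false, though every application in the paper respects it).
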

\begin{proof}
    If there does not exist $w\in E$ 
    satisfying $E(\{x,w\})=E(\{y,w\})=c'$, duplicate $x$ to $x'$ and color the edge $\{x', y\}$ with $c'$.  Call the resulting looped graph $E'$. With the same weighting as in \cref{eq:weighting_on_extension}, we would have the same 
    bound $\frac{1}{(R_{c-1}(3)-1)^2}=\varsigma$ but with a graph containing $\mH(0,2,1)$, contradicting \boxedchar{A2}.
$$
   \begin{tikzpicture}[line width=0.4mm,baseline=0ex, scale=0.8]
        \node[circle,fill=white,draw,scale=4] (A) at (0,0) {};
        \node[circle,fill=black,draw,scale=0.5] (Ap1) at (-0.3,0.3) {};
        \node[circle,fill=black,draw,scale=0.5] (Ap2) at (0.3,0.3) {};
        \node[circle,fill=black,draw,scale=0.5] (Ap3) at (0,-0.3) {};

        \draw[lime, line width=0.6mm] (Ap2) -- (Ap3);
        \draw[cyan, line width=0.6mm] (Ap1) -- (Ap3);
        \draw[red, line width=0.6mm] (Ap1) -- (Ap2);

        \node at (0.4, 0.6) {$y$};
        \node at (-0.4, 0.6) {$x$};
        \node at (0, -0.6) {$w$};
    \end{tikzpicture}
    \mapsto
   \begin{tikzpicture}[line width=0.4mm,baseline=0ex, scale=0.8]
        \node[circle,fill=white,draw,scale=4] (A) at (0,0) {};
        \node[circle,fill=black,draw,scale=0.5] (Ap1) at (-0.3,0.3) {};
        \node[circle,fill=black,draw,scale=0.5] (Ap2) at (0.3,0.3) {};
        \node[circle,fill=black,draw,scale=0.5] (Ap3) at (0,-0.3) {};
        \node[circle,fill=black,draw,scale=0.5] (Ap4) at (0,1.2) {};

        \draw[lime, line width=0.6mm] (Ap2) -- (Ap3);
        \draw[cyan, line width=0.6mm] (Ap1) -- (Ap3);
        \draw[red, line width=0.6mm] (Ap1) -- (Ap2);
        \draw[lime, line width=0.6mm] (Ap2) -- (Ap4);
        \draw[cyan, line width=0.6mm] (Ap3) -- (Ap4);
        \draw[black, line width=0.6mm] (Ap1) -- (Ap4);

        \node at (0.4, 0.6) {$y$};
        \node at (-0.4, 0.6) {$x$};
        \node at (0, -0.6) {$w$};
        \node at (-0.2, 1.5) {$x'$};
    \end{tikzpicture}
$$
\end{proof}

\subsection{Proof of \cref{th:stability}}\label{sec:remove_error_edges}

We will now bring together all of the structural results established in the previous sections to formally prove \cref{th:stability}. That is, assuming \boxedchar{A1}, \boxedchar{A2}, \boxedchar{A3}, we want to show that $\hat K_3-\varsigma$ is stable with respect to $\mG^{(c)}_{\operatorname{ex}}$.
 More specifically, we assume that $\hat{K}_3 - \varsigma$ is a tight positive algebra element satisfying $\hat{K}_3 - \varsigma \ge s(\hat K_{3,1} + \hat K_{3,3})$. Writing
\begin{equation*}
    \varsigma_n = \min_{F \in \mF_n^\varnothing} p(\hat{K}_3; F),
\end{equation*}
where clearly $\lim_{n \to \infty} \varsigma_n = \varsigma$, we want to prove that there exists some $n_0$ so that for any given $\varepsilon > 0$ there exists some $\delta>0$ so that any given $G \in F^\varnothing$ satisfying $p(\hat K_3,G) < \varsigma_n + \delta$ can be turned into an element of $\mG^{(c)}_{\operatorname{ex}}$ as defined in \cref{sec:formal_statement_theorem} by recoloring at most a $\varepsilon$ fraction of all edges.

\medskip

We start by noting that we may always assume
\begin{equation}\label{eq:beta_upper_bound}
    \delta <
    \frac{\varepsilon_4}
    {2\left(\frac{1}{\varepsilon_1(1+\varepsilon_1)}+\frac{\binom{n_1}{6}}{s}\right)}
\end{equation}
which implies that
\boxedchar{A4} is satisfied. Let the set $C_1\cup \dots \cup C_r=W\subseteq V(G)$ as well as $E\in\mL_{\operatorname{ex}}^{(c)}$ be as given by \cref{lem:Claim_7} and $E_0$, $V_0$, $F_i$, and $V_i$ be as given by \cref{eq:E0V0FiVi}. Write
$
    V^\ast=V(G)\setminus \bigcup_i V_i.    
$
and let $E^\ast$ be all pairs $(x,y)$ where $x\in V_i$ and $y\in V_j$  such 
that $G(\{x,y\})\ne E(\{i,j\})$. Note that by \cref{lem:Claim_8}
\begin{equation}
    |V^\ast| \le n - \sum_{i\in [r]}|V_i|\le \varepsilon_7n. \label{eq:Vastupperbnd}
\end{equation}
The goal is to recolor a sufficiently small number of edges in $G$ such that $V^\ast = \emptyset$ as well as $E^\ast = \emptyset$, establishing the claim of \cref{th:stability}. Each of the subsequent claims will recolor some edges without increasing the number monochromatic triangles. After each claim,
we will assume that we have recolored the edges in $G$ accordingly and update both $E^\ast$ and $V^\ast$.

\begin{claim}\label{lem:Claim_10}
    We  can turn all $G[V_i]$ into monochromatic cliques with
    the loop color of $E$ by recoloring at most
    \begin{equation*}
        \frac{r}{3}(p(\hat K_3,G)-\varsigma_n)\binom{n}{2}
        \label{eq:Claim_10}
    \end{equation*}
    edges.
\end{claim}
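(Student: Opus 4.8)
The plan is to recolour, inside each $V_i$, every edge whose colour differs from the loop colour $c_\ell$ of $E$ — I will call these the \emph{bad} edges, and write $B$ for the set of all of them — thereby turning each $G[V_i]$ into a monochromatic $c_\ell$-clique; I would carry out these recolourings one bad edge at a time (the procedure of \cref{alg:decrease_triangles}) and prove two things: that no single step raises the number of monochromatic triangles, and that $|B|\le\frac r3\bigl(p(\hat K_3;G)-\varsigma_n\bigr)\binom n2$.

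First I would establish the local estimate that each bad edge sits in many monochromatic triangles that are untouched by the recolouring. Fix $e=\{x,y\}\subseteq V_i$ of colour $c'\neq c_\ell$. As $c'$ is not the loop colour of $E$, \cref{lem:Degree_at_lest_two} supplies at least two indices $j\neq i$ with $E(\{i,j\})=c'$; for each such $j$ the definition of $V_i$ forces at least $\delta_0|F_j|$ edges of $x\ast F_j$ and at least $\delta_0|F_j|$ edges of $y\ast F_j$ to have colour $c'$, so by inclusion–exclusion at least $(2\delta_0-1)|F_j|$ vertices $z\in F_j$ give a monochromatic $c'$-triangle $\{x,y,z\}$, and $|F_j|\ge(1-\varepsilon_7)n/r$ by \cref{lem:Claim_8}. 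Hence $e$ lies in at least $2(2\delta_0-1)(1-\varepsilon_7)n/r$ monochromatic triangles whose other two edges leave $V_i$ — edges that are never recoloured in this claim, so all of these triangles are still present when $e$ is processed.

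Next, for the ``no increase'' part: recolouring $e=\{x,y\}$ from $c'$ to $c_\ell$ can only create monochromatic triangles $\{x,y,z\}$ with $F(\{x,z\})=F(\{y,z\})=c_\ell$ at that moment, and since $c_\ell$ appears on no edge of $E$ joining two distinct vertices, such a $z$ must either lie in $V_i$ (at most $|V_i|-2\le(1+r\varepsilon_7)n/r$ of these, by \cref{lem:Claim_8}) or be joined to $x$ and $y$ by error edges or lie in $V^\ast$ (few, by \cref{lem:bound_for_zerosets} and \cref{eq:Vastupperbnd}, and sharpened by restricting to bad edges with both endpoints in $F_i$, where $x,y\notin V_0$ forces near-total agreement with $E$ rather than only a $\delta_0$-majority). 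Choosing \whiteboxedchar{$\delta_0$ close enough to $1$ and $\varepsilon_5,\varepsilon_6,\varepsilon_7$ small enough}, each step destroys strictly more monochromatic triangles than it creates, decreasing the total by at least some fixed $\kappa\, n/r>0$ with $\kappa$ arbitrarily close to $1$.

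Finally, I would sum this over all $|B|$ steps: the monochromatic triangle count decreases by at least $\kappa|B|n/r$, it starts at $p(\hat K_3;G)\binom n3$, and the final colouring — still of order $n$ — has at least $\varsigma_n\binom n3$ monochromatic triangles, so $\kappa|B|n/r\le\bigl(p(\hat K_3;G)-\varsigma_n\bigr)\binom n3$, which rearranges to the claimed bound as soon as $\kappa$ is taken large enough. The main obstacle is exactly this last constant: the destroyed count is essentially $2n/r$ per bad edge while the created count is essentially $n/r$, so the net gain per step is only about $n/r$, and one must track the lower-order terms (the surplus from bad edges incident to $V_i\setminus F_i$, the error edges, and $V^\ast$) carefully enough to guarantee the constant is no worse than $\frac r3$.
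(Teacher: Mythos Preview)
Your overall strategy matches the paper's exactly: recolour the bad edges inside each $V_i$ one at a time, use \cref{lem:Degree_at_lest_two} together with the definition of $V_i$ to show each bad edge lies in at least $2(2\delta_0-1)(1-\varepsilon_7)n/r$ monochromatic triangles that get destroyed, bound the newly created triangles, and sum the net decrease against $(p(\hat K_3;G)-\varsigma_n)\binom{n}{3}$.

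Your bound on the \emph{created} side, however, has a genuine gap. You want to say that a $z\in V_j$ with $j\neq i$ and $G(\{x,z\})=c_\ell$ corresponds to an ``error edge'' counted by $E_0$, and then invoke $|E_0(x)|<\varepsilon_6 n$. That per-vertex bound only holds when $x\notin V_0$; you try to enforce this by ``restricting to bad edges with both endpoints in $F_i$'', but you never explain what happens to bad edges with an endpoint in $V_i\setminus F_i$ (in particular in $V_i\cap V_0$), and those cannot simply be absorbed into the final count since their number need not be small compared to $(p(\hat K_3;G)-\varsigma_n)\binom{n}{2}$. Moreover, the containment $E^\ast\subseteq E_0$ that you are implicitly leaning on is established in the paper only \emph{after} this claim (it uses the conclusion of \cref{lem:Claim_10}), so invoking it here is circular.

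The fix is the one you mention only to dismiss: use the $\delta_0$-majority built into the definition of $V_i$ directly. For any $x\in V_i$ (not just $x\in F_i$) and any $j\neq i$, at most $(1-\delta_0)|F_j|$ vertices of $F_j$ are joined to $x$ in colour $c_\ell$, so the number of created triangles is at most
\[
|V^\ast|+|V_i|+\sum_{j\neq i}\bigl(|V_j|-|F_j|+(1-\delta_0)|F_j|\bigr)=n-\delta_0\sum_{j\neq i}|F_j|\le\Bigl(1-\delta_0\bigl(1-\tfrac{1}{r}\bigr)(1-\varepsilon_7)\Bigr)n.
\]
Subtracting this from the destroyed count and taking $\delta_0$ close enough to $1$ and $\varepsilon_7$ small enough gives a net decrease of at least $n/r$ per bad edge, which is precisely the constant needed for $\frac{r}{3}$. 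Finally, your parenthetical reference to \cref{alg:decrease_triangles} is misplaced: that algorithm is used later in \cref{lem:V_star_is_zero}, not here.
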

\begin{proof}
    Recoloring any individual edge $\{v,w\} \in G[V_i]$ with $G(\{v,w\}) \neq E(\{i\})$ to the correct color creates no more than
    \begin{align*}
       &|V^\ast|+|V_i| + \sum_{j \neq i} (|V_j|-|F_j|) + |F_j|(1-\delta_0) \le \left(1 - \delta_0\left(1-\frac{1}{r}\right)(1-\varepsilon_7)\right)n 
    \end{align*}
    new monochromatic triangles, where we have used \cref{lem:Claim_8} to lower bound the $|F_j|$.
%
%
%
%
%
       %
    %
    On the other hand, the number of destroyed triangles is at least
    \[
        \sum_{\substack{j \neq i\\E(\{i,j\})=G(\{v,w\})}}
        (2\delta_0-1)|F_j|
        \ge 2(2\delta_0-1)\frac{1-\varepsilon_7}{r}n,
    \]
    where we have used \cref{lem:Degree_at_lest_two} so that the summation has at least two summands as well as \cref{lem:Claim_8} to lower bound the $|F_j|$.
    The difference
    between the destroyed and created triangles
    is therefore at least
    \begin{align*}
         &\left((4\delta_0-2)\frac{1-\varepsilon_7}{r}-1 + \delta_0\left(1-\frac{1}{r}\right)(1-\varepsilon_7)\right)n \\
         &=\left(\delta_0-1+\frac{5\delta_0-2}{r}+\left(-\delta_0\left(1-\frac{1}{r}\right) -\frac{4\delta_0-2}{r} \right)\varepsilon_7 \right)n \ge \frac{n}{r}
    \end{align*}
    where the lower bound holds for \whiteboxedchar{sufficiently large $\delta_0$} and \whiteboxedchar{sufficiently small $\varepsilon_7$}. Recoloring more than 
    $r\binom{n}{2}(p(\hat K_3,G)-\varsigma_n)/3$ edges would therefore destroy more than $\binom{n}{3}(p(\hat K_3;G)-\varsigma_n)$ triangles, a contradiction.
    %
\end{proof}

\begin{wrapfigure}{r}{0pt}
\qquad
\begin{tikzpicture}[line width=0.4mm,baseline=0ex, scale=0.8]
    \node[circle,,draw,scale=4.2, dotted] (A) at (0,0) {};
    \node[circle,,draw,scale=12] (Z) at (-0.3,0) {};
    \node[circle,fill=yellow, fill opacity=0.25, draw,scale=0.7, line width=1mm] (Ap1) at (-0.3,0.3) {};
    \node[circle,fill=yellow, fill opacity=0.25, draw,scale=0.7, line width=1mm] (Ap2) at (0.6,0) {};
    \node[circle,fill=yellow, fill opacity=0.25, draw,scale=0.7, line width=1mm] (Ap3) at (-0.3,-0.45) {};
    \node[circle,fill=black,draw,scale=0.4] (V1) at (-1.45,0.9) {};
    \node[circle,fill=black,draw,scale=0.4] (V2) at (-0.75,-1.1) {};

    \node[scale=0.8] at (-1.8,1.8) {$V_i$};
    \node[scale=0.8] at (-1.6,-2) {$V_j$};
    \node[scale=0.8] at (0.5,1.1) {$W$};
    \node[scale=0.8] at (1.9,2.4) {$G$};
    \node[scale=0.8] at (-1.45,1.2) {$x$};
    \node[scale=0.8] at (-0.9,-1.4) {$y$};
    \node[scale=0.8] at (-0.15,0.65) {$C_i$};
    \node[scale=0.8] at (0.05,-0.7) {$C_j$};

    \draw[red, line width=1.2mm] (Ap1) -- (Ap2);
    \draw[cyan, line width=1.2mm] (Ap1) -- (Ap3);
    \draw[lime, line width=1.2mm] (Ap2) -- (Ap3);
    \draw[red] (V1) -- (V2);
       
    \node[circle,fill=yellow, fill opacity=0.15, draw, scale=4.5]  at (-0.85,0.85) {};
    \node[circle,fill=yellow, fill opacity=0.15, draw,scale=4.5]  at (-0.6,-1.2) {};
\end{tikzpicture}
\bigskip
\end{wrapfigure}
Let us show that we now may assume that
\begin{equation}
    E^\ast \subseteq E_0 \label{eq:f2}.
\end{equation}
For any $(x,y)\in E^\ast$ with $x\in V_i$ and $y\in V_j$ we know that $i \neq j$ by \cref{lem:Claim_10}.
Let us show that $G[\{x,y\}\cup W]$ does not embed into $E$ so that $(x,y) \in E_0$. If it were embeddable, then $x$ would get assigned to the same vertex in $E$ as the class $C_i$ and $y$ to the same vertex as $C_j$ because the edges in $x\ast C_i$ and $y\ast C_j$ are monochromatic
with the loop color of $E$  by \cref{lem:Claim_10}.
By assumption though, $G(\{x,y\})\ne E(\{i,j\})$, which contradicts the embedding.

As a notational convenience,
given a vertex $v\in V(G)$ as well as a set $V_i$ and color $c_1\in [c]$,
denote by $G[v,V_i,c_1]$ the number of $c_1$-colored edges between $v$ and $V_i$ in $G$.

\begin{claim}\label{lem:V_star_is_zero}
    We can ensure $V^\ast = \emptyset$ by recoloring at most
    \begin{equation*}
    2^2 8^2(c-1)^2r^2(p(\hat K_3; G)-\varsigma_n)\binom{n}{2}
    \end{equation*}
   edges.
\end{claim}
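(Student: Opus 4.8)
The plan is to eliminate $V^\ast$ one vertex at a time: for each $v\in V^\ast$ we reassign $v$ to whichever part $V_i$ it is least incompatible with and recolour the offending edges incident to $v$, keeping each $V_i$ a monochromatic $c_\ell$-clique (as arranged in \cref{lem:Claim_10}) and never increasing the number of monochromatic triangles. Concretely, for $v\in V^\ast$ and $i\in[r]$ let $R_i(v)$ be the number of edges $\{v,w\}$ with $w\in\bigcup_j V_j$ whose current colour differs from the target colour $E(\{i,j\})$, where we set $E(\{i,i\}):=c_\ell$. We reassign $v$ to a part $i(v)$ minimising $R_i(v)$, recolour those $R_{i(v)}(v)$ edges to their target colours, recolour the at most $|V^\ast|$ edges from $v$ to still-unprocessed vertices of $V^\ast$ to colour $c_\ell$, then add $v$ to $V_{i(v)}$ and update $V^\ast$ and $E^\ast$. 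This is exactly the reassignment step of \cref{alg:decrease_triangles}, so what remains is to verify that it does not increase the monochromatic triangle count and to bound the total number of recoloured edges.

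The core estimate is that each such reassignment destroys at least $\tfrac{n}{3\cdot 2^2\, 8^2\,(c-1)^2 r^2}$ monochromatic triangles \emph{per recoloured edge incident to $v$}, and in particular destroys at least as many triangles as it creates. For the triangles created at $v$: since the non-loop part of $E\in\mL^{(c)}_{\operatorname{ex}}$ is triangle-free and, by \cref{lem:standard_is_Ramsey}, every loop colour differs from its incident edge colours, any monochromatic triangle through $v$ after the reassignment either lies inside $\{v\}\cup V_{i(v)}$ (there are $\binom{|V_{i(v)}|}{2}=O(n^2/r^2)$ of those by \cref{lem:Claim_8}) or uses an edge of $E^\ast\subseteq E_0$ or a vertex of $V^\ast$; by \cref{lem:bound_for_zerosets} and \cref{eq:Vastupperbnd} the latter number at most $|E_0|+|V^\ast|\,n\le 2\varepsilon_7 n^2$. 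For the triangles destroyed: for the minimiser $i(v)$, each recoloured edge $\{v,w\}$ with $w\in V_j$ of current colour $c_1\ne E(\{i(v),j\})$ is forced, by the structure of $E$ established in \cref{lem:Degree_at_lest_two}, \cref{lem:two_disjoint_monochrom} and \cref{lem:exists_cherry_in_Ramsey} (which supply enough $c_1$-coloured ``cherries'' at $v$ on the sets $F_k$, using the majority colours guaranteed by $v$ being the minimiser), to lie in $\Omega\!\big(n/((c-1)r)\big)$ monochromatic $c_1$-triangles through $v$, and one subtracts an $O(\varepsilon_7 n^2)$ overcount coming from the $E_0$-edges and from triangles shared between two simultaneously recoloured edges at $v$. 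Comparing the two bounds (for $\varepsilon_3,\varepsilon_7$ small and $n_0$ large) gives both the non-increase of the triangle count and the inequality $R_{i(v)}(v)\le\tfrac{3\cdot 2^2\,8^2\,(c-1)^2 r^2}{n}\cdot(\text{number of monochromatic triangles destroyed at }v)$.

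Summing over all $v\in V^\ast$, the total number of monochromatic triangles destroyed during the whole process is at most $\big(p(\hat K_3;G)-\varsigma_n\big)\binom{n}{3}$, since the count starts at $p(\hat K_3;G)\binom{n}{3}$ and, $\varsigma_n$ being the minimum over $n$-vertex colourings, never drops below $\varsigma_n\binom{n}{3}$. Hence the total number of recoloured edges is at most $\tfrac{3\cdot 2^2\,8^2\,(c-1)^2 r^2}{n}\big(p(\hat K_3;G)-\varsigma_n\big)\binom{n}{3}\le 2^2\,8^2\,(c-1)^2 r^2\big(p(\hat K_3;G)-\varsigma_n\big)\binom{n}{2}$, using $\binom{n}{3}\le\tfrac{n}{3}\binom{n}{2}$. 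After the process $V^\ast=\varnothing$, every $V_i$ is still a monochromatic $c_\ell$-clique, and $E^\ast$ has been updated accordingly. The main obstacle is the per-vertex triangle bookkeeping in the second paragraph: one must carefully handle the simultaneous recolouring of many edges at $v$ (triangles counted with multiplicity, and triangles whose monochromatic status flips because of two recoloured edges), the contribution of the residual error edges $E^\ast\subseteq E_0$, and the edges lying inside $V^\ast$, so that the clean ``$\Omega(n)$ triangles destroyed per recoloured edge'' estimate survives with the stated constant.
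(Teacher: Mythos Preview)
Your overall strategy—iteratively move each $v\in V^\ast$ into some part, show the monochromatic-triangle count does not increase, and bound the total decrease by $(p(\hat K_3;G)-\varsigma_n)\binom{n}{3}$—is exactly the paper's. The gap is in the core estimate, and there is also a misreading of \cref{alg:decrease_triangles}.

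First, \cref{alg:decrease_triangles} is \emph{not} a reassignment procedure. In the paper, $v$ is placed into $V_{i^\ast}$ with $i^\ast=\arg\min_i|V_i|$ (to make the ``after'' count $t_v^{(a)}\le\binom{|V_{i^\ast}|}{2}+|E^\ast|+|V^\ast|n$ as small as possible), and \emph{all} edges from $v$ are recoloured. \cref{alg:decrease_triangles} is instead a device for lower-bounding the ``before'' count $t_v^{(b)}$: it treats the triangle count at $v$ as a quadratic in the variables $x_{j,d}=G[v,V_j,d]$ and greedily pushes the mass in each $V_j$ to a single colour, producing a map $\alpha:V(E)\to[c]$ whose value lower-bounds the original. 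A case analysis on $\alpha$ (four cases, the last splitting into two sub-cases) together with \cref{lem:Degree_at_lest_two,lem:two_disjoint_monochrom,lem:exists_cherry_in_Ramsey} then gives the uniform per-\emph{vertex} bound
\[
t_v^{(b)}-t_v^{(a)}\ \ge\ \frac{n^2}{2^2\cdot 8^2(c-1)^2r^2},
\]
from which $|V^\ast|\le \tfrac{2^2\cdot 8^2(c-1)^2r^2}{n^2}(p(\hat K_3;G)-\varsigma_n)\binom{n}{3}$ and hence (edges recoloured) $\le |V^\ast|\,n$ yields the claim.

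Your per-\emph{edge} assertion—that each recoloured edge of colour $c_1$ lies in $\Omega(n/((c-1)r))$ monochromatic $c_1$-triangles through $v$ ``using the majority colours guaranteed by $v$ being the minimiser''—is not justified. Minimising $R_i$ tells you most edges from $v$ carry the \emph{target} colours $E(\{i(v),j\})$, but says nothing about the presence of $c_1$-coloured edges from $v$ needed to close a $c_1$-triangle on a given wrong edge. The delicate case is precisely when the majority colours from $v$ already match some vertex $u$ of $E$: then $t_v^{(b)}\approx\binom{|V_u|}{2}+\text{(small)}$ and barely exceeds $t_v^{(a)}$; extracting the extra $\Theta\!\big(n^2/((c-1)^2r^2)\big)$ (and hence even the non-increase $t_v^{(b)}\ge t_v^{(a)}$) is exactly what forces the paper's sub-case analysis via \cref{lem:exists_cherry_in_Ramsey} and \cref{lem:Degree_at_lest_two}. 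Your sketch acknowledges this ``main obstacle'' but does not resolve it; without that analysis, neither the non-increase nor the quantitative edge bound follows.
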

\begin{proof}
    Let $v \in V^\ast$ and $i^\ast = i^\ast(v)$ a corresponding index that minimizes $|V_i|$. Let us consider the recoloring obtained by recoloring all edges of $v\ast V_j$ to $E(\{i,j\})$, that is we add $v$ to $V_i$. We will execute this recoloring iteratively until $V^\ast = \emptyset$ and argue that at each step we are reducing the overall number of monochromatic triangles, implying an upper bound on $|V^\ast|$. The only ingredients that will be used in each step are the lower bounds for the $|V_i|$ from \cref{lem:Claim_8}, the upper bound on $|E^\ast|$ due to \cref{eq:f2} and \cref{lem:bound_for_zerosets}
    and the upper bound for $|V^\ast|$ due to \cref{eq:Vastupperbnd}.
    Trivially these will remain true under the recoloring throughout all iterations. We note that $i^\ast$ is not fixed but updated at each iteration.
    
    Let $t_v^{(b)}$ and $t_v^{(a)}$ therefore denote the number of monochromatic triangles incident to $v$ in $G$ \emph{before} and \emph{after} recoloring. We have
    \begin{equation}\label{eq:triangle_upper_bound}
        t_v^{(a)} \leq \binom{|V_{i^\ast}|}{2}   + |E^\ast| + |V^\ast| \, n
        \le
        \left(\frac{1}{2r^2}+\varepsilon_5+\varepsilon_7\right)n^2,
    \end{equation}
    where we have used the upper bounds for $V^*$ and  $E^*$.



       
    We can lower bound $t_v^{(b)}$ by counting those triangles incident to $v$ with both end points in $G\setminus V^\ast$, that is
    \begin{equation}
        \label{eq:basic_lower_bound}
        t_v^{(b)} \ge 
     \sum_{j\le k}
        G[v,V_j,E(\{j,k\})] \, G[v,V_k, E(\{j,k\})]
        \cdot\begin{cases}
            \frac{1}{2} \text{ if } j=k \\
            1 \text{ otherwise }
        \end{cases}-\varepsilon_5n^2
        -r n,
    \end{equation}
    where the term $\varepsilon_5n^2$ accounts for $|E^*|$, the term $-r n$ 
    arises from the fact that we are 
    overcounting, and the coefficient $1/2$ from the fact that
    \[
        \binom{G[v,V_j, E(j)]}{2}
        \ge
        \frac{1}{2} \, G[v,V_j, E(j)]^2 - n.
    \]
    Our task is therefore to lower bound
    \cref{eq:basic_lower_bound} and compare it to the upper bound of $t_v^{(a)}$.
    
    We view the summation in \cref{eq:basic_lower_bound} as a degree 2 polynomial over $cr$ variables $x_{j,d}$ corresponding to $G[v,V_j,d]$ ranging over integers subject to the constraints that all variables are $\ge 0$ and that $|V_j| = \sum_{d} x_{j,d}$. Expressed like this, we are trying to lower bound
    \begin{equation}
        \label{eq:polynomial_to_lower_bound}
        \sum_{j\le k}
        x_{j,E(\{j,k\})} x_{k,E(\{j,k\})}
        \cdot\begin{cases}
            \frac{1}{2} \text{ if } j=k \\
            1 \text{ otherwise }
        \end{cases}.
    \end{equation}
    
    \cref{alg:decrease_triangles} now takes a feasible assignment for \cref{eq:polynomial_to_lower_bound} and produces another feasible assignment with smaller support and attaining a smaller value. The goal is to find a feasible assignment for which we can explicitly compute a lower bound which therefore also lower bounds the original assignment.
    \begin{algorithm}
    \caption{Find a minimal assignment for \cref{eq:polynomial_to_lower_bound}}
    \label{alg:decrease_triangles}
    \begin{algorithmic}
    \REQUIRE $x_{i,d}\in\mathbb N$ with $\sum_{d}x_{i,d} = |V_i|$, loop color $c_\ell$ in $E$, enumeration $i_1,...,i_r$ of $V(E)$.
    
    \FOR{k=1,\dots,r}
        \STATE $d^*\gets \argmin_{d\in[c]} \begin{cases} \frac{1}{2}|V_{i_k}|&\text{if $d=c_\ell$} \\ \sum_{E(\{i_k,j\})=d} x_{j,d}&\text{otherwise} \end{cases}$
        \FOR{ $d\in [c]$}
            \IF{ $d \ne d^*$}
                \STATE $x_{i_k,d} \gets 0$
            \ELSE
                \STATE $x_{i_k,d} \gets |V_{i_k}|$
                \STATE $\alpha(i_k)=d$
            \ENDIF
        \ENDFOR
    \ENDFOR
    \RETURN $\alpha$
    \end{algorithmic}
    \end{algorithm}
    
    Take the given assignment $x_{i,d} = G[v,V_i,d]$ as input to the algorithm.
    By \cref{lem:two_disjoint_monochrom} we have exactly four cases to consider.
    \begin{enumerate}
        \item For at least two different $i,j\in [r]$ we have ${\alpha(i)}=c_\ell = {\alpha(j)}$. In this case \cref{eq:basic_lower_bound} is lower bounded by 
    \begin{equation}
        \label{eq:tvbLower_1}
        \left(\frac{1-\varepsilon_7}{r}\right)^2 n^2 -\varepsilon_5n^2-rn.
    \end{equation}
    %






        \item  There exists no $i$ such that $\alpha(i)=c_\ell$. Then, by maximality of the Ramsey graph, there exists a monochromatic triangle in $E$ extended by $v$ through $\alpha$, giving the lower bound
        
    \begin{equation}
        \label{eq:tvbLower_2}
        \left(\frac{1-\varepsilon_7}{r}\right)^2n^2 - \varepsilon_5n^2-rn.
    \end{equation}
    


        \item \label{enum:case_3_alg} There exists an $i$ such that $\alpha(i)=c_\ell$ and there exist two points $x,y\in V(E)$ so that $\alpha(x)=E(x,y)=\alpha(y)$. In this case we again get the lower bound
    
    \begin{equation}
        \label{eq:tvbLower_3}
        \left(\frac{1-\varepsilon_7}{r}\right)^2 n^2-\varepsilon_5n^2-rn.
    \end{equation}
    


        \item \label{enum:case_4_alg} There exists a vertex $u$ so that $E[u,\alpha^{-1}(d)]$ has color $d$ for all $d\in [c]$.




    \end{enumerate}
    
    The last case does not have an immediate lower bound. Assume therefore that \emph{every enumeration} of $V(E)$ in \cref{alg:decrease_triangles}
    results in  \cref{enum:case_4_alg}. We therefore consider two cases.

    \medskip
    \noindent \textbf{Case 1. }     Assume that for every $w\in V(E)$ there exists an enumeration $i_1,\dots,i_r$ of $V(E)$ so that 
    $i_r=w$ and so that 
    $w$ is not the vertex $u$ from \cref{enum:case_4_alg}. 
For each $w$, by definition of \cref{alg:decrease_triangles},
we may lower-bound $t_v^{(b)}$ by considering the situation when the $r-1$-st iteration of \cref{alg:decrease_triangles} is completed and before we determine the value of all $x_{i_r,d}$ in the $r$-th and last iteration. Since we are in case \cref{enum:case_4_alg}, there exists a vertex $u$ so that
$v\ast V_x$ is assigned the color $E(\{ u,x\})$ for all $x\ne w$. Hence we may lower bound $t_v^{(b)}$ by
\begin{equation}\label{eq:last_step_lower_bound}
    \binom{|V_u|}{2} + \frac{1}{2} G[v,V_w,E(\{w\})]^2 + \sum_{\substack{x\in V(E)\setminus\{u,w\} \\ E(\{w,x\}) = E(\{u,x\})}}|V_x| G[v,V_w,E(\{u,x\})] -\varepsilon_5n^2-rn.
\end{equation}
By \cref{lem:exists_cherry_in_Ramsey}, for every color $d\in [c]\setminus \{E(\{u,w\})\}$, the term $G[v,V_w,d]$ occurs in \cref{eq:last_step_lower_bound}. 
We assume that for all $w\in V(E)$ with respectively $u\in V(E)$ we have that, $G[v,V_w,E(\{u,w\})]\ge \frac{7}{8} |V_w|$. Otherwise, we get the lower bound
\begin{equation} \label{eq:tvbLower_4}
    \text{\cref{eq:last_step_lower_bound}} \ge 
    \left(\frac{1}{2} + \frac{1}{2\cdot 8^2(c-1)^2}\right)\left(\frac{1-\varepsilon_7}{r}\right)^2n^2 
    -\varepsilon_5 n^2 -rn.
\end{equation}
Then, there exists a unique function
$\alpha:V(E)\to[c]$ indicating that for each $w\in V(E)$, the set $v\ast V_w$ has at least $\frac{7}{8}|V_w|$ edges of color $\alpha(w)$. But by the maximality of the Ramsey graph, there will exist $u,w\in V(E)$ so that $\alpha(u)=E(\{u,w\}) = \alpha(w)$. Hence, we may lower bound $t_v^{(b)}$ by
\begin{equation} \label{eq:tvbLower_5}
    \frac{9}{16} \left(\frac{1-\varepsilon_7}{r}\right)^2n^2.
\end{equation}

    \medskip
    \noindent \textbf{Case 2. }   Assume now that there exists a vertex $w$ so that every enumeration $i_1,\dots,i_r$ of $V(E)$ with $i_r=w$ in \cref{alg:decrease_triangles} has $i_r$ as the vertex $u$ from \cref{enum:case_4_alg}. Choose an enumeration $i_1,\dots,i_r$ of $V(E)$ so that $i_r = w$ and $i_{r-1}=x$. Let $c_\ell$ denote the loop color of $E$. Then, considering again the second to last step, we have
\begin{align*}
    t_v^{(b)} \ge \binom{G[v,V_x,c_\ell]}{2} + \sum_{d\in [c]\setminus \{c_\ell\}}
    \sum_{\substack{j\in V(E)\setminus\{x,w\} \\ E(\{x,j\})=d}}
    G[v,V_x,d]|V_j|
\end{align*}
By \cref{lem:Degree_at_lest_two}, for every color $d\in [c]\setminus \{c_\ell\}$ in the first summation, there exists at least one non-zero term in the second summation. This means that we have
\begin{equation}\label{eq:case2_first_special_case}
    \ge \binom{G[v,V_i,c_\ell]}{2} + (|V_i|-G[v,V_i,c_\ell])\frac{1-\varepsilon_7}{r}n
\end{equation}
If we had $G[v,V_i,c_\ell] \le \frac{3}{4}|V_i|$ for some $i$, then the minimum value for \cref{eq:case2_first_special_case} with respect to $G[v,V_i,c_\ell]$ would occur at $G[v,V_i,c_\ell] = \frac{3}{4}|V_i|$ and we would get 
\begin{equation} \label{eq:tvbLower_6}
    t^{(b)}_v\ge \left(\frac{9}{2\cdot 16} + \frac{1}{4}\right)\left(\frac{1-\varepsilon_7}{r}\right)^2n^2
     -\varepsilon_5 n^2 -rn.
\end{equation}
Assume that we always have $G[v,V_i,c_\ell] \ge \frac{3}{4}|V_i|$. Then we would get
\begin{equation} \label{eq:tvbLower_7}
    t_v^{(b)} \ge \sum_{i\in V(E)\setminus \{w\}}\left( \frac{3}{2\cdot 4}|V_i|^2\right)
    -\varepsilon_5 n^2 -rn
     \ge \left(\frac{1-\varepsilon_7}{r}\right)^2n^2
     -\varepsilon_5 n^2 -rn
\end{equation}
since $r \geq 3$.

\medskip

We note that all previous lower bounds for $t_v^{(b)}$, from \cref{eq:tvbLower_1} to \cref{eq:tvbLower_7}, are greater than our upper bound for $t_v^{(a)}$ stated in \cref{eq:triangle_upper_bound} by at least
\begin{equation*}
    \frac{1}{2^2\cdot 8^2(c-1)^2r^2}n^2.
\end{equation*}
when \whiteboxedchar{$\varepsilon_5$ and $\varepsilon_7$ are sufficiently small}.
We can destroy at most 
\begin{equation*}
    |V^*| \, \frac{1}{2^2\cdot 8^2(c-1)^2r^2}n^2 \le (p(\hat K_3; G)-\varsigma_n)\binom{n}{3}
\end{equation*}
triangles, that is we get an upper bound on $|V^\ast|$ depending on $p(\hat K_3; G)-\varsigma_n$. It follows that we have to recolor  at most
\begin{equation*}
    |V^*|n \le 2^2\cdot 8^2(c-1)^2r^2(p(\hat K_3; G)-\varsigma_n)\binom{n}{2}
\end{equation*}
edges.
\end{proof}

\begin{claim}
    After recoloring at most
    \begin{equation*}
         \frac{r+1}{3(2\delta_0-1)}(p(\hat K_3; G)-\varsigma_n)\binom{n}{2}
    \end{equation*}
    edges, the set $E^*$ will consist only of loop colored edges.
    \label{claim:f4}
\end{claim}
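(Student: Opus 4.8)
The plan is to process, one edge at a time, every element of $E^\ast$ whose color is not the loop color $c_\ell$ of $E$: if $\{x,y\}\in E^\ast$ with $x\in V_i$, $y\in V_j$ (so $i\ne j$ by \cref{lem:Claim_10}) and $G(\{x,y\})=c'\ne c_\ell$, then $c'\ne E(\{i,j\})$, and I recolor $\{x,y\}$ to $E(\{i,j\})$, which removes it from $E^\ast$. Exactly as in the proof of \cref{lem:V_star_is_zero}, recoloring an $E^\ast$-edge to the color prescribed by the partition only reinforces the defining conditions of the sets $V_m$, so throughout the process one keeps $|F_m|\ge(1-\varepsilon_7)n/r$, hence also $|V_m|\le\frac nr\big(1+(r-1)\varepsilon_7\big)$ and $|V_m\setminus F_m|\le\varepsilon_7 n$ by \cref{lem:Claim_8}, the $V_m$ remain pairwise disjoint with $\bigcup_m V_m=V(G)$, and each $G[V_m]$ stays a monochromatic $c_\ell$-clique. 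Once the process halts, every surviving element of $E^\ast$ is loop colored, which is the assertion; so it only remains to bound the number $m$ of recolorings, and for that I will show that each step \emph{strictly} decreases the number of monochromatic triangles by at least $\tfrac{(2\delta_0-1)(n-2)}{r+1}$. Granting this, since any coloring on $n$ vertices contains at least $\varsigma_n\binom n3$ monochromatic triangles while $G$ starts with $p(\hat K_3;G)\binom n3$ of them, one gets $m\cdot\tfrac{(2\delta_0-1)(n-2)}{r+1}\le\big(p(\hat K_3;G)-\varsigma_n\big)\binom n3$, which, using $\binom n3=\tfrac{n-2}{3}\binom n2$, rearranges to exactly the claimed bound on $m$.

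To estimate one recoloring step — $\{x,y\}\in E^\ast$ with $x\in V_i$, $y\in V_j$, recolored from $c'$ to $c''=E(\{i,j\})$ — I would bound the destroyed and the created monochromatic triangles separately. For the destroyed ones: since $c'\ne c_\ell$ and $i\ne j$, \cref{lem:exists_cherry_in_Ramsey} applied to the vertices $i,j$ of $E$ and the color $c'$ yields a vertex $k\in V(E)\setminus\{i,j\}$ with $E(\{i,k\})=E(\{j,k\})=c'$; because $x\in V_i$ and $y\in V_j$, the definition of these sets gives at least $\delta_0|F_k|$ vertices $z\in F_k$ with $G(\{x,z\})=c'$ and at least $\delta_0|F_k|$ with $G(\{y,z\})=c'$, so by inclusion–exclusion at least $(2\delta_0-1)|F_k|\ge(2\delta_0-1)(1-\varepsilon_7)n/r$ vertices $z\in F_k$ have both, and each such $z$ (lying in $V_k$, hence distinct from $x,y$) gives a monochromatic $c'$-triangle $\{x,y,z\}$ that the recoloring destroys. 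For the created ones: any new monochromatic triangle must contain $\{x,y\}$ and be monochromatic in $c''$, so its third vertex cannot lie in $V_i\cup V_j$ (edges inside those parts have color $c_\ell\ne c''$); if it lies in some $V_k$ with $k\notin\{i,j\}$, then $\{i,j,k\}$ is not monochromatic in $E$ (the non-loop part of $E$ is triangle-free and $E(\{i,j\})=c''$), so at least one of $E(\{i,k\}),E(\{j,k\})$ differs from $c''$, and the definition of $V_i$ (resp.\ $V_j$) then bounds the number of valid $z\in V_k$ by $(1-\delta_0)|F_k|+|V_k\setminus F_k|\le(1-\delta_0)\frac nr\big(1+(r-1)\varepsilon_7\big)+\varepsilon_7 n$. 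Summing over the $r-2$ parts $V_k$ with $k\notin\{i,j\}$ bounds the created triangles.

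Subtracting, the net number of monochromatic triangles removed per step is at least
\[
    (2\delta_0-1)(1-\varepsilon_7)\frac nr-(r-2)\Big((1-\delta_0)\frac nr\big(1+(r-1)\varepsilon_7\big)+\varepsilon_7 n\Big),
\]
and a short computation shows that, after dividing by $n$ and letting $\varepsilon_7\to0$, this exceeds $\tfrac{2\delta_0-1}{r+1}$ precisely when $2\delta_0-1\ge(r+1)(r-2)(1-\delta_0)$, i.e.\ when $\delta_0\ge 1-\tfrac1{r(r-1)}$. Hence for \whiteboxedchar{$\delta_0$ sufficiently large} and \whiteboxedchar{$\varepsilon_7$ sufficiently small}, and $n\ge n_0$ large enough to absorb the gap between $n$ and $n-2$, each step destroys at least $\tfrac{(2\delta_0-1)(n-2)}{r+1}$ monochromatic triangles on balance, which finishes the argument.

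The step I expect to be the main obstacle is the per-step net-destruction estimate: here the number of destroyed triangles is only linear in $n$ (unlike in \cref{lem:V_star_is_zero}, where the analogous quantity is quadratic), so one must make the ``created'' bound genuinely smaller than the ``destroyed'' bound by a linear amount — this is what forces the quantitative requirement $\delta_0\ge1-1/(r(r-1))$ and the smallness of $\varepsilon_7$ — and one also has to verify, as in the proof of \cref{lem:V_star_is_zero}, that the partition structure together with all the displayed inequalities really does survive the entire sequence of recolorings.
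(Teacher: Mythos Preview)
Your argument is correct, but it takes a genuinely different route from the paper. You recolor the non-loop $E^\ast$ edges one at a time and, at each step, separately upper-bound the number of newly created monochromatic $c''$-triangles (with $c''=E(\{i,j\})$) via the $(1-\delta_0)|F_k|+|V_k\setminus F_k|$ estimate summed over the $r-2$ remaining parts; this forces the additional quantitative constraint $\delta_0\ge 1-1/(r(r-1))$. The paper instead recolors \emph{all} non-loop $E^\ast$ edges simultaneously and observes that this creates no new monochromatic triangles at all: after the recoloring every cross-edge has either its prescribed color $E(\{i,j\})$ or the loop color $c_\ell$, so any monochromatic triangle would have to be $c_\ell$-colored (the non-loop part of $E$ being triangle-free), yet no edge was changed to $c_\ell$. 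On the ``destroyed'' side the paper makes the same use of \cref{lem:exists_cherry_in_Ramsey} and the $(2\delta_0-1)|F_k|$ count as you do, and then notes that these triangles are disjoint across different $E^\ast$ edges because their two non-$E^\ast$ sides already carry the correct color. The upshot is that the paper's argument is shorter, avoids the per-step created-triangles bookkeeping, and imposes no new lower bound on $\delta_0$ in this claim; your iterative approach works too but at the cost of the stronger threshold on $\delta_0$ and the need to check that the $V_m/F_m$ invariants persist through the whole sequence of recolorings.
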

\begin{proof}
    Let $(x,y)\in E^\ast$ with $x\in V_i$, $y\in V_j$ and $E(\{x,y\})$ is not the loop color of $E$. By
    \cref{lem:exists_cherry_in_Ramsey} there exists at least one
    $k$ so that $E(\{i,k\}) = E(\{j,k\})=G(\{x,y\})$. By
    definition of $V_i$, both $x\ast F_k$ and $y\ast F_k$
    each have more than $\delta_0 |F_k|$ edges with color $G(\{x,y\})$ in $F_k$.
    Therefore, there are at least 
    \[
        (2\delta_0-1)|F_k|\ge (2\delta_0-1)\frac{1-\varepsilon_7}{r}n\ge (2\delta_0-1)\frac{1}{r+1}n
    \]
    monochromatic
    triangles containing $\{x,y\}$, where we have used \cref{lem:Claim_8} to lower bound $|F_k|$ and assume that \whiteboxedchar{$\varepsilon_7$ is sufficiently small} for the second inequality to hold. Note that each choice of $(x,y) \in
    E^\ast$ dictates its own set of of such monochromatic triangles, so we can sum them up without overcounting. If more than
    $\frac{r+1}{2\delta_0-1}(p(\hat K_3; G)-\varsigma_n)\binom{n}{2}/3$
    of these edges were to exist, they would lead to a decrease of the monochromatic triangle count of more than $(p(\hat K_3,G)-\varsigma_n)\binom{n}{3}$,
    a contradiction. Note that recoloring all of these edges $\{x,y\}$ respectively to $E(\{i,j\})$ \emph{simultaneously} does not create any additional triangles. Indeed, after recoloring, the only triangles left are those with the loop color. But we did not change any edge to the loop color and therefore we did not add any triangles. This establishes the claim.
\end{proof}

\begin{claim}\label{claim:f5}
    There exists a $\delta'$ so that when $p(\hat K_3;G)-\varsigma_n\le \delta'$, at most $\frac{\varepsilon}{3}\binom{n}{2}$ edges in $\bigcup_{i\ne j}V_i\ast V_j$ need to be recolored so that the only monochromatic triangles in $G$ are contained within the $V_i$.
\end{claim}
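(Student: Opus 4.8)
\textbf{Proof plan for \cref{claim:f5}.}

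The plan is to recolour, within $\bigcup_{i\ne j}V_i\ast V_j$, exactly those edges of $E^\ast$ that currently lie in a monochromatic triangle of $G$, restoring each such edge $\{x,y\}$ with $x\in V_i$ and $y\in V_j$ to the colour $E(\{i,j\})$. The first step is a structural observation: after \cref{lem:Claim_10}, \cref{lem:V_star_is_zero} and \cref{claim:f4} we have $V(G)=\bigcup_iV_i$, each $G[V_i]$ is a monochromatic clique in the loop colour $c_\ell$ of $E$, and $E^\ast$ consists only of $c_\ell$-coloured cross-edges (while $E^\ast\subseteq E_0$ by \cref{eq:f2}); consequently every monochromatic triangle of $G$ not contained in a single $V_i$ is itself coloured $c_\ell$ and contains at least one edge of $E^\ast$. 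Indeed, a triangle with two vertices in some $V_i$ has a $c_\ell$-coloured edge inside $V_i$, hence is a $c_\ell$-triangle, and both of its cross-edges — whose $E$-colour is not $c_\ell$, since in an element of $\mL_{\operatorname{ex}}^{(c)}$ loops and non-loop edges receive different colours — lie in $E^\ast$; and a triangle with vertices in three distinct parts $V_i,V_j,V_k$ of common colour $c'$ cannot have $E(\{i,j\})=E(\{j,k\})=E(\{i,k\})=c'$, because the non-loop part of $E$ contains no monochromatic triangle, so at least one of these three edges lies in $E^\ast$, is therefore $c_\ell$-coloured by \cref{claim:f4}, which forces $c'=c_\ell$ and all three edges into $E^\ast$.

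Second, I would check that the recolouring has the desired effect. It destroys every cross-part monochromatic triangle, since each of them contains an edge of $E^\ast$ that now receives a non-$c_\ell$ colour, and one sees the triangle is no longer monochromatic. It creates no new monochromatic triangle: such a triangle would use a recoloured edge $\{x,y\}$, now of colour $c'=E(\{i,j\})\ne c_\ell$ with $x\in V_i$, $y\in V_j$; its third vertex $z$ cannot lie in $V_i$ or $V_j$, as then the corresponding edge would be either the $c_\ell$-coloured clique edge of a part or an $E^\ast$-edge that stays $c_\ell$ if left alone and becomes $E(\{i,k\})$ or $E(\{j,k\})$ if recoloured; so $z\in V_k$ with $k\notin\{i,j\}$, and the triangle forces $E(\{i,k\})=E(\{j,k\})=E(\{i,j\})=c'$, a monochromatic triangle in the non-loop part of $E$, a contradiction. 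Hence after the recolouring the only monochromatic triangles of $G$ lie inside the $V_i$, which is exactly the conclusion of the claim.

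It remains to bound the number of recoloured edges. Since $V(G)=\bigcup_iV_i$ and each $G[V_i]$ is a clique, $G$ has precisely $\sum_i\binom{|V_i|}{3}+T$ monochromatic triangles, where $T$ is the number of cross-part ones; and since a balanced blow-up of $E$ on $n$ vertices realises $\hat K_3$-density $\binom n3^{-1}\sum_i\binom{m_i}{3}$ with balanced sizes $m_i$, which minimise $\sum\binom{\cdot}{3}$ among compositions of $n$ into $r$ parts, we get $\sum_i\binom{|V_i|}{3}\ge\varsigma_n\binom n3$ and hence $T\le\big(p(\hat K_3;G)-\varsigma_n\big)\binom n3\le\delta'\binom n3$. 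To turn this into a bound of the form $\tfrac\varepsilon3\binom n2$ on the number of recoloured edges, one exploits the ``book'' structure of the cross-part triangles: for a vertex $v$ and a part $V_i\not\ni v$ the $G[v,V_i,c_\ell]$ many $c_\ell$-coloured edges from $v$ into $V_i$ span $\binom{G[v,V_i,c_\ell]}{2}$ cross-part monochromatic triangles, so on the one hand $\sum_{v,i}\binom{G[v,V_i,c_\ell]}{2}\le T$, while the recoloured edges are counted (up to the $3$-part triangles) by $\sum_{v,i}G[v,V_i,c_\ell]$, and a Cauchy–Schwarz estimate against the $\le n(r-1)$ summands turns $T\le\delta'\binom n3$ into a bound of order $\sqrt{\delta'}\,n^2$ with a constant depending on $r$; combined with $E^\ast\subseteq E_0$ and the bound on $|E_0|$ from \cref{lem:bound_for_zerosets}, choosing $\delta'$ small enough in terms of $\varepsilon$ and $r$ then gives the claim. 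I expect this last count to be the only genuine obstacle: the crude estimate ``number of recoloured edges $\le$ number of bad triangles $\le\delta'\binom n3$'' loses a factor of $n$ against the target $\tfrac\varepsilon3\binom n2$, so one must really use the clustering of the cross-part triangles (together with the smallness of $|E_0|$ and the near-balancedness of the $V_i$ from \cref{lem:Claim_8}) to recover the missing power of $n$.
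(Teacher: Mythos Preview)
Your approach is correct and takes a genuinely different, more elementary route than the paper. The paper invokes the triangle removal lemma on the graph $H$ of loop-coloured cross-edges and then a $P_2$-removal lemma on each bipartite piece $H_{i,j}$, obtaining $\delta'$ only as an ineffective function of $\varepsilon$. You instead exploit the structure directly: after \cref{lem:Claim_10}, \cref{lem:V_star_is_zero} and \cref{claim:f4} every cross-part monochromatic triangle is $c_\ell$-coloured with all its cross-edges in $E^\ast$, so recolouring the offending $E^\ast$-edges to their $E$-colour kills all such triangles and creates none (the latter because a new triangle would force a monochromatic non-loop triangle in $E$). Your convexity step $\sum_i\binom{|V_i|}{3}\ge\varsigma_n\binom{n}{3}$ gives $T\le\delta'\binom{n}{3}$, and the Cauchy--Schwarz/book count $\sum_{v,i}\binom{d_{v,i}}{2}\le T$ with $\sum_{v,i}d_{v,i}=2|E^\ast|$ over $n(r-1)$ summands then yields $|E^\ast|\le \tfrac{n(r-1)}{2}+O\big(\sqrt{\delta'}\,n^2\big)$. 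This buys an explicit $\delta'=\Theta_r(\varepsilon^2)$, whereas the removal-lemma route gives tower-type dependence.

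One small point you should make explicit: the Cauchy--Schwarz bound carries an additive $n(r-1)/2$ term, which for very small $\varepsilon$ and $n$ close to $n_0$ could exceed the target $\tfrac{\varepsilon}{3}\binom{n}{2}$, since $n_0$ is fixed before $\varepsilon$. This is easily handled by combining with the crude bound $|R|\le 3T\le 3\delta'\binom{n}{3}$ for $n\lesssim (r-1)/\varepsilon$ and using Cauchy--Schwarz for larger $n$; both regimes then give thresholds of order $\varepsilon^2/(r-1)$. The reference to $E^\ast\subseteq E_0$ and \cref{lem:bound_for_zerosets} is not needed, since $\varepsilon_5$ does not depend on $\varepsilon$.
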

\begin{proof}
Note that all triangles in $G$ that are not contained entirely within a $V_i$ are either within $V_i\cup V_j$ with $i\ne j$ and otherwise within $V_i\cup V_j\cup V_k$ with $i\ne j\ne k\ne i$. 
Denote by $H$ the graph with $V(H)=V(G)$ so that $e\in E(H)$ if and only if $e$ has the loop color of $E$ in $G$ and is contained in some $V_i\ast V_j$ with $i\ne j$. Note that $H$ is $r$-partite with parts of almost equal size. Applying the triangle removal lemma to $\varepsilon/6$ gives us a $\tilde\delta$ so that when $p(K_3;H)\le \tilde\delta$, all triangles in $H$ can be removed by deleting at most $\frac{\varepsilon}{6}\binom{n}{2}$ edges. Notice that when $p(\hat K_3;G)\le \varsigma_n+\tilde\delta$, we get $p(K_3;H)\le \tilde\delta$. Therefore, by recoloring at most $\frac{\varepsilon}{6}\binom{n}{2}$ edges, the only triangles left in $G$ are those with say two points in a $V_i$ and one point in a $V_j$.

For each $i\ne j$ let $H_{i,j} = H[V_i\ast V_j]$. Applying the removal lemma for paths of length two on three vertices with $\varepsilon / (6\binom{r}{2})$ gives us a $\hat\delta$ so that when $p(P_2,H_{i,j})\le \hat \delta$, at most $\varepsilon / (6\binom{r}{2})$ edges need to be recolored to make $H_{i,j}$ free of any $P_2$. Note that we have
\begin{align*}
    p(P_2;H_{i,j}) \le \frac{(p(\hat K_3;G)-\varsigma_n)\binom{n}{3}}{|H_{i,j}|}
    \le \frac{(p(\hat K_3;G)-\varsigma_n)\binom{n}{3}}
        {\binom{2\lfloor\frac{1-\varepsilon_7}{r}\rfloor n}{3}}
    \le\hat \delta.
\end{align*}
for every $i\ne j$ by \cref{lem:Claim_8}  assuming that
\begin{equation*}
    p(\hat K_3;G) \le \varsigma_n +
        \hat\delta\min_{n\ge n_0}
            \frac{\binom{2\lfloor\frac{1-\varepsilon_7}{r}\rfloor n}{3}}{\binom{n}{3}}.
\end{equation*}
Here we have assumed that \whiteboxedchar{$n_0$ is large enough} such that $\binom{2\frac{1-\varepsilon_7}{r}n}{3} > 0$. Hence we may choose
\begin{equation*}
    \delta ' =  \operatorname{min} \left( \tilde\delta, \hat\delta\inf_{n\ge n_0}
            \frac{\binom{2\lfloor\frac{1-\varepsilon_7}{r}n\rfloor }{3}}{\binom{n}{3}} \right)
\end{equation*}
to prove the claim.
\end{proof}
\begin{claim}\label{claim:f6}
    There exists a $\delta''$ so that when $p(\hat K_3;G)-\varsigma_n\le \delta''$, at most
    $\frac{\varepsilon}{3}\binom{n}{2}$ edges need to be recolored to achieve that $||V_i|-|V_j||\le 1$ when $i\ne j$.
\end{claim}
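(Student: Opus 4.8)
The plan is to rebalance the parts by moving individual vertices one at a time, exploiting that the strong convexity of $x\mapsto\binom{x}{3}$ already forces the parts to be nearly equal. At this point in the proof of \cref{th:stability}, \cref{claim:f4} and \cref{claim:f5} have ensured that $V(G)=V_1\cup\dots\cup V_r$, that each $G[V_i]$ is a monochromatic clique in the loop color $c_\ell$ of $E$, that every edge between two distinct parts has color $E(\{i,j\})$ or $c_\ell$, and that the only monochromatic triangles of $G$ lie inside a single part. Hence the current number of monochromatic triangles of $G$ is exactly $\sum_{i=1}^{r}\binom{|V_i|}{3}$; since none of the recolorings performed in the earlier claims increased this quantity, $\sum_i\binom{|V_i|}{3}\le p(\hat K_3;G)\binom{n}{3}\le(\varsigma_n+\delta'')\binom{n}{3}$, while $\varsigma_n\binom{n}{3}$ equals $\sum_i\binom{|V_i|}{3}$ for a balanced integer partition of $n$ into $r$ parts.

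First I would bound $D:=\max_i|V_i|-\min_i|V_i|$. Applying the telescoping identity $\binom{a}{3}+\binom{b}{3}-\binom{a-1}{3}-\binom{b+1}{3}=\binom{a-1}{2}-\binom{b}{2}$ to the largest and smallest part and iterating $\lfloor D/2\rfloor$ transfers between them, the $s$-th transfer decreases $\sum_i\binom{|V_i|}{3}$ by at least $(D-2s+1)\min_i|V_i|\ge(D-2s+1)(1-\varepsilon_7)n/r$ (using the lower bound on $|V_i|$ from \cref{lem:Claim_8}), so these transfers together decrease it by $\Omega(nD^2/r)$. Since this cannot exceed the available slack $\delta''\binom{n}{3}$, we obtain $D^2=O(r\delta''n^2)$, i.e. $D=O(\sqrt{r\delta''}\,n)$.

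Next I would run the actual rebalancing: repeatedly take the current largest part $V_i$ and current smallest part $V_j$, stop if $|V_i|-|V_j|\le1$, and otherwise move a vertex $v$ from $V_i$ to $V_j$, recoloring every edge incident to $v$ so that $v\ast V_j$ receives color $c_\ell$ and $v\ast V_k$ receives color $E(\{j,k\})$ for each $k\ne j$. Since $|V_i|\ge|V_j|+2$ at the moment of each move, the number of monochromatic triangles changes by $\binom{|V_j|}{2}-\binom{|V_i|-1}{2}\le0$, which both prevents $p(\hat K_3;\cdot)$ from increasing and guarantees termination. Using that $E$ restricted to its non-loop colors is triangle-free and that loops of $E$ have a color distinct from their incident edges (\cref{lem:standard_is_Ramsey}), one checks that after each move every monochromatic triangle through $v$ lies inside $V_j\cup\{v\}$, so the invariant ``all monochromatic triangles lie inside a part'' and the recolored-blow-up structure --- hence the eventual membership of $G$ in $\mG^{(c)}_{\operatorname{ex}}$ --- are preserved.

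Finally I would count: the number of moves equals the total excess $\sum_i(|V_i|-t_i)_+$ over the balanced target partition $(t_i)$, and since $\min_i|V_i|\le n/r\le t_i$ for every $i$, each term is at most $D+1$, giving at most $r(D+1)=O(r^{3/2}\sqrt{\delta''}\,n)$ moves; as each move recolors fewer than $n$ edges, at most $O(r^{3/2}\sqrt{\delta''}\,n^2)$ edges are recolored altogether, which is at most $\tfrac{\varepsilon}{3}\binom{n}{2}$ once $\delta''>0$ is chosen small enough in terms of $\varepsilon$ and $r$ (equivalently $c$). I expect the genuine difficulty to be the quantitative part of the first step: the naive estimate ``each move destroys $\Omega(n/r)$ monochromatic triangles, hence there are at most $O(\delta''n^2)$ of them'' is far too weak --- it would permit $\Omega(n^3)$ recolorings --- so one really has to use strong convexity to bound the imbalance \emph{quadratically}, which is exactly what the $\binom{a-1}{2}-\binom{b}{2}$ telescoping delivers.
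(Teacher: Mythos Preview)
Your proof is correct and takes a genuinely different route from the paper's. The paper argues non-constructively: it writes the triangle density as the polynomial $Q_n(\rho)=\sum_i\binom{n\rho_i}{3}/\binom{n}{3}$ on the simplex, and bounds $Q_n(\rho)-\varsigma_n$ from below by a power of the $\ell_1$-distance $x=\min_{s\in R_n}\|\max(\rho-s,0)\|_1$ to the balanced set $R_n$; since the number of edges to recolor is at most $2x\binom{n}{2}$, choosing $\delta''$ small forces $x$ small. Your argument is instead fully constructive: you first run a \emph{thought experiment} of $\lfloor D/2\rfloor$ transfers between the two extreme parts, exploit the telescoping identity $\binom{a}{3}+\binom{b}{3}-\binom{a-1}{3}-\binom{b+1}{3}=\binom{a-1}{2}-\binom{b}{2}$ to harvest a quadratic-in-$D$ drop, and conclude $D=O(\sqrt{r\delta''}\,n)$; then you actually move vertices one by one, checking at each step that the blow-up structure and the ``all monochromatic triangles lie inside a part'' invariant survive. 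Your approach is more elementary, keeps explicit track of why the recolored graph remains on course for $\mG^{(c)}_{\operatorname{ex}}$, and in fact yields the sharper dependence $\delta''\sim\varepsilon^2$ (the quadratic term $\tfrac{3}{r}\sum d_i^2$ in the expansion of $\sum(\tfrac1r+d_i)^3$ is what drives this) versus the paper's cubic bound.

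One small imprecision worth tightening: the assertion that the greedy ``max-to-min'' procedure performs \emph{exactly} $\sum_i(|V_i|-t_i)_+$ moves requires the optimal assignment of targets to parts, and is not entirely obvious. A safe fix is either to replace the greedy by the simpler ``move one vertex from any part with $|V_i|>t_i$ to any part with $|V_j|<t_j$'' (each such move has $|V_i|\ge|V_j|+1$, so the triangle count still does not increase, and termination in exactly $\sum_i(|V_i|-t_i)_+$ steps is immediate), or to note that for the greedy one can show the move count is at most $\sum_i(|V_i|-\lfloor n/r\rfloor)_+ + r\le r(D+1)$, which is what you use anyway.
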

\begin{proof}
Let $\rho_i = |V_i|/n$. Denote by $\rho\in \mathbb R^r$ the vector $\rho=(\rho_1,\dots,\rho_r)$. Let $R_n\subseteq\mathbb R^r$ be all vectors $s$ so that $\sum_i s_i=1$ and $ns$ is a vector of integers so that no two integers differ by more than 1.
For any given $\rho$, the number of vertices that need to be recolored is at most
\begin{equation}\label{eq:dist_to_optimimum}
    2\min_{s\in R_n} \|\max(\rho-s,0)\|_1\binom{n}{2}
\end{equation}
where “$\max$" is interpreted as acting on each component. 
The triangle density is
    \begin{equation*}
        p(\hat K_3;G) = \frac{
        \sum_{i\in [r]}\binom{n\rho_i}{3}}{\binom{n}{3}}
        =
        Q_n(\rho)
    \end{equation*}
    which is a degree three polynomial in the $\rho_i$ for which $Q_n(s)=\varsigma_n$ when $s\in R_n$. 

We want to compute for a given $1>x>0$
\begin{align*}
    \min_{\min_{s\in R_n}\|\max(\kappa-s,0)\|_1=x} Q_n(\kappa)-\varsigma_n
\end{align*}
where $\kappa$ ranges over vectors in the standard simplex $\Delta^r$. By symmetry we may assume that 
\[
    s = \frac{1}{n}\left(\lceil \frac{n}{r}\rceil, \dots, \lceil \frac{n}{r}\rceil, \lfloor \frac{n}{r}\rfloor, \dots, \lfloor \frac{n}{r}\rfloor\right)
\]
and that $\kappa$ exceeds $s$ on the first $l$ coordinates. 
In this case, we find that the global minimum is at most $x^3/r^2$.
Therefore, when $0<\delta'' < \frac{1}{r^2}\left(\frac{\varsigma_n}{6}\right)^3$, this fulfills the assumptions of our claim: When $p(\hat K_3,G)\le \varepsilon_n+\delta''$ then necessarily $2\min_{2\in R_n}\|\max(\rho-s,0) \|_1\binom{n}{2}\le \frac{\varsigma_n}{6}$ which means that at most $\frac{\varsigma_n}{6}$ many edges need to be recolored to make the $V_i$ almost balanced. 
\end{proof}
It follows that, by picking 
\begin{align*}
    \delta &\le \min \left( \frac{\varepsilon_4}
    {2\left(\frac{1}{\varepsilon_1(1+\varepsilon_1)}+\frac{\binom{n_1}{6}}{s}\right)},
        \frac{\frac{\varepsilon}{9}}{\frac{r}{3}},\frac{\frac{\varepsilon}{9}}{2^2\cdot 8^2(c-1)^2r^2},\frac{\frac{\varepsilon}{9}}{\frac{r+1}{3(2\delta_0-1)}}, \delta', \delta''\right)
\end{align*}
with the individual components due to \cref{eq:beta_upper_bound}, \cref{lem:Claim_10}, \cref{lem:V_star_is_zero}, \cref{claim:f4}, \cref{claim:f5}, and \cref{claim:f6}, $G$ can be be turned into an element of $\mG_{\operatorname{ex}}^{(c)}$ by recoloring at most $\varepsilon\binom{n}{2}$ edges, assuming that $p(\hat{K}_3; G) \leq \varsigma_n + \delta$. \hfill $\blacksquare$

\section{Proofs of the main statements} \label{sec:proof}

All certificates along with a description of their structure and the code required to verify them can be found online at \url{github.com/FordUniver/kps_trianglemult}. The problem simplifications presented in \cref{sec:improvements}, in particular the the one in \cref{sec:color_invariance}, were crucial for the flag algebra calculations to become tenable. The block diagonalization presented in \cref{sec:block_diagonalization} has the additional benefit of making the \emph{exact} (rather than numeric) verification of positive-semidefinitess of the certificate computationally feasible.

We note that we relied on the graph isomorphism algorithm implemented in {\tt sage}~\cite{sagemath} in order to compute all necessary flags and their densities. We used {\tt csdp}~\cite{Borchers_1999}  to find the certificates, which, despite its age, seemed to be the only readily available solver capable of solving SDPs of this scale to a sufficient level of accuracy. We also relied on the exact LP solving capabilities of {\tt SoPlex}~\cite{GleixnerSteffyWolter2015, eifler2023computational, gleixner2012improving, gleixner2020linear} in order to turn the floating-point based output of {\tt csdp} into exact algebraic entities. Finally, we note that we found it crucial to factor out known zero eigenvectors based on the conjectured extremal constructions, besides also enforcing them through linear constraints in the rationalizing LP formulation, in order to obtain a sufficiently well conditioned solution out of {\tt csdp} that maintained positive semidefiniteness after rationalizing the solution.

\begin{proof}[Proof of \cref{eq:fourcolortriangles}]
    The upper bound follows immediately from \cref{eq:ramsey_upper} and by noting that $R(3,3,3) = 17$. A matching lower bound is established through a flag algebra SDP certificate with $N = 5$. Stability is obtained through \cref{th:stability} and by noting that all elements of $\mF^\varnothing_5$ in which $\hat{K}_{3,1}$ or $\hat{K}_{3,3}$ have positive density correspond to non-zero slack in the certificate.
\end{proof}

\begin{proof}[Proof of \cref{prop:m334}]
    The upper bound follows immediately from \cref{{eq:ramsey_upper_offdiag}} and by noting that $R(3,3) = 6$. A matching lower bound is established through a flag algebra SDP certificate with $N = 6$. 
\end{proof}

\section{Discussion and Outlook}\label{sec:discussion}

The computational improvements suggested in \cref{sec:improvements} were crucial in order to derive a certificate for the upper bound and stability statement in \cref{eq:fourcolortriangles}. They are applicable whenever the problem studied exhibits symmetries with respects to the colors, with the reduction of the number of constraints essentially factorial in the number of colors. We therefore hope that they find further use for other problems, for example for improved upper bounds on Ramsey numbers through flag algebras, as recently done by Lidicky and Pfender~\cite{lidicky2021semidefinite}.  We also applied the same techniques to derive (marginally) improved lower bounds for the two-color $K_4$- and $K_5$-Ramsey multiplicity using $N = 9$, with the floating-point output of {\tt csdp} suggesting $m_2(4) \geq 0.02961$ and $m_2(5) \geq 0.001557$. Since no matching or conjectured upper bound exists, we did not turn these results into exact rational values.

The improvements however are largely not applicable when there are no previously ignored symmetries in the problem statement, as is for example the case with the famous $(3,4)$-Tur{\'a}n conjecture. Block diagonalization is still possible, but without first applying \cref{sec:color_invariance} there are significantly fewer symmetries to take into consideration, leading to a less significant reduction in the number of variables. They may also not be helpful for applications beyond graphs~\cite{baber2012turan, balogh2014upper, sliacan2018improving, rue2023rado}, where there can be more drastic jumps on the numbers of constraints as $N$ is increased and even the results stated in \cref{sec:color_invariance} do not necessarily make the next step computationally feasible.

Besides these computational improvements, it is notable that our generalization of the stability argument from~\cite{CummingsEtAl_2013} no longer requires explicit knowledge of the Ramsey colorings underlying the extremal construction. While in our case the colorings were both known and crucial in order to derive an exact rather than a floating point-based flag algebra certificate, \cref{th:stability}, which draws a connection between the Ramsey number $R_{c-1}(3)$ and the Ramsey multiplicity problem $m_c(3)$, in theory opens up an avenue to establish a sort of equivalence of the two problems without first explicitly solving both or even either problem: 

\begin{itemize} \setlength\itemsep{0em}
    \item[1)] We could derive a flag algebra certificate for a particular $c > 4$ establishing $m_c(3)$ and meeting the requirements of \cref{th:stability} without explicit knowledge of the $R_{c-1}(3)$-Ramsey colorings. Note that this would imply the exact value of $R_{c-1}(3) = m_c(3)^{-1/2} - 1$.
    \item[2)] We could show that the Ramsey multiplicity problem satisfies the requirements of \cref{th:stability} for arbitrary $c \geq 3$, in particular that $\hat K_{3,1}$ and $\hat K_{3,3}$ have zero density in an extremal construction, through a purely theoretical argument not relying on the semidefinite programming method and without explicitly determining $m_c(3)$. This would imply that $m_c(3) = (R_{c-1}(3) - 1)^{-2}$ without giving us explicit knowledge of either value.
\end{itemize}

At the risk of extrapolating from a sample size of two, this fact motivates us to go so far as to conjecture the following to be true.
\begin{conjecture}\label{conj:equivalence}
    For any $c \geq 3$, we have $m_c(3) = (R_{c-1}(3) - 1)^{-2}$ and the only extremal constructions are derived from $R_{c-1}(3)$-Ramsey colorings.
\end{conjecture}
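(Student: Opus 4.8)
The plan is to deduce \cref{conj:equivalence} from \cref{th:stability}, applied with the choice $\varsigma = m_c(3)$. With this choice the upper bound $m_c(3) \le (R_{c-1}(3)-1)^{-2}$ is already supplied by \cref{eq:ramsey_upper}, so what remains is the matching lower bound together with the structural statement, and \cref{th:stability} would deliver both at once. First I would note that assumption \boxedchar{A1} is automatic: $\hat K_3 - m_c(3)$ is a positive element because every $\phi \in \operatorname{Hom}^+(\mA^\varnothing,\RR)$ arises from a convergent sequence of colorings of increasing order and $m_c(3;n)$ is increasing in $n$, and it is tight because a convergent subsequence of a sequence of extremal colorings yields an extremal homomorphism. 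Hence the entire content of the conjecture is concentrated in establishing assumption \boxedchar{A2}, i.e.\ in producing a constant $s = s(c) > 0$ with
\[
    \hat K_3 - m_c(3) \;\ge\; s\,(\hat K_{3,1} + \hat K_{3,3})
\]
in the semantic cone $\mS^\varnothing$. Once this is in hand, \cref{th:stability} (through \cref{th:extremals_are_Ramsey}) gives $m_c(3) = (R_{c-1}(3)-1)^{-2}$ together with stability with respect to $\mG^{(c)}_{\operatorname{ex}}$, and the characterization of extremal colorings then follows exactly as in the proof of \cref{eq:fourcolortriangles}.

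\textbf{Establishing \boxedchar{A2}.} There are two routes, mirroring the discussion in \cref{sec:discussion}. The computational route is, for a fixed $c$, to search for an SDP certificate as in \cref{eq:sdp_certificate} of the stronger inequality $\hat K_3 - s(\hat K_{3,1} + \hat K_{3,3}) \ge (R_{c-1}(3)-1)^{-2}$, using the full symmetry reductions of \cref{sec:improvements} with admissible color group $S_{[c]}$; this is precisely how the present paper treats $c = 4$. The theoretical route, which is the only one that could yield a statement uniform in $c$, is to prove \boxedchar{A2} combinatorially. A natural first step is to show that every extremal homomorphism $\phi$ (that is, $\phi(\hat K_3) = m_c(3)$) satisfies $\phi(\hat K_{3,1}) = \phi(\hat K_{3,3}) = 0$, and then to upgrade this qualitative fact to the quantitative inequality \boxedchar{A2} by a supersaturation/removal argument in the spirit of \cref{claim:f5}. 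To prove the qualitative step one would try to run variants of the local-switching arguments of \cref{sec:Ramsey_mult_graphs}---duplicating or merging a vertex into an appropriate ``part'' so as to destroy many monochromatic triangles at once---directly on a large near-extremal coloring, showing that a positive density of $\mH$-configurations or of $\hat K_{3,3}$-configurations would contradict near-optimality.

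\textbf{The main obstacle.} The difficulty is that this programme is, as stated, essentially circular: the switching lemmas of \cref{sec:Ramsey_mult_graphs} (e.g.\ \cref{lem:Degree_at_lest_two}, \cref{lem:two_disjoint_monochrom}, \cref{lem:exists_cherry_in_Ramsey}) are proved \emph{under} assumption \boxedchar{A2} and are statements about the already structured looped graph $E \in \mL^{(c)}_{\operatorname{ex}}$, whereas \boxedchar{A2} must be established for an a priori unstructured extremal object. Breaking this circularity---bounding $\phi(\hat K_{3,1} + \hat K_{3,3})$ by a constant multiple of $\phi(\hat K_3) - m_c(3)$ without first identifying $\phi$---seems to require a genuinely new idea, with an entropy-type argument or a first-order-optimality (variational) argument at the extremal point being the most plausible candidates. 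A second, orthogonal obstruction is that for $c - 1 \ge 4$ the Ramsey number $R_{c-1}(3)$, and hence the number and shape of the colorings generating $\mG^{(c)}_{\operatorname{ex}}$, is itself unknown; \cref{th:stability} at least has the virtue of reducing the ``only extremal constructions'' part of the conjecture to that single classical question.
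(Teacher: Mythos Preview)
The statement you are addressing is a \emph{conjecture}, not a theorem: the paper does not prove it and explicitly presents it as an open problem motivated by the cases $c=3,4$. There is therefore no proof in the paper to compare your attempt against.

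That said, your write-up is an accurate account of the situation and matches the paper's own discussion almost exactly. The paper's \cref{sec:discussion} identifies precisely the same reduction: once one knows that $\hat K_{3,1}$ and $\hat K_{3,3}$ have zero density in every extremal homomorphism (your qualitative version of \boxedchar{A2}), \cref{th:stability} and \cref{th:extremals_are_Ramsey} finish the argument. The paper lists the same two routes you do --- a per-$c$ flag-algebra certificate, or a uniform theoretical argument --- and, like you, offers no mechanism for the latter. Your observation that \boxedchar{A1} is automatic for $\varsigma = m_c(3)$ is correct and is implicit in the paper's framing.

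Your diagnosis of the obstacle is also correct and honest: the structural lemmas of \cref{sec:Ramsey_mult_graphs} presuppose \boxedchar{A2}, so they cannot be invoked to establish it, and you do not claim otherwise. One minor sharpening: the quantitative form of \boxedchar{A2} (with some $s>0$) is genuinely stronger than the qualitative statement $\phi(\hat K_{3,1}+\hat K_{3,3})=0$ for extremal $\phi$, and the supersaturation/removal step you gesture at to bridge the two is itself nontrivial --- removal lemmas do not in general upgrade ``zero at the extremum'' to a linear inequality of the form $f_0 \ge s\,g$ in $\mS^\varnothing$. So even the ``natural first step'' you propose would not obviously suffice. In short: your proposal is not a proof, you correctly flag that it is not, and the gap you identify is exactly the open content of the conjecture.
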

Suppose that for some $c\ge 3$ we have actually established that $\hat K_{3,1}$ and $\hat K_{3,3}$ have zero density in any extremal construction. Then, aside from establishing \cref{conj:equivalence} for that particular $c$, this would also imply that any two distinct $c$-color Ramsey-colorings $R_1, R_2$ cannot share any $r-1$-sized subcolorings. This follows since otherwise we could, just as in \cref{sec:Ramsey_mult_graphs}, consider a looped coloring $E$ on $R_1\cup \{v_2\}$ with $E(\{v_2\})=E(\{v_1,v_2\})=E(\{v_1\})$ so that $E[V(R_1)\setminus \{v_1\}]\simeq R_2$. By choosing a weighting on $E$ where every $v\in V(E)$ has weight $\frac{1}{r}$ except for $v_1$ and $v_2$ which get the weight $\frac{1}{2r}$, we would have a weighted looped coloring achieving a monochromatic triangle density of $\varsigma$ with a non-zero $\hat K_{3,1}$ density.

Finally, it should be noted that Fox and Wigerson~\cite{fox2023ramsey} somewhat recently characterized an infinite family of $2$-colorings for which an upper bound equivalent to the one given by \cref{eq:ramsey_upper} is tight, i.e., Tur{\'a}n graphs determine the extremal constructions for the respective Ramsey multiplicity problem. They also obtained results for the case of $c=3$ colors that are conditioned the conjecturec bound $R(t, \lceil t/2 \rceil) \leq 2^{-31} \, R(t,t)$.

\paragraph{Acknowledgements} This work was partially funded by the Deutsche Forschungsgemeinschaft (DFG, German Research Foundation) under Germany’s Excellence Strategy – The Berlin Mathematics Research Center MATH+ (EXC-2046/1, project ID: 390685689).

%
%
%
%

\end{document}